\providecommand{\main}{.} 
\documentclass[twoside,11pt]{article}

\usepackage{blindtext}

\usepackage[abbrvbib]{jmlr2e}   

\usepackage{lastpage}

\usepackage[margin=1in]{geometry}

\usepackage{amssymb}
\usepackage{amsmath}
\usepackage{amsfonts}
\usepackage{tensor}
\usepackage{mathtools}
\usepackage{cancel}
\usepackage{bm}
\usepackage{url}
\usepackage{textgreek}

\usepackage{booktabs}
\usepackage{siunitx}

\usepackage{float}
\usepackage{placeins} 
\usepackage{algorithm}
\usepackage[noend]{algpseudocode}

\usepackage{footnote}
\makesavenoteenv{algorithm}

\algrenewcommand\algorithmicrequire{\textbf{Input:}}
\algrenewcommand\algorithmicensure{\textbf{Output:}}

\newcommand*{\secref}[1]{\hyperref[{#1}]{Section~\ref*{#1}}}
\newcommand*{\secrefwithtitle}[1]{\hyperref[{#1}]{Section~\ref*{#1}~(\nameref*{#1})}}

\newcommand*{\lemref}[1]{\hyperref[{#1}]{Lemma~\ref*{#1}}}
\newcommand*{\propref}[1]{\hyperref[{#1}]{Proposition~\ref*{#1}}}
\newcommand*{\thmref}[1]{\hyperref[{#1}]{Theorem~\ref*{#1}}}
\newcommand*{\defref}[1]{\hyperref[{#1}]{Definition~\ref*{#1}}}
\newcommand*{\corref}[1]{\hyperref[{#1}]{Corollary~\ref*{#1}}}
\newcommand*{\figref}[1]{\hyperref[{#1}]{Figure~\ref*{#1}}}
\newcommand*{\tableref}[1]{\hyperref[{#1}]{Table~\ref*{#1}}}
\newcommand*{\algoref}[1]{\hyperref[{#1}]{Algorithm~\ref*{#1}}} 
\newcommand*{\appref}[1]{\hyperref[{#1}]{Appendix~\ref*{#1}}} 
\newcommand*{\apprefwithtitle}[1]{\hyperref[{#1}]{Appendix~\ref*{#1}~(\nameref*{#1})}}

\newtheorem{prop}[theorem]{Proposition}

\usepackage{framed}

\usepackage{xcolor}
\usepackage{listings}
\usepackage{soul}
\usepackage{tikz}
\usetikzlibrary{backgrounds}
\usetikzlibrary{math}
\usetikzlibrary{quotes}
\usepackage{multirow}

\usetikzlibrary{
    positioning, 
    calc, 
    patterns, 
    arrows.meta, 
    shapes.arrows, 
    shapes.misc}
\usepackage{tikz-3dplot}

\usepackage{pgfplots}
\usepgfplotslibrary{colormaps}
\usepgfplotslibrary{groupplots}
\pgfplotsset{compat=1.18}

\definecolor{tabblue}{HTML}{1F77B4}
\definecolor{taborange}{HTML}{FF7F0E}
\definecolor{tabgreen}{HTML}{2CA02C}
\definecolor{tabred}{HTML}{D62728}
\definecolor{tabpurple}{HTML}{9467BD}
\definecolor{tabbrown}{HTML}{8C564B}
\definecolor{tabpink}{HTML}{E377C2}
\definecolor{tabgray}{HTML}{7F7F7F}
\definecolor{tabolive}{HTML}{BCBD22}
\definecolor{tabcyan}{HTML}{17BECF}

\colorlet{tab1}{tabblue}
\colorlet{tab2}{taborange}
\colorlet{tab3}{tabgreen}
\colorlet{tab4}{tabred}
\colorlet{tab5}{tabpurple}
\colorlet{tab6}{tabbrown}
\colorlet{tab7}{tabpink}
\colorlet{tab8}{tabgray}
\colorlet{tab9}{tabolive}
\colorlet{tab10}{tabcyan}

\pgfplotscreateplotcyclelist{matplotlib}{
    {tabblue},
    {taborange},
    {tabgreen},
    {tabred},
    {tabpurple},
    {tabbrown},
    {tabpink},
    {tabgray},
    {tabolive},
    {tabcyan},
}

\pgfplotsset{
    colormap={grey_r}{
            gray(0cm)=(1); 
            gray(1cm)=(0); 
    },
    colormap={tabblues}{
        HTML(20cm)=(0F3A57);   
        HTML(41.4cm)=(1F77B4); 
        HTML(80cm)=(A8D3F0);   
    },
    colormap={taboranges}{
        HTML(20cm)=(663000);  
        HTML(52.7cm)=(FF7F0E); 
        HTML(80cm)=(FFC999);   
    },
    colormap={tabgreens}{
        HTML(20cm)=(165016);   
        HTML(40cm)=(2CA02C);   
        HTML(80cm)=(AFE9AF);   
    },
    colormap={tabreds}{
        rgb=(0.941, 0.659, 0.659) 
        rgb=(0.839, 0.153, 0.157) 
        rgb=(0.337, 0.063, 0.063) 
    },
    colormap={YlOrBr}{
        HTML(0cm)=(FFFFE5);  
        HTML(2cm)=(FFF7BC);  
        HTML(4cm)=(FEE391);  
        HTML(6cm)=(FEC44F);  
        HTML(8cm)=(D95F0E);  
        HTML(10cm)=(662506); 
    },
    colormap={magma}{
        HTML(0cm)=(000004);  
        HTML(2cm)=(3B0F70);  
        HTML(4cm)=(8C2981);  
        HTML(6cm)=(DE4968);  
        HTML(8cm)=(FE9F6D);  
        HTML(10cm)=(FCFDB1); 
    },
    colormap={plasma}{
        HTML(0cm)=(0D0887);  
        HTML(2cm)=(6A00A8);  
        HTML(4cm)=(B12A90);  
        HTML(6cm)=(E16462);  
        HTML(8cm)=(FCA636);  
        HTML(10cm)=(F0F921); 
    },
    colormap={flare}{
        rgb=(0.918, 0.592, 0.467) 
        rgb=(0.855, 0.435, 0.431) 
        rgb=(0.741, 0.286, 0.455) 
        rgb=(0.588, 0.196, 0.463) 
        rgb=(0.424, 0.133, 0.416) 
        rgb=(0.271, 0.098, 0.337) 
    },
    colormap={rainbow}{
        rgb=(0.47, 0.00, 1.00) 
        rgb=(0.20, 0.35, 1.00) 
        rgb=(0.00, 0.80, 1.00) 
        rgb=(0.25, 0.98, 0.75) 
        rgb=(0.60, 1.00, 0.50) 
        rgb=(0.90, 0.85, 0.30) 
        rgb=(1.00, 0.55, 0.15) 
        rgb=(1.00, 0.00, 0.00) 
    }
}

\usepackage{caption}
\usepackage{subcaption}

\DeclareFontFamily{U}{mathx}{}
\DeclareFontShape{U}{mathx}{m}{n}{<-> mathx10}{}
\DeclareSymbolFont{mathx}{U}{mathx}{m}{n}
\DeclareMathAccent{\widehat}{0}{mathx}{"70}
\DeclareMathAccent{\widecheck}{0}{mathx}{"71}

\newcommand{\vb}[1]{{\bm{#1}}}

\newcommand{\nhphantom}[1]{\sbox0{#1}\hspace{-\the\wd0}}

\DeclareMathOperator*{\argmin}{arg\,min}

\newcommand{\param}{\theta}
\newcommand{\paramzero}{{\theta_0}}

\newcommand{\state}{u}
\newcommand{\adj}{v}
\newcommand{\res}{R}
\newcommand{\qoi}{Q}

\newcommand{\sqrtcov}{C}
\newcommand{\inputbasis}{U}
\newcommand{\outputbasis}{V}
\newcommand{\pto}{q}
\newcommand{\ptoW}{f}
\newcommand{\paramspace}{X}
\newcommand{\obsspace}{Y}

\newcommand{\parammultiset}{\widehat{\Theta}}

\newcommand{\ppsleft}{\vb{\sigma}}
\newcommand{\ppsright}{\vb{\tau}}
\newcommand{\pushleft}{\vb{\mu}}
\newcommand{\pushright}{\vb{\nu}}
\newcommand{\incrleft}{\widetilde{\vb{\sigma}}}
\newcommand{\incrright}{\widetilde{\vb{\tau}}}

\newcommand{\kron}{\otimes_K}

\newcommand{\leftcore}{\vb{P}}
\newcommand{\rightcore}{\vb{Q}}
\newcommand{\leftunfolding}[1]{{{#1}^L}}
\newcommand{\rightunfolding}[1]{{{#1}^R}}

\newcommand{\sampleind}{i}
\newcommand{\nummodes}{d}
\newcommand{\taylororder}{k}
\newcommand{\derivativeorderactionsection}{j}
\newcommand{\derivativeorderrmgnsection}{j}
\newcommand{\jac}{\mathcal{J}}
\newcommand{\jaczero}{J}
\newcommand{\numactions}{n_s}
\newcommand{\tuple}[1]{{\mathbb{#1}}}
\newcommand{\loss}{\Phi}

\newcommand{\adjres}{\res^{\text{adj}}}

\newcommand{\innerproduct}[2]{\left\langle #1, #2 \right\rangle}

\newcommand{\norm}[1]{\left\| #1 \right\|}
\newcommand{\abs}[1]{\left\lvert #1 \right\rvert}

\algrenewcommand\algorithmicrequire{\textbf{Input:}}
\algrenewcommand\algorithmicensure{\textbf{Output:}}

\def\onedot{$\mathsurround0pt\ldotp$}
\def\cdddot#1{
  \mathbin{\vcenter{\baselineskip.67ex
    \hbox{\onedot}\hbox{\onedot}\hbox{\onedot}%
  }}%
}

\DeclarePairedDelimiter{\floor}{\lfloor}{\rfloor}

\makeatletter
\newcommand*\@MyHelperProd[1]{%
    \mathop{\hbox{#1$\m@th\displaystyle\prod$}}\limits
}
\newcommand*\smallprod{\@MyHelperProd\small}
\newcommand*\footnoteprod{\@MyHelperProd\footnotesize}
\makeatother

\usepackage{subfiles}

\ShortHeadings{Tucker Tensor Train Taylor Series}{Alger, Christierson, Chen and Ghattas}
\firstpageno{1}

\begin{document}

\title{Tucker Tensor Train Taylor Series}

\author{\name Nick Alger \email nalger@oden.utexas.edu \\
       \addr Oden Institute for Computational Engineering and Sciences\\
       The University of Texas at Austin\\
       Austin, TX 78712, USA
       \AND
       \name Blake Christierson \email bechristierson@utexas.edu \\
       \addr Oden Institute for Computational Engineering and Sciences\\
       The University of Texas at Austin\\
       Austin, TX 78712, USA
       \AND
       \name Peng Chen \email pchen402@gatech.edu \\
       \addr School of Computational Science and Engineering\\
       Georgia Institute of Technology\\
       Atlanta, GA 30308, USA
       \AND
       \name Omar Ghattas \email omar@oden.utexas.edu \\
       \addr Oden Institute for Computational Engineering and Sciences and\\
       Walker Department of Mechanical Engineering\\
       The University of Texas at Austin\\
       Austin, TX 78712, USA}

\editor{}

\maketitle

\begin{abstract}


Learning derivative-accurate surrogates for implicit simulators is a key challenge in scientific machine learning.
High-order Taylor surrogates have long been considered intractable in high dimensions, because the derivative tensors are enormous and accessible only through probes. We make such surrogates tractable with the Tucker tensor train Taylor series (T4S), a local surrogate that represents each derivative tensor of a truncated Taylor expansion as a Tucker tensor train.
T4S targets a different learning problem than global operator learning: rather than training from input-output pairs at many parameter values, it is trained from random directionally symmetric derivative probes at a single expansion point. Computing $m$ probes of the $k$th derivative requires only $O(mk)$ linearized solves sharing one operator, cheaper than the $O(m)$ nonlinear solves for function evaluations or $O(m\,2^k)$ linearized solves for asymmetric probes.
We develop derivative-informed dimension reduction, Riemannian Gauss-Newton and Cauchy SGD fitting algorithms with rank continuation, requiring little hyperparameter tuning, and fast sweeping routines for the Riemannian Jacobian. We prove representational guarantees under spectral decay of the input covariance. Experiments show that our methods match quasi-optimal T3-SVD accuracy on random tensors from probes alone, up to data-limited ranks, and recover high-order Taylor structure in Poisson PDE examples.

\end{abstract}

\begin{keywords}
    scientific machine learning; surrogate modeling; derivative-informed learning; tensor
networks; uncertainty quantification
\end{keywords}



\newpage



\section{Introduction}
\label{sec:intro}

Learning surrogate models for expensive simulators is a central challenge in scientific machine learning. In many scientific and engineering applications, a model evaluation requires solving an implicit system, such as a parameterized partial differential equation (PDE), rather than applying an explicit formula. These model evaluations are often embedded in outer-loop tasks, such as inverse problems, prediction and control under uncertainty, optimal experimental design, and digital-twin updating. Such tasks may require many model evaluations, and many of them also need gradients, Hessians, or higher derivatives. Even when derivatives are not used directly, derivative accuracy can determine the quality of local optimization steps, posterior approximations, proposal distributions, preconditioners, and uncertainty estimates.

This paper studies local, derivative-accurate surrogate learning for smooth parameter-to-output maps defined by such simulators. The input is a high- or infinite-dimensional parameter located near a fixed reference, and the output is a high- or infinite-dimensional quantity of interest depending on the parameter through the solution of a state equation. We work throughout with a covariance-whitened form of the map; precise definitions are given in \secref{sec:setting}. In the PDE setting, the parameter may be an initial condition, boundary condition, source, or material coefficient, and the output a field, boundary trace, flux, or other high-dimensional quantity. \figref{fig:overall_mapping_example} illustrates one such map.

\begin{figure}
\centering
\newcommand{\adddolfinplot}[5][]{
    \nextgroupplot[
        title={#3},
        #1
    ]
    \addplot graphics [xmin=0, xmax=1, ymin=0, ymax=1] {\main/figures/poisson_components/#2};
}

\begin{tikzpicture}

\begin{groupplot}[
    group style={
        group size=4 by 1,      
        horizontal sep=44pt     
    },
    scale only axis,
    enlargelimits=false,
    width=0.15\textwidth,    
    height=0.15\textwidth,
    title style={yshift=-6pt},      
    xtick=\empty, 
    yticklabel style={
        font=\footnotesize,
        /pgf/number format/fixed, 
        /pgf/number format/precision=0
    }
]

    \adddolfinplot[
        ytick=\empty
    ]{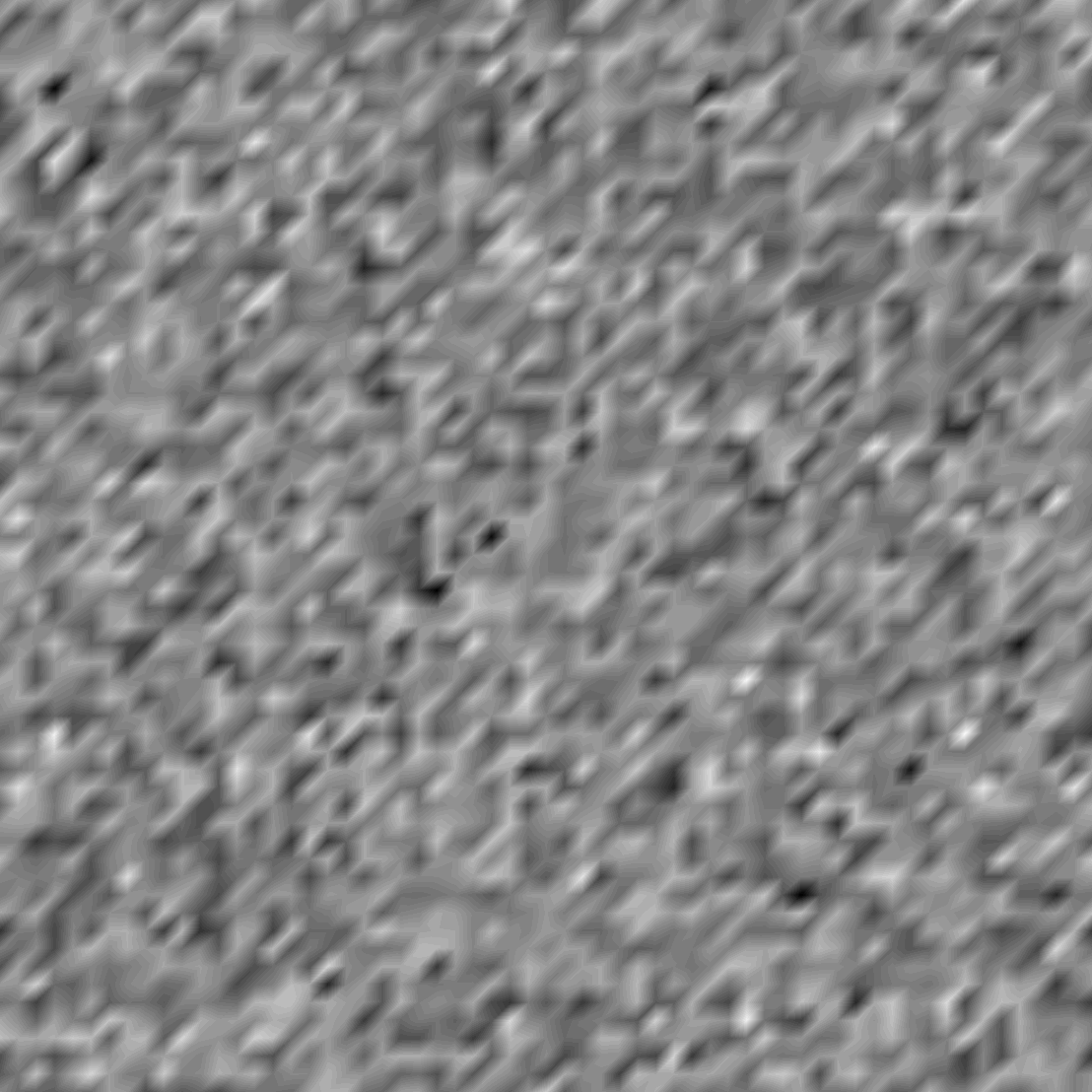}{$x$}{-3.4}{3.8}
    
    \adddolfinplot[ytick=\empty]{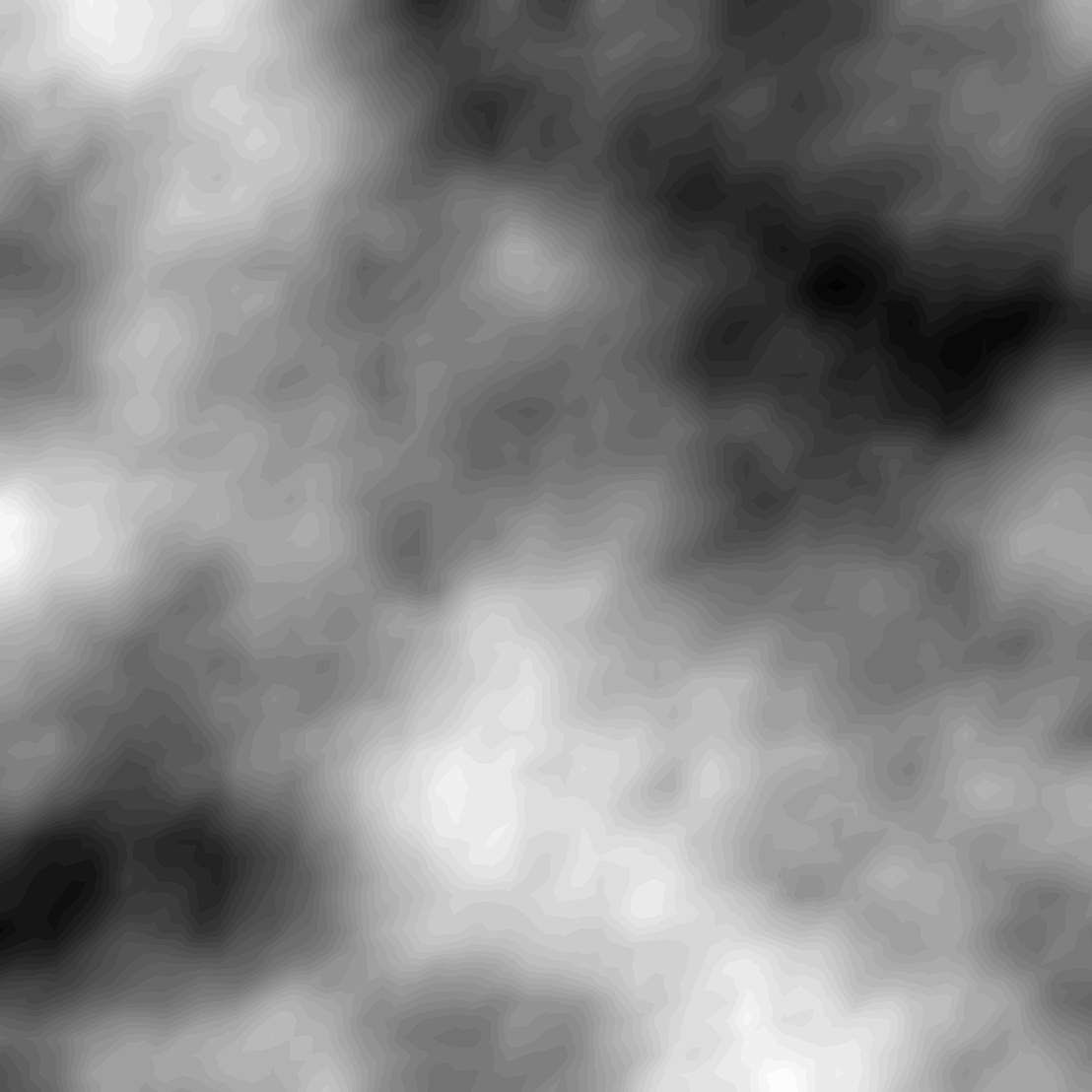}{$\theta$}{-1.76}{1.52}
    
    \adddolfinplot[
        ytick=\empty,
        clip=false,
        execute at end axis={
            \draw[black, thick] (-0.02, 0.98) rectangle (1.02, 1.02);
        }
    ]{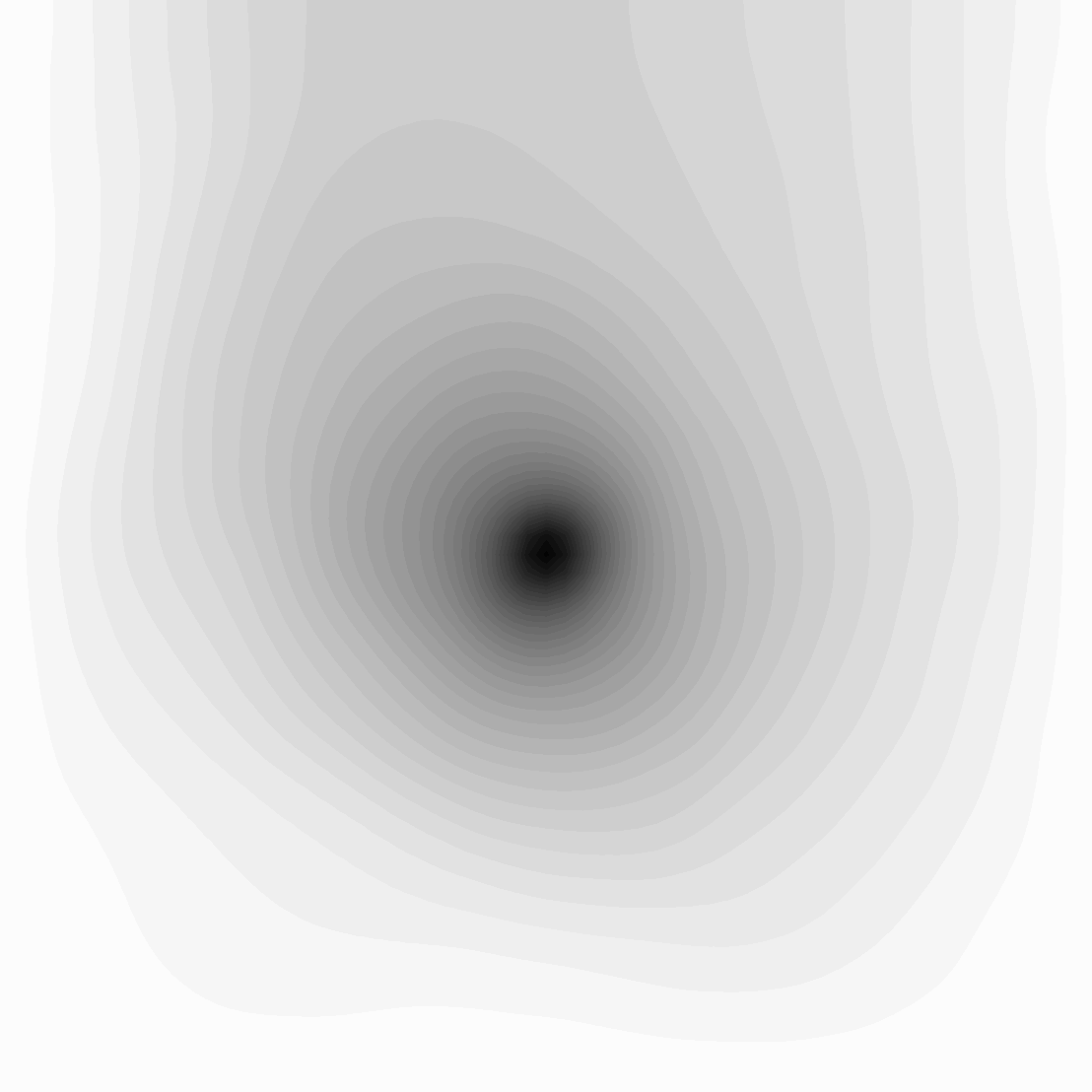}{$u$}{0}{0.78}

    \nextgroupplot[
        title={$q$},
        hide axis=false,
        colorbar=false,
        xmin=0,
        xmax=1,
        xtick={0, 1},
        minor xtick={0.25, 0.5, 0.75},
        xticklabel=\empty,
        ymin=0, ymax=0.21,
        ytick={0, 0.05, 0.10, 0.15, 0.20},
        yticklabel=\empty,
        xminorgrids=true,
        ymajorgrids=true,
        grid style={gray!20, solid},       
        minor grid style={gray!20, solid}
    ]
    \addplot [
        thick,
        black,
        mark=none
    ] table [x index=0, y index=1] {\main/figures/poisson_components/fig2a_1.txt};
    
\end{groupplot}

\node at ($(group c1r1.east)!0.5!(group c2r1.west)$) [yshift=-0.5em] {
    $\xrightarrow[\substack{\text{Evaluate} \\[0.25em] \theta_0 + Cx}]{}$
};
\node at ($(group c2r1.east)!0.5!(group c3r1.west)$) [yshift=-0.5em] {
    $\xrightarrow[\substack{\text{Solve} \\[0.25em] 0 = R(\theta, u)}]{}$
};
\node at ($(group c3r1.east)!0.5!(group c4r1.west)$) [yshift=-0.5em] {
    $\xrightarrow[\substack{\text{Evaluate} \\[0.25em] Q(\theta, u)}]{}$
};

\end{tikzpicture}
\caption{Illustration of the components of the covariance-whitened mapping $f:x \mapsto q$ for the example in \secref{sec:transmission}. The state equation is a Poisson PDE modeling steady state heat conduction from a source at the center of the domain $\Omega=[0,1]^2$. The parameter $\theta$ is a random spatially varying log conductivity coefficient, and the output $q$ is the trace of the temperature, $u$, along the top boundary of the domain. The noise function $x$ is the covariance-whitened version of $\theta$. We approximate $f$ using a surrogate model based on a Taylor series.
}
\label{fig:overall_mapping_example}
\end{figure}

The learning problem considered here differs from the standard operator-learning setting. Rather than training from input-output pairs at many parameter values, we train from randomized derivative tensor probes at a single expansion point. The learned object is a local Taylor surrogate whose derivative tensors are compressed in a low-rank format, fit to match the observed probes. Increasing the data then means probing more random directions and, when useful, higher derivative orders. This regime is natural for implicit simulators: a new function evaluation may require a nonlinear solve, whereas derivative probes require only linearized forward or adjoint solves that share a common coefficient matrix and differ only in right-hand sides.

Taylor series make natural local surrogates: their derivatives are built into the model, their approximation error can be analyzed, and their structure is explicit. For high-dimensional implicit maps, however, Taylor surrogates have traditionally been viewed as intractable. The derivative tensors grow exponentially with derivative order, so they cannot be stored or manipulated as dense arrays, and because $f$ depends on an implicitly defined state, individual tensor entries are not directly accessible. This makes conventional tensor approximation methods, built on sampling scattered entries, unsuitable. The tensors can be queried only through probes: contractions of the tensor with vectors in all but one index (see \secref{sec:intro_probes} and \figref{fig:general_tensor_actions}).

\begin{figure}
\centering
\begin{tikzpicture}[scale=0.75, every node/.style={scale=0.75}]
    \node[draw, rounded corners, minimum size=1.0cm, inner sep=0.0, line width=0.3mm] (Ta) at (0.0,0.0) {$A$};

    \node (u1a) at (-1.25,0.0) {};

    \node[draw, rounded corners, minimum size=0.6cm, inner sep=0.0, line width=0.3mm] (u2a) at (0.0,-1.0) {$x_2$};

    \node[draw, rounded corners, minimum size=0.6cm, inner sep=0.0, line width=0.3mm] (u3a) at (1.0,0.0) {$x_3$};

    \node[draw, rounded corners, minimum size=0.6cm, inner sep=0.0, line width=0.3mm] (u4a) at (0.0,1.0) {$x_4$};

    \draw[line width=0.3mm] (Ta) -- (u1a);
    \draw[line width=0.3mm] (Ta) -- (u2a);
    \draw[line width=0.3mm] (Ta) -- (u3a);
    \draw[line width=0.3mm] (Ta) -- (u4a);

    \node (y1) at (0.0,-2.0) {$\psi_1$};

    \node[draw, rounded corners, minimum size=1.0cm, inner sep=0.0, line width=0.3mm] (Tb) at (4+0.0,0.0) {$A$};

    \node[draw, rounded corners, minimum size=0.6cm, inner sep=0.0, line width=0.3mm] (u1b) at (4+-1.0,0.0) {$x_1$};

    \node (u2b) at (4+0.0,-1.25) {};

    \node[draw, rounded corners, minimum size=0.6cm, inner sep=0.0, line width=0.3mm] (u3b) at (4+1.0,0.0) {$x_3$};

    \node[draw, rounded corners, minimum size=0.6cm, inner sep=0.0, line width=0.3mm] (u4b) at (4+0.0,1.0) {$x_4$};

    \draw[line width=0.3mm] (Tb) -- (u1b);
    \draw[line width=0.3mm] (Tb) -- (u2b);
    \draw[line width=0.3mm] (Tb) -- (u3b);
    \draw[line width=0.3mm] (Tb) -- (u4b);

    \node (y2) at (4.0,-2.0) {$\psi_2$};

    \node[draw, rounded corners, minimum size=1.0cm, inner sep=0.0, line width=0.3mm] (Tc) at (8+0.0,0.0) {$A$};

    \node[draw, rounded corners, minimum size=0.6cm, inner sep=0.0, line width=0.3mm] (u1c) at (8+-1.0,0.0) {$x_1$};

    \node[draw, rounded corners, minimum size=0.6cm, inner sep=0.0, line width=0.3mm] (u2c) at (8+0.0,-1.0) {$x_2$};

    \node (u3c) at (8+1.25,0.0) {};

    \node[draw, rounded corners, minimum size=0.6cm, inner sep=0.0, line width=0.3mm] (u4c) at (8+0.0,1.0) {$x_4$};

    \draw[line width=0.3mm] (Tc) -- (u1c);
    \draw[line width=0.3mm] (Tc) -- (u2c);
    \draw[line width=0.3mm] (Tc) -- (u3c);
    \draw[line width=0.3mm] (Tc) -- (u4c);

    \node (y3) at (8.0,-2.0) {$\psi_3$};

    \node[draw, rounded corners, minimum size=1.0cm, inner sep=0.0, line width=0.3mm] (Td) at (12+0.0,0.0) {$A$};

    \node[draw, rounded corners, minimum size=0.6cm, inner sep=0.0, line width=0.3mm] (u1d) at (12+-1.0,0.0) {$x_1$};

    \node[draw, rounded corners, minimum size=0.6cm, inner sep=0.0, line width=0.3mm] (u2d) at (12+0.0,-1.0) {$x_2$};

    \node[draw, rounded corners, minimum size=0.6cm, inner sep=0.0, line width=0.3mm] (u3d) at (12+1.0,0.0) {$x_3$};

    \node (u4d) at (12+0.0,1.25) {};

    \draw[line width=0.3mm] (Td) -- (u1d);
    \draw[line width=0.3mm] (Td) -- (u2d);
    \draw[line width=0.3mm] (Td) -- (u3d);
    \draw[line width=0.3mm] (Td) -- (u4d);

    \node (y4) at (12.0,-2.0) {$\psi_4$};

\end{tikzpicture}
\caption{Probes $\psi_1, \psi_2, \psi_3, \psi_4$ of tensor $A$ by vectors $x_1, x_2, x_3, x_4$.}
\label{fig:general_tensor_actions}
\end{figure}

We address these obstacles with the Tucker tensor train Taylor series (T4S), defined in \secref{sec:t4s_model}. T4S represents each covariance-whitened derivative tensor in a truncated Taylor series as a Tucker tensor train, a structured low-rank tensor network defined in \secref{sec:t3} and illustrated in \figref{fig:tucker_tensor_train}. The Tucker factors decouple large physical discretization dimensions from smaller internal ranks, while the tensor train cores reduce the storage complexity from exponential to linear in derivative order. We fit the Tucker tensor trains directly from directionally symmetric derivative tensor probes---those in which all probing vectors are identical---which reduces the number of linearized state and adjoint solves per probe from exponential to linear in derivative order. An initial derivative-informed dimension reduction step compresses the input and output spaces to moderate dimensions before the reduced derivative tensors are fit using Riemannian manifold optimization with rank continuation.

\begin{figure}
\centering
\begin{tikzpicture}[scale=0.7, every node/.style={scale=0.6}]
    \node[draw, rounded corners, minimum size=1.75cm, inner sep=2.0, line width=0.3mm] (t) at (0-0.3,0) {\Large $D^4 \ptoW(0)$};
    \node (tout1) at (0.882*1.25-0.3,-1.214*1.25) {};
    \node (tout2) at (-0.882*1.25-0.3,-1.214*1.25) {};
    \node (tout3) at (1.427*1.25-0.3,0.464*1.25) {};
    \node (tout4) at (-1.427*1.25-0.3,0.464*1.25) {};
    \node (tout5) at (0*1.25-0.3,1.5*1.25) {};
			
    \draw[line width=0.3mm] (t) -- (tout1);
    \draw[line width=0.3mm] (t) -- (tout2);
    \draw[line width=0.3mm] (t) -- (tout3);
    \draw[line width=0.3mm] (t) -- (tout4);
    \draw[line width=0.3mm] (t) -- (tout5);
			
    \node (equals) at (2.25,0) {\Large $\approx$};

    \node[draw, rounded corners, minimum size=1.25cm, line width=0.3mm] (c) at (5.0,0) {\large $\vb{K}$};
    \node[draw, rounded corners, minimum size=0.8cm, inner sep=0.0, line width=0.3mm] (Q1) at (3.811179354631058, 0.38627124296868437) {$\vb{U}_{4,1}$};
    \node[draw, rounded corners, minimum size=0.8cm, inner sep=0.0, line width=0.3mm] (Q2) at (4.265268434634408, -1.011271242968684) {$\vb{U}_{4,2}$};
    \node[draw, rounded corners, minimum size=0.8cm, inner sep=0.0, line width=0.3mm] (Q3) at (5.734731565365591, -1.0112712429686845) {$\vb{U}_{4,3}$};
    \node[draw, rounded corners, minimum size=0.8cm, inner sep=0.0, line width=0.3mm] (Q4) at (6.188820645368942, 0.386271242968684) {$\vb{U}_{4,1}$};
    \node[draw, rounded corners, minimum size=0.8cm, inner sep=0.0, line width=0.3mm] (Q5) at (5.0, 1.25) {$\vb{U}_{4,5}$};

    \draw[line width=0.3mm] (c) -- (Q1);
    \draw[line width=0.3mm] (c) -- (Q2);
    \draw[line width=0.3mm] (c) -- (Q3);
    \draw[line width=0.3mm] (c) -- (Q4);
    \draw[line width=0.3mm] (c) -- (Q5);

    \node (Q1e) at (3.002781315780178, 0.6489356881873898) {};
    \node (Q2e) at (3.765650970185806, -1.6989356881873894) {};
    \node (Q3e) at (6.234349029814194, -1.69893568818739) {};
    \node (Q4e) at (6.997218684219822, 0.6489356881873891) {};
    \node (Q5e) at (5.000000000000001, 2.1) {};

    \draw[line width=0.3mm] (Q1e) -- (Q1);
    \draw[line width=0.3mm] (Q2e) -- (Q2);
    \draw[line width=0.3mm] (Q3e) -- (Q3);
    \draw[line width=0.3mm] (Q4e) -- (Q4);
    \draw[line width=0.3mm] (Q5e) -- (Q5);

    \node (equals) at (7.75,0) {\Large $\approx$};

    \node[draw, rounded corners, minimum size=0.5cm, inner sep=0.0, line width=0.3mm] (oneL) at (7.75+1.25,0.5) {$1$};
    \node[draw, rounded corners, minimum size=0.5cm, inner sep=0.0, line width=0.3mm] (oneR) at (15.25+1.25,0.5) {$1$};
    
    \node[draw, rounded corners, minimum size=0.9cm, inner sep=0.5, line width=0.3mm] (G1) at (9.0+1.25,0.5) {$\vb{G}_{4,1}$};
    \node[draw, rounded corners, minimum size=0.9cm, inner sep=0.5, line width=0.3mm] (G2) at (10.25+1.25,0.5) {$\vb{G}_{4,2}$};
    \node[draw, rounded corners, minimum size=0.9cm, inner sep=0.5, line width=0.3mm] (G3) at (11.5+1.25,0.5) {$\vb{G}_{4,3}$};
    \node[draw, rounded corners, minimum size=0.9cm, inner sep=0.5, line width=0.3mm] (G4) at (12.75+1.25,0.5) {$\vb{G}_{4,4}$};
    \node[draw, rounded corners, minimum size=0.9cm, inner sep=0.5, line width=0.3mm] (G5) at (14.0+1.25,0.5) {$\vb{G}_{4,5}$};

    \draw[line width=0.3mm] (oneL) -- (G1) -- (G2) -- (G3) -- (G4) -- (G5) -- (oneR);
    
    \node[draw, rounded corners, minimum size=0.8cm, inner sep=0.0, line width=0.3mm] (Q1b) at (9.0+1.25,-0.5) {$\vb{U}_{4,1}$};
    \node[draw, rounded corners, minimum size=0.8cm, inner sep=0.0, line width=0.3mm] (Q2b) at (10.25+1.25,-0.5) {$\vb{U}_{4,2}$};
    \node[draw, rounded corners, minimum size=0.8cm, inner sep=0.0, line width=0.3mm] (Q3b) at (11.5+1.25,-0.5) {$\vb{U}_{4,3}$};
    \node[draw, rounded corners, minimum size=0.8cm, inner sep=0.0, line width=0.3mm] (Q4b) at (12.75+1.25,-0.5) {$\vb{U}_{4,4}$};
    \node[draw, rounded corners, minimum size=0.8cm, inner sep=0.0, line width=0.3mm] (Q5b) at (14.0+1.25,-0.5) {$\vb{U}_{4,5}$};
			
    \draw[line width=0.3mm] (G1) -- (Q1b);
    \draw[line width=0.3mm] (G2) -- (Q2b);
    \draw[line width=0.3mm] (G3) -- (Q3b);
    \draw[line width=0.3mm] (G4) -- (Q4b);
    \draw[line width=0.3mm] (G5) -- (Q5b);

    \node (Q1c) at (9.0+1.25,-1.25) {};
    \node (Q2c) at (10.25+1.25,-1.25) {};
    \node (Q3c) at (11.5+1.25,-1.25) {};
    \node (Q4c) at (12.75+1.25,-1.25) {};
    \node (Q5c) at (14.0+1.25,-1.25) {};

    \draw[line width=0.3mm] (Q1b) -- (Q1c);
    \draw[line width=0.3mm] (Q2b) -- (Q2c);
    \draw[line width=0.3mm] (Q3b) -- (Q3c);
    \draw[line width=0.3mm] (Q4b) -- (Q4c);
    \draw[line width=0.3mm] (Q5b) -- (Q5c);
\end{tikzpicture}
\caption{Approximation of the $4$th derivative tensor, $D^4 f(0)$, (left) by a Tucker decomposition (middle) and a Tucker tensor train (right), shown in graphical tensor notation. Note that the $4$th derivative tensor has $5$ indices: one for each input perturbation, and one for the output.}
\label{fig:tucker_tensor_train}
\end{figure}

T4S is complementary to global operator-learning methods such as neural operators \citep{kovachki.li.ea.2023}, including Fourier neural operators \citep{Li2021fourier}, DeepONets \citep{lu2021learning}, derivative-informed neural operators (DINO) \citep{cao2025derivative,luo2025efficient,olearyroseberry.chen.ea.2024,o2022derivative,qiu2024derivative}, and related neural surrogate models. Those methods learn global input-output maps from many sampled function evaluations at different parameter values, and can be highly effective when such data are available; approximation-theoretic results exist for several architectures \citep{kovachki.lanthaler.ea.2021,lanthaler.mishra.ea.2022}. T4S targets a different regime: local learning from derivative information at one expansion point, with the goal of a mathematically structured surrogate whose derivatives at the expansion point are fit directly. DINO and the derivative-enhanced DeepONet of \cite{qiu2024derivative}, for instance, train operator surrogates using first-order derivative information together with parameter dimension reduction; T4S carries this idea to derivatives of all orders up to $k$ in a structured low-rank tensor format. 

Relatedly, classical local surrogates in large-scale inverse problems and uncertainty quantification rely on low-rank approximations of the Jacobian of $f$ or the Hessian of associated scalar-valued functions \citep{bashir2008hessian,bui2013computational,chen2019hessian,cui2016dimension,flath2011fast,IsaacPetraStadlerEtAl15short,petra2014computational,spantini2015optimal,wu2021efficient}. Because $f$ is vector-valued, its $k$th derivative tensor has $k+1$ modes (Section 3.1): the linear ($k=1$) term is a Jacobian, the analogue of the Gauss-Newton Hessian used in scalar-valued function approximations. The $k=2$ term is already a third-order tensor, rarely formed with existing methods, and $k \ge 3$ lies beyond them. The Tucker tensor train format of T4S reduces to a low-rank approximation of the Jacobian at $k=1$ and a Tucker decomposition of the third-order tensor at $k=2$. 

T4S is also complementary to sparse polynomial and sparse-grid methods \citep{adcock2022sparse,babuvska2007stochastic,bonizzoni2020regularity,chen2015new,chernov2013first,cohen2015approximation,kunoth2016sparse,xiu2002wiener}, transport maps \citep{brennan2020greedy,marzouk2016sampling,papamakarios2016fast}, and low-rank tensor and hybrid low-rank/sparse-polynomial methods \citep{bachmayr2023low,bachmayr2025sparse,khoromskij2018tensor}. Many of these have useful approximation theory but rely on function evaluations at scattered input points, or on access patterns poorly suited to implicit derivative tensors; T4S instead exploits derivative probes, a data source natural for implicit simulators and valuable for derivative-dependent outer-loop tasks. The closest relative on the learning side is the tensor train polynomial model of \cite{novikov.trofimov.ea.2018}, developed for supervised learning, which represents the $2 \times \dots \times 2$ coefficient tensor of an all-order global polynomial predictor over binary features as a tensor train and fits it to labeled examples by stochastic Riemannian gradient descent.

This paper builds on our previous tensor train Taylor series work \citep{alger.chen.ea.2020}, which demonstrated that high-order Taylor surrogates can be useful for PDE-based parameter-to-output maps. The present work changes the learning and computational picture in four ways. First, we use directionally symmetric derivative probes, which are far cheaper than the asymmetric probes used previously. Second, we replace plain tensor trains with Tucker tensor trains, improving conditioning and separating large physical dimensions from internal ranks. Third, we fit the tensors directly from probe data using Riemannian optimization and rank continuation, rather than using a complicated multi-stage construction. Fourth, we prove representational approximation guarantees for covariance-whitened derivative tensors, showing how spectral decay of the input covariance leads to explicit Tucker and tensor train rank-error tradeoffs.

The proposed approach is local, by design. A Taylor surrogate is appropriate when the map is sufficiently smooth near the expansion point and the relevant inputs remain in a neighborhood where the truncated expansion is accurate. T4S is therefore best suited to local uncertainty quantification, posterior approximation, design, and optimization concentrated near a nominal parameter or inferred state---the local forms of the outer-loop tasks noted at the outset, such as deterministic and Bayesian inverse problems \citep{BanksKunisch89,cao2023residual,KaipioSomersalo05,Stuart10a,Tarantola05,Vogel02}, prediction and PDE-constrained control under uncertainty \citep{AlexanderianPetraStadlerEtAl17,IsaacPetraStadlerEtAl15short}, optimal design and optimal experimental design \citep{AlexanderianNicholsonPetra24,du2023learning,go2025accurate,go2025sequential,gong2026shape,o2022learning,Pironneau84,wu2023large}, and digital-twin updating \citep[pages 61--68]{of2024foundational}. It is not intended to replace global operator learning. If global coverage is needed, one possible extension is to combine multiple local T4S models in a mixture or adaptive multipoint construction, for example by a higher-order generalization of the method in \cite{LuoChenChenEtAl2024}. The method also relies on low-rank structure in covariance-whitened derivative tensors: when the input covariance spectrum decays slowly and the derivative tensors have high intrinsic rank, the required Tucker and tensor train ranks may become large.

\subsection{Contributions}

The main contribution of this paper is a derivative-probe learning framework for constructing local, derivative-accurate surrogates of expensive implicit maps. Within this framework, we make the following specific contributions:

\begin{description}
\item[Tucker tensor train Taylor series surrogate model.] We introduce T4S, a local Taylor surrogate in which each derivative tensor is represented as a Tucker tensor train. The Tucker factors decouple the large external index dimension $N$ from the internal tensor train rank structure, replacing the $O(dNr^2)$ storage of full physical-dimension tensor train cores by $O(dNr+dr^3)$ storage in the equal-rank setting and avoiding the ill-conditioning that arises from over-parameterization in the full tensor train format.
\item[Derivative-probe training data.] We formulate surrogate construction as a learning problem from random derivative tensor probes at a single expansion point, and give algorithms for computing these probes efficiently. The algorithms exploit directional symmetry and amortize the cost of probing different derivative orders with the same input vectors. Directionally symmetric probes align with Taylor series evaluation and reduce the number of linearized state and adjoint solves per probe from exponential to linear in derivative order.
\item[Derivative-informed dimension reduction.] We develop a dimension reduction step that constructs compact input and output bases from higher-order derivative information at the expansion point. This reduces the infinite- or very-high-dimensional learning problem to a moderate-dimensional tensor fitting problem while preserving directions important for the local Taylor approximation.
\item[Riemannian fitting algorithms with rank continuation.] We identify trust-region Riemannian manifold Gauss-Newton (TR-RMGN) and Riemannian manifold Cauchy SGD (MC-SGD) as effective methods for fitting Tucker tensor trains from probe data. MC-SGD uses Cauchy step sizes, computing directional curvature along the gradient by a Hessian-vector product to set the step size automatically. We also introduce a rank continuation scheme based on singular values of tensor unfoldings and matricizations, which incrementally grows Tucker and tensor train ranks during optimization and reduces sensitivity to rank choice. The methods require little to no hyperparameter tuning, an advantage that is important under rank continuation, which would otherwise require retuning at each rank change.
\item[Fast sweeping algorithms for the Riemannian Jacobian.] Riemannian optimization requires repeated applications of the Riemannian Jacobian of the probe residual and its transpose. We develop fast sweeping algorithms for these operations, enabling efficient gradients and Gauss-Newton Hessian-vector products without forming large dense Jacobians. Simplifications of these algorithms may be used to compute gradients and Hessian-vector products in the Euclidean (non-Riemannian) context.
\item[Representational approximation guarantees.] We prove that each covariance-whitened derivative tensor can be approximated by a Tucker tensor train with quantified ranks and errors (\thmref{thm:ttt_main_thm}). Under power-law spectral decay, \corref{cor:error_vs_rank_poly_decay} gives explicit rank-error tradeoffs in the large-rank asymptotic regime. These guarantees use only spectral decay of the input covariance operator and are therefore conservative; additional low-rank structure of derivative tensors can improve performance further.
\item[Numerical evidence.] We test the tensor fitting algorithms on random tensor benchmarks and 
construct T4S surrogates for two PDE-based parameter-to-output maps: a linear\footnote{Linear in the source but nonlinear in the material coefficient, which is the relevant quantity for our approximation.} Poisson PDE with a Gaussian parameter and boundary-value observable, and a nonlinear one with a logistic parameter and a boundary-flux observable.
The experiments show that TR-RMGN and MC-SGD track quasi-optimal T3-SVD accuracy up to data-limited ranks, that rank continuation is important for stable fitting, and that the learned Taylor surrogates recover higher-order Taylor structure as the number of derivative probes increases. These are validation studies of the learning method 
rather than demonstrations of a specific outer-loop application, which we leave to future work.
\end{description}

\tableofcontents

\section{Problem setting and T4S model}
\label{sec:setting}

In this paper, we are concerned with the following learning problem:

\begin{framed}
\textbf{Derivative-probe surrogate learning problem.}
Let $f : X \rightarrow Y$ be an expensive smooth map between high- or infinite-dimensional spaces, defined implicitly by a simulator, and let $x = 0$ be an expansion point. For derivative orders
$j = 1, \dots, k$,
suppose we can query forward and reverse probes of $D^jf(0)$ but cannot efficiently access dense derivative-tensor entries. Given a computational budget for such probes, construct a local surrogate $f_k$ that can be evaluated cheaply and whose quality is assessed by prediction accuracy near the expansion point, derivative-probe accuracy on held-out directions, and performance in downstream derivative-dependent tasks.
\end{framed}

The remainder of this section makes the elements of this problem precise. \secref{subsec:setting} specifies the target map $f$, \secref{sec:intro_probes} defines the probe data, and \secref{sec:t4s_model} defines our proposed T4S surrogate $f_k$.

\subsection{Setting}
\label{subsec:setting}

We seek to approximate a map $\ptoW: X \rightarrow Y$ of the form
\begin{equation*}
    f(x) = q(\theta_0 + \sqrtcov x).
\end{equation*}
Here, $X$ and $Y$ are Hilbert spaces, $\theta_0 \in X$ is a point of interest, $\sqrtcov:X \rightarrow X$ is a self-adjoint positive-semidefinite Hilbert-Schmidt operator, and $q:X \rightarrow Y$,
\begin{equation}
\label{eq:f_of_m_defn}
q(\theta) = Q(\theta, u(\theta)),
\end{equation}
is a mapping that depends on a parameter $\theta$ implicitly through a state variable $u=u(\param)$ that solves a \emph{state equation}
\begin{equation}
R(\theta,u) = 0. \label{eq:pde_constraint}
\end{equation}
We call $q$ the \emph{parameter-to-output} map and $f$ the \emph{covariance-whitened} parameter-to-output map; $\sqrtcov$ plays the role of a covariance square root, so that in finite-dimensional discretizations, if $x$ is a white noise perturbation and $\theta=\theta_0+\sqrtcov x$, the corresponding parameter perturbation has covariance $\sqrtcov^2$. We assume that the state $u$ resides in a Banach space, the residual $R$ maps into a Banach space, $Q$ and $R$ are smooth, \eqref{eq:pde_constraint} is uniquely solvable for $u$ for all $\theta$ in a neighborhood of $\theta_0$, and the linearized state operator $\partial_u R(\theta_0, u_0)$ is continuously invertible, where $u_0 := u(\theta_0)$. We write $u(\theta)$ for the solution of \eqref{eq:pde_constraint} at a given $\theta$.

We are particularly interested in the setting where $R(\theta,u)=0$ is a PDE or system of PDEs, $\theta$ is a high-dimensional random vector or infinite-dimensional random field with (exact or approximate) mean $\theta_0$ and covariance $C^2$, $u$ is the PDE solution, and $q$ is a high- or infinite-dimensional quantity of interest. For instance, $\theta$ could be the initial condition, boundary conditions, distributed source, or material coefficient in the PDE, and $q$ could be the trace of $u$ or its normal derivative along a boundary. A typical $\sqrtcov^2$ in this setting is a covariance operator in the Mat\'ern class (\citealp[Section 3.4.1]{diggle1998model}; \citealp{lindgren2011explicit,Roininen14}). An illustration of such a map $f$ is shown in \figref{fig:overall_mapping_example}.

\subsection{Tensor probes as training data}
\label{sec:intro_probes}

Our learning problem is complicated by the fact that array entries of discretized derivative tensors are not easily accessible, a consequence of the implicit dependence of $u(\theta)$ on $\theta$. Instead, derivative tensors are accessible only indirectly, by \emph{probing}\footnote{Note: we previously \citep{alger.chen.ea.2020} used the term ``tensor actions'' for tensor probes. We break from this prior terminology because tensor probes are not group actions.} the tensor.

\begin{definition}[Tensor probes]
\label{def:tensor_actions}
Let $A:X_1 \times \dots \times X_j \rightarrow \mathbb{R}$ be a scalar-valued multilinear function, and let $(x_i)_{i=1}^j$ be a collection of vectors $x_i \in X_i$.
The $i$th \emph{probe} of $A$ by $x_1,\dots,x_{i-1}, x_{i+1}, \dots, x_{j}$ is the linear functional $\psi_i:X_i \rightarrow \mathbb{R}$,
\begin{equation*}
\psi_i(v) = A(x_1, \dots,x_{i-1}, v, x_{i+1}, \dots, x_j).
\end{equation*}
We say that  $(\psi_i)_{i=1}^j$ are the \emph{probes} of $A$ by $(x_i)_{i=1}^j$. 
Probes are \emph{directionally symmetric} if $X_1=\dots=X_j$ and $x_1=\dots=x_j$.
\end{definition}

Tensor probing is illustrated in \figref{fig:general_tensor_actions}.
The definition extends to vector-valued multilinear functions via the standard scalar/vector correspondence by currying (see \secref{sec:tensor_correspondences}).
For the vector-valued derivative tensor $D^j f(0): X^j \rightarrow Y$, we use two types of probes. A \emph{forward probe} evaluates the tensor on $j$ input directions,
\begin{equation*}
y = D^j f(0)(x_1, \dots, x_j) \in Y.
\end{equation*}
A \emph{reverse probe} pairs the output with a functional $\omega \in Y'$ and leaves one input slot open,
\begin{equation*}
\psi(v) = \omega\!\left(D^j f(0)(x_1, \dots, x_{j-1}, v)\right).
\end{equation*}
In this paper the input directions are usually directionally symmetric: $x_1 = \dots = x_j$ for forward probes and $x_1 = \dots = x_{j-1}$ for reverse probes. Here ``directionally symmetric'' refers only to the repeated input directions; the output mode is either evaluated directly in a forward probe or paired with an independent output functional $\omega$ in a reverse probe. Forward and reverse probes are defined precisely in \secref{sec:tensor_correspondences}, and their computation for derivatives of implicitly defined maps is detailed in \secref{sec:derivative_actions}.

In finite dimensions, multilinear functions correspond to arrays, and the $i$th probe is the vector obtained by contracting the array with vectors in all but the $i$th index. An array entry can be recovered by probing with appropriately chosen unit basis vectors and selecting one entry of the result, but this is inefficient when no other entries of that probe are needed. Conventional tensor approximation methods, which sample scattered tensor entries, are therefore unsuitable for tensors accessible only through probing.

Probing derivatives of an implicitly defined map requires solving many linearized versions of the state equation \eqref{eq:pde_constraint}. The number of solves, and the cost of assembling their right-hand sides, depends on the repetition (directional symmetry) in the probing vectors $x_i$: for an order-$j$ derivative it grows exponentially in $j$ for directionally asymmetric probes (no repetition) but only linearly for directionally symmetric probes (total repetition), with precise counts in \secref{sec:cost_vs_symmetry}.
Because Fr\'echet derivatives are symmetric in their input arguments, their action on repeated directions determines the full symmetric multilinear map through polarization; finite random symmetric probes therefore provide a natural training-data model for fitting derivative tensors.

\subsection{T4S model}
\label{sec:t4s_model}

We build a surrogate model of $f$ based on a truncated Taylor series:
\begin{equation}
    f(x) \approx f(0) + Df(0)x + \frac{1}{2}D^2f(0)x^2 + \dots + \frac{1}{k!}D^kf(0)x^k, \label{eq:ptoW taylor}
\end{equation}
where
$
    D^jf(0): X^j \rightarrow Y
$
is the $j$th Fr\'{e}chet derivative of $f$ with respect to $x$, evaluated at zero. 
We write
$$
D^jf(0)x^j := D^jf(0)\underbrace{(x, \dots, x)}_{j\text{ copies of }x}
$$ 
to denote the application of the multilinear function $D^jf(0)$ to $j$ copies of $x$.

Such Taylor series approximations are traditionally considered intractable in high dimensions because the derivative tensors are extremely large: after discretization of $X$ and $Y$ by, e.g., finite elements, $D^jf(0)$ becomes a $(j+1)$-dimensional array with shape $N \times \dots \times N \times M$, where $N$ and $M$ are the numbers of degrees of freedom in the discretizations of $X$ and $Y$ (e.g., the number of gridpoints in a mesh). 
To make Taylor series tractable, we approximate the derivative tensors with a type of tensor network called a \emph{Tucker tensor train}, equivalently termed an \emph{extended tensor train}. A Tucker tensor train consists of a Tucker decomposition (\citealp{de2000multilinear,tucker1966some}; \citealp[Section 4]{kolda.bader.2009}) composed with a tensor train (TT) decomposition \citep{fannes1992finitely,oseledets.2011,oseledets2009breaking} of the central Tucker core, illustrated using graphical tensor notation\footnote{For an introduction to tensor networks and graphical tensor notation, see, e.g., \cite{bridgeman2017hand,cichocki2016tensor}.} in \figref{fig:tucker_tensor_train}. When its ranks are small, a Tucker tensor train requires low storage and permits fast multilinear algebra operations. The Tucker bases separate the large physical discretization dimensions from the smaller internal ranks, while the tensor train cores control complexity with respect to derivative order; the format, its relationship to other low-rank formats, and its manifold structure are detailed in \secref{sec:t3}, where we also define Tucker tensor trains and their ranks formally (\defref{def:t3}).

\begin{definition}[T4S model]
\label{def:tttt}
A $k$th-order Tucker tensor train Taylor series (T4S) is an approximation $f_k \approx f$ of the form:
\begin{equation}
    f_k(x) = f(0) +  
    \sum_{j=1}^k \frac{1}{j!}  T_j\!\underbrace{\left(x, \dots, x\right)}_{j\text{ copies of }x}, \label{eq:taylor_model_infinite}
\end{equation}
where $T_j$ is the vector-valued multilinear function corresponding to a Tucker tensor train
\begin{equation*}
\tuple{T}_j = \left((U_{j,i})_{i=1}^{j+1},(G_{j,i})_{i=1}^{j+1}\right).
\end{equation*}
Here, $U_{j,i}$ are the Tucker basis cores and $G_{j,i}$ are the tensor train cores which define the Tucker tensor train. The T4S model is specified by the tuple $\left(f(0), \tuple{T}_1, \dots, \tuple{T}_k\right)$.
\end{definition}

If $T_j = D^j f(0)$ for all $j \le k$, then $f_k$ matches $f$ and all of its derivatives through order $k$ at the expansion point. In practice, $T_j$ is a learned low-rank approximation of $D^j f(0)$, so the derivative accuracy of the T4S model is affected by low-rank approximation error.
The derivative tensor $D^j f(0)$ is symmetric in its $j$ input arguments, but the Tucker tensor train parameterization in \defref{def:tttt} does not explicitly enforce this symmetry. This is intentional: the Taylor polynomial depends only on the symmetrized input action $T_j(x, \dots, x)$, and the directionally symmetric probe data used here identify this symmetric action. Enforcing shared input factors or explicitly symmetric cores is a possible refinement, but is not required for evaluating the learned Taylor surrogate.

\defref{def:tttt} describes the full-space model. The practical construction in \secref{sec:method} first builds reduced input and output bases and fits Tucker tensor trains for the derivatives of the reduced map. The reduced tensors may be viewed either as a compressed T4S model or as full-space tensors when composed with the learned input and output bases.

Beyond the prior work \citep{alger.chen.ea.2020} discussed in the introduction, the T4S model is connected to \cite{chkifa2013sparse}, which constructs a sparse polynomial approximation for stochastic PDEs based on Taylor approximation; \cite{kressner2015low}, which forms tensor train approximations of higher-order correlations of Gaussian random fields in a truncated KL basis; and \cite{BONIZZONI2016349,bonizzoni2013analysis}, which form tensor train approximations of higher-order moments for stochastic PDEs by expanding the PDE in a Taylor series.

Building the T4S model requires selecting the Taylor series order $k$.
In a Taylor expansion, contributions typically decrease with increasing order, yielding diminishing returns from higher-order terms; moreover, the low-order terms are themselves subject to low-rank approximation error, which can overwhelm any gains from including higher-order terms. While there is no universal best order,
our practical experience suggests that a modest order of $k=3$ or $k=4$ (corresponding to derivative tensors with $4$ or $5$ total modes, including the output mode) often offers a favorable balance between accuracy and computational expense.


\section{Preliminaries}
\label{sec:tensor_background_real}

This section summarizes required definitions and background on multilinear functions, multidimensional arrays, and the correspondences between them. We also define tensor trains and Tucker tensor trains.
Following standard usage in machine learning and computational science, we use the term \emph{tensor} as a general term referring to either a multilinear function (scalar- or vector-valued) or its corresponding array.
Readers familiar with this material may skip this section.
As a reader's guide to notation, throughout the paper blackboard-bold symbols (e.g., $\tuple{T}$) denote tensor-network core representations, standard symbols (e.g., $T$) denote the multilinear functions they represent, and bold symbols (e.g., $\vb{T}$) denote finite-dimensional arrays. We use bracket notation for array entries: $\vb{X}[i_1, \dots, i_d]$ denotes the $(i_1, \dots, i_d)$-entry of a $d$-dimensional array $\vb{X}$. The order of a derivative tensor $D^j f(0)$ is $j+1$ when the output mode is included.



\subsection{Tensor background}
\label{sec:tensor_correspondences}


\paragraph{Scalar- and vector-valued multilinear functions} Scalar- and vector-valued multilinear functions are closely related and can be converted into one another. A scalar-valued multilinear function $A:X_1 \times \dots \times X_k \times Z \to \mathbb{R}$ corresponds to the vector-valued multilinear function
\begin{equation*}
\widetilde{A}:X_1 \times \dots \times X_k \to Z', \qquad \widetilde{A}(x_1, \dots, x_k)(z) := A(x_1, \dots, x_k, z).
\end{equation*}
A vector-valued multilinear function $B:X_1 \times \dots \times X_k \to Y$ corresponds to the scalar-valued multilinear function 
\begin{equation*}
\widetilde{B}:X_1 \times \dots \times X_k \times Y' \to \mathbb{R}, \qquad \widetilde{B}(x_1, \dots, x_k, \omega) := \omega(B(x_1, \dots, x_k)).
\end{equation*}
Applying these transformations in sequence (scalar $\rightarrow$ vector $\rightarrow$ scalar) yields
$
\widetilde{\widetilde{A}}(x_1, \dots, x_k, E_z) = A(x_1, \dots, x_k, z),
$
for any evaluation map $E_z \in Z''$,
$E_z(\omega) = \omega(z)$.
The reverse sequence (vector $\rightarrow$ scalar $\rightarrow$ vector) yields
$
\widetilde{\widetilde{B}}(x_1, \dots, x_k) = E_{B(x_1, \dots, x_k)},
$
where $E_{B(x_1, \dots, x_k)}$ is the evaluation map associated with $B(x_1, \dots, x_k) \in Y$. 
If $Y$ and $Z$ are reflexive---as is the case for Hilbert spaces, the setting for the derivative tensors here---then $\widetilde{\widetilde{A}} = A$ and $\widetilde{\widetilde{B}} = B$ under the canonical identifications $Y \cong Y''$ and $Z \cong Z''$.

The probes of a vector-valued multilinear function $B$ 
by vectors $(x_1, \dots, x_k, \omega)$ 
are defined as the probes of the corresponding scalar-valued multilinear function $\widetilde{B}$ by those same vectors.
For the first $k$ probes of $B$, one of the `$x_j$ arguments' of $\widetilde{B}$ is left unevaluated, i.e.,
\begin{equation*}
\psi_j(~\cdot~) = \omega(B(x_1, \dots, x_{j-1}, ~\cdot~, x_{j+1}, \dots, x_k)). \qquad (j\text{th reverse probe})
\end{equation*}
We call these \emph{reverse probes} of $B$.
The last probe of $B$, in which the `$\omega$ argument' of $\widetilde{B}$ is left unevaluated, is the \emph{forward probe} of $B$. The forward probe is the evaluation map $\omega \mapsto \omega(B(x_1, \dots, x_k))$, which we identify with the vector $y \in Y$ given by
$$
y = B(x_1, \dots, x_k). \qquad (\text{forward probe})
$$


\paragraph{Arrays and multilinear functions} Given a scalar-valued multilinear function $A:\mathbb{R}^{N_1} \times \dots \times \mathbb{R}^{N_d} \to \mathbb{R}$, the corresponding array $\vb{A} \in \mathbb{R}^{N_1 \times \dots \times N_d}$ is defined by
\begin{equation*}
\vb{A}[i_1, \dots, i_d] := A(\vb{e}_{1,i_1}, \dots, \vb{e}_{d,i_d}),
\end{equation*}
where
$\vb{e}_{a,b}=(0,\dots,0,1,0,\dots,0)$ denotes the $b$-th standard basis vector in $\mathbb{R}^{N_a}$. Given such an array $\vb{A}$, the corresponding scalar-valued multilinear function $A$ is obtained as:
\begin{equation*}
A(\vb{x}_1, \dots, \vb{x}_d) = \sum_{i_1=1}^{N_1} \dots \sum_{i_d=1}^{N_d} \vb{A}[i_1, \dots, i_d]\, \vb{x}_1[i_1] \dots \vb{x}_d[i_d].
\end{equation*}
For a vector-valued multilinear function $B:\mathbb{R}^{N_1} \times \dots \times \mathbb{R}^{N_k} \to \mathbb{R}^M$, the corresponding array $\vb{B} \in \mathbb{R}^{N_1 \times \dots \times N_k \times M}$ is defined by
\begin{equation*}
\vb{B}[i_1, \dots, i_k, j] := B(\vb{e}_{1,i_1}, \dots, \vb{e}_{k,i_k})[j].
\end{equation*}
Given an array $\vb{B}$ of this shape, the corresponding vector-valued multilinear function $B$ is
\begin{equation*}
B(\vb{x}_1, \dots, \vb{x}_k)[j] = \sum_{i_1=1}^{N_1} \dots \sum_{i_k=1}^{N_k} \vb{B}[i_1, \dots, i_k, j]\, \vb{x}_1[i_1] \dots \vb{x}_k[i_k].
\end{equation*}
These definitions, combined with the scalar/vector correspondences described above,
establish one-to-one correspondences between finite-dimensional scalar-valued multilinear functions, vector-valued multilinear functions, and multidimensional arrays.

\paragraph{Linear algebra and norms for tensors}

Addition and scaling of arrays is performed elementwise. Addition and scaling of multilinear functions is defined by adding and scaling their outputs. 
The induced norm of a multilinear function $B: X_1 \times \dots \times X_k \to Y$ is
\begin{equation}
\label{eq:induced_norm_definition}
\norm{B} = \sup_{\substack{
\vb{x}_i \in X_i, \\
\norm{\vb{x}_i} = 1, \\
i=1,\dots,\taylororder
}}
\norm{B(\vb{x}_1, \dots, \vb{x}_\taylororder)},
\end{equation}
where the norms on the right-hand side are those of $X_i$ and $Y$. For an array $\vb{B}$, we define $\norm{\vb{B}}$ as the induced norm of its corresponding multilinear function. Although the induced norm is NP-hard to compute \citep{hillar2013most}, it plays a central role in our theory since it determines the boundedness of Fréchet derivatives of smooth functions (see, e.g., \cite[Section~9.4]{arbogast.bona.2008}).
We denote the Hilbert-Schmidt (also known as Frobenius) norm by $\norm{\cdot}_{\mathrm{HS}}$ and its corresponding inner product by $\innerproduct{\cdot}{\cdot}_{\mathrm{HS}}$. For one-dimensional arrays $\vb{x}, \vb{x}' \in \mathbb{R}^N$, we have
$
\norm{x}=\norm{x}_{\mathrm{HS}}=\left(\sum_{i=1}^N \vb{x}[i]^2\right)^{1/2}$
and
$
\innerproduct{\vb{x}}{\vb{x}'}_\text{HS}=\sum_{i=1}^N \vb{x}[i]\vb{x}'[i].
$
All operations described above (addition, scaling, norms, and inner products) are consistent across the equivalent representations of tensors described in \secref{sec:tensor_correspondences}.

\paragraph{Matrix unfoldings, matricizations, and the Kronecker product}


If $\vb{A}$ is an $N_1 \times \dots \times N_\nummodes$ array,  the \emph{$i$th matrix unfolding} of $\vb{A}$ is the $(N_1 \dots N_i) \times (N_{i+1} \dots N_\nummodes)$ matrix formed by reshaping $\vb{A}$ so that the first $i$ modes of $\vb{A}$ correspond to the rows of the matrix, and the last $\nummodes-i$ modes of $\vb{A}$ correspond to the columns of the matrix.
The $i$th \emph{matricization} of $\vb{A}$ is the $N_i \times (N_1 \dots N_{i-1} N_{i+1}\dots N_d)$ matrix formed by permuting the indices of $\vb{A}$ to move the $i$th index first, then reshaping the permuted version of $\vb{A}$ into a matrix such that the rows of the matrix correspond to the $i$th index, and the columns of the matrix correspond to all other indices.
Any standard array ordering convention may be used for the reshapings (e.g., row-major or column-major), so long as the same convention is used consistently, and one defines the Kronecker product, $\kron$, consistently with the ordering.

\subsection{Tucker tensor trains}
\label{sec:t3}

A Tucker tensor train combines a Tucker decomposition of a tensor with a tensor train decomposition of its central Tucker core. Here, we define tensor trains and Tucker tensor trains. The Tucker decomposition is never used directly, so its definition is omitted.
We formulate Tucker tensor trains in terms of vector-valued multilinear functions, as our interest lies in approximating vector-valued derivative tensors. Tensor trains, however, are defined in terms of scalar-valued multilinear functions, since defining them for vector-valued multilinear functions would require treating the final tensor index as a special case, complicating subsequent analysis.

\begin{definition}[Tensor train (TT)]
\label{def:tt}
A \emph{tensor train (TT) decomposition} of a scalar-valued multilinear function
$S: X_1 \times X_2 \times \dots \times X_d \rightarrow \mathbb{R}$  
is a factorization
$$
S(x_1, x_2, \dots, x_d)
= G_1(x_1)\,G_2(x_2)\,\dots\,G_d(x_d),
$$
where $G_i: X_i \to \mathbb{R}^{r_{i-1} \times r_i}$ are matrix-valued linear functions and $r_0 = r_d = 1$.
The right-hand side is a product of matrices. The tensor train representation of $S$ is the tuple  
$$
\tuple{S} = (G_1, \dots, G_d).
$$ 
Each $G_i$ is called a tensor train \emph{core}, and 
$\vb{r} = (r_0, r_1, \dots, r_d)$ are the tensor train \emph{ranks (TT-ranks)}.
\end{definition}

When $X_i = \mathbb{R}^{n_i}$, the function $S$ corresponds to an array $\vb{S} \in \mathbb{R}^{n_1 \times \dots \times n_d}$,  
and each $G_i$ corresponds to a 3-tensor  
$\vb{G}_i \in \mathbb{R}^{r_{i-1} \times n_i \times r_i}$  
with entries
$
\vb{G}_i[a,b,c] = \vb{e}_a^T\, G_i(\vb{e}_b)\, \vb{e}_c,
$
where $\vb{e}_l$ denotes the $l$th unit vector of appropriate dimension.
The array $\vb{S}$ can then be written as the contraction
$$
\vb{S}[i_1,\dots,i_d]
= \sum_{a_1=1}^{r_1} \!\dots\! \sum_{a_{d-1}=1}^{r_{d-1}}
\vb{G}_1[1,i_1,a_1]
\,\vb{G}_2[a_1,i_2,a_2]
\dots
\vb{G}_d[a_{d-1},i_d,1].
$$
Tensor trains are named for the sequential contraction of their cores, which is analogous to the way railway cars are connected in a railway train.   
When ranks remain moderate, the storage and computational costs scale linearly with the number of tensor indices, effectively mitigating the curse of dimensionality.    
For further background, see \cite[Section 3.4]{gelss.2017.dissertation}, \cite{orus2014practical,oseledets.2011}. In the physics literature, tensor trains are known as matrix product states.



\begin{definition}[Tucker tensor train (T3)]
\label{def:t3}
A \emph{Tucker tensor train (T3) decomposition} of a vector-valued multilinear function  
$T:\paramspace_1 \times \dots \times \paramspace_k \rightarrow \obsspace$  
is a factorization
$$
T(x_1, \dots, x_k)
= U_{k+1}\, S(U_1^* x_1, \dots, U_k^* x_k),
$$
where $U_i:\mathbb{R}^{n_i}\rightarrow\paramspace_i$ for $i=1,\dots,k$,  
$U_{k+1}:\mathbb{R}^{n_{k+1}}\rightarrow\obsspace$ are linear operators,  
and $S:\mathbb{R}^{n_1}\times\dots\times\mathbb{R}^{n_k}\rightarrow\mathbb{R}^{n_{k+1}}$
is the vector-valued multilinear function represented by a tensor train $\mathbb{S}=(G_1,\dots,G_{k+1})$. Here the output mode of $S$ is vector valued; it is represented as a tensor train, which \defref{def:tt} defines in terms of scalar-valued functions, via the scalar/vector correspondence of \secref{sec:tensor_correspondences}.
The Tucker tensor train is specified by the tuple
$$
\tuple{T} = ((U_1,\dots,U_{k+1}), (G_1,\dots,G_{k+1})).
$$
The operators $U_1,\dots,U_{k+1}$ are the \emph{Tucker bases} 
and the functions $G_1,\dots,G_{k+1}$ are the \emph{tensor train cores}.
The integers $\vb{n}=(n_1,\dots,n_{k+1})$ are the Tucker ranks of $\tuple{T}$.  
The TT-ranks $\vb{r}$ of $(G_1,\dots,G_{k+1})$ are the TT-ranks of $\tuple{T}$.
\end{definition}

When $\paramspace_i=\mathbb{R}^{N_i}$ and $\obsspace=\mathbb{R}^{N_{k+1}}$,
$T$ corresponds to an array $\vb{T}$, the operators $U_i$ to matrices
$\vb{U}_i$, and each $G_i$ to a 3-tensor
$\vb{G}_i \in \mathbb{R}^{r_{i-1} \times n_i \times r_i}$. A plain tensor
train representation would instead have
$\vb{G}_i \in \mathbb{R}^{r_{i-1} \times N_i \times r_i}$, so Tucker tensor
trains are far cheaper to store and manipulate when $n_i \ll N_i$.

The format admits two complementary perspectives. As a generalization of the
tensor train format, it augments each TT core with a local Tucker basis
matrix mapping an internal contracted index to the full free index,
decoupling internal rank from external dimension; \cite{oseledets.2011} noted
but did not develop this modification. Conversely, it is a special case of the
hierarchical Tucker (HT) format of \cite{hackbusch2009new}, further analyzed
by \cite{grasedyck2010hierarchical} (see \cite{grasedyck2013literature} for a
survey of low-rank formats), in which the tree of cores is linear, with TT
cores at interior nodes and Tucker factor matrices at the leaves.
\cite{dolgov2013two} gave the first systematic development as a computational
object (algebraic operations and stability analysis) in the context of quantized tensor trains. The spectral tensor-train format of
\cite{bigoni2016spectral} is a prominent scientific computing application.

Any finite-dimensional tensor $\vb{T}$ has an exact Tucker tensor train
representation $\tuple{T}$ whose Tucker ranks are the ranks of the
matricizations of $\vb{T}$ and whose tensor train ranks are the ranks of the
matrix unfoldings; these are the minimal ranks of any exact representation.
One such $\tuple{T}$ is constructed by the T3-SVD algorithm
(\algoref{alg:t3_svd_dense} in \appref{app:tensor_appendix}), a generalization
of TT-SVD \citep[Algorithm 1]{oseledets.2011}. An implicit sweeping version
(\algoref{alg:t3_svd_implicit} in \appref{app:tensor_appendix}) computes
the singular values of all matricizations and matrix unfoldings of an existing Tucker tensor train efficiently; see \appref{app:t3_unfoldings} for details.

Finite-dimensional Tucker tensor trains with fixed Tucker ranks $\vb{n}$ and
tensor train ranks $\vb{r}$ form an embedded submanifold
$\mathcal{M}_{\vb{n},\vb{r}} \subset \mathbb{R}^{N_1 \times \dots \times N_{k+1}}$,
inheriting this structure from its parent frameworks (Tucker composed with TT,
or hierarchical Tucker). \cite{koch2010dynamical} established and exploited the
smooth manifold structure of the Tucker format for dynamical approximation,
and \cite{holtz.rohwedder.ea.2012} the smooth embedded manifold structure of
the TT format. \cite{uschmajew2013geometry} established the smooth quotient
manifold structure of the HT format with explicit tangent space
characterizations, and \cite{lubich2013dynamical} the tangent space projector,
curvature estimates, and geometric foundations for optimization, treated
comprehensively in \cite{uschmajew2020geometric}. The Tucker tensor train
manifold inherits these foundations from both lines of work, while its linear
tree structure keeps manifold operations at the same complexity scale as the
TT case.

Our numerical methods manipulate tangent vectors via gauged representations,
compute singular values of matrix unfoldings and matricizations (T3-SVD), and
retract tangent vectors to the manifold; \appref{sec:tensor_train_manifold}
provides efficient algorithms for these tasks.

\section{Derivative probing}
\label{sec:derivative_actions}

The training data for the learning problem consists of derivative probes of implicitly defined maps, computed by repeated application of the chain rule and implicit function theorem (\citealp[Section 3]{alger.chen.ea.2020}; \citealp{maddison2019automated}). This section describes how to compute probes of $D^\derivativeorderactionsection \pto(\paramzero)$ and, in particular, how their cost depends on the directional symmetry (repetition) of the vectors defining them. We assume that directional \emph{partial} derivatives of $\res$ and $\qoi$ are easily computable, as holds in typical applications. Probes of $D^\derivativeorderactionsection \pto(\paramzero)$ yield the probes of $D^\derivativeorderactionsection \ptoW(0)$ needed to build the T4S model, through
\begin{equation}
\label{eq:relating_Df_and_Dq}
D^\derivativeorderactionsection \ptoW(0)(x_1, \dots, x_\derivativeorderactionsection) = D^\derivativeorderactionsection \pto(\paramzero)(\sqrtcov x_1, \dots, \sqrtcov x_\derivativeorderactionsection).
\end{equation}

Every probe reduces to a sequence of linearized solves that share a single operator $A$ (the linearized state operator) or its adjoint $A^*$, organized over the lattice of multiset subsets of the derivative directions. \algoref{alg:symbolic_diff} and \algoref{alg:probing} compute all forward and reverse probes by traversal of this lattice. The number of solves is the dominant cost, and it ranges from $O(\derivativeorderactionsection)$ for directionally symmetric probes to $O(2^\derivativeorderactionsection)$ for asymmetric ones (\secref{sec:cost_vs_symmetry}). 

The key points are:
\begin{itemize}
    \item Computing probes requires solving multiple linearized versions of the state equation. These share the same linearized state operator (or its adjoint, for reverse probes) and differ only in their right-hand sides.
    \item The cost of a probe, both in terms of the number of solves and the cost of assembling their right-hand sides, depends sharply on the repetition in the directions defining it. Probes with more repetition are far cheaper.
    \item Computing a high-order probe yields every lower-order probe whose directions are a subset of its own at little additional cost.
\end{itemize}

\subsection{Derivative notation}
\label{sec:derivative_notation}

The operator $D$ denotes the total derivative with respect to $\param$, and $\partial_w$ the partial derivative with respect to $w$. Functions of $\param$ are evaluated at $\paramzero$ and functions of $\state$ at $u=\state(\paramzero)$, the solution of $\res(\paramzero,\state(\paramzero))=0$, unless stated otherwise; we suppress these arguments where they would clutter notation, writing e.g.\ $D\pto = D\pto(\paramzero)$. Derivative directions are indicated in compressed polynomial notation, as follows.

\paragraph{Forward probing notation} Let $\parammultiset=\{\widehat{\param}_i\}_{i=1}^l$ be a collection of directions $\widehat{\param}_i \in \paramspace$, and let $\alpha$ be a multiset with entries in $\{1,\dots,l\}$ and cardinality $|\alpha|=\sum_{i=1}^l \alpha(i)=\derivativeorderactionsection$. The forward probe of $D^\derivativeorderactionsection \pto$ by directions $\widehat{\param}_i$ with multiplicities $\alpha(i)$ is
\begin{equation}
y = D^\derivativeorderactionsection \pto \widehat{\param}_1^{\,\alpha(1)} \cdots \widehat{\param}_l^{\,\alpha(l)} = D^\derivativeorderactionsection \pto \parammultiset^\alpha,
\end{equation}
e.g.\ $D^4 \pto \parammultiset^{\{1,1,2,3\}} = D^4 \pto \widehat{\param}_1^{\,2} \widehat{\param}_2 \widehat{\param}_3 = D^4 \pto(\paramzero)(\widehat{\param}_1, \widehat{\param}_1, \widehat{\param}_2, \widehat{\param}_3)$.

\paragraph{Reverse probing notation} The reverse probe of $D^\derivativeorderactionsection \pto$ inserts vectors into all but one of its arguments and pairs the result with a linear functional $\omega$; by multilinear symmetry which argument is open is irrelevant, so we speak of \emph{the} reverse probe. For a multiset $\alpha$ with $|\alpha|=\derivativeorderactionsection-1$, the reverse probe is the linear functional $\psi$ with $\psi(\nu) = \omega(D^{\derivativeorderactionsection} \pto \parammultiset^\alpha \nu)$, written
\begin{equation}
\psi = \omega(D^{\derivativeorderactionsection} \pto \parammultiset^\alpha),\label{eq:reverse_action_shorthand}
\end{equation}
with the missing direction reserved for the argument of $\psi$.

\paragraph{Partial derivative notation} Partial derivatives use the same convention. For a variable $Z$ depending on $w$, directions $\widehat{W}=\{\widehat{w}_\beta\}_{\beta \in B}$, and a multiset $\Gamma$ with entries in $B$, the $|\Gamma|$th partial directional derivative is written $\partial_w^{|\Gamma|} Z \widehat{W}^{\Gamma}$; e.g.\ $\partial_w^3 Z \widehat{W}^{\{\beta, \beta', \beta'\}} = \partial_w^3 Z(\widehat{w}_{\beta}, \widehat{w}_{\beta'}, \widehat{w}_{\beta'})$.

\subsection{Forward probing}
\label{sec:forward_actions}

\begin{figure}
    \centering
    \begin{subfigure}[b]{0.98\textwidth}
        \centering
        \def\symspace{4.75}
\def\symwidth{1.125}
\def\symheight{1.0625}

\begin{tikzpicture}
    \tikzstyle{every node}=[font=\footnotesize]
    \tikzset{incarr/.style={-{Stealth[length=4]}}}
    
    \node (a) at (\symspace-1.5,0) {};
    
    \node (111a) at ([shift=({0,3*\symheight})]a) {$\lbrace 1, 1, 1 \rbrace$};
    \node (11a) at ([shift=({0,2*\symheight})]a) {$\lbrace 1, 1 \rbrace$};
    \node (1a) at ([shift=({0,\symheight})]a) {$\lbrace 1 \rbrace$};
    \node (0a) at ([shift=({0,0})]a) {$\emptyset$};

    \node (Oa) at ([shift=({0,-0.75*\symheight})]a) {$\mathcal{O}(k)$};
    
    \node at ([shift=({0,3.75*\symheight})]a) {$D^3 u \, \widehat{\param}_1^{\,3}$};

    \draw[incarr] (0a) -- (1a);
    \draw[incarr] (1a) -- (11a);
    \draw[incarr] (11a) -- (111a);
    
    \node (b) at (0,0) {};
    
    \node (112b) at ([shift=({-0.5*\symwidth,3*\symheight})]b) {$\lbrace 1, 1, 2 \rbrace$};
    
    \node (12b) at ([shift=({0.5*\symwidth,2*\symheight})]b) {$\lbrace 1, 2 \rbrace$};
    \node (11b) at ([shift=({-1.5*\symwidth,2*\symheight})]b) {$\lbrace 1, 1 \rbrace$};

    \node (2b) at ([shift=({1.5*\symwidth,\symheight})]b) {$\lbrace 2 \rbrace$};
    \node (1b) at ([shift=({-0.5*\symwidth,\symheight})]b) {$\lbrace 1 \rbrace$};
    
    \node (0b) at ([shift=({0.5*\symwidth,0})]b) {$\emptyset$};

    \node at ([shift=({-0.5*\symwidth,3.75*\symheight})]b) {$D^3 u \, \widehat{\param}_1^{\,2} \, \widehat{\param}_2$};

    \draw[incarr] (0b) -- (1b);
    \draw[incarr] (0b) -- (2b);
    \draw[incarr] (1b) -- (11b);
    \draw[incarr] (1b) -- (12b);
    \draw[incarr] (2b) -- (12b);
    \draw[incarr] (11b) -- (112b);
    \draw[incarr] (12b) -- (112b);

    \node (c) at (-\symspace, 0) {};
    
    \node (123c) at ([shift=({0,3*\symheight})]c) {$\lbrace 1, 2, 3 \rbrace$};
    
    \node (12c) at ([shift=({-1.5*\symwidth,2*\symheight})]c) {$\lbrace 1, 2 \rbrace$};
    \node (13c) at ([shift=({0,2*\symheight})]c) {$\lbrace 1, 3 \rbrace$};
    \node (23c) at ([shift=({1.5*\symwidth,2*\symheight})]c) {$\lbrace 2, 3 \rbrace$};
    
    \node (1c) at ([shift=({-1.5*\symwidth,\symheight})]c) {$\lbrace 1 \rbrace$};
    \node (2c) at ([shift=({0,\symheight})]c) {$\lbrace 2 \rbrace$};
    \node (3c) at ([shift=({1.5*\symwidth,\symheight})]c) {$\lbrace 3 \rbrace$};
    
    \node (0c) at ([shift=({0,0})]c) {$\emptyset$};

    \node (Oc) at ([shift=({0,-0.75*\symheight})]c) {$\mathcal{O}(2^k)$};

    \node at ([shift=({0,3.75*\symheight})]c) {$D^3 u \, \widehat{\param}_1 \, \widehat{\param}_2 \, \widehat{\param}_3$};

    \draw[incarr] (0c) -> (1c);
    \draw[incarr] (0c) -> (2c);
    \draw[incarr] (0c) -> (3c);
    \draw[incarr] (1c) -> (12c);
    \draw[incarr] (1c) -> (13c);		
    \draw[incarr] (2c) -> (12c);
    \draw[incarr] (2c) -> (23c);
    \draw[incarr] (3c) -> (13c);
    \draw[incarr] (3c) -> (23c);
    \draw[incarr] (12c) -> (123c);
    \draw[incarr] (13c) -> (123c);
    \draw[incarr] (23c) -> (123c);

    \node (0) at (-\symspace-2.75,0) {};
    \node[left] (00) at ([shift=({0,0})]0) {$l\!=\!0$};
    \node[left] (1) at ([shift=({0,\symheight})]0) {$l\!=\!1$};
    \node[left] (2) at ([shift=({0,2*\symheight})]0) {$l\!=\!2$};
    \node[left] (3) at ([shift=({0,3*\symheight})]0) {$l\!=\!3$};
    
    \draw[-Latex] ([shift=({-0.1,0})]00.south west) -- ([shift=({-0.1,0})]3.north west)
        node[midway, above, align=center, rotate=90] {Incremental Order};
    
    \draw[-Latex] (Oc) -- (Oa)
        node[above, midway, align=center] (Ob) {Number of linear systems};
\end{tikzpicture}
    \end{subfigure}
    \vspace{-1.0em}
    \caption{Incremental lattices for third-order forward derivative probes with different directional symmetries. Each node corresponds to one incremental state solve; repeated directions collapse the subset lattice into a smaller multiset lattice.}
    \label{fig:directional_symmetry_derivative_lattice}
\end{figure}

Forward probes admit an expansion as a sum of directional partial derivatives of $\qoi$. Writing $\widehat{\state}_\beta := D^{|\beta|} \state \parammultiset^\beta$ for the \emph{incremental state variables} and $\widehat{U}_\alpha := \{\widehat{\state}_\beta: \beta \subset \alpha\}$ for the collection of those indexed by multiset subsets of $\alpha$, we have
\begin{equation}
D^\derivativeorderactionsection \pto \parammultiset^\alpha = \sum_{i} c_i \,\partial_\param^{|\mu_i|} \partial_\state^{|\Gamma_i|} Q \,\parammultiset^{\mu_i} \widehat{U}_\alpha^{\Gamma_i}, \label{eq:Q_partial_sum3333}
\end{equation}
where $c_i \in \mathbb{N}$ counts repeated terms, $\mu_i$ is a multiset with entries in $\{1,\dots,l\}$, and $\Gamma_i$ is a multiset of multiset subsets $\beta \subset \alpha$.

This expansion follows by repeatedly applying the chain rule for total derivatives to $\pto(\param)=\qoi(\param, \state(\param))$. Each application introduces incremental state variables $\widehat{\state}_\beta$, which quantify the sensitivity of $\state$ to perturbations of $\param$; by strong induction, $D^\derivativeorderactionsection \pto \parammultiset^\alpha$ depends on $\widehat{\state}_\beta$ for all multiset subsets $\beta\subset\alpha$. The numbers $c_i$, the multisets $\mu_i$, and the multisets of multisets $\Gamma_i$ are produced by the symbolic procedure of \secref{sec:symbolic_differentiation}. For example,
\begin{equation*}
D^2 \pto \widehat{\param}_1^2 =\partial^2_{\param \param} \qoi \widehat{\param}_1^2 + 2\partial^2_{\param \state} \qoi \widehat{\param}_1 \widehat{\state}_{\{1\}} + \partial^2_{\state \state} \qoi \widehat{\state}_{\{1\}}^2 + \partial_{\state} \qoi \widehat{\state}_{\{1,1\}},
\end{equation*}
in which the second term has $c_i=2$, $\mu_i=\{1\}$, and $\Gamma_i=\{\{1\}\}$.

Each incremental state variable solves the \emph{incremental state equation}
\begin{equation}
\label{eq:incr_state_3240198234712}
0 = D^{|\beta|} \res \parammultiset^\beta.
\end{equation}
Applying the chain rule to $\res$ as above and isolating the term containing $\widehat{\state}_\beta$ recasts \eqref{eq:incr_state_3240198234712} as the linear system
\begin{equation}
\label{eq:incr_state_Au_b}
    A \widehat{\state}_\beta = b_\beta, \qquad A = \partial_\state \res,
\end{equation}
where $A$ is the linearized state operator at the expansion point, the same for every incremental solve; only the right-hand side $b_\beta$ changes. The right-hand side $b_\beta$ is a sum of partial derivatives of $\res$ depending on lower-order incremental state variables $\widehat{\state}_\gamma$ for proper subsets $\gamma \subsetneq \beta$.

Computing $D^\derivativeorderactionsection \pto \parammultiset^\alpha$ therefore traverses the lattice of multiset subsets of $\alpha$ (\figref{fig:directional_symmetry_derivative_lattice}), solving \eqref{eq:incr_state_Au_b} for $\widehat{\state}_\beta$ at each node $\beta\subset\alpha$ and substituting the results into \eqref{eq:Q_partial_sum3333}.

\subsection{Reverse probing}
\label{sec:reverse_actions}

For $\derivativeorderactionsection=1$ the reverse probe $\psi$ in \eqref{eq:reverse_action_shorthand} is the derivative $g = D\omega(\pto) = \omega(D \pto)$ of the scalar map $\param \mapsto \omega(\pto(\param))$; after identifying $\paramspace'$ with $\paramspace$ through the Riesz map, $g$ is the gradient of this scalar map. By the adjoint method \citep{cea1986conception,gunzburger2002perspectives,plessix2006review,troltzsch2010optimal},
\begin{align}
g &= \omega(\partial_\param \qoi) + (\partial_\param \res)(\adj), \label{eq:gradient_partials} \\
0 &= \res, \label{eq:state_eq_partials} \\
0 &= \adjres, \label{eq:adjoint_eq_partials}
\end{align}
where the \emph{adjoint residual}
\begin{equation}
\label{eq:adjoint_residual_def}
\adjres := \omega(\partial_\state \qoi) + (\partial_\state \res)(\adj)
\end{equation}
is the state-derivative of the Lagrangian $\mathcal{L} = \omega(\qoi) + \res(\adj)$, paralleling the state residual $\res = \partial_\adj \mathcal{L}$ and the gradient $g = \partial_\param \mathcal{L}$. We write linear functionals to the left of their vector arguments throughout, so the residual acts on the adjoint as $\res(\adj)= \res(\theta_0, u(\theta_0))(\adj)$.
The adjoint equation \eqref{eq:adjoint_eq_partials} is the linear system $A^* \adj = c$ with $A^*=(\partial_\state \res)^*$ and $c=-\omega(\partial_\state \qoi)$; computing $g$ solves \eqref{eq:state_eq_partials} for $\state$, then $A^*\adj=c$ for $\adj$, then evaluates \eqref{eq:gradient_partials}.

For $\derivativeorderactionsection > 1$, the reverse probe is a derivative of the gradient, $\psi = \omega(D^{\derivativeorderactionsection} \pto \parammultiset^\alpha) = D^{\derivativeorderactionsection-1} g \parammultiset^\alpha$, computed by the forward procedure of \secref{sec:forward_actions} applied to $g$ in place of $\qoi$, with $\state$ replaced by the pair $(\state,\adj)$ and the state equation replaced by the combined state and adjoint system. Repeatedly differentiating $g$ expresses $\psi_\beta = D^{|\beta|} g \parammultiset^\beta$ as a sum of partial derivatives of $\qoi$ and $\res$, now also depending on the \emph{incremental adjoint variables}
\begin{equation*}
    \widehat{\adj}_\beta := D^{|\beta|} \adj \parammultiset^\beta, \qquad
    \widehat{V}_\alpha := \{\widehat{\adj}_\beta : \beta \subset \alpha\},
\end{equation*}
the reverse-mode counterparts of $\widehat{\state}_\beta$ and $\widehat{U}_\alpha$, with $\widehat{\adj}_\emptyset = \adj$. Each term pairs a linear functional with a vector---either $\omega$ acting on a derivative of $\qoi$, or a derivative of $\res$ acting on an incremental adjoint---so that
\begin{equation}
\psi_\beta = D^{|\beta|} g \parammultiset^\beta
= \sum_i c_i\, \omega\!\left(\partial_\param^{|\mu_i|+1}\partial_\state^{|\Gamma_i|} \qoi\, \parammultiset^{\mu_i}\widehat{U}_\beta^{\Gamma_i}\right)
+ \sum_i \tilde{c}_i\left(\partial_\param^{|\tilde{\mu}_i|+1}\partial_\state^{|\tilde{\Gamma}_i|} \res\, \parammultiset^{\tilde{\mu}_i}\widehat{U}_\beta^{\tilde{\Gamma}_i}\right)\!(\widehat{\adj}_{\delta_i}),
\label{eq:reverse_probe_expansion}
\end{equation}
where $\delta_i \subset \beta$ and one $\param$ argument of each $\qoi$ and $\res$ partial is left open for the argument of $\psi_\beta$. Equation \eqref{eq:reverse_probe_expansion} is the reverse-mode analog of the forward expansion \eqref{eq:Q_partial_sum3333}, with $g$ in place of $\pto$ and with the outer pairings $\omega$ and $\widehat{\adj}_{\delta_i}$.

Each incremental adjoint variable solves the \emph{incremental adjoint equation}
\begin{equation}
\label{eq:incr_adj_residual}
0 = D^{|\beta|} \adjres \parammultiset^\beta.
\end{equation}
Isolating the term $(\partial_\state \res)(\widehat{\adj}_\beta) = A^* \widehat{\adj}_\beta$ recasts \eqref{eq:incr_adj_residual} as
\begin{equation}
\label{eq:incr_adj_ATu_c}
A^* \widehat{\adj}_\beta = c_\beta,
\end{equation}
exactly as \eqref{eq:incr_state_3240198234712} is recast as \eqref{eq:incr_state_Au_b}. The right-hand side $c_\beta$ depends on $\widehat{\state}_\gamma$ for all $\gamma \subset \beta$ and on $\widehat{\adj}_\gamma$ for proper subsets $\gamma \subsetneq \beta$; its base case $c_\emptyset = -\omega(\partial_\state \qoi)$ recovers the adjoint equation \eqref{eq:adjoint_eq_partials}. Reverse probing thus mirrors forward probing, solving the two incremental systems \eqref{eq:incr_state_Au_b} and \eqref{eq:incr_adj_ATu_c} at each node of the same lattice.

\subsection{Symbolic automatic differentiation} 
\label{sec:symbolic_differentiation}

As the derivative order increases, the formulas for $D^{|\beta|}\pto\parammultiset^\beta$, $D^{|\beta|}g\parammultiset^\beta$, $D^{|\beta|}\res\parammultiset^\beta$, and $D^{|\beta|}\adjres\parammultiset^\beta$ in terms of partial derivatives of $\qoi$ and $\res$ become lengthy. Rather than deriving and transcribing them by hand, we generate them by symbolic differentiation. The symbolic differentiation procedure generates only the algebraic formulas for the incremental right-hand sides; the directional partial derivatives of $\res$ and $\qoi$ themselves are then assembled by algorithmic differentiation or by problem-specific variational forms. Each term of \eqref{eq:Q_partial_sum3333} has the form $t = \partial_\param^{|\mu|} \partial_\state^{|\Gamma|} Q \parammultiset^\mu \widehat{U}_\alpha^\Gamma$, and by the product rule
\begin{align}
Dt\widehat{\param}_i &=  \partial_\param^{|\mu|+1} \partial_\state^{|\Gamma|} Q \parammultiset^{\mu + \{i\}} \widehat{U}_\alpha^{\Gamma}
+ \partial_\param^{|\mu|} \partial_\state^{|\Gamma|+1} Q \parammultiset^\mu \widehat{U}_\alpha^{\Gamma + \{\{i\}\}} \nonumber\\
&+ \sum_{\gamma \in \Gamma} \partial_\param^{|\mu|} \partial_\state^{|\Gamma|} Q \parammultiset^\mu \widehat{U}_\alpha^{\Gamma + \{\gamma + \{i\}\} - \{\gamma\}}, \label{eq:Dt_symbolic}
\end{align}
a sum of terms of the same form.

The reverse-mode objects $g$ and $\adjres$ are likewise sums of terms, but each pairs a linear functional with a vector: either the output functional $\omega$ acting on a derivative of $\qoi$, or a derivative of $\res$ acting on an incremental adjoint $\widehat{\adj}_\delta$. Differentiating an $\omega$-paired term is done via \eqref{eq:Dt_symbolic}, since $\omega$ does not depend on $\param$. Differentiating a $\widehat{\adj}_\delta$-paired term yields
\begin{equation}
D\big(t(\widehat{\adj}_\delta)\big)\widehat{\param}_i = t(\widehat{\adj}_{\delta+\{i\}}) + \big(Dt\,\widehat{\param}_i\big)(\widehat{\adj}_\delta),
\label{eq:Dt_symbolic_outer}
\end{equation}
whose first term raises the order of the incremental adjoint, $\widehat{\adj}_{\delta+\{i\}} = D^{|\delta|+1}\adj\parammultiset^{\delta+\{i\}}$, and whose second differentiates the inner derivative of $\res$ via \eqref{eq:Dt_symbolic}.

Symbolically, a term is represented by a tuple
\begin{equation*}
\partial_\param^{|\mu|} \partial_\state^{|\Gamma|} \tau\, \parammultiset^\mu \widehat{U}_\alpha^\Gamma \ \ \text{paired with}\ \ \rho \quad\longleftrightarrow\quad (\rho,\tau,\mu,\Gamma),
\end{equation*}
where $\tau \in \{`\qoi\textrm{'}, `\res\textrm{'}\}$ tags the differentiated function and $\rho$ records the outer pairing. For forward probes and residual derivatives there is none, $\rho = \mathrm{id}$, and the tuple reduces to $(\tau,\mu,\Gamma)$; for reverse objects $\rho$ is either $\omega$ (with $\tau=`\qoi\textrm{'}$) or an incremental adjoint $\widehat{\adj}_\delta$ (with $\tau=`\res\textrm{'}$). The entire sum is stored as a dictionary (hash map) mapping tuples to their integer coefficients $c$, and is differentiated term by term using \eqref{eq:Dt_symbolic} together with \eqref{eq:Dt_symbolic_outer}.

\algoref{alg:symbolic_diff} formalizes this procedure: given the order-zero representation $\mathcal{D}_\emptyset$ of a function $F$ and a multiset $\alpha$, it traverses the subset lattice and returns the entire family $\{\mathcal{D}_\beta\}_{\beta\subset\alpha}$, obtaining each node by differentiating a parent $\beta-\{i\}$ in one further direction $i$. The family is required because the probing algorithms need $D^{|\beta|} F \parammultiset^\beta$ at every node, not only at the top order $\alpha$. The four objects in derivative probing arise as the same algorithm applied to different initial representations: the forward probe $\pto$ from $\mathcal{D}_\emptyset = \{(\mathrm{id},`\qoi\textrm{'},\emptyset,\emptyset)\mapsto 1\}$ and the residual $\res$ from $\{(\mathrm{id},`\res\textrm{'},\emptyset,\emptyset)\mapsto 1\}$; the gradient $g$ and the adjoint residual $\adjres$ both from $\{(\omega,`\qoi\textrm{'},\emptyset,\emptyset)\mapsto 1,\; (\adj,`\res\textrm{'},\emptyset,\emptyset)\mapsto 1\}$, differing only in the reserved open argument ($\param$ for $g$, $\state$ for $\adjres$). The representations of $\res$ and $\adjres$ supply the incremental right-hand sides $b_\beta$ and $c_\beta$. Once the symbolic representations are determined, the terms themselves are constructed by assembling the corresponding directional partial derivatives.

\begin{algorithm}[t]
\caption{Symbolic differentiation: representations of $D^{|\beta|} F \parammultiset^\beta$ for all $\beta\subset\alpha$.}
\label{alg:symbolic_diff}
\begin{algorithmic}[1]
\Require Order-zero representation $\mathcal{D}_\emptyset$ of $F$: a dictionary mapping symbolic terms $(\rho,\tau,\mu,\Gamma)$ to coefficients $c$; multiset $\alpha$ of derivative directions.
\Ensure Dictionaries $\{\mathcal{D}_\beta\}_{\beta\subset\alpha}$, where $\mathcal{D}_\beta$ represents $D^{|\beta|} F \parammultiset^\beta$.
\For{each nonempty $\beta \subset \alpha$, by nondecreasing $|\beta|$} \Comment{traverse the lattice of subsets}
    \State choose $i \in \beta$ and set $\beta' \gets \beta - \{i\}$ \Comment{parent $\beta'\subsetneq\beta$, already computed}
    \State $\mathcal{D}_\beta \gets \{\,\}$ \Comment{empty dictionary; absent keys default to $0$}
    \For{each term $(\rho,\tau,\mu,\Gamma)$ in $\mathcal{D}_{\beta'}$ with coefficient $c$} \Comment{differentiate in direction $i$, \eqref{eq:Dt_symbolic}}
        \State $\mathcal{D}_\beta[(\rho,\,\tau,\,\mu+\{i\},\,\Gamma)] \mathrel{{+}{=}} c$ \Comment{$\param$-derivative}
        \State $\mathcal{D}_\beta[(\rho,\,\tau,\,\mu,\,\Gamma+\{\{i\}\})] \mathrel{{+}{=}} c$ \Comment{introduce $\widehat{\state}_{\{i\}}$}
        \For{each $\gamma \in \Gamma$}
            \State $\mathcal{D}_\beta[(\rho,\,\tau,\,\mu,\,\Gamma+\{\gamma+\{i\}\}-\{\gamma\})] \mathrel{{+}{=}} c$ \Comment{raise the order of $\widehat{\state}_\gamma$}
        \EndFor
        \If{$\rho = \widehat{\adj}_\delta$ is an incremental adjoint} \Comment{differentiate the outer pairing, \eqref{eq:Dt_symbolic_outer}}
            \State $\mathcal{D}_\beta[(\widehat{\adj}_{\delta+\{i\}},\,\tau,\,\mu,\,\Gamma)] \mathrel{{+}{=}} c$ \Comment{raise the order of $\widehat{\adj}_\delta$}
        \EndIf
    \EndFor
\EndFor
\State \textbf{return} $\{\mathcal{D}_\beta\}_{\beta\subset\alpha}$
\end{algorithmic}
\end{algorithm}

\subsection{The probing algorithm}
\label{sec:probing_algorithm}

With the symbolic representations of $D^{|\beta|}\pto\parammultiset^\beta$, $D^{|\beta|}g\parammultiset^\beta$, $D^{|\beta|}\res\parammultiset^\beta$, and $D^{|\beta|}\adjres\parammultiset^\beta$ in hand (\secref{sec:symbolic_differentiation}), \algoref{alg:probing} computes all forward and reverse probes by a single traversal of the subset lattice of $\alpha$. The traversal solves for the incremental state variables $\widehat{\state}_\beta$ and incremental adjoint variables $\widehat{\adj}_\beta$ at every node $\beta\subset\alpha$, sharing the state solve and the operator $A$. At node $\beta$ the forward probe $D^{|\beta|}\pto\parammultiset^\beta$ is assembled from $\{\widehat{\state}_\gamma : \gamma\subset\beta\}$ via \eqref{eq:Q_partial_sum3333}, and the reverse probe $\omega(D^{|\beta|+1}\pto\parammultiset^\beta)$ from $\{\widehat{\state}_\gamma : \gamma\subset\beta\}$ and $\{\widehat{\adj}_\gamma : \gamma\subset\beta\}$ via \eqref{eq:reverse_probe_expansion}. Because the reverse probe leaves one argument open, at each node it probes the derivative tensor one order higher than the forward probe; at the top node $\beta=\alpha$ this yields $D^{\derivativeorderactionsection}\pto\parammultiset^\alpha$ and $\omega(D^{\derivativeorderactionsection+1}\pto\parammultiset^\alpha)$. Omitting the adjoint steps recovers pure forward probing.

\begin{algorithm}[t]
\caption{Derivative probing: all forward and reverse probes on the lattice of subsets of $\alpha$.}
\label{alg:probing}
\begin{algorithmic}[1]
\Require Multiset $\alpha$ of $\derivativeorderactionsection$ derivative directions from $\parammultiset=\{\widehat{\param}_i\}_{i=1}^l$; output functional $\omega$; routines for directional partial derivatives of $\res$ and $\qoi$.
\Ensure Forward probes $\{y_\beta = D^{|\beta|}\pto\parammultiset^\beta : \beta\subset\alpha\}$; reverse probes $\{\psi_\beta = \omega(D^{|\beta|+1}\pto\parammultiset^\beta) : \beta\subset\alpha\}$.
\State Use \algoref{alg:symbolic_diff} to form symbolic representations of $D^{|\beta|}\pto\parammultiset^\beta$, $D^{|\beta|}g\parammultiset^\beta$, $D^{|\beta|}\res\parammultiset^\beta$, $D^{|\beta|}\adjres\parammultiset^\beta$ for all $\beta\subset\alpha$
\State Extract $b_\beta$ from $D^{|\beta|}\res\parammultiset^\beta$ and $c_\beta$ from $D^{|\beta|}\adjres\parammultiset^\beta$ ($\beta\subset\alpha$) \Comment{recast as \eqref{eq:incr_state_Au_b}, \eqref{eq:incr_adj_ATu_c}}
\State Solve the state equation $0=\res$ for $\state$ \Comment{base node $\beta=\emptyset$}
\State Assemble $A=\partial_\state \res$ and its adjoint $A^*$ \Comment{shared by all incremental solves}
\State Solve the incremental adjoint equation $A^*\widehat{\adj}_\emptyset = c_\emptyset$ for $\widehat{\adj}_\emptyset = \adj$ \Comment{base node $\beta=\emptyset$; \eqref{eq:incr_adj_ATu_c}}
\For{each nonempty $\beta \subset \alpha$, by nondecreasing $|\beta|$} \Comment{single traversal of the subset lattice}
    \State Assemble $b_\beta$; solve $A\,\widehat{\state}_\beta = b_\beta$ for $\widehat{\state}_\beta$ \Comment{incremental state \eqref{eq:incr_state_Au_b}}
    \State Assemble $c_\beta$; solve $A^*\widehat{\adj}_\beta = c_\beta$ for $\widehat{\adj}_\beta$ \Comment{incremental adjoint \eqref{eq:incr_adj_ATu_c}}
\EndFor
\For{each $\beta \subset \alpha$} \Comment{assemble every probe in the lattice}
    \State Assemble $y_\beta$ from $\{\widehat{\state}_\gamma : \gamma\subset\beta\}$ \Comment{forward probe, order $|\beta|$; \eqref{eq:Q_partial_sum3333}}
    \State Assemble $\psi_\beta$ from $\{\widehat{\state}_\gamma : \gamma\subset\beta\}$ and $\{\widehat{\adj}_\gamma : \gamma\subset\beta\}$ \Comment{reverse probe, order $|\beta|+1$; \eqref{eq:reverse_probe_expansion}}
\EndFor
\State \textbf{return} $\{y_\beta\}_{\beta\subset\alpha},\ \{\psi_\beta\}_{\beta\subset\alpha}$
\end{algorithmic}
\end{algorithm}

\subsection{Cost vs.\ directional symmetry}
\label{sec:cost_vs_symmetry}

\begin{table}
    \centering
    \begin{subtable}[t]{0.12\textwidth}
        \centering
        \begin{tabular}{c}
             \\
             Order \\ \hline
             1 \\
             2 \\
             3 \\
             4 \\
             5 \\
             6 \\
             7 \\
             8 \\
             9 \\
             10
        \end{tabular}
    \end{subtable}%
    \begin{subtable}[t]{0.41\textwidth}
        \centering
        \begin{tabular}{rr|rr}
            \multicolumn{2}{c|}{Forward}
                & \multicolumn{2}{|c}{Reverse} \\
            \multicolumn{1}{c}{asym.}
                & \multicolumn{1}{c|}{sym.}
                & \multicolumn{1}{|c}{asym.}
                & \multicolumn{1}{c}{sym.} \\
            \hline
            1    & 1    & 1    & 1 \\
            3    & 2    & 3    & 3 \\
            7    & 3    & 7    & 5 \\
            15   & 4    & 15   & 7 \\
            31   & 5    & 31   & 9 \\
            63   & 6    & 63   & 11 \\
            127  & 7    & 127  & 13 \\
            255  & 8    & 255  & 15 \\
            511  & 9    & 511  & 17 \\
            1023 & 10   & 1023 & 19
        \end{tabular}
        \caption{Number of incremental solves}
    \end{subtable}%
    \begin{subtable}[t]{0.46\textwidth}
        \centering
        \begin{tabular}{rr|rr}
            \multicolumn{2}{c|}{Forward}
                & \multicolumn{2}{|c}{Reverse} \\
            \multicolumn{1}{c}{asym.}
                & \multicolumn{1}{c|}{sym.}
                & \multicolumn{1}{|c}{asym.}
                & \multicolumn{1}{c}{sym.} \\
            \hline
            2       & 2     & 2       & 2 \\
            5       & 4     & 5       & 5 \\
            15      & 7     & 15      & 7 \\
            52      & 12    & 52      & 21 \\
            203     & 19    & 203     & 38 \\
            877     & 30    & 877     & 64 \\
            4,140   & 45    & 4,140   & 105 \\
            21,147  & 67    & 21,147  & 165 \\
            115,975 & 97    & 115,975 & 254 \\
            678,570 & 139   & 678,570 & 381
        \end{tabular}
        \caption{Number of right-hand side terms}
    \end{subtable}
    
    \caption{Comparison of the number of incremental solves and number of terms in the right-hand side (RHS) for directionally asymmetric (no repetition) and directionally symmetric (complete repetition) forward and reverse derivative probes.}
    \label{tab:der_act_dir_sym}
\end{table}

The number of solves to compute a probe with directions $\parammultiset^\alpha$ equals the size of the lattice of multiset subsets of $\alpha$,
\begin{equation}
\label{eq:multiset_lattice_size}
    \vert \lbrace \beta \subset \alpha \rbrace \vert 
        = \prod_{i=1}^l (\alpha_i + 1),
\end{equation}
which ranges from $|\alpha|+1$ when all directions coincide to $2^{|\alpha|}$ when all differ. Assembling the right-hand sides $b_\beta$ and $c_\beta$, which are sums of directional partial derivatives of $\res$ and $\qoi$, adds further cost. The number of terms in these right hand sides likewise grows as repetition decreases. \tableref{tab:der_act_dir_sym} reports both counts for derivative orders $\derivativeorderactionsection=1,\dots,10$ in two extremes: \emph{directionally symmetric probes}, all directions equal (e.g.\ $\parammultiset^\alpha=\widehat{\param}_1^{\,3}$), and \emph{directionally asymmetric probes}, all distinct (e.g.\ $\parammultiset^\alpha=\widehat{\param}_1 \widehat{\param}_2 \widehat{\param}_3$). Thus for $m$ probes of derivatives up to order $\derivativeorderactionsection$, directionally symmetric probing requires $O(m\derivativeorderactionsection)$ incremental state and adjoint solves, whereas fully asymmetric probing requires $O(m\,2^\derivativeorderactionsection)$, before accounting for the larger right-hand-side assembly cost. \cite{wang2017high} studies the effect of directional symmetry on cost in a non-implicit automatic differentiation setting. Because cost depends so strongly on directional symmetry, we construct the Tucker tensor train Taylor series model using only directionally symmetric probes of $D^\derivativeorderactionsection \pto$.

Algebraic polarization and multilinear symmetry of derivatives imply that the directionally symmetric probes of $D^\derivativeorderactionsection \pto$ completely determine $D^\derivativeorderactionsection \pto$. This exact recovery uses the full (infinite) family of symmetric probes; in practice we fit the model from finitely many random symmetric probes, so constructing the T4S model is a learning problem rather than exact recovery.

While high-order probes are expensive, the same computation yields every lower-order probe almost for free. Computing the top-order probe with directions $\parammultiset^\alpha$ already requires solving the incremental state and adjoint equations at every node $\beta\subset\alpha$, since the top node depends on all lower-order incremental variables $\widehat{\state}_\beta$ and $\widehat{\adj}_\beta$. These are precisely the variables that assemble the lower-order probes $D^{|\beta|}\pto\parammultiset^\beta$ and $\psi_\beta$ for $\beta\subsetneq\alpha$, so the entire family returned by \algoref{alg:probing} costs no incremental solves beyond those already paid for the top-order probe, only their comparatively cheap assembly.


\section{Fitting the model}
\label{sec:method}

This section constructs the Tucker tensor trains $\tuple{T}_j$ that approximate the derivatives $D^j f(0)$ in the T4S model. The construction has four stages. We first reduce the infinite-dimensional (or very high-dimensional) Hilbert spaces $X$ and $Y$ to moderate Euclidean spaces $\mathbb{R}^N$ and $\mathbb{R}^M$ by a derivative-based sketching procedure that uses directionally symmetric probes to build orthonormal bases $U$ and $V$ for reduced subspaces of $X$ and $Y$ (\secref{sec:initial_dimension_reduction}). We then generate training data as directionally symmetric forward and reverse probes of the reduced derivatives (\secref{sec:symmetric_action_data}), and fit each $\tuple{T}_j$ at fixed rank by minimizing a probe-matching least-squares loss over the manifold of fixed-rank Tucker tensor trains (\secref{sec:fixed_rank_optimization_problem}). Finally, we grow the ranks by continuation, re-solving the fixed-rank problem with a warm start at each step, and select the final model by validation (\secref{sec:rank_adaptivity}). Throughout this section and the rest of the paper, $N$ and $M$ denote the reduced input and output dimensions, in contrast to the full discretization sizes of \secref{sec:t4s_model}. We fit the tensors to the reduced derivatives $D^j \widetilde{f}(0)$ and recover Tucker tensor trains on the original spaces $X$ and $Y$ afterward if desired (\secref{sec:initial_dimension_reduction}).

We present two methods for the fixed-rank problem: a trust-region Riemannian Gauss-Newton method (TR-RMGN, \secref{sec:trrmgn}) and a stochastic manifold gradient descent method with Cauchy step sizes (MC-SGD, \secref{sec:cauchy_sgd}). TR-RMGN inherits the convergence guarantees of trust-region Riemannian Newton-type methods, the gold standard among classical methods for problems of this form; MC-SGD lacks those guarantees but performs well and is much cheaper per iteration. Rank continuation wraps either method in the outer loop
$$
\text{fit } \tuple{T}_j \rightarrow \text{increase ranks} \rightarrow \text{fit } \tuple{T}_j \rightarrow \text{increase ranks} \rightarrow \dots,
$$
increasing ranks so as to keep edge condition numbers balanced across the tensor network (\secref{sec:rank_adaptivity}). Finally, \secref{sec:cost_comparison} compares the training cost of T4S with other surrogates; it is not a step of the construction.

\subsection{Derivative-based dimension reduction}
\label{sec:initial_dimension_reduction}

We build shared orthonormal bases $U : \mathbb{R}^N \to X$ and $V : \mathbb{R}^M \to Y$ for reduced subspaces of the input and output spaces, using a symmetric tensor version of randomized sketching driven by cheap directionally symmetric derivative probes. The output basis $V$ sketches the column space of each derivative tensor, viewed as a tall matrix whose rows are indexed by the grouped input indices and whose columns are indexed by the output index. The sketch is taken with a row-wise directionally symmetric Khatri-Rao random matrix, each of whose rows has the Kronecker structure $\theta \otimes \dots \otimes \theta$ for a random $\theta \in X$; this large matrix is never formed, because applying it reduces to a directionally symmetric forward probe of the derivative tensor for each row. The input basis $U$ is built analogously, viewing the derivative tensor (with its output projected onto $\operatorname{range}(V)$) as a wide matrix and sketching with reverse probes. Derivative tensors of all orders are sketched with the same random vectors, which both yields a single shared basis per space and lets incremental state solutions be reused across orders.

General (non-symmetric) Khatri-Rao sketches, with rows $\theta_1 \otimes \dots \otimes \theta_j$ for independent random $\theta_i$, are a natural choice here because they provably give Johnson--Lindenstrauss / oblivious subspace embeddings \citep{jin2021faster,bujanovic2025subspace}. The symmetric sketches we use ($\theta_1 = \dots = \theta_j$) are not covered by these guarantees; we adopt them because directionally symmetric probes are far cheaper to compute than asymmetric ones (\secref{sec:cost_vs_symmetry}).

Although this scheme is motivated by matrix sketching in finite dimensions, it can be written so that no operation requires finite dimensions, and we present the algorithms at the infinite-dimensional level. For intuition we also state, for each basis, what the algorithm computes when $X$ is finite-dimensional.

\subsubsection{Output basis}

\begin{algorithm}
\caption{Sketch shared output basis $V:\mathbb{R}^M \rightarrow Y$.}
\label{alg:output_basis}
\begin{algorithmic}[1]
\State Start with empty basis $V:\mathbb{R}^0 \rightarrow Y$, $V(0)=0$
\For{$i=1,2,\dots$}
    \State Draw a random vector $\widehat{\theta} \in X \sim N(0,C^2)$
    \For{$j=1,\dots,k$}
        \State $y_j \gets D^j q(\paramzero) \widehat{\theta}^j$
        \State $\rho_j \gets y_j - VV^* y_j$
        \If{$\norm{\rho_j} \ge \epsilon \norm{y_j}$}
            \State $V \gets \begin{bmatrix} V & \rho_j/\norm{\rho_j} \end{bmatrix}$
        \EndIf 
    \EndFor
    \State Terminate loop if $V$ has not been updated for $p_\text{stop}$ consecutive iterations
\EndFor
\end{algorithmic}
\end{algorithm}

\algoref{alg:output_basis} constructs $V : \mathbb{R}^M \to Y$. Each outer iteration draws $\widehat{\theta} \in X \sim N(0,C^2)$ and, for each order $j = 1,\dots,k$, forms the symmetric forward probe
$$
y_j = D^j q(\paramzero)\,\widehat{\theta}^j.
$$
When $X$ is finite-dimensional, $y_j$ is the symmetric forward probe of $D^j f(0)$ by a white-noise vector in $N(0,I)$, since $f(x) = q(\paramzero + Cx)$. If $\norm{y_j - VV^*y_j} < \epsilon \norm{y_j}$, then $y_j$ is well approximated by $\operatorname{range}(V)$ and is discarded; otherwise the residual $y_j - VV^*y_j$ is normalized and appended to $V$. The same $\widehat{\theta}$ is reused across all $j$ so that incremental state variables carry over between orders. The outer loop draws a new $\widehat{\theta}$ each time and stops after $p_\text{stop}$ consecutive iterations in which every $y_j$, $j = 1,\dots,k$, is well approximated; we use $p_\text{stop}=5$. For numerical stability, $V$ should be re-orthogonalized occasionally.

\subsubsection{Input basis}

\begin{algorithm}
\caption{Sketch shared input basis $U:\mathbb{R}^N \rightarrow X$.}
\label{alg:input_basis}
\begin{algorithmic}[1]
\State Start with empty basis $U:\mathbb{R}^0 \rightarrow X$, $U(0)=0$
\For{$i=1,2,\dots$}
    \State Draw a random vector $\widehat{\theta} \in X \sim N(0,C^2)$
    \State Draw a random vector $\widehat{\vb{\omega}} \in \mathbb{R}^M \sim N(0, \vb{I})$
    \State $\omega \gets \widehat{\vb{\omega}}^T V^*$
    \For{$j=1,\dots,k$}
        \State $\psi_j \gets \omega(D^j q(\paramzero) \widehat{\theta}^{j-1})$
        \State $\widehat{x}_j \gets C^* \psi_j^*$
        \State $\rho_j \gets \widehat{x}_j - UU^* \widehat{x}_j$
        \If{$\norm{\rho_j} \ge \epsilon \norm{\widehat{x}_j}$}
            \State $U \gets \begin{bmatrix} U & \rho_j / \norm{\rho_j} \end{bmatrix}$
        \EndIf
    \EndFor
    \State Terminate loop if $U$ has not been updated for $p_\text{stop}$ consecutive iterations
\EndFor
\end{algorithmic}
\end{algorithm}

\algoref{alg:input_basis} constructs $U : \mathbb{R}^N \to X$. Each outer iteration draws $\widehat{\theta} \in X \sim N(0,C^2)$ and $\widehat{\vb{\omega}} \in \mathbb{R}^M \sim N(0,I)$ and sets $\omega = \widehat{\vb{\omega}}^T V^*$. For each order $j = 1,\dots,k$ it forms the symmetric reverse probe
$$
\psi_j = \omega\!\left(D^j q(\paramzero)\,\widehat{\theta}^{j-1}\right), \qquad
\widehat{x}_j = C^* \psi_j^*.
$$
When $X$ is finite-dimensional, $\widehat{x}_j$ is the Riesz representative of the symmetric reverse probe of $VV^* D^j f(0)$ by white-noise vectors in both the input and output spaces. The accept/discard rule and the reuse of $\widehat{\theta}$, $\widehat{\vb{\omega}}$ across orders are as for the output basis: $\widehat{x}_j$ is discarded if $\norm{\widehat{x}_j - UU^* \widehat{x}_j} < \epsilon \norm{\widehat{x}_j}$, and otherwise its residual is appended to $U$, and the loop stops after $p_\text{stop}$ consecutive iterations with no update. As with $V$, $U$ should be re-orthogonalized occasionally.

\subsubsection{Dimension-reduced map}
The reduced map $\widetilde{f} : \mathbb{R}^N \to \mathbb{R}^M$ is
$$
\widetilde{f}(\vb{x}) = V^* f(U \vb{x}).
$$
Recall that $f(x) = q(\paramzero + Cx)$ absorbs the whitening transform $C$, so $\widetilde{f}$ acts on whitened, reduced coordinates; this is why the training directions in $\mathbb{R}^N$ used below are plain white noise. Its $j$th derivative at the origin satisfies
$$
D^j \widetilde{f}(0)(\vb{x}_1,\dots,\vb{x}_j)
= V^* D^j f(0)\left(U\vb{x}_1,\dots,U\vb{x}_j\right),
$$
so a probe of $D^j \widetilde{f}(0)$ is a probe of $D^j f(0)$ with its inputs lifted by $U$ and its output projected by $V$. We fit Tucker tensor trains $\tuple{T}_j$ to the reduced derivatives $D^j \widetilde{f}(0)$; if Tucker tensor trains on the original spaces are wanted, compose the first $j$ Tucker basis cores of $\tuple{T}_j$ with $U$ and the last with $V$.

\subsubsection{Alternative dimension reduction methods}
The bases $U$ and $V$ may instead be built by conventional dimension reduction that evaluates $f$ at many points: proper orthogonal decomposition / PCA / Karhunen-Lo\`eve expansion \citep{bhattacharya2021model,lanthaler2023operator,manzoni2016dimensionality}, active subspaces \citep{constantine2015active} and their vector-valued \citep{verdiere2025mollified,zahm2020gradient} and derivative-informed \citep{luo2025dimension} variants, likelihood-informed and related posterior-oriented methods \citep{baptista2022gradient,chen2024coupled,cui2014likelihood,zahm2022certified}, greedy methods \citep{lieberman2010parameter}, and Hessian-based methods \citep{chen2019hessian}. Tucker bases could also be built directly from symmetric derivative probes by the damped power iteration of \cite[Algorithm 1]{jin2024scalable} or the single-mode-sketching method of \cite{hashemi2025rtsms}, with per-order bases $U_j$, $V_j$ then merged into the shared $U$, $V$.

\subsection{Symmetric probing training data}
\label{sec:symmetric_action_data}

For each order $j$ we seek a Tucker tensor train $\tuple{T}_j$ with $T_j \approx D^j \widetilde{f}(0)$, where $T_j$ is the vector-valued multilinear function associated with $\tuple{T}_j$. The training data are directionally symmetric forward and reverse probes of $D^j \widetilde{f}(0)$ on random directions. These reduced-space probes are computed by the machinery of \secref{sec:derivative_actions} applied to $\widetilde{f}$---equivalently, full-space probes of $D^j f(0)$ lifted by $U$ and projected by $V$---and reusing the same directions across orders makes them cheap. They are distinct from the probes of \secref{sec:initial_dimension_reduction}, which served only to build $U$ and $V$.

Let $\numactions$ be the number of training samples. For $i=1,\dots,\numactions$, draw normalized white-noise directions
\begin{align*}
 \vb{x}^{(i)} = \vb{x}_0^{(i)} \big/ \norm{\vb{x}_0^{(i)}},& \qquad \vb{x}_0^{(i)} \sim N(0, \vb{I}_{N}), \\
\vb{\omega}^{(i)} = \vb{\omega}_0^{(i)} \big/ \norm{\vb{\omega}_0^{(i)}},& \qquad \vb{\omega}_0^{(i)} \sim N(0, \vb{I}_{M}),
\end{align*}
and, for $j=1,\dots,k$, compute the symmetric forward and reverse probes of $D^j \widetilde{f}(0)$,
\begin{align*}
\vb{y}_{j}^{(i)} &= D^j \widetilde{f}(0)\left(\vb{x}^{(i)}\right)^j, \\
\vb{\psi}_{j}^{(i)} &= \left(\vb{\omega}^{(i)}\right)^T D^j \widetilde{f}(0)\left(\vb{x}^{(i)}\right)^{j-1}.
\end{align*}
Unit-norm directions matter for conditioning. Since the forward probe $\vb{y}_j^{(i)}$ reuses $\vb{x}^{(i)}$ in all $j$ slots, its dependence on the raw magnitude is $\norm{\vb{x}_0^{(i)}}^{j}$, a heavy-tailed weight whose spread grows with order. Unnormalized, a few large draws dominate the least squares \eqref{eq:loss_expanded_double_sum}, shrinking the effective sample count and inflating the Gauss-Newton condition number (\secref{sec:trrmgn})---worse at higher $k$ (\secref{sec:numerical_random}). Normalizing removes the weight (or divides it out afterward), so each sample is a unit-norm rank-one measurement.

Because the same directions $\vb{x}^{(i)}$, $\vb{\omega}^{(i)}$ are used for all orders, incremental variables from lower-order probes are stored and reused at higher orders. The data used to fit $\tuple{T}_\derivativeorderrmgnsection$ are the input--output pairs
\begin{equation*}
((\vb{x}^{(i)}, \vb{\omega}^{(i)}), (\vb{\psi}_{\derivativeorderrmgnsection}^{(i)}, \vb{y}_{\derivativeorderrmgnsection}^{(i)})), \qquad i=1,\dots, \numactions, \quad \derivativeorderrmgnsection=1,\dots,k.
\end{equation*}

\subsection{Fixed-rank nonlinear least-squares problem}
\label{sec:fixed_rank_optimization_problem}

For fixed Tucker ranks $\vb{n}$ and tensor train ranks $\vb{r}$, we fit $\tuple{T}_\derivativeorderrmgnsection$ by approximately solving
\begin{equation}
\label{eq:fixed_rank_tt_optimization_problem_original}
\min_{\vb{T} \in \mathcal{M}_{\vb{n},\vb{r}}} ~ \loss_\derivativeorderrmgnsection(\vb{T}),
\end{equation}
with the least-squares loss
\begin{equation}
\label{eq:loss_expanded_double_sum}
    \loss_\derivativeorderrmgnsection(\vb{T}) = \frac{1}{2 \numactions}\sum_{i=1}^{\numactions}\sum_{l=1}^\derivativeorderrmgnsection \norm{\vb{\psi}_{\derivativeorderrmgnsection}^{(\sampleind)} - \vb{a}_{l}^{(\sampleind)}(\vb{T})}^2 + \frac{1}{2 \numactions}\sum_{i=1}^{\numactions}\norm{\vb{y}_{\derivativeorderrmgnsection}^{(\sampleind)} - \vb{a}_{\derivativeorderrmgnsection+1}^{(\sampleind)}(\vb{T})}^2.
\end{equation}
Here $\vb{a}_{l}^{(i)}(\vb{T})$ is the probe of $\tuple{T}_\derivativeorderrmgnsection$ that leaves mode $l$ open while filling the remaining modes with the sample directions: the $\derivativeorderrmgnsection$ input modes with $\vb{x}^{(i)}$ and the output mode with $\vb{\omega}^{(i)}$. For $l = 1,\dots,\derivativeorderrmgnsection$ an input mode is open, so $\vb{a}_{l}^{(i)}(\vb{T})$ is a reverse probe in $\mathbb{R}^N$, compared in \eqref{eq:loss_expanded_double_sum} against the reverse target $\vb{\psi}_{\derivativeorderrmgnsection}^{(i)}$; for $l = \derivativeorderrmgnsection+1$ the output mode is open, so $\vb{a}_{\derivativeorderrmgnsection+1}^{(i)}(\vb{T})$ is the forward probe in $\mathbb{R}^M$, compared against the forward target $\vb{y}_{\derivativeorderrmgnsection}^{(i)}$. Each map $\vb{T} \mapsto \vb{a}_{l}^{(\sampleind)}(\vb{T})$ is linear on $\mathbb{R}^{N \times \dots \times N \times M}$.

The loss compares the single reverse target $\vb{\psi}_{\derivativeorderrmgnsection}^{(i)}$ against all $\derivativeorderrmgnsection$ of the reverse probes $\vb{a}_{1}^{(i)},\dots,\vb{a}_{\derivativeorderrmgnsection}^{(i)}$. For the true derivative tensor, which is symmetric in its input arguments, these $\derivativeorderrmgnsection$ probes coincide under directionally symmetric inputs, so the $\derivativeorderrmgnsection$ residuals would be redundant; including all of them encourages the learned $\tuple{T}_\derivativeorderrmgnsection$, which is not explicitly symmetrized, to match the symmetric action across its input modes.

Nonlinearity enters only through the feasible set $\mathcal{M}_{\vb{n},\vb{r}} \subset \mathbb{R}^{N \times \dots \times N \times M}$, the manifold of Tucker tensor trains with Tucker ranks $\vb{n}$ and tensor train ranks $\vb{r}$ (\appref{sec:tensor_train_manifold}).

\subsubsection{Trust-region RMGN} 
\label{sec:trrmgn}

TR-RMGN solves \eqref{eq:fixed_rank_tt_optimization_problem_original} by repeatedly linearizing the manifold at the current iterate, minimizing the loss over a trust region in the tangent space by an inexact conjugate-gradient solve, retracting back to the manifold, and accepting or rejecting the step by a trust-region ratio test. It follows the general framework of \cite{absil2007trust}, \cite[Chapter 7]{absil2008optimization}, and \cite[Section 6.4]{boumal2023introduction}, and is guaranteed to converge to a critical point; as is typical for tensor optimization, a global minimizer is not expected.

\begin{figure}
\centering
\scalebox{0.6}{
        \begin{tikzpicture}[scale=4, every node/.style={scale=1.0}]
        \draw[fill, top color=gray, bottom color=white, shading=axis, shading angle=40.0] 
        (0.0, 1.6) [bend right=15] to (-1.0, 1.1) [bend left=35] to (0.3, 0.3) [bend left=25] to (1.2,0.9) [bend right=25] to cycle;

        \draw[fill, fill opacity=0.1, color=gray, draw=none] (0.1, 2.0) -- (-1.6, 1.3) --  (-0.2, 0.5) -- (1.3,1.6) -- cycle;
        \draw[fill=none] (0.1, 2.0) -- (-1.6, 1.3) --  (-0.2, 0.5) -- (1.3,1.6) -- cycle;

        \begin{scope}
            \clip (0.1, 2.0) -- (-1.6, 1.3) --  (-0.2, 0.5) -- (1.3,1.6) -- cycle;
            \draw[rotate around={-10:((0.85,1.40)}, fill=gray!255, opacity=0.15, draw=none] (0.85,1.40) ellipse[x radius=0.8, y radius=0.5];
            \draw[rotate around={-10:((0.85,1.40)}, fill=gray!255, opacity=0.15, draw=none] (0.85,1.40) ellipse[x radius=0.5, y radius=0.3125];
            \draw[rotate around={-10:((0.85,1.40)}, fill=gray!255, opacity=0.15, draw=none] (0.85,1.40) ellipse[x radius=0.2, y radius=0.125];
        \end{scope}
        \node[fill=black, draw=none, circle, inner sep=2pt] (qstar) at (0.85,1.40) {};
        \node[fill=black, draw=none, circle, inner sep=2pt] (pcur) at (-0.1,1.30) {};
        \node[fill=black, draw=none, circle, inner sep=2pt] (pnext) at (0.8,1.05) {};

        \draw[-{Stealth[length=2.5mm]}, shorten >=2pt,shorten <=2pt, line width=2pt] (pcur) [bend left=45] to (qstar);
        \draw[-{Stealth[length=2.5mm]}, shorten >=2pt,shorten <=2pt, line width=2pt] (qstar) [bend left=15] to (pnext);
        
        \node at (0.32, 0.65) {\Large $\mathcal{M}_{\mathbf{n},\mathbf{r}}$};
        \node at (0.93,1.43) {$\mathbf{q}$};
        \node at (0.875,0.965) {$\widetilde{\mathbf{p}}$};
        \node at (-0.08,1.22) {$\mathbf{p}$};
        
    \end{tikzpicture}
    
}
\caption{One step of the Riemannian manifold Gauss-Newton method. 
The shaded ellipsoids are level sets of the objective function restricted to $\vb{p}+T_{\vb{p}} \mathcal{M}_{\vb{n},\vb{r}}$, where $\vb{p}$ is the current iterate. The point $\vb{q}$ is an approximate minimizer of the objective function on $\vb{p}+T_{\vb{p}} \mathcal{M}_{\vb{n},\vb{r}}$. 
}
\label{fig:ttrmgn}
\end{figure}

Because $\vb{T} \mapsto \vb{a}_{l}^{(i)}(\vb{T})$ is linear and $\vb{p} + T_\vb{p}\mathcal{M}_{\vb{n},\vb{r}}$ is an affine subspace, replacing $\mathcal{M}_{\vb{n},\vb{r}}$ by this local linearization in \eqref{eq:fixed_rank_tt_optimization_problem_original} gives a linear least-squares subproblem, solved by conjugate gradients on the normal equations. Starting from $\vb{p} \in \mathcal{M}_{\vb{n},\vb{r}}$ and trust-region radius $\Delta$, each iteration (illustrated in \figref{fig:ttrmgn}) does the following:

\begin{enumerate}
\item Apply the CG–Steihaug method \citep{steihaug1983conjugate}, a conjugate-gradient variant for trust-region subproblems, to approximately minimize $\loss_\derivativeorderrmgnsection$ over the $\vb{q}$ in $\vb{p}+T_{\vb{p}} \mathcal{M}_{\vb{n},\vb{r}}$ within distance $\Delta$ of $\vb{p}$:
\begin{equation}
\label{eq:fixed_rank_tt_optimization_problem}
\vb{q} \approx \vb{p} + \argmin_{\substack{\vb{v} \in T_{\vb{p}} \mathcal{M}_{\vb{n},\vb{r}} \\ \norm{\vb{v}} \le \Delta}} \loss_\derivativeorderrmgnsection(\vb{p} + \vb{v}).
\end{equation}

\item Retract $\vb{q}$ onto $\mathcal{M}_{\vb{n},\vb{r}}$ to obtain a proposal $\widetilde{\vb{p}} = \operatorname{retract}(\vb{q})$.

\item Compute the ratio
$$
\rho = \frac{\loss_\derivativeorderrmgnsection(\vb{p}) - \loss_\derivativeorderrmgnsection(\widetilde{\vb{p}})}{\loss_\derivativeorderrmgnsection(\vb{p}) - \loss_\derivativeorderrmgnsection(\vb{q})}
$$
of actual to predicted reduction. Based on $\rho$ and the CG–Steihaug termination, accept ($\vb{p} \leftarrow \widetilde{\vb{p}}$) or reject (retain $\vb{p}$) the proposal and update $\Delta$ by the standard rules \citep[Algorithm 4.1]{nocedal1999numerical}, \citep[Algorithm 10]{absil2008optimization}, and \citep[Algorithm 6.3]{boumal2023introduction}.
\end{enumerate}

The iteration repeats until convergence. For background on trust-region methods see \cite[Chapter 4]{nocedal1999numerical}, and for their Riemannian extensions \cite[Chapter 7]{absil2008optimization} and \cite[Section 6.4]{boumal2023introduction}. We detail the two nontrivial steps, the linear least-squares subproblem and the retraction, below.

\paragraph{Linear least-squares subproblem}
Using the definition of $\loss_\derivativeorderrmgnsection$ and linearity of probing in the probed tensor, the subproblem \eqref{eq:fixed_rank_tt_optimization_problem} becomes
$$
\argmin_{\substack{\vb{v} \in T_{\vb{p}} \mathcal{M}_{\vb{n},\vb{r}} \\ \|\vb{v}\| \le \Delta}} 
\frac{1}{2 \numactions} \sum_{i=1}^{\numactions} \sum_{l=1}^{\derivativeorderrmgnsection+1}
\| \vb{b}_{l}^{(i)} - \vb{a}_{l}^{(i)}(\vb{v}) \|^2,
$$
where $\vb{b}_{l}^{(i)} := \vb{\psi}_{\derivativeorderrmgnsection}^{(i)} - \vb{a}_{l}^{(i)}(\vb{p})$ for $l = 1, \dots, \derivativeorderrmgnsection$ and $\vb{b}_{\derivativeorderrmgnsection+1}^{(i)} := \vb{y}_{\derivativeorderrmgnsection}^{(i)} - \vb{a}_{\derivativeorderrmgnsection+1}^{(i)}(\vb{p})$. Setting the gradient to zero gives the normal equations
\begin{equation}
\label{eq:sqp_normal_eq}
(\jaczero^T \circ \jaczero)(\vb{v}) = \jaczero^T(\vb{b}),
\end{equation}
where $\jaczero$ is the tangent-vector-to-probes map that stacks every probe of $\vb{v}$,
\begin{align*}
\jaczero(\vb{v}) &= \big((\vb{a}_{1}^{(1)}(\vb{v}), \dots, \vb{a}_{\derivativeorderrmgnsection+1}^{(1)}(\vb{v})), \dots, (\vb{a}_{1}^{(\numactions)}(\vb{v}), \dots, \vb{a}_{\derivativeorderrmgnsection+1}^{(\numactions)}(\vb{v}))\big), \\
\vb{b} &= \big((\vb{b}_{1}^{(1)}, \dots, \vb{b}_{\derivativeorderrmgnsection+1}^{(1)}), \dots, (\vb{b}_{1}^{(\numactions)}, \dots, \vb{b}_{\derivativeorderrmgnsection+1}^{(\numactions)})\big),
\end{align*}
and $\jaczero^T$ is its adjoint under the Hilbert--Schmidt inner product; we write $\circ$ because $\jaczero$ and $\jaczero^T$ act on collections of arrays. We solve \eqref{eq:sqp_normal_eq} by CG–Steihaug with tolerance $\epsilon = \min(0.5, \sqrt{\norm{\vb{b}}}) \norm{\vb{b}}$ \citep[Section 7.1]{nocedal1999numerical}. Fully dense tangent vectors are too large to store or use in CG, so all CG operations act on gauged representations of tangent vectors (\appref{sec:tensor_train_manifold}); applying $\jaczero$ and $\jaczero^T$ efficiently in this representation is the subject of \secref{sec:tt_operations}.

\paragraph{Retraction step} 
The point $\vb{q} \in \vb{p} + T_\vb{p} \mathcal{M}_{\vb{n},\vb{r}}$ is represented as a Tucker tensor train with doubled ranks (\appref{sec:tensor_train_manifold}). We obtain $\widetilde{\vb{p}} = \operatorname{retract}(\vb{q})$ by rounding this representation with the implicit T3-SVD algorithm (\algoref{alg:t3_svd_implicit} in \appref{app:tensor_appendix}), truncating the SVDs to the prescribed ranks $\vb{n}$ and $\vb{r}$. This generalizes the tensor train retraction of \cite[Section 4.3]{steinlechner2016riemannian}.

\subsubsection{Manifold Cauchy SGD}
\label{sec:cauchy_sgd}

When training data are plentiful, using all of it at each TR-RMGN iteration is expensive, and one would prefer a minibatch per iteration. But on a small minibatch the Gauss-Newton Hessian is singular and ill-conditioned on its nonzero eigenspace, so CG converges slowly and adding CG iterations tends to hurt generalization. We therefore use a manifold stochastic gradient descent method with a Cauchy step size (MC-SGD):
\begin{align}
\label{eq:cauchy_sgd}
\vb{q}_k &= \vb{p}_k - \frac{\norm{\vb{g}^{(B)}}^2}{\norm{J^{(B)}\left(\vb{g}^{(B)}\right)}^2} \vb{g}^{(B)}, \\
\vb{p}_{k+1} &= \operatorname{retract}(\vb{q}_k),
\end{align}
where $J^{(B)}$ is the Jacobian on a freshly sampled minibatch $B$, $\vb{g}^{(B)} = (J^{(B)})^T(\vb{b})$ is the stochastic gradient, and $\operatorname{retract}$ is the implicit truncated T3-SVD retraction. Cauchy step sizes are classical in optimization \citep[Chapter 3]{nocedal1999numerical}, going back to Cauchy's 1847 note \citep{cauchy1847methodes}, and have recently drawn interest in stochastic and machine-learning settings \citep{castera2022second,smee2025firstish}.

Writing the step as
\begin{equation*}
    -\frac{\norm{\vb{g}^{(B)}}^2}{\norm{J^{(B)}\left(\vb{g}^{(B)}\right)}^2} \vb{g}^{(B)}
    =
    -\frac{
    \innerproduct{\vb{g}^{(B)}}{\vb{g}^{(B)}}
    }{
    \innerproduct{\vb{g}^{(B)}}{\left((J^{(B)})^T \circ J^{(B)}\right)(\vb{g}^{(B)})}
    } \vb{g}^{(B)}
\end{equation*}
shows that the denominator is the Gauss-Newton Hessian applied to the gradient: the Cauchy step forms the local quadratic model of the objective along the gradient ray and jumps to its minimum, equivalently a manifold Gauss-Newton-CG step truncated after one CG iteration. It costs one extra Jacobian-vector product per iteration, cheap next to gradient assembly and retraction, and removes the step-size schedule and tuning otherwise needed in stochastic gradient descent. This contrasts with the grid-searched fixed learning rate used for stochastic Riemannian gradient descent on the tensor train manifold by \citet{novikov.trofimov.ea.2018}. On a small minibatch the denominator $\norm{J^{(B)}(\vb{g}^{(B)})}$ can be tiny or zero along nearly flat directions (especially with low-precision rounding), so in practice the step should be capped by a guard.

\paragraph{Stopping criterion}
This is the first of three distinct controls in the construction: it stops the fixed-rank MC-SGD iteration and triggers a rank increase, separate from the continuation's termination test and from the final model selection, both in \secref{sec:rank_adaptivity}. For fixed ranks, $\loss_j(\vb{p}_k)$ typically falls quickly and then fluctuates around a noisy equilibrium. We detect the equilibrium by exponentially smoothing the loss,
\begin{equation*}
s_k = \alpha \loss_j(\vb{p}_k) + (1-\alpha)\loss_j(\vb{p}_{k-1}),
\end{equation*}
and stopping when the lagged smoothed difference $\Delta s = s_k - s_{k-t}$ becomes positive. We set $\alpha = 1 - \exp(-1/\tau)$ with $\tau = C_\tau \numactions/|B|$ (the number of iterations in $C_\tau$ epochs) and lag $t = C_t \numactions/|B|$ (the iterations in $C_t$ epochs), using $C_\tau=1.0$, $C_t=3.0$, and $|B|=\floor{\numactions / 10}$. Because this training-loss criterion can stop early under minibatch noise and does not guard against overfitting, the final model across all ranks is selected by held-out probe (validation) error (\secref{sec:rank_adaptivity}).

\subsubsection{Alternative optimization methods and related work}
\label{sec:alternative_optimization_methods}

Readers following the method may skip to \secref{sec:rank_adaptivity}; this subsection records alternatives and related work.

\paragraph{Core-wise optimization}
Our method takes the full tensor as the optimization variable, using its core representation only as a computational device. An alternative is to treat the cores themselves as independent variables and minimize over them, which admits deterministic methods such as L-BFGS \citep{gorodetsky2018gradient,gorodetsky2022reverse} and stochastic methods such as SGD \citep{yuan2019high} and Adam \citep{kingma2014adam}. Its principal difficulty is that the map from cores to the represented tensor is highly nonlinear and neither one-to-one nor onto, which increases the nonlinearity and ill-conditioning of the problem; \figref{fig:t_squared} gives a one-dimensional illustration.

\begin{figure}
\centering
\begin{subfigure}[b]{0.49\textwidth}
    \centering
    \begin{tikzpicture}[scale=1.5]
        \draw[->, dashed] (-1.0,0) -- (1.0,0) node[right] {$t$};
        \draw[->, dashed] (0,-0.25) -- (0,1.0) node[above] {$f(t)$};
    
        \draw[color=black, line width=1, domain=-1:1] plot (\x,\x*\x) node[right] {};
    \end{tikzpicture}
    \caption{$f(t)=t^2$}
\end{subfigure}
\begin{subfigure}[b]{0.49\textwidth}
    \centering
    \begin{tikzpicture}[scale=1.5]
        \draw[color=black, line width=1] (-1.0,0) -- (1.0,0) node[right] {$g_1$};
        \draw[color=black, line width=1] (0,-1.0) -- (0,1.0) node[above] {$g_2$};
    
        \draw[color=black, line width=1, domain=0.05:1.0]    plot (\x, 0.05 / \x)             node[right] {};
        \draw[color=black, line width=1, domain=0.15:1.0]    plot (\x, 0.15 / \x)             node[right] {};
        \draw[color=black, line width=1, domain=0.45:1.0]    plot (\x, 0.45 / \x)             node[right] {};
    
        \draw[color=black, line width=1, domain=-1.0:-0.05]    plot (\x, -0.05 / \x)             node[right] {};
        \draw[color=black, line width=1, domain=-1.0:-0.15]    plot (\x, -0.15 / \x)             node[right] {};
        \draw[color=black, line width=1, domain=-1.0:-0.45]    plot (\x, -0.45 / \x)             node[right] {};
    
        \draw[color=black, line width=1, domain=0.05:1.0]    plot (\x, -0.05 / \x)             node[right] {};
        \draw[color=black, line width=1, domain=0.15:1.0]    plot (\x, -0.15 / \x)             node[right] {};
        \draw[color=black, line width=1, domain=0.45:1.0]    plot (\x, -0.45 / \x)             node[right] {};
    
        \draw[color=black, line width=1, domain=-1.0:-0.05]    plot (\x, 0.05 / \x)             node[right] {};
        \draw[color=black, line width=1, domain=-1.0:-0.15]    plot (\x, 0.15 / \x)             node[right] {};
        \draw[color=black, line width=1, domain=-1.0:-0.45]    plot (\x, 0.45 / \x)             node[right] {};
    \end{tikzpicture}
    \caption{Level sets of $f(g_1,g_2)= g_1^2 g_2^2$.}
\end{subfigure}
\caption{The change of variables $t=g_1 g_2$ transforms the easy to optimize function $t^2$ into the difficult to optimize function $g_1^2 g_2^2$. 
Contraction of adjacent cores in a tensor network is analogous to the multiplication $g_1g_2$.
}
\label{fig:t_squared}
\end{figure}

\paragraph{Alternating least-squares}
Alternating least-squares (ALS) methods \citep{grasedyck2015variants,holtz2012alternating,rohwedder2013local} cycle through the cores until convergence, fitting one at a time with the others fixed; performance improves when the fixed cores are orthogonalized relative to the one being updated, as in \eqref{eq:p_rep_Btilde} and \eqref{eq:p_rep_Gtilde}. Each single-core subproblem is a linear least-squares problem solvable by CG, giving ALS-CG; full ALS, which solves each subproblem exactly by dense linear algebra, is typically intractable at large scale. ALS-CG Hessian-vector products are those of the TR-RMGN Hessian (\secref{sec:tt_operations}) with certain terms zeroed, hence only slightly cheaper, so per-iteration CG cost is comparable. Unlike TR-RMGN, which updates all cores at once, ALS-CG updates one at a time.

\paragraph{Riemannian manifold optimization}
Riemannian optimization on low-rank manifolds progresses from matrices \citep{boumal2011rtrmc,kasai2018inexact} to tensors: from \cite{vandereycken2013low}, \cite{kressner2014low} extended it to fixed-rank Tucker tensors and \cite{steinlechner2016riemannian} to the tensor train manifold, with related subspace-correction \citep{kressner2014tensor}, preconditioned \citep{kressner2016preconditioned}, and hierarchical-Tucker \citep{da2015optimization} developments. \cite{luo2023low} prove local quadratic convergence of Riemannian Gauss-Newton for fixed-Tucker-rank estimation, supporting the Gauss-Newton method we use; see also \cite{novikov2021tensor,psenka2020second,uschmajew2020geometric} for the tensor train manifold, \cite{willner2025riemannian} for hierarchical Tucker, and \cite{cai2026tensor,bian2025fast} for preconditioned and quotient-geometry formulations. The most direct treatment of the Tucker tensor train manifold (there termed the extended tensor train manifold) is \cite{molozhavenko2026optimization}, which proves its smooth structure and derives efficient projection and retraction when the Tucker basis matrices are shared; we do not share Tucker bases, but since derivative tensors are symmetric in their inputs, doing so is a natural direction for future work.

The main disadvantage of Riemannian methods is the per-iteration retraction, which requires sweeping orthogonalization and SVDs and can be costly at large ranks; randomized tensor train rounding may reduce this \citep{al2023randomized,kressner2023streaming,aldaas2025adaptive}. Efficient Riemannian Hessian-vector products are a further challenge: \cite{novikov2022automatic} compute them by automatic differentiation, whereas we use a problem-specific method for Gauss-Newton Hessian-vector products (\secref{sec:tt_operations}).

\subsection{Rank continuation}
\label{sec:rank_adaptivity}

Starting from small ranks $\vb{n}$, $\vb{r}$, we solve the fixed-rank problem \eqref{eq:fixed_rank_tt_optimization_problem} by either method of \secref{sec:trrmgn} or \secref{sec:cauchy_sgd}, then incrementally increase the ranks and re-solve, warm-starting from the previous solution. Two controls govern this outer loop, both distinct from the per-rank stopping criterion of \secref{sec:cauchy_sgd}: a \emph{termination} test that ends the continuation, and a \emph{selection} rule that picks the final model.

We reserve a fraction of the probes (e.g.\ 20\%) as validation data, excluded from the fit \eqref{eq:fixed_rank_tt_optimization_problem_original} and used only to choose ranks. Validation error typically decreases as ranks grow, then stabilizes or rises from overfitting. For termination, we track how overdetermined the model is through the ratio of data dimension to $\dim(\mathcal{M}_{\vb{n},\vb{r}})$; this ratio falls as ranks grow, and the continuation stops once it drops below a threshold $\tau_\text{data}$ (e.g.\ $\tau_\text{data}=2$), before the model becomes underdetermined. For selection, we return the ranks that achieved the smallest validation error over all iterations.

Continuation also regularizes. As shown in \secref{sec:initial_guess_new_ranks}, the warm-start gradient is confined to the newly added directions, so each refit only corrects near the previous solution rather than refitting from scratch---which is why the validation error levels off rather than diverging at high rank.

\subsubsection{Choosing the new ranks}
Inspired by the adaptive-rank ALS method for tensor completion of \cite{grasedyck2019stable}, we increase ranks based on the singular values of the tensor's matrix unfoldings and matricizations, growing the most well-conditioned edges so as to keep all edges comparably conditioned.

Let $\tuple{T}$ be the Tucker tensor train obtained from \eqref{eq:fixed_rank_tt_optimization_problem} and $\vb{T}$ its array. Write the singular values of the $i$th matrix unfolding of $\vb{T}$ as $\sigma_{i,1}^\text{TT} \ge \sigma_{i,2}^\text{TT} \ge \dots \ge \sigma_{i,r_i}^\text{TT} \ge \dots$ (values beyond $\sigma_{i,r_i}^\text{TT}$ vanish by the Tucker tensor train structure), and define the \emph{tensor-train edge condition numbers}
$$
\kappa_i^\text{TT} = \frac{\sigma_{i,1}^\text{TT}}{\sigma_{i,r_i}^\text{TT}},
\qquad
\vb{\kappa}^\text{TT} = (1, \kappa_1^\text{TT}, \dots, \kappa_{d-1}^\text{TT}, 1).
$$
Similarly, from the singular values $\sigma_{i,1}^\text{Tucker} \ge \dots \ge \sigma_{i,n_i}^\text{Tucker} \ge \dots$ of the $i$th matricization, define the \emph{Tucker edge condition numbers}
$$
\kappa_i^\text{Tucker} = \frac{\sigma_{i,1}^\text{Tucker}}{\sigma_{i,n_i}^\text{Tucker}},
\qquad
\vb{\kappa}^\text{Tucker} = (\kappa_1^\text{Tucker}, \dots, \kappa_d^\text{Tucker}).
$$
All singular values, and hence condition numbers, are computed by the implicit T3-SVD algorithm (\algoref{alg:t3_svd_implicit} in \appref{app:tensor_appendix}).

Let $\kappa_\text{max}$ be the largest entry of $\vb{\kappa}^\text{TT}$ and $\vb{\kappa}^\text{Tucker}$. We propose new ranks
$$
n_i' =
\begin{cases}
n_i+n_\text{chunk}, & \kappa_i^\text{Tucker} < \kappa_\text{max}/\tau, \\
n_i, & \text{otherwise},
\end{cases}
\qquad
r_i' =
\begin{cases}
r_i+n_\text{chunk}, & \kappa_i^\text{TT} < \kappa_\text{max}/\tau, \\
r_i, & \text{otherwise},
\end{cases}
$$
increasing a rank only on a well-conditioned edge---one whose condition number is at least a factor $\tau$ below $\kappa_\text{max}$---so the increases bring all edges toward comparable conditioning. The user-defined $\tau>1$ and $n_\text{chunk}$ control how uniformly conditioned the edges stay and how fast ranks grow; we use $\tau = 10.0$ and typically $n_\text{chunk} = 1$.

Proposed ranks may be too large, producing degenerate tensor trains, just as a matrix factorization $\vb{A} = \vb{X}\vb{Y}$ with inner dimension $r$ is degenerate when $r > \min(N,M) \ge \operatorname{rank}(\vb{A})$. We therefore reduce $\vb{n}'$ and $\vb{r}'$ by the least amount needed for a non-degenerate Tucker tensor train. This ``useless rank removal'' sweeps back and forth through the cores as in the T3-SVD algorithm, adjusting ranks as needed, but uses only the shape and ranks---no linear algebra is performed. If adjustment leaves $\vb{n}' = \vb{n}$ and $\vb{r}' = \vb{r}$ (because every edge already satisfied $\kappa_i^\text{Tucker}, \kappa_i^\text{TT} \ge \kappa_\text{max}/\tau$, or because all increases were removed), we uniformly increase all ranks by $n_\text{chunk}$ and again remove useless rank.

\subsubsection{Initial guess with new ranks} 
\label{sec:initial_guess_new_ranks}
Given the updated ranks $\vb{n}'$, $\vb{r}'$, we pad the cores of $\tuple{T}$ with zeros to match them without changing the represented tensor $\vb{T}$, and use the result as the initial guess for \eqref{eq:fixed_rank_tt_optimization_problem_original} at the next iteration. This guess is necessarily degenerate, since the unfoldings and matricizations of $\vb{T}$ retain their original ranks $\vb{r}$, $\vb{n}$. Because the point lies on a lower-rank stratum at the boundary of $\mathcal{M}_{\vb{n}',\vb{r}'}$, the initial orthogonal representations of the tangent-space base point are partially arbitrary and in practice depend on the orthogonalization routine.

This degenerate guess is nonetheless harmless. Orthogonalization completes the rank-deficient point with an arbitrary orthonormal frame on the new directions, so the tangent space already spans them; the first Gauss-Newton step then moves off the lower-rank stratum into the interior of $\mathcal{M}_{\vb{n}',\vb{r}'}$, and the ranks stay non-degenerate. We prefer this deterministic start to a random restart or perturbation: a random start often stalls at a poor critical point because Gauss-Newton cannot escape along negative curvature, while the zero-padded one lies in the basin of the continued solution.

Warm-starting also tames the conditioning. The Gauss-Newton Hessian \eqref{eq:loss_expanded_double_sum} is ill-conditioned, and more so as ranks grow, since each added rank resolves a smaller singular value of $\vb{T}$ and contributes smaller eigenvalues; continuation does not fix this. Instead it deflates the right-hand side: at the warm start the lower ranks are already optimal, so the gradient 
has essentially no component along the resolved directions and is orders of magnitude smaller than a cold solve's. Each refit thus corrects only the new directions, whereas a cold solve rebuilds the whole structure from scratch and overfits once underdetermined (\secref{sec:numerical_random}).

\subsubsection{Alternative approaches for rank adaptivity} 
The simplest approach treats the ranks as hyperparameters chosen by grid search, as in the stochastic Riemannian fitting of \cite{novikov.trofimov.ea.2018}. Our scheme is inspired by the ``SALSA'' method for tensor completion \citep{grasedyck2019stable}, which continually raises and lowers tensor train ranks within a regularized ALS scheme to keep a fixed number of ``minor'' singular values per unfolding. The modified-ALS (``MALS'') method \citep{holtz2012alternating} adapts ranks by merging two consecutive cores, updating the enlarged tensor by least squares, and refactoring by truncated SVD; the same two-site scheme underlies the tensor-network classifier of \cite{stoudenmire.schwab.2016}, which updates by a gradient step and reports difficulty combining this with SGD. \cite{dolgov2014alternating} adapt tensor train ranks within an alternating scheme using an inexact gradient (residual) direction followed by truncated SVD, and \cite{dektor2021rank} give an analogous idea for time-dependent PDEs based on the velocity component orthogonal to the tangent space; such gradient-projection enrichment is an alternative rank-continuation strategy that could be used here, approximating the intractable full-space gradient by the gradient on a larger-rank tangent space. The closely related Riemannian approach of \cite{vermeylen2025riemannian} grows the rank along a descent direction in the tangent cone to the bounded-rank variety.

\subsection{Cost comparison}
\label{sec:cost_comparison}

\tableref{tab:training_cost} compares the cost of generating training data and fitting the model for T4S, our prior tensor train construction \citep{alger.chen.ea.2020}, a conventional neural network surrogate, and derivative-informed neural operators (DINO) \citep{olearyroseberry.chen.ea.2024}. We measure data-generation cost by two kinds of solves: \emph{nonlinear solves} of the state equation, and \emph{linear solves} of the linearized forward and adjoint equations, which share a common coefficient matrix at a fixed operating point and are therefore much cheaper.

\begin{table}
    \centering
    \small
    \begin{tabular}{lcccl}
        Method & $n_s$ & Nonlinear solves & Linear solves & Training cost \\
        \hline
        T4S (this work)            & probes  & $1$          & $2k\,n_s + O(nk)$   & Fit the tensors. \\
        \citet{alger.chen.ea.2020} & probes  & $1$          & $O(k^2\,r^2\,2^k)$ & Negligible. \\
        \hline
        DINO \citep{olearyroseberry.chen.ea.2024} & samples & $n_s + n_s'$ & $n\,n_s + n\,n_s'$ & Train the network. \\
        Conventional neural network               & samples & $n_s$        & $0$                & Train the network. \\
        \hline
    \end{tabular}
    \caption{Training-data generation and training cost for surrogate models of an
    implicitly defined parameter-to-output map. Here $k$ is the Taylor order, $r$ the tensor train rank, $n$ the reduced dimension,
    $n_s$ the number of training samples, and $n_s' \le n_s$ the number of samples used
    for DINO's reduced bases. For simplicity of presentation, here $n$ is assumed to be either larger or comparable in size to the reduced output dimension $m$. Nonlinear and linear solves count solutions of the state
    equation and of the linearized forward/adjoint equations, respectively; the
    training-cost column involves no further solves. See
    \secref{sec:cost_comparison}.}
    \label{tab:training_cost}
\end{table}

The probe-based methods (T4S and \citealp{alger.chen.ea.2020}) expand about a single point and need only one nonlinear solve, after which all derivative information comes from linear solves. T4S exploits directional symmetry to cut the per-probe linear-solve cost from the $O(2^k)$ of asymmetric probes to $O(k)$, and amortizes incremental solves across orders by reusing probing vectors (\secref{sec:cost_vs_symmetry}). The sampling methods evaluate $f$ at $n_s$ distinct parameter values, each a nonlinear solve; DINO additionally forms a reduced Jacobian per sample, adding $n\,n_s$ linear solves. The dimension-reduction step (\secref{sec:initial_dimension_reduction}) is lower order: for T4S it adds $O(nk)$ linear solves and no nonlinear solve, since the expansion point is reused; for DINO it repeats the data-generation pattern with a smaller count $n_s' \le n_s$. Once data are generated, fitting incurs no further solves---T4S by Riemannian manifold optimization (\secref{sec:trrmgn}, \secref{sec:cauchy_sgd}), \citet{alger.chen.ea.2020} by negligible small least-squares solves, and the neural network and DINO by network training.

The count $n_s$ means different things across methods---random probe directions for the probe-based methods, sampled function evaluations for the sampling methods---and the methods need different amounts of data to reach comparable accuracy. \tableref{tab:training_cost} therefore reports the cost of generating each method's own training data rather than placing all methods at a common $n_s$. Conventional neural networks and DINO target a global regime complementary to the local approximation considered here, and are included for cost context rather than as a direct accuracy comparison.


\ifdefined\algforw\else\newlength{\algforw}\fi

\section{Riemannian Jacobian} 
\label{sec:tt_operations}

The TR-RMGN method (\secref{sec:trrmgn}) and the MC-SGD method (\secref{sec:cauchy_sgd}) both apply the Riemannian manifold least-squares Jacobian $J$ and its transpose $J^T$ to vectors. This section presents sweeping methods for these operations. Full tangent vectors are too large to form, so we work on \emph{gauged representations} $\delta \tuple{V}$ of tangent vectors, on which $J$ and $J^T$ factor as $\jac\circ\Pi$ and $\Pi\circ\jac^T$, where $\Pi$ is a low-cost projector and $\jac$ probes the tangent vector against the training data (\secref{sec:jacobian_gauged}). The methods for $\jac$ and $\jac^T$ (\secref{sec:sweeping_methods}) consist only of array contractions and additions and cost $O(dNn + dnr^2 + Mm)$ per training sample, with the ambient dimension $N$ and tensor order $d$ entering only linearly. Proofs of correctness are given in \appref{app:finite_to_infinite}. Readers using standard core-based optimizers can instead obtain corewise (non-manifold) versions of these algorithms by a substitution (\secref{sec:corewise_jac}).

\subsection{Riemannian Jacobian for gauged representations}
\label{sec:jacobian_gauged}

The inputs to $J$ and outputs from $J^T$ are tangent vectors to the manifold of fixed-rank Tucker tensor trains (T3 manifold), which are too large to form or store explicitly. We therefore represent a tangent vector $\vb{v}$ implicitly by its gauged variations $\delta \tuple{V} = ((\delta \vb{U}_i)_{i=1}^d, (\delta \vb{G}_i)_{i=1}^d)$; see \appref{sec:tensor_train_manifold} for the T3 manifold, this representation, and the associated linear algebra.

Let $\jac$ be the linear map that mirrors $\jaczero$ but acts on variations rather than full tangent vectors, $\jac : \delta \tuple{V} \mapsto \jaczero(\vb{v})$, where $\vb{v}$ is the tangent vector represented by $\delta \tuple{V}$. Let $\Pi$ be the orthogonal projector
$$
\Pi : ((\delta \vb{U}_l)_{l=1}^{j+1}, (\delta \vb{G}_l)_{l=1}^{j+1}) \mapsto ((\widehat{\delta \vb{U}}_l)_{l=1}^{j+1}, (\widehat{\delta \vb{G}}_l)_{l=1}^{j+1}),
$$
defined by
$$
\begin{aligned}
\widehat{\delta \vb{U}}_l &= (\vb{I} - \vb{U}_l\vb{U}_l^T)\delta \vb{U}_l, && l = 1, \dots, j+1, \\
\widehat{\delta \vb{G}}_l &= (\vb{I} - \vb{P}_l^L(\vb{P}_l^L)^T)\delta \vb{G}_l, && l = 1, \dots, j, \\
\widehat{\delta \vb{G}}_{j+1} &= \delta \vb{G}_{j+1},
\end{aligned}
$$
which enforces the gauge conditions \eqref{eq:gauge_condition_tucker} and \eqref{eq:gauge_condition_tt}. The versions of $J$ and $J^T$ acting on gauged tangent vectors are then $\jac \circ \Pi$ and $\Pi \circ \jac^T$.

These factorizations let both fitting methods run entirely on gauged representations, never forming full tangent vectors. The TR-RMGN normal equations \eqref{eq:sqp_normal_eq} become $(\Pi \circ \jac^T \circ \jac \circ \Pi)(\delta \tuple{V}) = (\Pi \circ \jac^T)(\vb{b})$, and applying the CG--Steihaug method to this projected system is equivalent to applying it directly to \eqref{eq:sqp_normal_eq}. In MC-SGD, the stochastic gradient $\vb{g}^{(B)}=(J^{(B)})^T(\vb{b})$ is represented by $\tuple{G}^{(B)} = (\Pi \circ (\mathcal{J}^{(B)})^T)(\vb{b})$, and the quantity $J^{(B)}(\vb{g}^{(B)})$ in the Cauchy steplength by $(\mathcal{J}^{(B)} \circ \Pi)(\tuple{G}^{(B)})$, where $\mathcal{J}^{(B)}$ is the minibatch version of $\jac$.

\subsection{Sweeping methods}
\label{sec:sweeping_methods}

We present methods for applying $\jac^{(s)}$ and $\left(\jac^{(s)}\right)^T$ to vectors---the versions of $\jac$ and $\jac^T$ for a single training sample $s$. The full $\jac$ (or minibatch $\jac^{(B)}$) and its transpose follow by looping or vectorizing over samples. The methods consist solely of array contractions and additions, making them well suited to vectorized GPU execution; padding the Tucker tensor train cores with zeros to make the Tucker and tensor train ranks uniform further improves pipelining.

In the sweeping methods below, a hat ($\widehat{\,\cdot\,}$) marks a base-point quantity, $\delta$ a tangent-vector perturbation, and a tilde ($\widetilde{\,\cdot\,}$) an adjoint quantity arising in the transpose sweep.

The probes $\delta\vb{z}=(\delta\vb{z}_1, \dots, \delta\vb{z}_d)$ of a tangent vector depend on both the probing vectors $\vb{w}_1, \dots, \vb{w}_d$ and the representation $\delta \tuple{V}$. The operator $\jac^{(s)}$ is the partial evaluation $(\vb{w}_1,\dots,\vb{w}_d),\, \delta \tuple{V} \mapsto \delta\vb{z}$ in which the $\vb{w}_i$ are fixed, so its input is $\delta \tuple{V}$ and its output is $\delta\vb{z}$; different $\vb{w}_i$ give different realizations of $\jac^{(s)}$. In our Riemannian fitting procedures the $\vb{w}_i$ take the directionally symmetric form $\vb{x}^{(s)},\dots,\vb{x}^{(s)},\vb{\omega}^{(s)}$, but the methods below do not rely on this.

Because a tangent vector is represented by a doubled-rank Tucker tensor train (\appref{app:tangent_vectors_doubled_ranks}), we first solve the plain probing problem for a Tucker tensor train (\secref{sec:tt_actions}), then specialize it to probe a tangent vector to derive the application of $\jac^{(s)}$ (\secref{sec:perturbed_action}), and finally apply the transpose $\left(\jac^{(s)}\right)^T$ (\secref{sec:transpose_of_action_map}).

\subsubsection{Probing a Tucker tensor train}
\label{sec:tt_actions}

\begin{figure}
\centering
\begin{tikzpicture}[scale=0.75, every node/.style={scale=0.75}]

\pgfmathsetmacro{\dx}{2.5}
\pgfmathsetmacro{\dya}{2.5}
\pgfmathsetmacro{\dyb}{2.5}
\pgfmathsetmacro{\munuoffsety}{-0.15}
\pgfmathsetmacro{\munugapy}{0.3}
\pgfmathsetmacro{\munuspacerx}{0.6}

    \node[draw, rounded corners, minimum size=0.5cm, inner sep=0.0, line width=0.3mm] (oneL) at (-1*\dx,0) {$1$};
    \node[draw, rounded corners, minimum size=0.5cm, inner sep=0.0, line width=0.3mm] (oneR) at (4*\dx,0) {$1$};

    \node[draw, rounded corners, minimum size=0.8cm, inner sep=0.0, line width=0.3mm] (G1) at (0*\dx,0) {$\vb{G}_1$};
    \node[draw, rounded corners, minimum size=0.8cm, inner sep=0.0, line width=0.3mm] (G2) at (1*\dx,0) {$\vb{G}_2$};
    \node[draw, rounded corners, minimum size=0.8cm, inner sep=0.0, line width=0.3mm] (G3) at (2*\dx,0) {$\vb{G}_3$};
    \node[draw, rounded corners, minimum size=0.8cm, inner sep=0.0, line width=0.3mm] (G4) at (3*\dx,0) {$\vb{G}_4$};

    \draw[line width=0.3mm] (oneL) -- (G1) -- (G2) -- (G3) -- (G4) -- (oneR);

    \node[draw, rounded corners, minimum size=0.8cm, inner sep=0.0, line width=0.3mm] (B1) at (0*\dx,-\dya) {$\vb{U}_1$};
    \node[draw, rounded corners, minimum size=0.8cm, inner sep=0.0, line width=0.3mm] (B2) at (1*\dx,-\dya) {$\vb{U}_2$};
    \node[draw, rounded corners, minimum size=0.8cm, inner sep=0.0, line width=0.3mm] (B3) at (2*\dx,-\dya) {$\vb{U}_3$};
    \node[draw, rounded corners, minimum size=0.8cm, inner sep=0.0, line width=0.3mm] (B4) at (3*\dx,-\dya) {$\vb{U}_4$};
    
    \node[draw, rounded corners, minimum size=0.6cm, inner sep=0.0, line width=0.3mm] (u1) at (0*\dx,-\dya-\dyb) {$\vb{w}_1$};
    \node[draw, rounded corners, minimum size=0.6cm, inner sep=0.0, line width=0.3mm] (u2) at (1*\dx,-\dya-\dyb) {$\vb{w}_2$};
    \node[draw, rounded corners, minimum size=0.6cm, inner sep=0.0, line width=0.3mm] (u3) at (2*\dx,-\dya-\dyb) {$\vb{w}_3$};
    \node[draw, rounded corners, minimum size=0.6cm, inner sep=0.0, line width=0.3mm] (u4) at (3*\dx,-\dya-\dyb) {$\vb{w}_4$};
			
    \draw[line width=0.3mm] (G1) -- (B1) -- (u1);
    \draw[line width=0.3mm] (G2) -- (B2) -- (u2);
    \draw[line width=0.3mm] (G3) -- (B3) -- (u3);
    \draw[line width=0.3mm] (G4) -- (B4) -- (u4);

    \draw[line width=0.3mm, arrows={-Stealth[harpoon]}] (-1*\dx+\munuspacerx,\munuoffsety+\munugapy) to node[midway, above]{${\pushleft}_0$} (0*\dx-\munuspacerx,\munuoffsety+\munugapy);
    \draw[line width=0.3mm, arrows={-Stealth[harpoon]}] (0*\dx-\munuspacerx,\munuoffsety) to node[midway, below]{${\pushright}_0$} (-1*\dx+\munuspacerx,\munuoffsety);
    
    \draw[line width=0.3mm, arrows={-Stealth[harpoon]}] (0*\dx+\munuspacerx,\munuoffsety+\munugapy) to node[midway, above]{${\pushleft}_1$} (1*\dx-\munuspacerx,\munuoffsety+\munugapy);
    \draw[line width=0.3mm, arrows={-Stealth[harpoon]}] (1*\dx-\munuspacerx,\munuoffsety) to node[midway, below]{${\pushright}_1$} (0*\dx+\munuspacerx,\munuoffsety);

    \draw[line width=0.3mm, arrows={-Stealth[harpoon]}] (1*\dx+\munuspacerx,\munuoffsety+\munugapy) to node[midway, above]{${\pushleft}_2$} (2*\dx-\munuspacerx,\munuoffsety+\munugapy);
    \draw[line width=0.3mm, arrows={-Stealth[harpoon]}] (2*\dx-\munuspacerx,\munuoffsety) to node[midway, below]{${\pushright}_2$} (1*\dx+\munuspacerx,\munuoffsety);

    \draw[line width=0.3mm, arrows={-Stealth[harpoon]}] (2*\dx+\munuspacerx,\munuoffsety+\munugapy) to node[midway, above]{${\pushleft}_3$} (3*\dx-\munuspacerx,\munuoffsety+\munugapy);
    \draw[line width=0.3mm, arrows={-Stealth[harpoon]}] (3*\dx-\munuspacerx,\munuoffsety) to node[midway, below]{${\pushright}_3$} (2*\dx+\munuspacerx,\munuoffsety);

    \draw[line width=0.3mm, arrows={-Stealth[harpoon]}] (3*\dx+\munuspacerx,\munuoffsety+\munugapy) to node[midway, above]{${\pushleft}_4$} (4*\dx-\munuspacerx,\munuoffsety+\munugapy);
    \draw[line width=0.3mm, arrows={-Stealth[harpoon]}] (4*\dx-\munuspacerx,\munuoffsety) to node[midway, below]{${\pushright}_4$} (3*\dx+\munuspacerx,\munuoffsety);

    \draw[line width=0.3mm, arrows={-Stealth[harpoon]}] (0*\dx+\munuoffsety+\munugapy,0-\munuspacerx) to node[midway, right]{${\vb{\eta}}_1$} (0*\dx+\munuoffsety+\munugapy,-\dya+\munuspacerx);
    \draw[line width=0.3mm, arrows={-Stealth[harpoon]}] (0*\dx+\munuoffsety,-\dya+\munuspacerx) to node[midway, left]{${\vb{\xi}}_1$} (0*\dx+\munuoffsety,0-\munuspacerx);

    \draw[line width=0.3mm, arrows={-Stealth[harpoon]}] (1*\dx+\munuoffsety+\munugapy,0-\munuspacerx) to node[midway, right]{${\vb{\eta}}_2$} (1*\dx+\munuoffsety+\munugapy,-\dya+\munuspacerx);
    \draw[line width=0.3mm, arrows={-Stealth[harpoon]}] (1*\dx+\munuoffsety,-\dya+\munuspacerx) to node[midway, left]{${\vb{\xi}}_2$} (1*\dx+\munuoffsety,0-\munuspacerx);

    \draw[line width=0.3mm, arrows={-Stealth[harpoon]}] (2*\dx+\munuoffsety+\munugapy,0-\munuspacerx) to node[midway, right]{${\vb{\eta}}_3$} (2*\dx+\munuoffsety+\munugapy,-\dya+\munuspacerx);
    \draw[line width=0.3mm, arrows={-Stealth[harpoon]}] (2*\dx+\munuoffsety,-\dya+\munuspacerx) to node[midway, left]{${\vb{\xi}}_3$} (2*\dx+\munuoffsety,0-\munuspacerx);

    \draw[line width=0.3mm, arrows={-Stealth[harpoon]}] (3*\dx+\munuoffsety+\munugapy,0-\munuspacerx) to node[midway, right]{${\vb{\eta}}_4$} (3*\dx+\munuoffsety+\munugapy,-\dya+\munuspacerx);
    \draw[line width=0.3mm, arrows={-Stealth[harpoon]}] (3*\dx+\munuoffsety,-\dya+\munuspacerx) to node[midway, left]{${\vb{\xi}}_4$} (3*\dx+\munuoffsety,0-\munuspacerx);

    \draw[line width=0.3mm, arrows={-Stealth[harpoon]}] (0*\dx+\munuoffsety+\munugapy,-\dya-\munuspacerx) to node[midway, right]{$\vb{z}_1$} (0*\dx+\munuoffsety+\munugapy,-\dya-\dyb+\munuspacerx);
    \draw[line width=0.3mm, arrows={-Stealth[harpoon]}] (0*\dx+\munuoffsety,-\dya-\dyb+\munuspacerx) to node[midway, left]{$\vb{w}_1$} (0*\dx+\munuoffsety,-\dya-\munuspacerx);

    \draw[line width=0.3mm, arrows={-Stealth[harpoon]}] (1*\dx+\munuoffsety+\munugapy,-\dya-\munuspacerx) to node[midway, right]{$\vb{z}_2$} (1*\dx+\munuoffsety+\munugapy,-\dya-\dyb+\munuspacerx);
    \draw[line width=0.3mm, arrows={-Stealth[harpoon]}] (1*\dx+\munuoffsety,-\dya-\dyb+\munuspacerx) to node[midway, left]{$\vb{w}_2$} (1*\dx+\munuoffsety,-\dya-\munuspacerx);

    \draw[line width=0.3mm, arrows={-Stealth[harpoon]}] (2*\dx+\munuoffsety+\munugapy,-\dya-\munuspacerx) to node[midway, right]{$\vb{z}_3$} (2*\dx+\munuoffsety+\munugapy,-\dya-\dyb+\munuspacerx);
    \draw[line width=0.3mm, arrows={-Stealth[harpoon]}] (2*\dx+\munuoffsety,-\dya-\dyb+\munuspacerx) to node[midway, left]{$\vb{w}_3$} (2*\dx+\munuoffsety,-\dya-\munuspacerx);

    \draw[line width=0.3mm, arrows={-Stealth[harpoon]}] (3*\dx+\munuoffsety+\munugapy,-\dya-\munuspacerx) to node[midway, right]{$\vb{z}_4$} (3*\dx+\munuoffsety+\munugapy,-\dya-\dyb+\munuspacerx);
    \draw[line width=0.3mm, arrows={-Stealth[harpoon]}] (3*\dx+\munuoffsety,-\dya-\dyb+\munuspacerx) to node[midway, left]{$\vb{w}_4$} (3*\dx+\munuoffsety,-\dya-\munuspacerx);

\end{tikzpicture}
\caption{Edge variables associated with probes of a Tucker tensor train. The contracted subnetwork associated with an edge variable consists of everything ``behind'' the arrow.
}
\label{fig:tt_pushthroughs}
\end{figure}

\algoref{alg:probe_t3} computes all $d$ probes of a Tucker tensor train by sweeping contractions, at cost $O(dNn + dnr^2 + Mm)$ (\thmref{thm:probe_t3}). Let $\tuple{T}=((\vb{U}_i)_{i=1}^d,(\vb{G}_i)_{i=1}^d)$ represent a scalar-valued multilinear function $T$, and let $\vb{w}_1,\dots,\vb{w}_d$ be probing vectors. We treat the output mode as an additional scalar tensor mode; the vector-valued case follows by the scalar/vector correspondence established in \secref{sec:tensor_correspondences}. The $i$th probe is the vector $\vb{z}_i$ representing the linear functional $\vb{t} \mapsto T(\vb{w}_1, \dots, \vb{t}, \dots, \vb{w}_d)$:
$$
\vb{z}_i^T \vb{t} = T(\vb{w}_1, \dots, \vb{w}_{i-1}, \vb{t}, \vb{w}_{i+1}, \dots, \vb{w}_d)
\qquad \text{for all } \vb{t}.
$$
The sweeps are organized around the \emph{edge variables}
\begin{align*}
\vb{\xi}_i :=& \vb{U}_i^T \vb{w}_i, \\
\pushleft_{i}^T :=& G_{1}(\vb{\xi}_1) G_{2}(\vb{\xi}_2) \dots G_{i}(\vb{\xi}_i), \\
\pushright_{i} :=& G_{i+1}(\vb{\xi}_{i+1})G_{i+2}(\vb{\xi}_{i+2}) \dots G_{d}(\vb{\xi}_d), \\
\eta_i(\vb{s}):=& \pushleft_{i-1}^T G_i(\vb{s}) \pushright_i,
\end{align*}
where $G_i(\vb{s})$ is the matrix formed by contracting the Tucker (middle) index of $\vb{G}_i$ against $\vb{s}$. Each edge variable is a partial contraction of the network: contracting $\tuple{T}$ against all $\vb{w}_i$ reduces it to a scalar, and each internal edge splits the network into two subnetworks whose separate contractions give the vectors associated with that edge (\figref{fig:tt_pushthroughs}). For the edge between $\vb{G}_i$ and $\vb{G}_{i+1}$ these are $\pushleft_i$ and $\pushright_i$; for the edge between $\vb{U}_i$ and $\vb{G}_i$ they are $\vb{\xi}_i$ (below) and $\vb{\eta}_i$ (above); and for the edge between $\vb{w}_i$ and $\vb{U}_i$, contracting the network above the edge gives the $i$th probe $\vb{z}_i$. \algoref{alg:probe_t3} evaluates these by contracting up for $\vb{\xi}_i$, sweeping left-to-right for $\pushleft_i$, right-to-left for $\pushright_i$, centrally for $\vb{\eta}_i$, and down for the probes $\vb{z}_i$.

\begin{algorithm}[t]
\caption{Probing a Tucker tensor train.}
\label{alg:probe_t3}
\settowidth{\algforw}{$\pushleft_{i}^T \gets \pushleft_{i-1}^T G_i(\vb{\xi}_i)$\quad}
\begin{algorithmic}[1]
\Require Tucker tensor train $\tuple{T}=((\vb{U}_i)_{i=1}^d,(\vb{G}_i)_{i=1}^d)$; probing vectors $\vb{w}_1,\dots,\vb{w}_d$.
\Ensure Probes $\vb{z}_1,\dots,\vb{z}_d$.
\State\label{ln:t3-up} \makebox[\algforw][l]{$\vb{\xi}_i \gets \vb{U}_i^T \vb{w}_i$}\textbf{for } $i=1,\dots,d$ \Comment{contract up}
\State\label{ln:t3-bdy} $\pushleft_{0}^T \gets 1$, \ $\pushright_{d} \gets 1$ \Comment{boundary terms}
\State\label{ln:t3-left} \makebox[\algforw][l]{$\pushleft_{i}^T \gets \pushleft_{i-1}^T G_i(\vb{\xi}_i)$}\textbf{for } $i=1,\dots,d-1$ \Comment{left sweep}
\State\label{ln:t3-right} \makebox[\algforw][l]{$\pushright_{i} \gets G_i(\vb{\xi}_i)\,\pushright_{i+1}$}\textbf{for } $i=d-1,\dots,1$ \Comment{right sweep}
\State\label{ln:t3-central} \makebox[\algforw][l]{$\vb{\eta}_i \gets \pushleft_{i-1}^T \vb{G}_i \pushright_i$}\textbf{for } $i=1,\dots,d$ \Comment{central contraction}
\State\label{ln:t3-down} \makebox[\algforw][l]{$\vb{z}_i \gets \vb{U}_i \vb{\eta}_i$}\textbf{for } $i=1,\dots,d$ \Comment{contract down}
\State \textbf{return} $\vb{z}_1,\dots,\vb{z}_d$
\end{algorithmic}
\end{algorithm}

\begin{theorem}[Probing a Tucker tensor train]
\label{thm:probe_t3}
\algoref{alg:probe_t3} returns the probes of $\tuple{T}$ on $\vb{w}_1,\dots,\vb{w}_d$; that is, its outputs satisfy
$$
\vb{z}_i^T \vb{t} = T(\vb{w}_1, \dots, \vb{w}_{i-1}, \vb{t}, \vb{w}_{i+1}, \dots, \vb{w}_d)
\qquad \text{for all } \vb{t} \text{ and } i=1,\dots,d.
$$
If $\tuple{T}$ has shape $(N,\dots,N,M)$, Tucker ranks $(n,\dots,n,m)$, and tensor train ranks $(1,r,\dots,r,1)$, then \algoref{alg:probe_t3} requires $O(dNn + d n r^2 + Mm)$ operations.
\end{theorem}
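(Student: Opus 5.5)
The argument has two parts: correctness, obtained by unwinding the definitions, and cost, obtained by counting operations line by line.

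\emph{Correctness.} By \defref{def:t3} with the output mode treated as an extra scalar mode, for any $\vb{t}$ we have
$$
T(\vb{w}_1,\dots,\vb{t},\dots,\vb{w}_d) = G_1(\vb{U}_1^T\vb{w}_1)\cdots G_i(\vb{U}_i^T\vb{t})\cdots G_d(\vb{U}_d^T\vb{w}_d).
$$
Here each $G_l$ is linear and the product is a $1\times 1$ matrix because $r_0=r_d=1$. First I will check by induction on $i$ that the left and right sweeps of \algoref{alg:probe_t3} produce exactly $\pushleft_i^T = G_1(\vb{\xi}_1)\cdots G_i(\vb{\xi}_i)$ and $\pushright_i = G_{i+1}(\vb{\xi}_{i+1})\cdots G_d(\vb{\xi}_d)$, with the empty products $\pushleft_0^T=1$ and $\pushright_d=1$ as base cases. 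This gives
$$
T(\vb{w}_1,\dots,\vb{t},\dots,\vb{w}_d)=\pushleft_{i-1}^T G_i(\vb{U}_i^T\vb{t})\pushright_i .
$$
Since $\vb{s}\mapsto \pushleft_{i-1}^T G_i(\vb{s})\pushright_i$ is a linear functional on $\mathbb{R}^{n_i}$, it is represented by the vector $\vb{\eta}_i$ formed in the central contraction, so $\eta_i(\vb{s})=\vb{\eta}_i^T\vb{s}$. Then $\vb{\eta}_i^T\vb{U}_i^T\vb{t}=(\vb{U}_i\vb{\eta}_i)^T\vb{t}=\vb{z}_i^T\vb{t}$. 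The only care needed is to fix an index convention for the contraction $\pushleft_{i-1}^T\vb{G}_i\pushright_i$ that is consistent with $\vb{G}_i[a,b,c]=\vb{e}_a^TG_i(\vb{e}_b)\vb{e}_c$. With that convention, the $b$th entry of $\vb{\eta}_i$ is $\pushleft_{i-1}^TG_i(\vb{e}_b)\pushright_i$, and linearity finishes the argument.

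\emph{Cost.} I will count each line, taking the ranks $(n,\dots,n,m)$ and $(1,r,\dots,r,1)$.

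The up and down contractions are matrix--vector products with $\vb{U}_i$. They cost $O(Nn)$ for each of the $d-1$ input modes and $O(Mm)$ for the output mode, for a total of $O(dNn+Mm)$.

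Forming $G_i(\vb{\xi}_i)$ contracts an $r\times n\times r$ core against a vector in $\mathbb{R}^n$, which costs $O(nr^2)$; the final core costs $O(mr)$. Multiplying by $\pushleft_{i-1}$ or $\pushright_{i+1}$ then adds only $O(r^2)$. So each sweep costs $O(dnr^2+mr)$.

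The central contraction of $\vb{G}_i$ with $\pushleft_{i-1}$ and $\pushright_i$ costs $O(nr^2)$ for each mode.

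The $O(mr)$ terms need absorbing. I will note that non-degeneracy, or simply $r\le$ the product of neighbouring dimensions, is not needed: I would bound $mr$ by $dnr^2+Mm$, using $m\le M$. If that bound is awkward, I would instead treat the output core's cost $O(mr)$ as dominated by $O(Mm)$ when $r\le M$, and otherwise by $O(nr^2)$ under the equal-rank regime. Summing the line counts then gives $O(dNn+dnr^2+Mm)$.

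I expect the main obstacle to be bookkeeping rather than substance. Specifically, it is making the array contraction $\pushleft_{i-1}^T\vb{G}_i\pushright_i$ precise and consistent with the scalar/vector correspondence for the output mode, and handling the boundary cores with rank $1$ and Tucker rank $m$ in the count.
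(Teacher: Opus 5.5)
Your proof is correct and follows essentially the same route as the paper: write $T(\vb{w}_1,\dots,\vb{t},\dots,\vb{w}_d)=\vb{\mu}_{i-1}^T G_i(\vb{U}_i^T\vb{t})\vb{\nu}_i$, identify this linear functional with $(\vb{U}_i\vb{\eta}_i)^T\vb{t}$, and then count the cost line by line. Your case split for the output-core term ($mr\le Mm$ when $r\le M$, and $mr<r^2\le nr^2$ otherwise) is slightly more careful than the paper's bare claim $O(rm)\le O(Mm)$, which tacitly needs $r\le M$; also, when you carry out the induction for the right sweep you will find it must read $\vb{\nu}_i = G_{i+1}(\vb{\xi}_{i+1})\vb{\nu}_{i+1}$, so the algorithm's printed right-sweep line has an index typo.
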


The proof of \thmref{thm:probe_t3} is given in \appref{app:finite_to_infinite}.

\subsubsection{Probing a tangent vector}
\label{sec:perturbed_action}

Applying $\jac^{(s)}$ to $\delta \tuple{V}$ means computing the probes $\delta\vb{z} = (\delta\vb{z}_i)_{i=1}^d$ of the tangent vector $\vb{v}$ which $\delta \tuple{V}$ represents. This computation proceeds in two stages, \algoref{alg:probe_basepoint} and \algoref{alg:probe_tangent}, each at cost $O(dNn + dnr^2 + Mm)$ (\thmref{thm:probe_tangent}). Fix orthogonal representations \eqref{eq:p_rep_Btilde} and \eqref{eq:p_rep_Gtilde} of the base point $\vb{p} \in \mathcal{M}_{\vb{n},\vb{r}}$ (\appref{sec:tensor_train_manifold}), and let $\delta \tuple{V} = ((\delta \vb{U}_i)_{i=1}^d, (\delta \vb{G}_i)_{i=1}^d)$ represent $\vb{v} \in T_\vb{p}\mathcal{M}_{\vb{n},\vb{r}}$ with respect to them.

\algoref{alg:probe_basepoint} is the probing sweep of \secref{sec:tt_actions} applied to the base point, using its gauge-appropriate cores $\vb{P}_i$ (left sweep), $\vb{Q}_i$ (right sweep), and $\vb{O}_i$ (central contraction) in place of $\vb{G}_i$. It returns the base-point probes $\widehat{\vb{z}}_i = \vb{U}_i \widehat{\vb{\eta}}_i$ and the base-point edge variables $\widehat{\vb{\xi}}_i$, $\widehat{\pushleft}_i$, $\widehat{\pushright}_i$, $\widehat{\vb{\eta}}_i$.

\algoref{alg:probe_tangent} then computes the tangent probes. The tangent vector $\vb{v}$ is represented by a doubled-rank Tucker tensor train (\appref{app:tangent_vectors_doubled_ranks}), so the internal edge variables of \secref{sec:tt_actions} double in dimension, each splitting into a base-point component and a perturbation component:
\begin{equation*}
\vb{\xi}_i \to (\widehat{\vb{\xi}}_i, \delta \vb{\xi}_i), \quad
\pushleft_i \to (\ppsleft_i, \widehat{\pushleft}_i), \quad
\pushright_i \to (\ppsright_i, \widehat{\pushright}_i), \quad
\vb{\eta}_i \to (\delta \vb{\eta}_i, \widehat{\vb{\eta}}_i).
\end{equation*}
The base-point components are supplied by \algoref{alg:probe_basepoint}; \algoref{alg:probe_tangent} computes the perturbation components $\delta \vb{\xi}_i$, $\ppsleft_i$, $\ppsright_i$, $\delta \vb{\eta}_i$ and assembles the tangent probes $\delta\vb{z}=\jac^{(s)}(\delta\tuple{V})$.

\begin{algorithm}[t]
\caption{Probing a base point via its orthogonal representations.}
\label{alg:probe_basepoint}
\settowidth{\algforw}{$\widehat{\pushleft}_{i}^T \gets \widehat{\pushleft}_{i-1}^T P_i(\widehat{\vb{\xi}}_i)$\quad}
\begin{algorithmic}[1]
\Require Base-point cores $(\vb{U}_i)_{i=1}^d$ and $(\vb{P}_i,\vb{Q}_i,\vb{O}_i)_{i=1}^d$ of the representations \eqref{eq:p_rep_Btilde}--\eqref{eq:p_rep_Gtilde}; probing vectors $\vb{w}_1,\dots,\vb{w}_d$.
\Ensure Base-point probes $\widehat{\vb{z}}_1,\dots,\widehat{\vb{z}}_d$ and edge variables $\widehat{\vb{\xi}}_i,\widehat{\pushleft}_i,\widehat{\pushright}_i,\widehat{\vb{\eta}}_i$.
\State\label{ln:bp-up} \makebox[\algforw][l]{$\widehat{\vb{\xi}}_i \gets \vb{U}_i^T \vb{w}_i$}\textbf{for } $i=1,\dots,d$ \Comment{contract up}
\State\label{ln:bp-bdy} $\widehat{\pushleft}_0^T \gets 1$, \ $\widehat{\pushright}_d \gets 1$ \Comment{boundary terms}
\State\label{ln:bp-left} \makebox[\algforw][l]{$\widehat{\pushleft}_i^T \gets \widehat{\pushleft}_{i-1}^T P_i(\widehat{\vb{\xi}}_i)$}\textbf{for } $i=1,\dots,d-1$ \Comment{left sweep}
\State\label{ln:bp-right} \makebox[\algforw][l]{$\widehat{\pushright}_i \gets Q_i(\widehat{\vb{\xi}}_i)\,\widehat{\pushright}_{i+1}$}\textbf{for } $i=d-1,\dots,1$ \Comment{right sweep}
\State\label{ln:bp-central} \makebox[\algforw][l]{$\widehat{\vb{\eta}}_i \gets \widehat{\pushleft}_{i-1}^T \vb{O}_i \widehat{\pushright}_i$}\textbf{for } $i=1,\dots,d$ \Comment{central contraction}
\State\label{ln:bp-down} \makebox[\algforw][l]{$\widehat{\vb{z}}_i \gets \vb{U}_i \widehat{\vb{\eta}}_i$}\textbf{for } $i=1,\dots,d$ \Comment{contract down}
\State \textbf{return} $\widehat{\vb{z}}_1,\dots,\widehat{\vb{z}}_d$ and $\widehat{\vb{\xi}}_i,\widehat{\pushleft}_i,\widehat{\pushright}_i,\widehat{\vb{\eta}}_i$
\end{algorithmic}
\end{algorithm}

\begin{algorithm}[t]
\caption{Probing a tangent vector: application of $\jac^{(s)}$.}
\label{alg:probe_tangent}
\settowidth{\algforw}{$\delta\vb{\eta}_i \gets \ppsleft_{i-1}^T \rightcore_i \widehat{\pushright}_i + \widehat{\pushleft}_{i-1} \leftcore_i \ppsright_i + \widehat{\pushleft}_{i-1}^T \delta\vb{G}_i \widehat{\pushright}_i$\quad}
\begin{algorithmic}[1]
\Require Base edge variables $\widehat{\vb{\xi}}_i,\widehat{\pushleft}_i,\widehat{\pushright}_i,\widehat{\vb{\eta}}_i$ (\algoref{alg:probe_basepoint}); base-point cores $(\vb{U}_i)_{i=1}^d$ and $(\vb{P}_i,\vb{Q}_i,\vb{O}_i)_{i=1}^d$; variation $\delta\tuple{V}=((\delta\vb{U}_i)_{i=1}^d,(\delta\vb{G}_i)_{i=1}^d)$; probing vectors $\vb{w}_1,\dots,\vb{w}_d$.
\Ensure Tangent probes $\delta\vb{z}=(\delta\vb{z}_1,\dots,\delta\vb{z}_d)=\jac^{(s)}(\delta\tuple{V})$.
\State\label{ln:tan-up} \makebox[\algforw][l]{$\delta\vb{\xi}_i \gets \delta\vb{U}_i^T \vb{w}_i$}\textbf{for } $i=1,\dots,d$ \Comment{contract up}
\State\label{ln:tan-bdy} $\ppsleft_0^T \gets 0$, \ $\ppsright_d \gets 0$ \Comment{boundary terms}
\State\label{ln:tan-left} $\ppsleft_i^T \gets \ppsleft_{i-1}^T Q_i(\widehat{\vb{\xi}}_i) + \widehat{\pushleft}_{i-1}^T \delta G_i(\widehat{\vb{\xi}}_i) + \widehat{\pushleft}_{i-1}^T O_i(\delta\vb{\xi}_i)$ \quad \textbf{for } $i=1,\dots,d-1$ \Comment{left sweep}
\State\label{ln:tan-right} $\ppsright_i \gets \delta G_{i+1}(\widehat{\vb{\xi}}_{i+1})\widehat{\pushright}_{i+1} + O_{i+1}(\delta\vb{\xi}_{i+1})\widehat{\pushright}_{i+1} + P_{i+1}(\widehat{\vb{\xi}}_{i+1})\ppsright_{i+1}$ \quad \textbf{for } $i=d-1,\dots,1$ \Comment{right sweep}
\State\label{ln:tan-central} \makebox[\algforw][l]{$\delta\vb{\eta}_i \gets \ppsleft_{i-1}^T \rightcore_i \widehat{\pushright}_i + \widehat{\pushleft}_{i-1}^T \leftcore_i \ppsright_i + \widehat{\pushleft}_{i-1}^T \delta\vb{G}_i \widehat{\pushright}_i$}\textbf{for } $i=1,\dots,d$ \Comment{central contraction}
\State\label{ln:tan-down} \makebox[\algforw][l]{$\delta\vb{z}_i \gets \vb{U}_i \delta\vb{\eta}_i + \delta\vb{U}_i \widehat{\vb{\eta}}_i$}\textbf{for } $i=1,\dots,d$ \Comment{contract down}
\State \textbf{return} $\delta\vb{z}=(\delta\vb{z}_1,\dots,\delta\vb{z}_d)$
\end{algorithmic}
\end{algorithm}

\begin{theorem}[Probing the base point and the tangent vector]
\label{thm:probe_tangent}
\algoref{alg:probe_basepoint} returns the probes $\widehat{\vb{z}}$ of the base point $\vb{p}$ on $\vb{w}_1,\dots,\vb{w}_d$, and \algoref{alg:probe_tangent} returns $\delta\vb{z}=\jac^{(s)}(\delta\tuple{V})$, the probes of the tangent vector $\vb{v}$ represented by $\delta\tuple{V}$. If $\vb{p}$ has shape $(N,\dots,N,M)$, Tucker ranks $(n,\dots,n,m)$, and tensor train ranks $(1,r,\dots,r,1)$, then each of Algorithms~\ref{alg:probe_basepoint} and~\ref{alg:probe_tangent} requires $O(dNn + d n r^2 + Mm)$ operations.
\end{theorem}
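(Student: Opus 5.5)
The plan is to reduce both claims to \thmref{thm:probe_t3}, which we take as established. For \algoref{alg:probe_basepoint}, we use the orthogonal representations \eqref{eq:p_rep_Btilde}--\eqref{eq:p_rep_Gtilde} of the base point. They state that for every $i$ the TT core of $\vb{p}$ can be written as the chain $P_1\cdots P_{i-1}\,O_i\,Q_{i+1}\cdots Q_d$, with left-orthogonal $\vb{P}$'s, right-orthogonal $\vb{Q}$'s, and the shared Tucker bases $\vb{U}_i$. We then check that the left sweep with $P_i$ produces exactly the $\pushleft_{i-1}$ of the representation centered at $i$, and that the right sweep with $Q_i$ produces the $\pushright_i$ of that same representation. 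Consequently the central contraction with $\vb{O}_i$ equals line~\ref{ln:t3-central} of \algoref{alg:probe_t3} applied to a valid Tucker tensor train representation of $\vb{p}$. Correctness of $\widehat{\vb{z}}_i$ then follows from \thmref{thm:probe_t3} applied to that centered representation, one $i$ at a time. The operation count is identical to that of \algoref{alg:probe_t3}.

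For \algoref{alg:probe_tangent}, we write the tangent vector as the doubled-rank Tucker tensor train of \appref{app:tangent_vectors_doubled_ranks}. The Tucker factors are $[\,\vb{U}_i\ \ \delta\vb{U}_i\,]$. The TT cores have the block form
$$
\begin{bmatrix} Q_i(\vb{s}) & 0 \\ \delta G_i(\vb{s}) + O_i(\vb{t}) & P_i(\vb{s}) \end{bmatrix},
$$
acting on the stacked argument $(\vb{s},\vb{t})$ obtained from $[\,\vb{U}_i\ \ \delta\vb{U}_i\,]^T\vb{w}_i$. The first core is the bottom row and the last core is the first column, with whatever block ordering the appendix fixes. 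Expanding this product reproduces the usual sum $\sum_i$ of single-core variations: the $\delta G_i$ terms together with the $\delta U_i$ terms, the latter absorbed via $O_i$.

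We then run \algoref{alg:probe_t3} on this doubled representation and split each edge variable into its two blocks. On the Tucker edges, the up-contraction gives $(\widehat{\vb{\xi}}_i,\delta\vb{\xi}_i)$. By block lower-triangularity, the left-sweep vector has the form $(\ppsleft_i^T,\widehat{\pushleft}_i^T)$, whose second block obeys the base-point recursion and whose first block obeys exactly line~\ref{ln:tan-left}. The right sweep works symmetrically, giving $(\ppsright_i,\widehat{\pushright}_i)$ with line~\ref{ln:tan-right}. Both boundary terms are zero because the end cores select a single block row or column. For the central contraction with respect to $\vb{s}$ and $\vb{t}$, we obtain a pair $(\delta\vb{\eta}_i,\widehat{\vb{\eta}}_i)$. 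The down-contraction then gives $\vb{U}_i\delta\vb{\eta}_i+\delta\vb{U}_i\widehat{\vb{\eta}}_i$, matching line~\ref{ln:tan-down}.

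The main obstacle is the central step. The $\vb{s}$-derivative block of the middle core yields three terms, with $Q_i$, $P_i$ and $\delta G_i$, and one must check that they produce exactly $\ppsleft_{i-1}^T\rightcore_i\widehat{\pushright}_i+\widehat{\pushleft}_{i-1}^T\leftcore_i\ppsright_i+\widehat{\pushleft}_{i-1}^T\delta\vb{G}_i\widehat{\pushright}_i$. One must also check that the $\vb{t}$-block gives $\widehat{\pushleft}_{i-1}^T\vb{O}_i\widehat{\pushright}_i=\widehat{\vb{\eta}}_i$, which again uses the centered identity from the first paragraph. A further subtlety is that the two blocks of the edge variables are paired the opposite way: the base-point part sits in the second slot on the left edges but in the first slot on the right edges. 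This must be tracked consistently to get the $P$/$Q$ placement right, and I would verify it first on $d=2$.

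For the cost, the doubled representation has Tucker ranks at most $2n$ and TT ranks at most $2r$, which changes only constants. Alternatively, one can count directly: each line involves $O(Nn)$ or $O(Mm)$ work for the Tucker contractions and $O(nr^2)$ for each core–vector product, repeated over $d$ cores. This gives $O(dNn+dnr^2+Mm)$.
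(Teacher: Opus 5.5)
Your strategy is the paper's: reduce both claims to \thmref{thm:probe_t3}, using the $\vb{O}_i$-centered representation \eqref{eq:p_rep_Btilde} for the base point and the block-triangular doubled-rank Tucker tensor train of \appref{app:tangent_vectors_doubled_ranks} for the tangent vector. Your tangent part is correct and more explicit than the paper's. The left edge variable is $(\ppsleft_i^T,\widehat{\pushleft}_i^T)$, and, as you anticipate, the right one stacks $\widehat{\pushright}_i$ above $\ppsright_i$. The $\vb{s}$-block of the central contraction gives the three terms of line~\ref{ln:tan-central}. The $\vb{t}$-block gives $\widehat{\pushleft}_{i-1}^T\vb{O}_i\widehat{\pushright}_i$, which equals $\widehat{\vb{\eta}}_i$ directly by line~\ref{ln:bp-central}, with no appeal to a centered identity. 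Your cost count is fine.

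The gap is in your base-point paragraph. In \eqref{eq:p_rep_Btilde}, the Tucker factor paired with $\vb{O}_i$ is not $\vb{U}_i$ but $\widetilde{\vb{U}}_i$. \algoref{alg:t3_sweep_orth} obtains it by absorbing into $\vb{U}_i$ the factor $\vb{X}$ in $\widetilde{\vb{G}}_i[a,y,b]=\sum_x\vb{O}_i[a,x,b]\vb{X}[x,y]$. A chain whose $i$th Tucker factor is $\vb{U}_i$ must have $\widetilde{\vb{G}}_i$ at its center, as in \eqref{eq:p_rep_Gtilde}. So \thmref{thm:probe_t3} applied to the centered representation yields the probe $\widetilde{\vb{U}}_i\widehat{\vb{\eta}}_i$, not the $\vb{U}_i\widehat{\vb{\eta}}_i$ returned by line~\ref{ln:bp-down}.

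These genuinely differ. $\widehat{\vb{\eta}}_i$ is built only from orthonormal objects: the $\vb{U}_j$, the left-orthogonal $\vb{P}_j$, the right-orthogonal $\vb{Q}_j$, and the outer-orthogonal $\vb{O}_i$. Hence $\norm{\widehat{\vb{\eta}}_i}\le\prod_{j\neq i}\norm{\vb{w}_j}$ for every $\vb{p}$, whereas the true probe is linear in $\vb{p}$. All of the scale sits in $\vb{X}$, with $\norm{\vb{X}}_{\mathrm{HS}}=\norm{\vb{p}}_{\mathrm{HS}}$.

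The paper's proof makes the same leap, asserting that \thmref{thm:probe_t3} gives $\widehat{\vb{z}}_i=\vb{U}_i\widehat{\vb{\eta}}_i$. So the first claim of the statement holds only after correcting line~\ref{ln:bp-down} to $\widehat{\vb{z}}_i=\widetilde{\vb{U}}_i\widehat{\vb{\eta}}_i$, equivalently $\vb{U}_i\,\widehat{\pushleft}_{i-1}^T\widetilde{\vb{G}}_i\widehat{\pushright}_i$. What your argument does establish is that the base-point edge variables $\widehat{\vb{\xi}}_i,\widehat{\pushleft}_i,\widehat{\pushright}_i,\widehat{\vb{\eta}}_i$ are correct. \algoref{alg:probe_tangent} uses $\widehat{\vb{\eta}}_i$ only in $\delta\vb{U}_i\widehat{\vb{\eta}}_i$, which is exactly the $\vb{O}_i$/$\delta\vb{U}_i$ term of \eqref{eq:t3_tangent_sum_lru}, so the tangent-probe claim is unaffected.
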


The proof of \thmref{thm:probe_tangent} is given in \appref{app:finite_to_infinite}.

The base-point edge variables $\widehat{\vb{\xi}}_i$, $\widehat{\pushleft}_i$, $\widehat{\pushright}_i$, $\widehat{\vb{\eta}}_i$ depend on the base point $\vb{p}$ and the sample $s$, but not on the tangent vector, so they can be precomputed once per base point and reused across all tangent vectors there. Because they depend on $s$, this requires storing them for every sample; when training data are plentiful, recomputing them per tangent vector trades computation for reduced memory.

\subsubsection{Transpose of tangent vector to probes map}
\label{sec:transpose_of_action_map}

Let $\widetilde{\vb{z}}=(\widetilde{\vb{z}}_1, \dots, \widetilde{\vb{z}}_d)$ be a tuple of probe-like perturbation vectors (e.g.\ residuals). \algoref{alg:transpose_tangent} applies the transpose map $\widetilde{\vb{z}} \mapsto \left(\jac^{(s)}\right)^T(\widetilde{\vb{z}})$ by an adjoint sweep over the edge variables of \secref{sec:perturbed_action}, again at cost $O(dNn + dnr^2 + Mm)$ (\thmref{thm:transpose_tangent}). In \algoref{alg:transpose_tangent}, $\otimes$ is the tensor product, $(\vb{a} \otimes \vb{b} \otimes \vb{c})[j,k,l]=\vb{a}[j] \vb{b}[k] \vb{c}[l]$, not the Kronecker product $\kron$.

\begin{algorithm}[t]
\caption{Transpose of the tangent vector to probes map: application of $\left(\jac^{(s)}\right)^T$.}
\label{alg:transpose_tangent}
\settowidth{\algforw}{$\incrleft_i \gets Q_{i+1}(\widetilde{\delta\vb{\eta}}_{i+1})\widehat{\pushright}_{i+1} + Q_{i+1}(\widehat{\vb{\xi}}_{i+1})\incrleft_{i+1}$\quad}
\begin{algorithmic}[1]
\Require Base edge variables $\widehat{\vb{\xi}}_i,\widehat{\pushleft}_i,\widehat{\pushright}_i,\widehat{\vb{\eta}}_i$ (\algoref{alg:probe_basepoint}); base-point cores $(\vb{U}_i)_{i=1}^d$ and $(\vb{P}_i,\vb{Q}_i,\vb{O}_i)_{i=1}^d$ of the representations \eqref{eq:p_rep_Btilde}--\eqref{eq:p_rep_Gtilde}; probing vectors $\vb{w}_1,\dots,\vb{w}_d$; perturbation vectors $\widetilde{\vb{z}}_1,\dots,\widetilde{\vb{z}}_d$.
\Ensure $\left(\jac^{(s)}\right)^T(\widetilde{\vb{z}}) = ((\widetilde{\delta\vb{U}}_i)_{i=1}^d,(\widetilde{\delta\vb{G}}_i)_{i=1}^d)$.
\State\label{ln:tr-up} \makebox[\algforw][l]{$\widetilde{\delta\vb{\eta}}_i \gets \vb{U}_i^T \widetilde{\vb{z}}_i$}\textbf{for } $i=1,\dots,d$ \Comment{contract up}
\State\label{ln:tr-bdy} $\incrright_0^T \gets 0$, \ $\incrleft_d \gets 0$ \Comment{boundary terms}
\State\label{ln:tr-left} \makebox[\algforw][l]{$\incrright_i^T \gets \incrright_{i-1}^T P_i(\widehat{\vb{\xi}}_i) + \widehat{\pushleft}_{i-1}^T P_i(\widetilde{\delta\vb{\eta}}_i)$}\textbf{for } $i=1,\dots,d-1$ \Comment{left sweep}
\State\label{ln:tr-right} \makebox[\algforw][l]{$\incrleft_i \gets Q_{i+1}(\widetilde{\delta\vb{\eta}}_{i+1})\widehat{\pushright}_{i+1} + Q_{i+1}(\widehat{\vb{\xi}}_{i+1})\incrleft_{i+1}$}\textbf{for } $i=d-1,\dots,1$ \Comment{right sweep}
\State\label{ln:tr-central} \makebox[\algforw][l]{$\widetilde{\delta\vb{\xi}}_i \gets \incrright_{i-1}^T \vb{O}_i \widehat{\pushright}_i + \widehat{\pushleft}_{i-1}^T \vb{O}_i \incrleft_i$}\textbf{for } $i=1,\dots,d$ \Comment{central contraction}
\State\label{ln:tr-assembleU} \makebox[\algforw][l]{$\widetilde{\delta\vb{U}}_i \gets \widetilde{\vb{z}}_i \widehat{\vb{\eta}}_i^T + \vb{w}_i \widetilde{\delta\vb{\xi}}_i^T$}\textbf{for } $i=1,\dots,d$ \Comment{assemble $\widetilde{\delta\vb{U}}_i$}
\State\label{ln:tr-assembleG} $\widetilde{\delta\vb{G}}_i \gets \incrright_{i-1}^T \otimes \widehat{\vb{\xi}}_i \otimes \widehat{\pushright}_i + \widehat{\pushleft}_{i-1}^T \otimes \widehat{\vb{\xi}}_i \otimes \incrleft_i + \widehat{\pushleft}_{i-1}^T \otimes \widetilde{\delta\vb{\eta}}_i \otimes \widehat{\pushright}_i$ \quad \textbf{for } $i=1,\dots,d$ \Comment{assemble $\widetilde{\delta\vb{G}}_i$}
\State \textbf{return} $((\widetilde{\delta\vb{U}}_i)_{i=1}^d,(\widetilde{\delta\vb{G}}_i)_{i=1}^d)$
\end{algorithmic}
\end{algorithm}

\begin{theorem}[Transpose of tangent vector to probes map]
\label{thm:transpose_tangent}
\algoref{alg:transpose_tangent} returns $\left(\jac^{(s)}\right)^T(\widetilde{\vb{z}})$, the gradient of $\delta\tuple{V} \mapsto \langle \widetilde{\vb{z}}, \jac^{(s)}(\delta\tuple{V})\rangle$ with respect to $\delta\tuple{V}$. If $\vb{p}$ has shape $(N,\dots,N,M)$, Tucker ranks $(n,\dots,n,m)$, and tensor train ranks $(1,r,\dots,r,1)$, then \algoref{alg:transpose_tangent} requires $O(dNn + d n r^2 + Mm)$ operations.
\end{theorem}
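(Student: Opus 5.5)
The plan is to treat \algoref{alg:transpose_tangent} as the reverse-mode (adjoint) differentiation of \algoref{alg:probe_tangent}. By \thmref{thm:probe_tangent}, \algoref{alg:probe_tangent} computes $\jac^{(s)}(\delta\tuple{V})$, and every one of its lines is linear in the perturbation quantities $(\delta\vb{U}_i,\delta\vb{G}_i,\delta\vb{\xi}_i,\ppsleft_i,\ppsright_i,\delta\vb{\eta}_i,\delta\vb{z}_i)$, with the base-point edge variables held fixed. The composition of these lines is therefore a linear map. Its transpose is obtained by running the lines in reverse order and transposing each elementary linear step. Summing adjoint contributions wherever a variable fans out to several later uses then gives the gradient of $\delta\tuple{V}\mapsto\langle\widetilde{\vb{z}},\jac^{(s)}(\delta\tuple{V})\rangle$.

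The key steps, in order, are as follows. First, write the Hilbert--Schmidt pairing $\langle\widetilde{\vb{z}},\delta\vb{z}\rangle=\sum_i\widetilde{\vb{z}}_i^T(\vb{U}_i\delta\vb{\eta}_i+\delta\vb{U}_i\widehat{\vb{\eta}}_i)$. This immediately produces the contribution $\widetilde{\vb{z}}_i\widehat{\vb{\eta}}_i^T$ to $\widetilde{\delta\vb{U}}_i$, and it produces adjoint variables $\widetilde{\delta\vb{\eta}}_i=\vb{U}_i^T\widetilde{\vb{z}}_i$, which accounts for the ``contract up'' line.

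Second, substitute the central-contraction formula for $\delta\vb{\eta}_i$. The term $\widehat{\pushleft}_{i-1}^T\delta\vb{G}_i\widehat{\pushright}_i$ paired with $\widetilde{\delta\vb{\eta}}_i$ gives the rank-one contribution $\widehat{\pushleft}_{i-1}\otimes\widetilde{\delta\vb{\eta}}_i\otimes\widehat{\pushright}_i$ to $\widetilde{\delta\vb{G}}_i$. The two remaining terms are linear functionals of $\ppsleft_{i-1}$ and $\ppsright_i$. Collecting them produces the seed terms $\widehat{\pushleft}_{i-1}^TP_i(\widetilde{\delta\vb{\eta}}_i)$ and $Q_{i+1}(\widetilde{\delta\vb{\eta}}_{i+1})\widehat{\pushright}_{i+1}$ of the adjoint recursions.

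Third, unroll the left and right recursions of \algoref{alg:probe_tangent} by the discrete adjoint (summation-by-parts) identity. Define $\incrright_i$ and $\incrleft_i$ as the total sensitivities of the pairing with respect to $\ppsright_i$ and $\ppsleft_i$. One then checks that $\sum_i(\text{adjoint of }\ppsleft_i)^T\ppsleft_i$ telescopes into sums over $i$ of the source terms $\widehat{\pushleft}_{i-1}^T\delta G_i(\widehat{\vb{\xi}}_i)+\widehat{\pushleft}_{i-1}^TO_i(\delta\vb{\xi}_i)$, each paired with the appropriate adjoint; the right recursion is handled symmetrically. Rewriting these pairings gives the outer-product contributions to $\widetilde{\delta\vb{G}}_i$ and the central contraction for $\widetilde{\delta\vb{\xi}}_i$. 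Finally, $\delta\vb{\xi}_i=\delta\vb{U}_i^T\vb{w}_i$ transposes to the term $\vb{w}_i\widetilde{\delta\vb{\xi}}_i^T$ in $\widetilde{\delta\vb{U}}_i$.

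The cost bound follows line by line, exactly as in \thmref{thm:probe_t3}. The up-contractions and the assembly of $\widetilde{\delta\vb{U}}_i$ cost $O(Nn)$ per input mode and $O(Mm)$ for the output mode. Each sweep step and each central contraction is a core applied to vectors, costing $O(nr^2)$. Each outer product $r\times n\times r$ costs $O(nr^2)$. Summing over the $d$ modes gives $O(dNn+dnr^2+Mm)$.

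I expect the main obstacle to be the bookkeeping in the third step. The forward left sweep uses $Q_i$ and the right sweep uses $P_{i+1}$, while in the transpose the roles are crossed: $\incrright$ propagates left-to-right through $P_i$ and $\incrleft$ propagates right-to-left through $Q_{i+1}$. Matching each adjoint variable to the correct forward variable, handling the boundary terms $\incrright_0=\incrleft_d=0$, and getting the indices right in the telescoping all need care. I would verify this first on $d=2$ and $d=3$ by explicit expansion, and then write the general telescoping argument by induction on $i$.
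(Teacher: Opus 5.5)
Your proposal is correct and is essentially the paper's argument. The paper casts the same reverse-mode differentiation of \algoref{alg:probe_tangent} as an adjoint-state Lagrangian $\mathcal{L}=\Psi(\delta\tuple{V},\Xi)+\langle\widetilde{\Xi},E(\Xi)\rangle$: stationarity in the edge variables $\Xi$ gives the contract-up, sweep, and central-contraction lines, and the derivatives in $\delta\vb{U}_i,\delta\vb{G}_i$ give the two assembly lines, followed by the same line-by-line cost count. Your summation-by-parts bookkeeping, with $\incrright_i$ and $\incrleft_i$ as the adjoints of $\ppsright_i$ and $\ppsleft_i$ and zero boundary values, is exactly what that Lagrangian formalism does implicitly.
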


The proof of \thmref{thm:transpose_tangent} is given in \appref{app:finite_to_infinite}.

\subsection{Corewise (non-manifold) simplification}
\label{sec:corewise_jac}

Readers using the manifold methods of this paper can skip this subsection. Standard optimizers such as Adam and L-BFGS treat the Tucker tensor train cores $((\vb{U}_i)_{i=1}^d, (\vb{G}_i)_{i=1}^d)$ as independent optimization variables. This has no intrinsic meaning from the manifold perspective, where the optimization variable is the full tensor and the cores merely provide a representation: individual core entries of the gradient depend on the base-point representation, and combining gradients from different tangent spaces (as in L-BFGS) is ill-defined. For optimizers that nonetheless require it, the corewise (non-manifold) Jacobian and its transpose follow from the manifold sweeping methods by a substitution.

Because a tensor network is multilinear in its cores, perturbing the cores by a small amount yields a sum of tensor trains, each agreeing with the original except for one core replaced by its perturbation. This sum has the form of the tangent-vector sum \eqref{eq:t3_tangent_sum_lru} under the substitutions
\begin{equation*}
\vb{P}_i \rightarrow \vb{G}_i, \qquad
\vb{Q}_i \rightarrow \vb{G}_i, \qquad
\vb{O}_i \rightarrow \vb{G}_i,
\end{equation*}
with the basis cores $\vb{U}_i$ no longer required to be orthogonal. Making these substitutions throughout \secref{sec:perturbed_action} and \secref{sec:transpose_of_action_map}, the $\delta\vb{z}$ computed by \algoref{alg:probe_tangent} is the corewise Jacobian applied to the corewise tangent vector $((\delta \vb{U}_i), (\delta \vb{G}_i))$, and the $((\widetilde{\delta \vb{U}}_i)_{i=1}^d, (\widetilde{\delta \vb{G}}_i)_{i=1}^d)$ computed by \algoref{alg:transpose_tangent} is the corewise Jacobian transpose applied to $\widetilde{\vb{z}}$.


\section{Error bounds}
\label{sec:theory}

In this section, we prove that $D^j \ptoW(0)$ may
be approximated by a Tucker tensor train, and provide upper bounds on the Tucker and tensor train ranks based on the induced norm of $D^j \pto(\paramzero)$ and the spectral properties of $\sqrtcov$.  The basic argument goes as follows. 

First, we show that finite-dimensional tensors that are preconditioned by matrices with decaying eigenvalues in all but one index may be approximated by tensor trains (\propref{thm:preconditioned_are_tt}). 
This is done with a peeling process that constructs tensor train cores one at a time. The crux of the argument is the proof of \lemref{lem:helper_for_tt_thm}, which shows how to construct one tensor train core by peeling off the last two modes of the tensor. 
Then, we show that approximately symmetric tensor trains may be approximated by Tucker tensor trains (\lemref{lem:sym_tt_to_ttt}). 
\lemref{lem:sym_tt_to_ttt} is combined with \propref{thm:preconditioned_are_tt} to show that finite-dimensional symmetric preconditioned tensors may be approximated by Tucker tensor trains (\propref{thm:finite_ttt}). 
Finally, we generalize the results in \propref{thm:finite_ttt} from finite dimensions to infinite dimensions in \thmref{thm:ttt_main_thm}, and combine \thmref{thm:ttt_main_thm} with a standard hyperbolic cross result to provide upper bounds on the Tucker and tensor train ranks in the case where the eigenvalues of $\sqrtcov$ decay according to a power law (\corref{cor:error_vs_rank_poly_decay}).


Two caveats must be made regarding this theory. First, these are representational guarantees: they establish the existence of accurate low-rank Tucker tensor train approximants of the covariance-whitened derivative tensors, but the performance of the fitting algorithms introduces an additional source of error which we do not consider here. In particular, the results do not by themselves imply finite-sample recovery of the derivative tensors from finitely many random probes, nor convergence of the nonconvex fitting algorithms; these statistical and optimization questions are left for future work. Although the TR-RMGN fitting algorithm can be shown to converge to a local minimizer, little can be said about convergence to a global minimizer because optimization of tensors is a well-known difficult problem. Second, these bounds only use spectral decay of the covariance operator and boundedness of the derivative tensors, and are therefore highly conservative. Bounds that also consider intrinsic low rank structure of the derivative tensors themselves would provide sharper estimates that better reflect true performance of the method. However, such bounds cannot be given in general, because they depend on the structure of the derivative tensors which is problem-specific.

\subsection{Tensor train approximation in finite dimensions}
\label{sec:tt_finite_dim}

\begin{prop}[Preconditioned multilinear maps admit tensor-train approximation]
\label{thm:preconditioned_are_tt}
    Let $\vb{B} \in \mathbb{R}^{N_1 \times \dots \times N_k \times M}$, let $B:\mathbb{R}^{N_1} \times \dots \times \mathbb{R}^{N_k} \rightarrow \mathbb{R}^{M}$ denote the vector-valued multilinear function corresponding to $\vb{B}$, let $\{\vb{C}_i \in \mathbb{R}^{N_i \times N_i}\}_{i=1}^k$ be symmetric positive-semidefinite matrices, let $F$ be the multilinear function
    \begin{equation*}
        F(\vb{x}_1, \dots, \vb{x}_{k}) := B(\vb{C}_1 \vb{x}_1, \dots, \vb{C}_{k} \vb{x}_{k}),
    \end{equation*}
    and let $\vb{F}$ be the array corresponding to $F$. 
    Further, let $\gamma_{j,1} \ge \gamma_{j,2} \ge \dots \ge \gamma_{j,N_1 \dots N_{j}}$ denote the eigenvalues of $\vb{C}_1 \kron \dots \kron \vb{C}_j$,
    and choose $r'_j$ so that
    \begin{equation}
    \label{eq:gamma_eigs_assumption}
        \left(\sum_{\alpha=r'_j+1}^{N_1 \dots N_{j}} \gamma_{j,\alpha}^2 \right)^{1/2} \le \epsilon_j \left(\sum_{\alpha=1}^{N_1 \dots N_{j}} \gamma_{j,\alpha}^2 \right)^{1/2},
    \end{equation}
    for $j=1,\dots,k$. Then there exists a tensor train $\tuple{S}$ with ranks $\vb{r}=(1, r_1, \dots, r_k, 1)$ with $r_i \le r'_i$ for $i=1,\dots,k$, such that the vector-valued multilinear function $S$ corresponding to $\tuple{S}$ satisfies
    \begin{equation}
    \label{eq:tt_overall_bound}
        \norm{F - S} \le 2 \norm{\vb{B}} \left(\prod_{l=1}^k \norm{\vb{C}_l}_{\mathrm{HS}}\right) \sum_{j=1}^{k} \epsilon_j.
    \end{equation}
\end{prop}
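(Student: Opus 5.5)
The plan is to build the tensor train by a TT-SVD-style peeling argument in which the truncations are dictated by the spectra of the preconditioners $\vb{C}_1 \kron \dots \kron \vb{C}_j$, not by the unknown tensor $\vb{B}$. The error will then telescope into one term per unfolding. I would proceed by induction on the number of input modes, peeling off the last two modes at each step; the single-step version of this is \lemref{lem:helper_for_tt_thm}.

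\textbf{Step 1 (key estimate).} The $\vb{C}_l$ are symmetric positive semidefinite, so the eigenvectors of $\vb{C}_1 \kron \dots \kron \vb{C}_j$ can be chosen as Kronecker products $\vb{e}_\alpha = \vb{e}^{(1)}_{a_1}\kron\dots\kron \vb{e}^{(j)}_{a_j}$ of unit eigenvectors of the factors. Thus each $\vb{e}_\alpha$ is a rank-one unit tensor, and $\sum_\alpha \gamma_{j,\alpha}^2 = \prod_{l\le j}\norm{\vb{C}_l}_{\mathrm{HS}}^2$. Let $\vb{\Pi}_j$ be the orthogonal projector onto the span of the top $r'_j$ eigenvectors. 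For unit $\vb{x}_1,\dots,\vb{x}_k$ I would expand
\[
(\vb{I}-\vb{\Pi}_j)(\vb{C}_1\vb{x}_1\kron\dots\kron\vb{C}_j\vb{x}_j)=\sum_{\alpha>r'_j}\gamma_{j,\alpha}\innerproduct{\vb{e}_\alpha}{\vb{x}_1\kron\dots\kron\vb{x}_j}\vb{e}_\alpha .
\]
Feeding this into $B$ (extended linearly over the first $j$ slots) costs at most $\norm{\vb{B}}$ per rank-one term, because every $\vb{e}_\alpha$ is rank one. Cauchy--Schwarz together with $\sum_\alpha\innerproduct{\vb{e}_\alpha}{\vb{x}_1\kron\dots\kron\vb{x}_j}^2\le 1$ then bounds the discarded part by
\[
\norm{\vb{B}}\Big(\sum_{\alpha>r'_j}\gamma_{j,\alpha}^2\Big)^{1/2}\prod_{l>j}\norm{\vb{C}_l}\le \epsilon_j\norm{\vb{B}}\prod_{l=1}^k\norm{\vb{C}_l}_{\mathrm{HS}},
\]
using \eqref{eq:gamma_eigs_assumption} and $\norm{\vb{C}_l}\le\norm{\vb{C}_l}_{\mathrm{HS}}$. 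This rank-one trick is what lets an induced-norm bound on $\vb{B}$ control a Hilbert--Schmidt tail, so I regard it as the crux.

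\textbf{Step 2 (peeling).} The projectors $\vb{\Pi}_j$ are not nested: $\operatorname{range}\vb{\Pi}_j$ need not lie in $\operatorname{range}(\vb{\Pi}_{j-1})\kron\mathbb{R}^{N_j}$. So they cannot simply be composed into a TT; this is the main obstacle. To get around it, I would replace $\vb{\Pi}_j$ by the projector onto the span of $\vb{\Pi}_j$ restricted to (range of the previous left interface)$\kron\mathbb{R}^{N_j}$. Equivalently, following \lemref{lem:helper_for_tt_thm}, at step $j$ I merge the current left interface of rank $\le r_{j-1}$ with mode $j$. I then project this merged mode onto the image under the previous interface map of the dominant eigenspace, which gives a new interface of dimension $r_j\le r'_j$ and defines the core $\vb{G}_j$.

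\textbf{Step 3 (error telescoping).} Writing $S_j$ for the map after $j$ truncations, I would bound $\norm{S_{j-1}-S_j}$. Each step's error splits into the Step 1 tail for the unprojected $F$ plus a correction caused by using the previously truncated map in place of $F$. Both pieces are controlled by the same quantity, and I expect this is where the factor $2$ arises. Summing over $j$ by the triangle inequality gives \eqref{eq:tt_overall_bound}, with the final core $\vb{G}_{k+1}$ absorbing the output mode and closing the train. If the nesting correction resists this direct approach, I would first try to show that the nested projector error is dominated by $\norm{(\vb{I}-\vb{\Pi}_j)(\vb{C}_1\kron\dots\kron\vb{C}_j)}$-type tails through the rank-one expansion above, since contractive projections cannot increase those tails.
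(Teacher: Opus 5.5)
\textbf{There is a genuine gap in Steps~2--3.}

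Your Step~1 is sound. It uses the same mechanism as Part~2 of the paper's proof of \lemref{lem:helper_for_tt_thm}: expand in rank-one product eigenvectors, pay $\norm{\vb{B}}$ per term, and apply Cauchy--Schwarz against the eigenvalue tail.

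The gap is in Steps~2--3, which is where the actual proof has to happen. You correctly identify non-nestedness of the $\vb{\Pi}_j$ as the obstacle. But you never give a precise nested construction (``the image under the previous interface map of the dominant eigenspace'' is not one), and the error accounting is only a guess.

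The ``correction caused by using the previously truncated map in place of $F$'' is not a bookkeeping detail. Suppose $S_j$ is obtained from $S_{j-1}$ by a projection $\vb{Q}_j$ acting jointly on modes $1,\dots,j$. Then the correction is $(S_{j-1}-F)\circ(\vb{I}-\vb{Q}_j)$. In general this is as large as the error already accumulated, not the $j$th tail, so summing $\norm{S_{j-1}-S_j}$ would compound errors rather than add them. Worse, in the induced norm it is not even bounded by $\norm{S_{j-1}-F}$ in general. That norm only tests Kronecker-structured inputs, and a joint projection on modes $1,\dots,j$ destroys that structure.

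So the factor $2$ cannot come from where you expect. The fallback appeal to contractivity does not help either: unless you know the projections commute, $\vb{I}$ minus their composite is not a tail projection.

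\textbf{How to close it within your own route.} The missing observation is that there is no nesting obstruction once everything is written in the product eigenbasis. There $\vb{\Pi}_j$ and $\vb{\Pi}_{j-1}\kron\vb{I}$ are both diagonal, so they commute and are coordinate projections.
\begin{itemize}
\item Let $A_j\subset\{1,\dots,N_1\}\times\dots\times\{1,\dots,N_j\}$ index the $r'_j$ largest eigenvalue products $\gamma_\alpha$.
\item Set $E_1=A_1$ and $E_j=A_j\cap(E_{j-1}\times\{1,\dots,N_j\})$. This is what your first formulation of the fix produces once the interfaces are coordinate-aligned.
\item Let $S$ be $B$ applied to the coordinate projection onto $E_k$ of $\vb{C}_1\vb{x}_1\kron\dots\kron\vb{C}_k\vb{x}_k$.
\item Every prefix of an element of $E_k$ lies in $E_j$. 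So the $j$th unfolding has at most $|E_j|\le r'_j$ nonzero rows in these coordinates, and unfolding ranks are invariant under the orthogonal change of basis. Hence $S$ is a tensor train with $r_j\le r'_j$.
\item Each telescoping difference is $B$ applied to the coordinate projection onto $(E_{j-1}\times\{1,\dots,N_j\})\setminus A_j\subset A_j^c$. Step~1 applies to it unchanged.
\end{itemize}
Summing gives \eqref{eq:tt_overall_bound} \emph{without} the factor $2$. Bounding the whole complement of $E_k$ at once even yields $(\sum_j\epsilon_j^2)^{1/2}$ in place of $\sum_j\epsilon_j$.

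\textbf{How the paper avoids nesting.} It peels from the output side and projects on the \emph{right}. The truncation $\vb{\Gamma}'$ is used only to select the row space $\vb{G}^R$ of $\vb{\Gamma}'\vb{B}_\text{mat}(\vb{C}_k\kron\vb{I})$, and $\vb{F}_\text{mat}$ is approximated by $\vb{F}_\text{mat}(\vb{G}^R)^T\vb{G}^R$. The remainder $B'$ is then again preconditioned by the untruncated $\vb{C}_1,\dots,\vb{C}_{k-1}$, with $\norm{B'}\le\norm{B}$, and the core satisfies $\norm{G}\le 1$. So the lemma recurses and the errors add. The factor $2$ is the price of projecting onto the row space of the truncated unfolding instead of truncating directly. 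Completed as above, your route is shorter and gives a sharper constant.
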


The full proof of \propref{thm:preconditioned_are_tt} is given in \appref{app:finite_to_infinite}. The idea is to perform a rescaling to set $\norm{C}_{\mathrm{HS}}=1$, then build the tensor train one core at a time by repeatedly peeling modes off $F$ using \lemref{lem:helper_for_tt_thm} (below). The total error is bounded by summing the error incurred at each peeling step. For \lemref{lem:helper_for_tt_thm}, we need the following standard machinery: \lemref{lem:unfoldings_and_kron}, which relates matrix unfoldings and the Kronecker product, and \corref{cor:unfolding_norm_bound}, which bounds induced tensor norms by matrix unfolding norms.

\begin{lemma}[Matrix unfoldings and the Kronecker product]
\label{lem:unfoldings_and_kron}
Suppose $\vb{A}$ is an $N_1 \times \dots \times N_\nummodes$ array with corresponding scalar-valued multilinear function $A$, $\vb{M}_i$ are $N_i \times q_i$ matrices, and $F$ is the scalar-valued multilinear function
$$
F(\vb{x}_1, \dots, \vb{x}_\nummodes) := A(\vb{M}_1 \vb{x}_1, \dots, \vb{M}_\nummodes \vb{x}_\nummodes)
$$
with corresponding $q_1 \times \dots \times q_\nummodes$ array $\vb{F}$. Further, let $\vb{F}_\text{mat}$ and $\vb{A}_\text{mat}$ denote the $i$th matrix unfoldings of $\vb{F}$ and $\vb{A}$, respectively. Then
\begin{equation}
\label{eq:unfolding_and_kron}
\vb{F}_\text{mat} = (\vb{M}_1 \kron \dots \kron \vb{M}_i)^T \vb{A}_\text{mat} (\vb{M}_{i+1} \kron \dots \kron \vb{M}_\nummodes).
\end{equation}
\end{lemma}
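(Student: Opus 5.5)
The identity \eqref{eq:unfolding_and_kron} is an equality of two $(q_1\cdots q_i)\times(q_{i+1}\cdots q_\nummodes)$ matrices. We prove it entrywise, by expanding both sides in coordinates and using the definition of the Kronecker product together with the fixed reshaping convention. Fix multi-indices $(a_1,\dots,a_i)$ and $(a_{i+1},\dots,a_\nummodes)$ with $a_l \in \{1,\dots,q_l\}$. Let $\alpha$ be the row index of $\vb{F}_\text{mat}$ corresponding to the first multi-index and $\beta$ the column index corresponding to the second.

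\textbf{Left-hand side.} By the definition of the array corresponding to a multilinear function,
$$
\vb{F}_\text{mat}[\alpha,\beta]=\vb{F}[a_1,\dots,a_\nummodes]=F(\vb{e}_{1,a_1},\dots,\vb{e}_{\nummodes,a_\nummodes})=A(\vb{M}_1\vb{e}_{1,a_1},\dots,\vb{M}_\nummodes\vb{e}_{\nummodes,a_\nummodes}).
$$
Since $\vb{M}_l\vb{e}_{l,a_l}$ is the $a_l$th column of $\vb{M}_l$, multilinear expansion of $A$ in the standard bases gives
$$
\vb{F}[a_1,\dots,a_\nummodes]=\sum_{b_1,\dots,b_\nummodes}\vb{A}[b_1,\dots,b_\nummodes]\prod_{l=1}^\nummodes \vb{M}_l[b_l,a_l].
$$

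\textbf{Right-hand side.} The Kronecker product is defined consistently with the reshaping convention (\secref{sec:tensor_correspondences}). Hence the entry of $\vb{M}_1\kron\dots\kron\vb{M}_i$ in the row indexed by $(b_1,\dots,b_i)$ and the column indexed by $(a_1,\dots,a_i)$ is $\prod_{l\le i}\vb{M}_l[b_l,a_l]$. The analogous statement holds for $\vb{M}_{i+1}\kron\dots\kron\vb{M}_\nummodes$. Write the $(\alpha,\beta)$ entry of the triple product as a sum over the row index of $\vb{A}_\text{mat}$, identified with $(b_1,\dots,b_i)$, and the column index, identified with $(b_{i+1},\dots,b_\nummodes)$. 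Each summand is $\vb{A}[b_1,\dots,b_\nummodes]$ times the two products of $\vb{M}_l$ entries. The transpose on the first factor is exactly what places $b_l$ in the row slot and $a_l$ in the column slot. This reproduces the sum obtained for the left-hand side.

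The two sides agree entry by entry, which proves the lemma. There is no real analytic obstacle. The one point needing care is the index-ordering bookkeeping: the grouped multi-index $(b_1,\dots,b_i)\mapsto$ row index used for $\vb{A}_\text{mat}$ must be the same bijection that underlies the Kronecker product. This holds by the standing convention. It is safest to state it once explicitly, for example for column-major order, and note that any consistent convention works identically.
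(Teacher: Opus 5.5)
Your entrywise computation is correct, and it is essentially the paper's approach: the paper gives no proof beyond stating that the lemma ``follows from direct calculation using the definitions,'' and your argument writes out that calculation. The one subtle point, that the bijection from multi-indices to row and column indices used for the unfoldings must be the same one underlying $\kron$, is also the right one to flag, and the paper's standing convention in \secref{sec:tensor_correspondences} ($\kron$ defined consistently with the reshaping order) covers it.
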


\begin{corollary}[Bounding tensor norm by matrix unfolding norm]
\label{cor:unfolding_norm_bound}
If $\vb{B}$ is a tensor and $\vb{B}_\text{mat}$ is any matrix unfolding of $\vb{B}$, then $\norm{\vb{B}} \le \norm{\vb{B}_\text{mat}}$.
\end{corollary}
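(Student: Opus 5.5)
The plan is to reduce the corollary to \lemref{lem:unfoldings_and_kron} by taking the matrices $\vb{M}_l$ there to be the probing vectors themselves, viewed as $N_l \times 1$ matrices. Here $\norm{\vb{B}_\text{mat}}$ is the induced (spectral) matrix norm, consistent with \eqref{eq:induced_norm_definition} applied to a matrix as a bilinear form. First I will reduce to the scalar-valued case. If $\vb{B}$ corresponds to a vector-valued multilinear function $B:\mathbb{R}^{N_1}\times\dots\times\mathbb{R}^{N_k}\to\mathbb{R}^M$, then by the Hilbert-space identity $\norm{y}=\sup_{\norm{\omega}=1}\omega^T y$ we have
$$
\norm{B}=\sup_{\norm{\vb{x}_l}=1,\ \norm{\omega}=1}\widetilde{B}(\vb{x}_1,\dots,\vb{x}_k,\omega).
$$
This is the induced norm of the corresponding scalar-valued function with $d=k+1$ modes, and both share the same array $\vb{B}$. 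So it suffices to treat a scalar-valued $A$ with array $\vb{A}$ of shape $N_1\times\dots\times N_d$, where $\vb{A}_\text{mat}$ is its $i$th unfolding.

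Next, fix unit vectors $\vb{x}_1,\dots,\vb{x}_d$ and apply \lemref{lem:unfoldings_and_kron} with $\vb{M}_l=\vb{x}_l$ and $q_l=1$. Then $F$ is a multilinear function on $\mathbb{R}\times\dots\times\mathbb{R}$. Its array $\vb{F}$ is the $1\times\dots\times1$ array whose single entry is $F(1,\dots,1)=A(\vb{x}_1,\dots,\vb{x}_d)$, and every unfolding of $\vb{F}$ is this same $1\times1$ matrix. Equation \eqref{eq:unfolding_and_kron} therefore gives
$$
A(\vb{x}_1,\dots,\vb{x}_d)=(\vb{x}_1\kron\dots\kron\vb{x}_i)^T\,\vb{A}_\text{mat}\,(\vb{x}_{i+1}\kron\dots\kron\vb{x}_d).
$$

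Then I will bound the right-hand side. The Kronecker product of unit vectors is a unit vector, since $\norm{\vb{a}\kron\vb{b}}=\norm{\vb{a}}\norm{\vb{b}}$ by a direct computation of the sum of squares, extended by induction. By Cauchy--Schwarz and the definition of the spectral norm, $|\vb{u}^T\vb{A}_\text{mat}\vb{v}|\le\norm{\vb{A}_\text{mat}}$ for unit vectors $\vb{u}$ and $\vb{v}$. Taking the supremum over unit $\vb{x}_l$ yields $\norm{\vb{A}}\le\norm{\vb{A}_\text{mat}}$.

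I do not expect any step to be a real obstacle. The only points needing care are bookkeeping ones: checking that the scalar/vector correspondence of \secref{sec:tensor_correspondences} preserves both the induced norm and the unfoldings, including the unfolding that places the output mode last. I also need to confirm that the ordering convention for the Kronecker product matches the reshaping, which \lemref{lem:unfoldings_and_kron} already assumes.
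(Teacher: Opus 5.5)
Your proposal is correct and follows the paper's argument: the paper notes that, by \lemref{lem:unfoldings_and_kron}, $\norm{\vb{B}}$ tests $\vb{B}_\text{mat}$ only against Kronecker-structured unit vectors, while the spectral norm tests it against all unit vectors, and your choice $\vb{M}_l=\vb{x}_l$ makes exactly this explicit. Your reduction of the vector-valued case to the scalar-valued one via $\norm{y}=\sup_{\norm{\omega}=1}\omega^T y$ is correct bookkeeping that the paper leaves implicit.
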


\lemref{lem:unfoldings_and_kron} follows from direct calculation using the definitions. \corref{cor:unfolding_norm_bound} follows immediately from \lemref{lem:unfoldings_and_kron}: $\norm{\vb{B}}$ may be viewed as testing $\vb{B}_\text{mat}$ against vectors that have a Kronecker product structure, while $\norm{\vb{B}_\text{mat}}$ tests $\vb{B}_\text{mat}$ against arbitrary vectors and is therefore at least as large.

\begin{figure}
\centering
\begin{tikzpicture}[scale=0.75, every node/.style={scale=0.75*0.75}]


    \node[draw, rounded corners, minimum size=1.55cm, line width=0.3mm] (A) at (0.0,0) {\Large $B$};
    \node[draw, rounded corners, minimum size=0.9cm, inner sep=0.0, line width=0.3mm] (C3) at (-0.38627124296868437, 3.811179354631058-5.0) {\large $C_3$};
    \node[draw, rounded corners, minimum size=0.9cm, inner sep=0.0, line width=0.3mm] (C4) at (1.011271242968684, 4.265268434634408-5.0) {\large $C_4$};
    \node[draw, rounded corners, minimum size=0.9cm, inner sep=0.0, line width=0.3mm] (C1) at (-0.386271242968684, 6.188820645368942-5.0) {\large $C_1$};
    \node[draw, rounded corners, minimum size=0.9cm, inner sep=0.0, line width=0.3mm] (C2) at (-1.25, 0.0) {\large $C_2$};

    \draw[line width=0.3mm] (A) -- (C3);
    \draw[line width=0.3mm] (A) -- (C4);
    \draw[line width=0.3mm] (A) -- (C1);
    \draw[line width=0.3mm] (A) -- (C2);

    \node (C3e) at (-0.6489356881873898, 3.002781315780178-5.0) {};
    \node (C4e) at (1.6989356881873894, 3.765650970185806-5.0) {};
    \node (Ae) at (0.75 * 1.69893568818739, 0.85 * 6.234349029814194 - 0.85 * 5.0) {};
    \node (C1e) at (-0.6489356881873891, 6.997218684219822-5.0) {};
    \node (C2e) at (-2.1, 5.000000000000001-5.0) {};

    \draw[line width=0.3mm] (C3e) -- (C3);
    \draw[line width=0.3mm] (C4e) -- (C4);
    \draw[line width=0.3mm] (Ae) -- (A);
    \draw[line width=0.3mm] (C1e) -- (C1);
    \draw[line width=0.3mm] (C2e) -- (C2);

    \node (approx) at (7.75 - 5,0) {\Huge $\approx$};

    \node[draw, rounded corners, minimum size=1.5cm, line width=0.3mm] (R) at (5.75,0) {\Large $B'$};
    \node[draw, rounded corners, minimum size=0.9cm, inner sep=0.0, line width=0.3mm] (C3_b) at (-0.38627124296868437+5.75, 3.811179354631058-5.0) {\large $C_3$};
    \node[draw, rounded corners, minimum size=0.9cm, inner sep=0.0, line width=0.3mm] (C1_b) at (-0.386271242968684+5.75, 6.188820645368942-5.0) {\large $C_1$};
    \node[draw, rounded corners, minimum size=0.9cm, inner sep=0.0, line width=0.3mm] (C2_b) at (-1.25+5.75, 0.0) {\large $C_2$};

    \draw[line width=0.3mm] (R) -- (C3_b);
    \draw[line width=0.3mm] (R) -- (C1_b);
    \draw[line width=0.3mm] (R) -- (C2_b);

    \node (C3e_b) at (-0.6489356881873898+5.75, 3.002781315780178-5.0) {};
    \node (C4e_b) at (1.6989356881873894+5.75, 3.765650970185806-5.0) {};
    \node (C1e_b) at (-0.6489356881873891+5.75, 6.997218684219822-5.0) {};
    \node (C2e_b) at (-2.1+5.75, 5.000000000000001-5.0) {};

    \draw[line width=0.3mm] (C3e_b) -- (C3_b);
    \draw[line width=0.3mm] (C1e_b) -- (C1_b);
    \draw[line width=0.3mm] (C2e_b) -- (C2_b);

    \node[draw, rounded corners, minimum size=1.15cm, line width=0.3mm] (U) at (5.75+1.5,0) {\Large $G$};

    \node (U1) at (5.75+1.5, -8.75 + 5.75 + 1.5 + 0.25) {};
    \node (U2) at (8.5, 0.0) {};

    \draw[line width=0.3mm] (R) -- (U);
    \draw[line width=0.3mm] (U) -- (U1);
    \draw[line width=0.3mm] (U) -- (U2);

\end{tikzpicture}
\caption{Peeling off the last two modes from a preconditioned tensor per \lemref{lem:helper_for_tt_thm}.}
\label{fig:tt_peel_one_mode}
\end{figure}

\begin{lemma}[Helper for \propref{thm:preconditioned_are_tt}: peeling off the last two modes of $T$]
\label{lem:helper_for_tt_thm}
    Under the same setup as \propref{thm:preconditioned_are_tt} and assuming $\norm{\vb{C}_i}_{\mathrm{HS}}=1$ for $i=1,\dots,k$, there exists a multilinear function $B':\mathbb{R}^{N_1} \times \dots \times \mathbb{R}^{N_{k-1}} \rightarrow \mathbb{R}^{r_{k-1}}$ satisfying 
    \begin{equation}
    \label{eq:R_less_than_A_onestep}
    \norm{B'} \le \norm{B},
    \end{equation}
    and a matrix-valued linear function $G:\mathbb{R}^{N_k} \rightarrow \mathbb{R}^{r_{k-1}\times M}$  satisfying 
    \begin{equation}
    \label{eq:U_func_bounded_by_1}
    \norm{G} \le 1
    \end{equation}
    such that the multilinear function $F'$ defined by
    \begin{equation}
    \label{eq:tt_one_step_factorization}
        F'(\vb{x}_1, \dots, \vb{x}_{k-1}, \vb{x}_{k}) := B'(\vb{C}_1 \vb{x}_1, \dots, \vb{C}_{k-1} \vb{x}_{k-1})^T G(\vb{x}_{k})
    \end{equation}
    satisfies
    \begin{equation*}
        \norm{F - F'} \le 2\epsilon_{k-1} \norm{B}.
    \end{equation*}
    This is illustrated in \figref{fig:tt_peel_one_mode}.
\end{lemma}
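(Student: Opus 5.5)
The plan is to work with the matrix unfolding that separates the first $k-1$ input modes from the last two modes, namely input mode $k$ together with the output mode. I would truncate the combined preconditioner $\vb{C}^{(k-1)} := \vb{C}_1 \kron \dots \kron \vb{C}_{k-1}$ to its dominant eigenspace, and then split the truncated tensor into a left factor carrying $B$ and a right factor $G$ carrying only $\vb{C}_k$ and spectral weights. By \lemref{lem:unfoldings_and_kron}, the $(k-1)$th unfolding of $\vb{F}$ (with the output as last mode and the identity as its "preconditioner") is
$$
\vb{F}_\text{mat} = \vb{C}^{(k-1)} \vb{B}_\text{mat} (\vb{C}_k \kron \vb{I}_M).
$$
The matrix $\vb{C}^{(k-1)}$ is symmetric positive semidefinite, with eigenvalues $\gamma_{k-1,\alpha}$ and orthonormal eigenvectors $\vb{w}_\alpha \in \mathbb{R}^{N_1\cdots N_{k-1}}$. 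Because $\norm{\vb{C}_i}_{\mathrm{HS}}=1$, we have $\sum_\alpha \gamma_{k-1,\alpha}^2 = \norm{\vb{C}^{(k-1)}}_{\mathrm{HS}}^2 = 1$. Write $r = r_{k-1} \le r'_{k-1}$, $\vb{W} = [\vb{w}_1,\dots,\vb{w}_r]$ and $\vb{\Lambda} = \operatorname{diag}(\gamma_{k-1,1},\dots,\gamma_{k-1,r})$. Then \eqref{eq:gamma_eigs_assumption} gives
$$
\norm{\vb{C}^{(k-1)} - \vb{W}\vb{\Lambda}\vb{W}^T}_{\mathrm{HS}} \le \epsilon_{k-1}.
$$

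The candidate factorization uses the identity $\vb{W}\vb{\Lambda}\vb{W}^T = \vb{W}\vb{W}^T \vb{C}^{(k-1)}$. With it, the truncated tensor evaluated at $(\vb{x}_1,\dots,\vb{x}_k)$ becomes $(\vb{C}_1\vb{x}_1 \otimes \dots \otimes \vb{C}_{k-1}\vb{x}_{k-1})^T \vb{W}\, \vb{W}^T \vb{B}_\text{mat}(\vb{C}_k\vb{x}_k \kron \cdot\,)$. It is also multilinear of the required form \eqref{eq:tt_one_step_factorization}. I would not take the naive split $B'(\vb{y}) = \vb{W}^T(\vb{y}_1\otimes\dots\otimes \vb{y}_{k-1})$, $G(\vb{x}_k) = \vb{W}^T\vb{B}_\text{mat}(\vb{C}_k \vb{x}_k\kron\cdot)$, because it puts $B$ into $G$. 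Instead I would rescale by the spectral weights, so that $B'$ carries $B$ and $G$ carries the weights $\gamma_\alpha$:
$$
B'(\vb{y})_\alpha = \gamma_\alpha^{-1/2}\,\cdots,\qquad G(\vb{x}_k)[\alpha,:] = \gamma_\alpha^{1/2}\cdots .
$$
The exponents must be chosen so that $\norm{G} \le (\sum_\alpha \gamma_\alpha^2)^{1/2}\norm{\vb{C}_k} \le 1$, where the factor $\norm{\vb{C}_k} \le \norm{\vb{C}_k}_{\mathrm{HS}}=1$ absorbs the dependence on $\vb{x}_k$. The norm bound on $B'$ should then come from \corref{cor:unfolding_norm_bound}, applied in reverse to the preconditioned directions, with the normalization $\norm{\vb{C}_i}_{\mathrm{HS}}=1$ used to control sums over eigen-directions.

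For the error, the plan is to write $F - F'$ as the difference between the exact and truncated unfoldings. I would bound the induced norm by testing against rank-one inputs $\vb{x}_1\otimes\dots\otimes\vb{x}_{k-1}$, $\vb{x}_k$ and output functionals $\vb{\omega}$. I expect two contributions, each of size at most $\epsilon_{k-1}\norm{B}$. The first is the truncation of $\vb{C}^{(k-1)}$ on the left. The second is the discrepancy introduced when the eigenvector components are redistributed between $B'$ and $G$, since the product $B'^T G$ only reproduces $\vb{W}\vb{W}^T\vb{C}^{(k-1)}$ up to the tail. Adding these by the triangle inequality gives the factor $2\epsilon_{k-1}\norm{B}$.

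The main obstacle is the norm control. The eigenvectors $\vb{w}_\alpha$ live in the full tensor-product space and are not rank-one. A quantity such as $B(\vb{w}_\alpha,\cdot\,)$ is therefore naturally bounded by the unfolding norm $\norm{\vb{B}_\text{mat}}$, not by the induced norm $\norm{B}$. This is exactly why I would keep $\vb{W}$ inside $B'$, where it acts only on genuinely rank-one preconditioned inputs $\vb{C}_1\vb{x}_1\otimes\dots\otimes\vb{C}_{k-1}\vb{x}_{k-1}$, and route $\vb{B}$ through those rank-one inputs before projecting. My first attempt would be to define $B'(\vb{y})_\alpha$ as $B$ evaluated at the preconditioned rank-one input, contracted against $\vb{w}_\alpha$-weighted directions in the $k$th slot. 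That makes $\norm{B'} \le \norm{B}$ immediate from the definition \eqref{eq:induced_norm_definition}, and $G$ then only has to reconstruct $\vb{C}_k\vb{x}_k$ from those directions. If that fails, I would fall back to a Hilbert--Schmidt estimate for $G$ and a Cauchy--Schwarz step over $\alpha$ using $\sum_\alpha\gamma_\alpha^2=1$.
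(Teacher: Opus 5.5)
Your first step matches the paper's. It uses the same unfolding $\vb{F}_{\text{mat}} = \vb{\Gamma}\vb{B}_{\text{mat}}(\vb{C}_k \kron \vb{I})$, with $\vb{\Gamma} = \vb{C}_1 \kron \dots \kron \vb{C}_{k-1}$ truncated to its $r'_{k-1}$ largest eigenvalues. The gap is the factorization, which is the real content of the lemma, and your proposed mechanism cannot produce it.

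Index the bond by the retained eigenvectors $\vb{w}_\alpha$ of $\vb{\Gamma}$. Then the left factor is proportional to $\vb{w}_\alpha^T(\cdot)$, which is independent of $B$, and the right factor to $\vb{w}_\alpha^T\vb{B}_{\text{mat}}(\vb{C}_k \kron \vb{I})$. No diagonal rescaling by $\gamma_\alpha^{\pm 1/2}$ moves $B$ across the bond. Nor can $G$ ``carry only $\vb{C}_k$ and spectral weights'': the rows of $\vb{G}^R \in \mathbb{R}^{r_{k-1} \times N_k M}$ must approximately span the row space of $\vb{F}_{\text{mat}}$, and that row space is determined by $B$. (Your ``$\vb{w}_\alpha$-weighted directions in the $k$th slot'' also puts $\vb{w}_\alpha \in \mathbb{R}^{N_1 \cdots N_{k-1}}$ on the wrong side.)

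Your closing instinct, to route $B$ through rank-one preconditioned inputs and then project, is right. But the projection must act on the right space $\mathbb{R}^{N_k M}$, not through $\vb{W}$ on the left. Concretely:
\begin{itemize}
\item Let $\vb{G}^R$ have orthonormal rows spanning the row space of the truncated unfolding $\widetilde{\vb{F}}'_{\text{mat}} = \vb{\Gamma}'\vb{B}_{\text{mat}}(\vb{C}_k \kron \vb{I})$. Then $\norm{G} \le 1$ follows from orthonormality alone.
\item Define $F'$ as the projection of the \emph{untruncated} $F$, $\vb{F}'_{\text{mat}} = \vb{F}_{\text{mat}}(\vb{G}^R)^T\vb{G}^R$. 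So $B'$ is the reshaping of $\vb{B}_{\text{mat}}(\vb{C}_k \kron \vb{I})(\vb{G}^R)^T$ and only ever sees rank-one inputs.
\item $\norm{B'} \le \norm{B}$ then follows from an SVD of $(\vb{G}^R)^T\vb{\mu}$ reshaped to $N_k \times M$, Cauchy--Schwarz, and $\norm{\vb{C}_k}_{\mathrm{HS}} = 1$.
\end{itemize}
Keeping $\vb{W}\vb{W}^T$ inside $B'$ breaks this. It maps rank-one inputs to non-rank-one ones, and the masked tensor can have induced norm larger than $\norm{B}$. For example, masking $(1/\sqrt{2})\bigl(\begin{smallmatrix}1&1\\1&-1\end{smallmatrix}\bigr)$ to its three leading entries gives spectral norm $(1+\sqrt{5})/(2\sqrt{2}) > 1$.

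The error analysis also needs more than you plan. The factor $2$ comes from
$\norm{F - F'} \le \norm{\vb{F}_{\text{mat}} - \widetilde{\vb{F}}'_{\text{mat}}} + \norm{(\vb{F}_{\text{mat}} - \widetilde{\vb{F}}'_{\text{mat}})(\vb{G}^R)^T\vb{G}^R} \le 2\norm{\vb{F}_{\text{mat}} - \widetilde{\vb{F}}'_{\text{mat}}}$.
This needs a \emph{matrix} operator-norm bound, because the projector destroys the rank-one structure of $\vb{x}_k \kron \vb{\omega}$. Testing only rank-one inputs and using $\norm{\vb{C}_k} \le 1$ is therefore not enough. Instead, write a general unit $\vb{b} \in \mathbb{R}^{N_k M}$ by its SVD as $\sum_j \sigma_j \vb{\phi}_j \kron \vb{\psi}_j$, and use $\sum_j \norm{\vb{C}_k\vb{\phi}_j}^2 \le \norm{\vb{C}_k}_{\mathrm{HS}}^2$.

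Finally, the obstacle you identify rests on a false premise. The eigenvectors of $\vb{\Gamma}$ can be chosen as Kronecker products of eigenvectors of the individual $\vb{C}_i$, hence rank-one. In that basis $\vb{\Gamma} - \vb{\Gamma}'$ is diagonal with diagonal $\vb{g}$, where $\norm{\vb{g}} \le \epsilon_{k-1}$. Each row of $\vb{B}_{\text{mat}}$ is then $B$ evaluated at rank-one inputs $\vb{e}_{1,i_1}, \dots, \vb{e}_{k-1,i_{k-1}}$. That is exactly what lets the induced norm control
$\abs{\vb{a}^T(\vb{\Gamma} - \vb{\Gamma}')\vb{B}_{\text{mat}}(\vb{C}_k \kron \vb{I})\vb{b}} \le \norm{B}\,\norm{\vb{a}}\,\norm{\vb{g}}\,\norm{\vb{C}_k}_{\mathrm{HS}} \le \epsilon_{k-1}\norm{B}$
for arbitrary unit $\vb{a}$ and $\vb{b}$.
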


\begin{proof}
For this proof, we assume without loss of generality that all vectors, arrays, and multilinear functions are expressed with respect to the bases of eigenvectors of the matrices $\vb{C}_1, \vb{C}_2,\dots, \vb{C}_k$. 
Under this assumption, $\vb{C}_j = \operatorname{diag}(\lambda_1, \lambda_2, \dots, \lambda_{N_j})$.

The proof proceeds in three parts. In the first part, we define two approximations, $F \approx \widetilde{F}'$ and $\widetilde{F}' \approx F'$. We then prove bound $\norm{F - F'} \le 2 \norm{\vb{F}_\text{mat} - \widetilde{\vb{F}}_\text{mat}'}$, where $\vb{F}_\text{mat}$ and $\widetilde{\vb{F}}_\text{mat}'$ are certain matrix unfoldings of the arrays corresponding to $F$ and $\widetilde{F}'$, respectively. In the second part, we bound $\norm{\vb{F}_\text{mat} - \widetilde{\vb{F}}_\text{mat}'}$. In the third part, we show that $F'$ arises from a factorization of the form given in \eqref{eq:tt_one_step_factorization}.

\paragraph{Part 1} 
Let $\vb{F}$ be the $N_1 \times \dots \times N_k \times M$ array corresponding to $F$. Let $\vb{B}_\text{mat}$ be the $(N_1 \dots N_{k-1}) \times (N_k M)$ matrix formed by reshaping $\vb{B}$ such that the rows of $\vb{B}_\text{mat}$ correspond to the first $k-1$ input modes of $\vb{B}$, and the columns of $\vb{B}_\text{mat}$ correspond to the last input and the output modes of $\vb{B}$, and let $\vb{F}_\text{mat}$ be the $(N_1 \dots N_{k-1}) \times (N_k M)$ matrix formed by reshaping $\vb{F}$ in the same way.
Using the relationship between matrix unfoldings and the Kronecker product (\lemref{lem:unfoldings_and_kron}) yields
\begin{equation*}
\vb{F}_\text{mat} = \underbrace{\left(\vb{C}_1 \kron \dots \kron \vb{C}_{k-1}\right)}_{\vb{\Gamma}} \vb{B}_\text{mat} \left(\vb{C}_k \kron \vb{I}\right) = \vb{\Gamma} \vb{B}_\text{mat} \left(\vb{C}_k \kron \vb{I}\right),
\end{equation*}
where $\vb{I}$ is the $M \times M$ identity matrix and 
\begin{equation*}
    \vb{\Gamma} := \vb{C}_1 \kron \dots \kron \vb{C}_{k-1}.
\end{equation*}
In the eigenvector bases we are using, $\vb{\Gamma}$ is diagonal, and its diagonal entries consist of the eigenvalue products $\gamma_{k-1,\alpha}$ in a permuted order. Let $\vb{\Gamma}'$ be the diagonal matrix formed from $\vb{\Gamma}$ by keeping the largest $r_{k-1}'$ diagonal entries of $\vb{\Gamma}$, and setting the rest of the diagonal entries to zero.
Set
\begin{equation*}
\widetilde{\vb{F}}_\text{mat}' := \vb{\Gamma}' \vb{B}_\text{mat} \left(\vb{C}_k \kron \vb{I}\right).
\end{equation*}
Define $r_{k-1}$ to be the rank of $\widetilde{\vb{F}}_\text{mat}'$. By construction, $r_{k-1} \le r_{k-1}'$. Let $\vb{G}^R$ be a $r_{k-1} \times N_k M$ matrix with rows that form an orthonormal basis for the row space of $\widetilde{\vb{F}}_\text{mat}'$, and define
\begin{equation*}
\vb{F}_\text{mat}' := \vb{F}_\text{mat} \left(\vb{G}^R\right)^T \vb{G}^R.
\end{equation*}
Let $\widetilde{\vb{F}}'$ and $\vb{F}'$ denote the $N_1 \times \dots \times N_k \times M$ arrays formed by reshaping $\widetilde{\vb{F}}_\text{mat}$ and $\vb{F}_\text{mat}'$, respectively, and let $\widetilde{F}':\mathbb{R}^{N_1} \times \dots \times \mathbb{R}^{N_k}\rightarrow \mathbb{R}^M$ and $F':\mathbb{R}^{N_1} \times \dots \times \mathbb{R}^{N_k}\rightarrow \mathbb{R}^M$ denote the vector-valued multilinear functions corresponding to $\widetilde{\vb{F}}'$ and $\vb{F}'$, respectively.

Using the triangle inequality and the fact that the induced norm of an array is always bounded by the induced norm of its matrix unfoldings 
yields
\begin{equation*}
\norm{F - F'} \le \norm{\vb{F} - \widetilde{\vb{F}}'} + \norm{\vb{F}' - \widetilde{\vb{F}}'} 
\le \norm{\vb{F}_\text{mat} - \widetilde{\vb{F}}_\text{mat}'} + \norm{\vb{F}_\text{mat}' - \widetilde{\vb{F}}_\text{mat}'}.
\end{equation*}
Using the fact that the rows of $\vb{G}^R$ form an orthonormal basis for the rows of $\vb{F}_\text{mat}'$, $\widetilde{\vb{F}}_\text{mat}'$ and the definitions of $\vb{F}_\text{mat}'$, $\widetilde{\vb{F}}_\text{mat}'$, we have
\begin{align*}
\norm{\vb{F}_\text{mat}' - \widetilde{\vb{F}}_\text{mat}'} &= \norm{\left(\vb{F}_\text{mat}' - \widetilde{\vb{F}}_\text{mat}'\right)\left(\vb{G}^R\right)^T \vb{G}^R} \\
&= \norm{\left(\vb{F}_\text{mat} - \widetilde{\vb{F}}_\text{mat}'\right)\left(\vb{G}^R\right)^T \vb{G}^R}
\le \norm{\vb{F}_\text{mat} - \widetilde{\vb{F}}_\text{mat}'},
\end{align*}
and hence $\norm{F - F'} \le 2 \norm{\vb{F}_\text{mat} - \widetilde{\vb{F}}_\text{mat}'}$. 

\paragraph{Part 2} By the definition of the induced norm, we have
\begin{equation*}
\norm{\vb{F}_\text{mat} - \widetilde{\vb{F}}_\text{mat}'}
= 
\sup_{\substack{
\vb{a} \in \mathbb{R}^{N_1 \dots N_{k-1}} \\
\norm{\vb{a}} = 1
}}
\sup_{\substack{
\vb{b} \in \mathbb{R}^{N_k M} \\
\norm{\vb{b}} = 1
}}
\underbrace{\vb{a}^T (\vb{\Gamma} - \vb{\Gamma}') \vb{B}_\text{mat} (\vb{C}_k \kron \vb{I}) \vb{b}}_{=:\rho}.
\end{equation*}
We will now proceed to bound $\rho:=\vb{a}^T (\vb{\Gamma} - \vb{\Gamma}') \vb{B}_\text{mat} (\vb{C}_k \kron I) \vb{b}$. For convenience, define $\vb{g}$ to be the diagonal of the diagonal matrix $\vb{\Gamma} - \vb{\Gamma}'$, so that $\left(\vb{a}^T (\vb{\Gamma} - \vb{\Gamma}')\right)\![\alpha] = \vb{a}[\alpha] \vb{g}[\alpha]$. Notice that by assumption \eqref{eq:gamma_eigs_assumption}, we have $\norm{\vb{g}}\le \epsilon_{k-1}$. 
Let $(\sigma_j, \vb{\phi}_j, \vb{\psi}_j)$ be the singular values and singular vectors of the $N_k \times M$ matrix formed by reshaping $\vb{b}$.
Here, we take the full singular value decomposition, so that the singular vectors $\vb{\phi}_j$ are defined for $j=1,\dots,N_k$ and form an orthonormal basis for $\mathbb{R}^{N_k}$, even if $M < N_k$. By the relationship between the Kronecker product and matrix unfoldings, we have
$$
\vb{b} = \sum_{j=1}^K \sigma_j \vb{\phi}_j \kron \vb{\psi}_j,
$$
where $K=\min(N_k, M)$. Since the induced norm of a vector is the Hilbert-Schmidt norm of its matrix reshaping, we have $1=\norm{\vb{b}}^2 = \sum_{j=1}^K \sigma_j^2$.
Writing the matrix products in $\rho$ in terms of component sums, using the triangle inequality twice, and the non-negativity of $\sigma_j$, yields
\begin{align*}
\abs{\rho}
&= \left\lvert\sum_{\alpha=1}^{N_1 \dots N_{k-1}} \sum_{\beta=1}^{N_k M} \underbrace{\vb{a}[\alpha] \vb{g}[\alpha]}_{\left(\vb{a}^T(\vb{\Gamma} - \vb{\Gamma}')\right)[\alpha]} \vb{B}_\text{mat}[\alpha, \beta] \underbrace{\left(\sum_{j=1}^K \sigma_j \vb{C}_k\vb{\phi}_j \kron \vb{\psi}_j\right)\![\beta]}_{\left((\vb{C}_k \kron \vb{I}) \vb{b}\right)[\beta]}\right\rvert \\
&= \left\lvert\sum_{j=1}^K \sigma_j \sum_{\alpha=1}^{N_1 \dots N_{k-1}} \sum_{\beta=1}^{N_k M} \vb{a}[\alpha] \vb{g}[\alpha] \vb{B}_\text{mat}[\alpha,\beta] \left( \vb{C}_k\vb{\phi}_j \kron \vb{\psi}_j\right)\![\beta]\right\rvert \\
&\le \sum_{j=1}^K \sigma_j \sum_{\alpha=1}^{N_1 \dots N_{k-1}} \abs{\vb{a}[\alpha] \vb{g}[\alpha]} \left\lvert \sum_{\beta=1}^{N_k M}  \vb{B}_\text{mat}[\alpha,\beta]  \left(\vb{C}_k\vb{\phi}_j \kron \vb{\psi}_j\right)\![\beta]\right\rvert.
\end{align*}
Let $i_1, \dots, i_{k-1}$ be the sub-indices for each mode corresponding to the grouped-mode index $\alpha$. We have
\begin{align*}
\left\lvert \sum_{\beta=1}^{N_k M}  \vb{B}_\text{mat}[\alpha, \beta]  \left(\vb{C}_k\vb{\phi}_j \kron \vb{\psi}_j\right)\![\beta]\right\rvert &= \abs{\vb{\psi}_j^T B(\vb{e}_{1,i_1}, \dots, \vb{e}_{k-1,i_{k-1}}, \vb{C}_k\vb{\phi}_j)} \\
&\le \norm{\vb{\psi}_j} \norm{B} \norm{\vb{C}_k\vb{\phi}_j} = \norm{B} \norm{\vb{C}_k\vb{\phi}_j},
\end{align*}
where $\vb{e}_{l,i}$ is the $i$th standard unit basis vector for $\mathbb{R}^{N_l}.$
Substituting this into our previous expression for $\abs{\rho}$ and using Cauchy--Schwarz twice yields
\begin{align*}
\abs{\rho} &\le \norm{B} \sum_{j=1}^K \sigma_j \norm{\vb{C}_k\vb{\phi}_j} \sum_{\alpha=1}^{N_1 \dots N_{k-1}} \abs{\vb{a}[\alpha] \vb{g}[\alpha]} \\
&\le \norm{B} \left(\sum_{j=1}^K \sigma_j^2\right)^{1/2} \left(\sum_{j=1}^K\norm{\vb{C}_k\vb{\phi}_j}^2\right)^{1/2} \norm{\vb{a}} \norm{\vb{g}} \\
&\le \epsilon_{k-1}\norm{B} \left(\sum_{j=1}^K\norm{\vb{C}_k\vb{\phi}_j}^2\right)^{1/2}.
\end{align*}
Since $K \le N_k$, we have
\begin{equation*}
\sum_{j=1}^{K} \norm{\vb{C}_k \vb{\phi}_j}^2 \le \sum_{j=1}^{N_k} \norm{\vb{C}_k \vb{\phi}_j}^2 = \norm{\vb{C}_k}_{\mathrm{HS}}^2 = 1,
\end{equation*}
so $\abs{\rho} \le \epsilon_{k-1} \norm{B}$. Hence, $\norm{F - \widetilde{F}'} \le  \epsilon_{k-1} \norm{B}$, which implies $\norm{F - F'} \le  2\epsilon_{k-1} \norm{B} $.

\paragraph{Part 3} Let $G:\mathbb{R}^{N_k} \rightarrow \mathbb{R}^{r_{k-1} \times M}$ be the matrix-valued multilinear function corresponding to the $r_{k-1} \times (N_k M)$ array $\vb{G}^R$ (note the regrouping of indices). 
Let $\vb{B}'$ be the $N_1 \times \dots \times N_{k-1} \times r_{k-1}$ array formed by reshaping $\vb{B}_\text{mat} (\vb{C}_k \kron \vb{I}) \left(\vb{G}^R\right)^T$, and let $B':\mathbb{R}^{N_1} \times \dots \times \mathbb{R}^{N_{k-1}} \rightarrow \mathbb{R}^{r_{k-1}}$ be the vector-valued multilinear function corresponding to $\vb{B}'$.
We have
\begin{align*}
&B'(\vb{C}_1\vb{x}_1, \dots, \vb{C}_{k-1}\vb{x}_{k-1})^T G(\vb{x}_k)\vb{\nu} \\
&= (\vb{C}_1 \vb{x}_1 \kron \dots \kron \vb{C}_{k-1} \vb{x}_{k-1})^T \vb{B}_\text{mat} (\vb{C}_k \kron \vb{I}) \left(\vb{G}^R\right)^T G(\vb{x}_k)\vb{\nu} \\
&= (\vb{x}_1 \kron \dots \kron \vb{x}_{k-1})^T \vb{\Gamma} \vb{B}_\text{mat} (\vb{C}_k \kron \vb{I}) \left(\vb{G}^R\right)^T \vb{G}^R(\vb{x}_k \kron \vb{\nu}) \\
&= (\vb{x}_1 \kron \dots \kron \vb{x}_{k-1})^T \vb{F}_\text{mat}'(\vb{x}_k \kron \vb{\nu}) \\
&= F'(\vb{x}_1, \dots, \vb{x}_{k-1}, \vb{x}_k)^T \vb{\nu},
\end{align*}
which implies factorization \eqref{eq:tt_one_step_factorization} since $\vb{\nu}$ is arbitrary. All that remains is to verify bounds \eqref{eq:R_less_than_A_onestep} and \eqref{eq:U_func_bounded_by_1}. For \eqref{eq:U_func_bounded_by_1}, notice that 
\begin{equation*}
\norm{\vb{\mu}^T G(\vb{x}_k)\vb{\nu}} = \norm{\vb{\mu}^T\vb{G}^R (\vb{x}_k \kron \vb{\nu})} 
\le \norm{\vb{\mu}}\norm{\vb{x}_k \kron \vb{\nu}} = \norm{\vb{\mu}}\norm{\vb{x}_k} \norm{\vb{\nu}},
\end{equation*}
and so $\norm{G} \le 1$ as required. For \eqref{eq:R_less_than_A_onestep}, let $\vb{\mu}$ be an arbitrary vector in $\mathbb{R}^{r_{k-1}}$ satisfying $\norm{\vb{\mu}}=1$. 
Let $\vb{X}$
be the $N_k \times M$ matrix formed by reshaping $\left(\vb{G}^R\right)^T\vb{\mu}$, and let $(\sigma_j, \vb{\phi}_j, \vb{\psi}_j)$ be the singular values and singular vectors in the full singular value decomposition of $\vb{X}$. We have
\begin{equation*}
\left(\vb{G}^R\right)^T\vb{\mu} = \sum_{j=1}^{K} \sigma_j \vb{\phi}_j \kron \vb{\psi}_j.
\end{equation*}
Since the rows of $\vb{G}^R$ are orthonormal, 
we have 
\begin{equation*}
\sum_{j=1}^{K} \sigma_j^2 = \norm{\vb{X}}_{\mathrm{HS}}^2 = \norm{\left(\vb{G}^R\right)^T\vb{\mu}}^2 \le 1.
\end{equation*}
Let $\vb{\theta}_i \in \mathbb{R}^{N_i}$, $i=1,\dots,k-1$, be vectors satisfying $\norm{\vb{\theta}_i}=1$. We compute
\begin{align*}
B'(\vb{\theta}_1, \dots, \vb{\theta}_{k-1})^T \vb{\mu} &= (\vb{\theta}_1 \kron \dots \kron \vb{\theta}_{k-1})^T \vb{B}_\text{mat} (\vb{C}_k \kron \vb{I}) \left(\vb{G}^R\right)^T \vb{\mu} \\
&= \sum_{j=1}^{K} \sigma_j (\vb{\theta}_1 \kron \dots \kron \vb{\theta}_{k-1})^T \vb{B}_\text{mat} (\vb{C}_k \vb{\phi}_j \kron \vb{\psi}_j) \\
&= \sum_{j=1}^{K} \sigma_j B(\vb{\theta}_1, \dots, \vb{\theta}_{k-1}, \vb{C}_k \vb{\phi}_j)^T \vb{\psi}_j.
\end{align*}
Taking the absolute value, using the triangle inequality, the definition of the induced norm, and Cauchy--Schwarz yields
\begin{align}
\left\lvert B'(\vb{\theta}_1, \dots, \vb{\theta}_{k-1})^T \vb{\mu} \right\rvert &\le \sum_{j=1}^{K} \abs{\sigma_j} \left\lvert B(\vb{\theta}_1, \dots, \vb{\theta}_{k-1}, \vb{C}_k \vb{\phi}_j)^T \vb{\psi}_j \right\rvert \nonumber\\
&\le \norm{B} \sum_{j=1}^{K} \abs{\sigma_j} \norm{\vb{C}_k \vb{\phi}_j} \nonumber\\
&\le \norm{B} \left(\sum_{j=1}^{K} \norm{\vb{C}_k \vb{\phi}_j}^2\right)^{1/2}
\le \norm{B} \label{eq:bound_for_Rabs}.
\end{align}
where in the last inequality we used the fact that $\norm{\vb{C}_k}_{\mathrm{HS}} = 1$.
Taking the supremum over the vectors $\vb{\mu}$ and $\vb{\theta}_i$ yields the bound $\norm{B'} \le \norm{B} $, which completes the proof.
\end{proof}



\subsection{From tensor train to Tucker tensor train}
\label{sec:sym_tt_to_ttt}

\begin{lemma}[Symmetric tensor trains are Tucker tensor trains]
\label{lem:sym_tt_to_ttt}
    Let $S:\mathbb{R}^{N} \times \dots \times \mathbb{R}^{N}\rightarrow \mathbb{R}^M$ be a vector-valued multilinear function represented by a tensor train $\tuple{S}=(\vb{G}_i)_{i=1}^{k+1}$ with TT-ranks $\vb{r}=(1, r_1, \dots, r_k,1)$.
    Further, assume that $S$ is nearly symmetric to tolerance $\epsilon \ge 0$ in its $k$ inputs. That is, if $\pi$ is a permutation function which re-orders $k$ variables\footnote{
    If $k=3$ there are six possible permutation functions. Two of which, for example, are $(S \circ \pi)(\vb{x}_1, \vb{x}_2, \vb{x}_3) = S(\vb{x}_2, \vb{x}_1, \vb{x}_3)$ and $(S \circ \pi)(\vb{x}_1, \vb{x}_2, \vb{x}_3) = S(\vb{x}_3, \vb{x}_1, \vb{x}_2)$.
    }, then
    \begin{equation}
    \label{eq:permutation_error_assumption}
    \norm{S - S \circ \pi} \le \epsilon.
    \end{equation}
    Then there exists an $N \times r_1$ matrix $\vb{\inputbasis}$ with orthonormal columns, a $M \times r_k$ matrix $\vb{\outputbasis}$ with orthonormal columns, and a tensor train $\tuple{S}'=(\vb{G}'_1, \dots, \vb{G}'_{k+1})$ with shape $\vb{n}=(r_1,\dots,r_1,r_k)$ and ranks $\vb{r}$ such that the multilinear function $T$ defined by
    \begin{equation}
    \label{eq:T_tilde_PSQ}
        T(\vb{x}_1, \dots, \vb{x}_k) := \vb{\outputbasis} S'(\vb{\inputbasis}^T\vb{x}_1, \dots, \vb{\inputbasis}^T\vb{x}_k)
    \end{equation}
    satisfies
    $$
    \norm{S - T} \le (k-1)\epsilon.
    $$
    Here $S'$ is the vector-valued multilinear function corresponding to $\tuple{S}'$.
\end{lemma}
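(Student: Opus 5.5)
The plan is to build $\vb{\inputbasis}$ from the first core and $\vb{\outputbasis}$ from the last core, replace every input $\vb{x}_i$ by its projection onto $\operatorname{range}(\vb{\inputbasis})$, and control the error from each slot using near-symmetry. The output mode should cost nothing, and each of the $k-1$ input slots other than the first should cost at most $\epsilon$.

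\emph{Step 1 (bases from the end cores).} The first core is a linear map $G_1:\mathbb{R}^N\to\mathbb{R}^{1\times r_1}$, so it can be written $G_1(\vb{x}) = \vb{x}^T\vb{A}$ for an $N\times r_1$ matrix $\vb{A}$. Take $\vb{\inputbasis}$ to be an $N\times r_1$ matrix with orthonormal columns whose range contains $\operatorname{range}(\vb{A})$, padding with extra orthonormal columns if $\vb{A}$ is rank deficient; this needs $r_1\le N$, which holds for a nondegenerate train. Set $\vb{P}:=\vb{\inputbasis}\vb{\inputbasis}^T$. Then $G_1(\vb{P}\vb{x}) = G_1(\vb{x})$, and so
\[
S(\vb{x}_1,\vb{x}_2,\dots,\vb{x}_k) = S(\vb{P}\vb{x}_1,\vb{x}_2,\dots,\vb{x}_k)
\]
holds exactly.

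For the output, view the last core as the $r_k\times M$ matrix through which the output index is produced. Choose $\vb{\outputbasis}$ with orthonormal columns spanning (a superspace of) its row space, again padding if needed. Then $S = \vb{\outputbasis}\vb{\outputbasis}^T S$ exactly, so the output mode introduces no error.

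\emph{Step 2 (telescoping over input slots).} Define
\[
T(\vb{x}_1,\dots,\vb{x}_k) := \vb{\outputbasis}\vb{\outputbasis}^T S(\vb{P}\vb{x}_1,\dots,\vb{P}\vb{x}_k) = S(\vb{P}\vb{x}_1,\dots,\vb{P}\vb{x}_k).
\]
Write $S-T$ as a telescoping sum of $k-1$ terms, using Step 1 for slot $1$. The $i$th term, for $i=2,\dots,k$, is
\[
S(\vb{P}\vb{x}_1,\dots,\vb{P}\vb{x}_{i-1},\vb{x}_i,\vb{x}_{i+1},\dots,\vb{x}_k) - S(\vb{P}\vb{x}_1,\dots,\vb{P}\vb{x}_{i-1},\vb{P}\vb{x}_i,\vb{x}_{i+1},\dots,\vb{x}_k).
\]
By multilinearity this equals $S(\dots,(\vb{I}-\vb{P})\vb{x}_i,\dots)$, with $(\vb{I}-\vb{P})\vb{x}_i$ in slot $i$.

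\emph{Step 3 (the key step: symmetry kills each term up to $\epsilon$).} Let $\pi$ be the transposition of slots $1$ and $i$, and set $D := S - S\circ\pi$, so that $\norm{D}\le\epsilon$ by \eqref{eq:permutation_error_assumption}. Then
\[
S(\dots,(\vb{I}-\vb{P})\vb{x}_i,\dots) = D(\dots,(\vb{I}-\vb{P})\vb{x}_i,\dots) + (S\circ\pi)(\dots,(\vb{I}-\vb{P})\vb{x}_i,\dots).
\]
In the second term the vector $(\vb{I}-\vb{P})\vb{x}_i$ has been moved into slot $1$ of $S$. Since $S$ depends on its first argument only through $\vb{P}$ and $\vb{P}(\vb{I}-\vb{P})=0$, that term vanishes.

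The remaining term is bounded by $\epsilon\norm{(\vb{I}-\vb{P})\vb{x}_i}\prod_{l<i}\norm{\vb{P}\vb{x}_l}\prod_{l>i}\norm{\vb{x}_l}$, which is at most $\epsilon$ for unit vectors. Summing over $i=2,\dots,k$ and taking the supremum gives $\norm{S-T}\le(k-1)\epsilon$. This is the step requiring care: the naive approach of comparing $S$ and $S\circ\pi$ separately at $\vb{x}_i$ and at $\vb{P}\vb{x}_i$ would give $2\epsilon$ per slot. Applying $D$ to the difference $(\vb{I}-\vb{P})\vb{x}_i$ directly is what yields the sharp constant $k-1$.

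\emph{Step 4 (form of $T$).} Define the new cores by $G'_i(\vb{s}) := G_i(\vb{\inputbasis}\vb{s})$ for $\vb{s}\in\mathbb{R}^{r_1}$ and $i=1,\dots,k$. Define the last core $G'_{k+1}$ by composing the output index of $G_{k+1}$ with $\vb{\outputbasis}^T$, giving an $r_k\times r_k$ core. The TT-ranks $\vb{r}$ are unchanged, and the shape is $(r_1,\dots,r_1,r_k)$. Using $\vb{P}=\vb{\inputbasis}\vb{\inputbasis}^T$ and $S=\vb{\outputbasis}\vb{\outputbasis}^T S$, one checks directly that
\[
T(\vb{x}_1,\dots,\vb{x}_k) = \vb{\outputbasis} S'(\vb{\inputbasis}^T\vb{x}_1,\dots,\vb{\inputbasis}^T\vb{x}_k),
\]
which is the form \eqref{eq:T_tilde_PSQ}.
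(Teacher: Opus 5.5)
Your proof is correct and is essentially the paper's argument. It uses the same bases taken from the first and last cores, the same exact identity in slot $1$, and the same telescoping into $k-1$ terms, each bounded by $\epsilon$ through a transposition with slot $1$; the paper's string-notation terms $b^{i}a^{k-i}-ab^{i}a^{k-i-1}$ equal your $S(\vb{P}\vb{x}_1,\dots,\vb{P}\vb{x}_i,(\vb{I}-\vb{P})\vb{x}_{i+1},\dots)$, since strings differing only in the first letter coincide. Your explicit step of applying $D=S-S\circ\pi$ to $(\vb{I}-\vb{P})\vb{x}_i$ and noting that the $S\circ\pi$ term vanishes spells out why each term costs only $\epsilon$, which the paper compresses into the remark that the two strings are permutations of each other, and your padding of $\vb{\inputbasis},\vb{\outputbasis}$ covers rank-deficient end cores, which the paper tacitly excludes.
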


\begin{proof}
Since $r_0=r_d=1$, we may interpret the $1 \times N \times r_1$ array $\vb{G}_1$ as an $N \times r_1$ matrix, and we may interpret the $r_k \times M \times 1$ array $\vb{G}_{k+1}$ as an $r_k \times M$ matrix. Let $\vb{\inputbasis}$ be an $N \times r_1$ matrix whose columns form an orthonormal basis for the column space of $\vb{G}_1$, and let $\vb{\outputbasis}$ be an $M \times r_k$ matrix whose columns form an orthonormal basis for the row space of $\vb{G}_{k+1}$, with $\vb{G}_1$ and $\vb{G}_{k+1}$ interpreted as matrices. For $i=1,\dots,k$ define $\vb{G}'_i$ to be the $r_{i-1} \times r_1 \times r_i$ array formed by contracting the middle mode of $\vb{G}_i$ with the columns of $\vb{\inputbasis}$, and define $\vb{G}'_{k+1}$ to be the $r_k \times r_k \times 1$ array formed by contracting the middle mode of $\vb{G}_{k+1}$ with the columns of $\vb{\outputbasis}$. I.e., the arrays $\vb{G}'_i$ have entries
\begin{align*}
    \vb{G}'_i[a,b,c] &= \sum_{\beta=1}^N \vb{G}_i[a,\beta,c] \vb{\inputbasis}[\beta,b], \qquad i=1,\dots,k, \\
    \vb{G}'_{k+1}[a,b,c] &= \sum_{\beta=1}^M \vb{G}_{k+1}[a,\beta,c] \vb{\outputbasis}[\beta,b].
\end{align*}
Further, define $\tuple{S}'=(\vb{G}'_1, \dots, \vb{G}'_{k+1})$. The matrices $\vb{\inputbasis}$ and $\vb{\outputbasis}$ and the tensor train $\tuple{S}'$ have the appropriate shapes and ranks required by the theorem, and by construction
\begin{equation*}
S'(\vb{\xi}_1, \dots, \vb{\xi}_k) = \vb{\outputbasis}^T S(\vb{\inputbasis} \vb{\xi}_1, \dots, \vb{\inputbasis} \vb{\xi}_k)
\end{equation*}
and so equation \eqref{eq:T_tilde_PSQ} becomes 
\begin{equation*}
    T(\vb{x}_1, \dots, \vb{x}_k) = \vb{\outputbasis}\vb{\outputbasis}^T S(\vb{\inputbasis}\vb{\inputbasis}^T \vb{x}_1, \dots, \vb{\inputbasis}\vb{\inputbasis}^T \vb{x}_k).
\end{equation*}
Since $\vb{\inputbasis}$ and $\vb{\outputbasis}$ are orthonormal bases for the externally facing indices of the first and last cores of $\tuple{S}$, we have
\begin{equation}
\label{eq:project_first_and_last}
    S(\vb{x}_1, \dots, \vb{x}_k) = \vb{\outputbasis}\vb{\outputbasis}^T S(\vb{\inputbasis}\vb{\inputbasis}^T \vb{x}_1, \vb{x}_2, \dots, \vb{x}_k).
\end{equation}
The remainder of the proof is to show that we can insert the projector $\vb{\inputbasis}\vb{\inputbasis}^T$ into the $2$nd through $k$th indices while incurring less than $(k-1)\epsilon$ error. We will need to manipulate versions of $T$ with $\vb{\inputbasis}\vb{\inputbasis}^T$ inserted in to subsets of the $k$ input indices, and $\vb{\outputbasis}\vb{\outputbasis}^T$ always contracted with the output index. For convenience of notation, we represent tensors of this form by formal strings of order $k$ consisting of the letters `$a$' and `$b$'. The letter $b$ occurs in the $i$th entry of the string if $\vb{\inputbasis}\vb{\inputbasis}^T$ is inserted in index $i$, and $a$ occurs otherwise. 
If the same letter is repeated several times in a row, we may shorten the string with exponent notation, but note that the letters do not commute. 
For example, if $k=4$, we have the correspondences
\begin{alignat*}{5}
a^4 = aaaa \quad &\leftrightarrow \quad \vb{\outputbasis}\vb{\outputbasis}^T S(&~\cdot~,&& ~\cdot~,&& ~\cdot~,&& ~\cdot~) \\
b a^3 = baaa \quad &\leftrightarrow \quad \vb{\outputbasis}\vb{\outputbasis}^T S(&\vb{\inputbasis}\vb{\inputbasis}^T ~\cdot~,&& ~\cdot~,&& ~\cdot~,&& ~\cdot~) \\
a b a^2 = abaa \quad &\leftrightarrow \quad \vb{\outputbasis}\vb{\outputbasis}^T S(&~\cdot~,&&~ \vb{\inputbasis}\vb{\inputbasis}^T ~\cdot~,&& ~\cdot~,&& ~\cdot~) \\
b a b^2 = babb \quad &\leftrightarrow \quad \vb{\outputbasis}\vb{\outputbasis}^T S(&\vb{\inputbasis}\vb{\inputbasis}^T ~\cdot~,&& ~\cdot~,&&~ \vb{\inputbasis}\vb{\inputbasis}^T~\cdot~,&&~ \vb{\inputbasis}\vb{\inputbasis}^T~\cdot~)
\end{alignat*}
and so on. Using this notation, $S=a^k$, $T=b^k$, and Equation \eqref{eq:project_first_and_last} becomes $a^k = ba^{k-1}$. Assumption \eqref{eq:permutation_error_assumption} means that permuting the letters in a string incurs error $\epsilon$, e.g., $\norm{abba - baba} \le \epsilon$. 

By the same reasoning used to derive Equation \eqref{eq:project_first_and_last}, two strings are equal if they differ in only the first entry, e.g., $abba=bbba$. We therefore have
\begin{equation}
\label{eq:aabb_sum_1}
0 = \sum_{i=1}^k ab^{i-1}a^{k-i} - bb^{i-1}a^{k-i}
\end{equation}
because $ab^{i-1}a^{k-i}$ and $bb^{i-1}a^{k-i}$ differ only in their first entry.
If expanded out fully, this sum has $2k$ terms, with first term $a^k$ and last term $b^k$. Regrouping the terms to match the second term with the third, the fourth term with the fifth, and so on, yields
\begin{equation}
\label{eq:aabb_sum_2}
0 = a^k - \left(\sum_{i=1}^{k-1}bb^{i-1}aa^{k-i-1} - ab^{i-1}ba^{k-i-1}\right) - b^k.
\end{equation}
Moving $a^k - b^k$ to the right-hand side, taking the induced norm of each side, and using the triangle inequality yields
\begin{align*}
\norm{a^k - b^k} =& \norm{\sum_{i=1}^{k-1} bb^{i-1}aa^{k-i-1} - ab^{i-1}ba^{k-i-1}} \\
\le& \sum_{i=1}^{k-1} \norm{bb^{i-1}aa^{k-i-1} - ab^{i-1}ba^{k-i-1}} \le (k-1) \epsilon.
\end{align*}
The last inequality follows from \eqref{eq:permutation_error_assumption} since $bb^{i-1}aa^{k-i-1}$ and $ab^{i-1}ba^{k-i-1}$ are permutations of each other (the first and $i$th letters are swapped). The theorem follows from recalling that $a^k = S$ and $b^k = T$.
\end{proof}

The Tucker tensor train constructed in \lemref{lem:sym_tt_to_ttt} uses a single shared input basis $\vb{\inputbasis}$ for all $k$ input modes (and a separate output basis $\vb{\outputbasis}$), reflecting the approximate symmetry of $S$; this shared-input structure is precisely the symmetry-aware parameterization noted in \secref{sec:t4s_model}.

\begin{prop}[Tucker tensor train in finite dimensions]
\label{thm:finite_ttt}
    Let the conditions of \propref{thm:preconditioned_are_tt} hold, and impose further constraints that $N_1=\dots=N_k=N$, that $F$ is symmetric in its $k$ input arguments, and that $\vb{C}_1=\dots=\vb{C}_k=\vb{C}$. Then there exists an $N \times r_1$ matrix $\vb{\inputbasis}$ with orthonormal columns, an $M \times r_k$ matrix $\vb{\outputbasis}$ with orthonormal columns, and a tensor train $\tuple{S}'$ with shape $\vb{n}=(r_1,\dots,r_1,r_k)$ and ranks $\vb{r}=(1, r_1, \dots, r_k, 1)$ such that the multilinear function $T$ defined by
    \begin{equation}
    \label{eq:widetildet_5456}
        T(\vb{x}_1, \dots, \vb{x}_k) := \vb{\outputbasis} S'(\vb{\inputbasis}^T\vb{x}_1, \dots, \vb{\inputbasis}^T\vb{x}_k)
    \end{equation}
    satisfies
    \begin{equation*}
        \norm{F - T} \le 2(2k-1) \norm{B} \norm{\vb{C}}_{\mathrm{HS}}^k \sum_{j=1}^k \epsilon_j.
    \end{equation*}
    Here, $S'$ is the vector-valued multilinear function corresponding to $\tuple{S}'$.
\end{prop}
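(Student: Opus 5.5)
The plan is to chain \propref{thm:preconditioned_are_tt} with \lemref{lem:sym_tt_to_ttt}, using the exact symmetry of $F$ to control the approximate symmetry of the intermediate tensor train.

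First, apply \propref{thm:preconditioned_are_tt} with $\vb{C}_1=\dots=\vb{C}_k=\vb{C}$. This produces a tensor train $\tuple{S}$ with ranks $\vb{r}=(1,r_1,\dots,r_k,1)$ satisfying
$$\norm{F-S}\le \delta := 2\norm{\vb{B}}\norm{\vb{C}}_{\mathrm{HS}}^k\sum_{j=1}^k\epsilon_j.$$
Here I identify $\norm{\vb{B}}$ with $\norm{B}$, since the induced norm of an array is defined as that of its multilinear function.

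Second, I verify the near-symmetry hypothesis \eqref{eq:permutation_error_assumption} for $S$. For any permutation $\pi$ of the $k$ inputs, $F=F\circ\pi$ by symmetry. The triangle inequality gives
$$\norm{S-S\circ\pi}\le\norm{S-F}+\norm{F\circ\pi-S\circ\pi}.$$
Permuting arguments does not change the induced norm, because the supremum in \eqref{eq:induced_norm_definition} is taken over all unit vectors in each slot and all $N_i=N$. Hence $\norm{F\circ\pi-S\circ\pi}=\norm{F-S}$, and $S$ is nearly symmetric with tolerance $\epsilon=2\delta$.

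Third, apply \lemref{lem:sym_tt_to_ttt} to $S$. This yields an orthonormal $\vb{\inputbasis}\in\mathbb{R}^{N\times r_1}$, an orthonormal $\vb{\outputbasis}\in\mathbb{R}^{M\times r_k}$, and a tensor train $\tuple{S}'$ of shape $(r_1,\dots,r_1,r_k)$ and ranks $\vb{r}$. The associated $T$ of the form \eqref{eq:widetildet_5456} satisfies $\norm{S-T}\le (k-1)\cdot 2\delta$.

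Finally, combine the two bounds:
$$\norm{F-T}\le\norm{F-S}+\norm{S-T}\le \delta+2(k-1)\delta=(2k-1)\delta.$$
Substituting $\delta$ gives exactly $2(2k-1)\norm{B}\norm{\vb{C}}_{\mathrm{HS}}^k\sum_j\epsilon_j$.

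The only delicate step is the second one: justifying that composing with a permutation preserves the induced norm, and that the tolerance is therefore $2\delta$ rather than something larger. This requires all input spaces to coincide, which is why the hypotheses $N_1=\dots=N_k$ and equal $\vb{C}_i$ are imposed. The remaining steps are direct invocations of the earlier results. The output shapes and ranks match the statement verbatim, since \lemref{lem:sym_tt_to_ttt} preserves the ranks $\vb{r}$ of $\tuple{S}$.
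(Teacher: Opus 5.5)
Your proposal is correct and follows the paper's proof step for step. You obtain $S$ from \propref{thm:preconditioned_are_tt}, use the exact symmetry of $F$ and the permutation invariance of the induced norm to get $\norm{S - S\circ\pi} \le 2\norm{F-S}$, then apply \lemref{lem:sym_tt_to_ttt} and the triangle inequality to get the factor $2k-1$; the only cosmetic difference is that you pass the lemma the tolerance $2\delta$ instead of $2\norm{F-S}$, which gives the same final bound.
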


The proof of \propref{thm:finite_ttt} is given in \appref{app:finite_to_infinite}. The idea of the proof is to use \propref{thm:preconditioned_are_tt} to form a tensor train approximation of $F$, then use \lemref{lem:sym_tt_to_ttt} to approximate the tensor train with a Tucker tensor train.

\subsection{Extension to infinite dimensions: main theorem}
\label{sec:infinite_and_main_thm}

\begin{theorem}[Tucker tensor train approximation of derivative tensors]
\label{thm:ttt_main_thm}
    Let $X$ and $Y$ be Hilbert spaces, let $q:X \rightarrow Y$ be a function with $k$ continuous derivatives, let $\sqrtcov:X \rightarrow X$ be a Hilbert--Schmidt self-adjoint positive-semidefinite operator, let $\theta_0 \in X$, and define $f:X \rightarrow Y$,
    \begin{equation*}
        f(x) := q(\theta_0 + \sqrtcov x).
    \end{equation*}
    Further, let $\gamma_{j,1} \ge \gamma_{j,2} \ge \dots$ denote the eigenvalues of $\kron^j \sqrtcov$
    (the Kronecker product of $j$ copies of $C$)
    and 
    choose $r_j$ so that
    \begin{equation}
    \label{eq:gamma_eigs_assumption2}
    \left(\sum_{\alpha=r_j+1}^{\infty} \gamma_{j,\alpha}^2 \right)^{1/2} \le \epsilon_j \left(\sum_{\alpha=1}^{\infty} \gamma_{j,\alpha}^2 \right)^{1/2},
    \end{equation}
    for $j=1,\dots,k$. Then there exists a Tucker tensor train $\tuple{T}$ with tensor train ranks $\vb{r}=(1,r_1, r_2, \dots, r_k, 1)$ and Tucker ranks $\vb{n}=(r_1, \dots, r_1, r_k)$ such that
    \begin{equation*}
        \norm{T - D^k \ptoW(0)} \le 2(2k-1)\norm{D^k \pto(\paramzero)} \norm{C}_{\mathrm{HS}}^k \sum_{j=1}^k \epsilon_j,
    \end{equation*}
    where $T$ is the vector-valued multilinear function corresponding to $\tuple{T}$, and $\norm{C}_{\mathrm{HS}}$ is the Hilbert--Schmidt norm of $C$.
\end{theorem}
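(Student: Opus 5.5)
The plan is to reduce to the finite-dimensional \propref{thm:finite_ttt} by truncating $C$ spectrally, and then pass to the limit. First I note that $D^k f(0)(x_1,\dots,x_k) = D^k q(\theta_0)(Cx_1,\dots,Cx_k)$ by the chain rule, as in \eqref{eq:relating_Df_and_Dq}. So $D^k f(0)$ has exactly the preconditioned form $F(x_1,\dots,x_k)=B(Cx_1,\dots,Cx_k)$ with $B=D^k q(\theta_0)$ bounded, symmetric, and $\norm{B}=\norm{D^kq(\theta_0)}$. Since $C$ is compact, self-adjoint and positive semidefinite, it has an orthonormal eigenbasis $(\phi_i)$ with eigenvalues $\lambda_1\ge\lambda_2\ge\dots\ge 0$, and $\sum_i\lambda_i^2=\norm{C}_{\mathrm{HS}}^2<\infty$. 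The eigenvalues of $\kron^j C$ are then the products $\lambda_{i_1}\cdots\lambda_{i_j}$.

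Fix $N$ and let $P_N$ be the orthogonal projector onto $\operatorname{span}\{\phi_1,\dots,\phi_N\}$, with $C_N:=P_NC=CP_N$. Since only the output mode is unconstrained, I also need a finite output dimension. I would handle this by choosing an orthonormal basis of a finite-dimensional subspace $Y_M\subset Y$ and defining $B_{N,M}:=Q_M B(P_N\cdot,\dots,P_N\cdot)$, where $Q_M$ projects onto $Y_M$. This is a symmetric finite array in the $\phi$-coordinates with $\norm{B_{N,M}}\le\norm{B}$. The approximated tensor is $F_{N,M}(x_1,\dots,x_k)=B_{N,M}(C_Nx_1,\dots,C_Nx_k)$.

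The eigenvalues of $\kron^j C_N$ are a subset of those of $\kron^j C$, namely the products with all indices at most $N$, padded with zeros. The tail condition \eqref{eq:gamma_eigs_assumption2} then transfers to the truncated operator, though the normalization changes: the right-hand side of \eqref{eq:gamma_eigs_assumption} uses $\norm{C_N}_{\mathrm{HS}}^{j}$ rather than $\norm{C}_{\mathrm{HS}}^j$. To deal with this I would use the absolute form: the tail sum beyond $r_j$ for $C_N$ is at most the tail for $C$, which is at most $\epsilon_j\norm{C}_{\mathrm{HS}}^j$. Tracing \propref{thm:preconditioned_are_tt}, the rescaled argument (setting $\norm{C}_{\mathrm{HS}}=1$ and applying \lemref{lem:helper_for_tt_thm}) only uses $\norm{\vb g}\le\epsilon_j$ after normalizing by the Hilbert--Schmidt product. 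So I would rerun it with $\epsilon_j' := \epsilon_j\norm{C}_{\mathrm{HS}}^j/\norm{C_N}_{\mathrm{HS}}^j$. The factor $\norm{C_N}_{\mathrm{HS}}^k$ in \propref{thm:finite_ttt} then combines with it to give $\norm{C}_{\mathrm{HS}}^k$ up to the mismatch $\norm{C}^j_{\mathrm{HS}}\norm{C_N}^{k-j}_{\mathrm{HS}}\le\norm{C}^k_{\mathrm{HS}}$. This bookkeeping is the step I expect to be fiddliest. If it fails cleanly, the fallback is to apply the proposition with slightly inflated $\epsilon_j$ and absorb the difference in the limit. Either way \propref{thm:finite_ttt} yields $U$, $V$ and $S'$ with the stated ranks such that $\norm{F_{N,M}-T_{N,M}}\le 2(2k-1)\norm{B}\norm{C}_{\mathrm{HS}}^k\sum_j\epsilon_j$. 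Here $T_{N,M}$ is regarded as a multilinear map on $X$ via the embeddings, and the column spaces of $U$ and $V$ lie in $\operatorname{range}P_N$ and $Y_M$.

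Finally I bound $\norm{D^kf(0)-F_{N,M}}$. Telescoping one slot at a time gives a term $\norm{B}\norm{C-C_N}\,\norm{C}^{k-1}$ for each replacement of $C$ by $C_N$, and $\norm{C-C_N}=\lambda_{N+1}\to0$. The output projection $Q_M$ is the genuine obstacle, because $Q_M\to I$ only strongly, not in operator norm. To get around it, I would note that $F_N:=B(C_N\cdot,\dots,C_N\cdot)$ is a bounded multilinear map on the finite-dimensional space $(\operatorname{range}P_N)^k$. Its image under the finitely many basis inputs $\phi_{i_1},\dots,\phi_{i_k}$ spans a finite-dimensional subspace $Y_N\subset Y$, so taking $Y_M:=Y_N$ makes $Q_M$ exact and introduces no error. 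This gives $\norm{D^kf(0)-T_{N}}\le 2(2k-1)\norm{B}\norm{C}_{\mathrm{HS}}^k\sum_j\epsilon_j+k\lambda_{N+1}\norm{B}\norm{C}^{k-1}$. A slack is still needed to remove the last term: when some $\epsilon_j$ inequality is strict I absorb it by taking $N$ large. In the equality case I would either argue by compactness that the finite-rank approximants (with fixed ranks) have a convergent subsequence in the operator-norm limit, since the cores live in bounded sets of fixed finite dimension after the bases are fixed within an increasing sequence of spaces, or simply observe that the inequality with $\epsilon_j+\delta$ holds for every $\delta>0$ and pass to the limit via this compactness.
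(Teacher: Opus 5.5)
Your argument follows the paper's proof. You reduce to finite dimensions exactly as \lemref{lem:inf_to_finite} does: spectral truncation of $C$, with the output space spanned by the images of the retained eigenvectors, which is your final choice $Y_M:=Y_N$. You then apply \propref{thm:finite_ttt}. Your transfer of the tail condition is sound, and it works for every $N$, whereas the paper only asserts that the tolerances are met for $N$ large enough. The reason is that the tail beyond $r_j$ of a sub-multiset of nonnegative numbers never exceeds the tail of the full multiset. Hence \eqref{eq:gamma_eigs_assumption} holds with $\epsilon_j'=\epsilon_j\norm{C}_{\mathrm{HS}}^j/\norm{C_N}_{\mathrm{HS}}^j$, and $\norm{C_N}_{\mathrm{HS}}^{k-j}\norm{C}_{\mathrm{HS}}^{j}\le\norm{C}_{\mathrm{HS}}^k$ recovers the stated constant.

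The one step that fails as written is the equality case at the end. The approximants $T_N$ do not lie in a norm-compact set. Their Tucker bases live in $\operatorname{range}P_N$ and $Y_N$, whose union is infinite-dimensional. Nothing in your argument prevents those bases from drifting toward ever-higher eigenvectors, in which case no subsequence converges in operator norm.

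The compactness that does work is weak-$*$:
\begin{itemize}
\item Bounded $k$-linear maps $X^k\to Y$ form, isometrically, the dual of the completed projective tensor product $X\otimes_\pi\cdots\otimes_\pi X\otimes_\pi Y$. So the bounded sequence $(T_N)$ has a weak-$*$ cluster point $T$.
\item The induced norm is weak-$*$ lower semicontinuous. This gives $\norm{D^kf(0)-T}\le\liminf_N\norm{D^kf(0)-T_N}\le 2(2k-1)\norm{D^kq(\theta_0)}\norm{C}_{\mathrm{HS}}^k\sum_j\epsilon_j$.
\item The set of maps with Tucker ranks at most $\vb{n}$ and TT ranks at most $\vb{r}$ is weak-$*$ closed. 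Each rank bound says that all minors of a suitable matrix with entries $\langle T(x_1,\dots,x_k),y\rangle$ vanish, and these entries are weak-$*$ continuous in $T$.
\item A bounded map with finite matricization and unfolding ranks admits a Tucker tensor train with those ranks. Take Tucker bases spanning the finite-dimensional ranges, apply TT-SVD to the finite core, and zero-pad to the exact ranks.
\end{itemize}

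The paper's own proof passes over this point entirely. It concludes from the vanishing of the reduction error as $N\to\infty$, which by itself gives the bound only up to an arbitrarily small additive term. Your strict-inequality branch is fine. It is only the compactness justification for the equality case that needs replacing.
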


\begin{proof}
    The theorem follows from using \lemref{lem:inf_to_finite} (\appref{app:finite_to_infinite}) with $B=D^k \pto(\paramzero)$ to reduce the problem to finite dimensions, then using \propref{thm:finite_ttt} with $B=B_N$ and $\vb{C}=\vb{\Lambda}$ from \lemref{lem:inf_to_finite} to construct a Tucker tensor train in finite dimensions. Two observations make the bound of \propref{thm:finite_ttt} reproduce the stated bound. First, $\norm{B_N} \le \norm{B} = \norm{D^k \pto(\paramzero)}$ by \lemref{lem:inf_to_finite}, and $\norm{\vb{\Lambda}}_{\mathrm{HS}} \le \norm{C}_{\mathrm{HS}}$ since $\vb{\Lambda}$ holds eigenvalues of $C$. Second, the eigenvalues of $\kron^j \vb{\Lambda}$ are a subset of those of $\kron^j C$ and exhaust them as $N \to \infty$, so the truncation tolerances $\epsilon_j$ fixed by \eqref{eq:gamma_eigs_assumption2} are met in finite dimensions for $N$ large enough. Since the reduction error of \lemref{lem:inf_to_finite} likewise vanishes as $N \to \infty$, the bound $2(2k-1)\norm{D^k \pto(\paramzero)} \norm{C}_{\mathrm{HS}}^k \sum_{j=1}^k \epsilon_j$ follows. The first $k$ Tucker bases are copies of the composition of $\inputbasis$ from \lemref{lem:inf_to_finite} with $\vb{\inputbasis}$ from \propref{thm:finite_ttt}, and the last Tucker basis is the composition of $\outputbasis$ from \lemref{lem:inf_to_finite} with $\vb{\outputbasis}$ from \propref{thm:finite_ttt}.
\end{proof}

\begin{corollary}[Asymptotic error bound for power law covariance]
\label{cor:error_vs_rank_poly_decay}
Let the conditions of \thmref{thm:ttt_main_thm} hold, and further
suppose that the eigenvalues $\lambda_i$ of $C$ are given by
\begin{equation*}
\lambda_i = i^{-\beta}
\end{equation*}
for some $\beta > 1/2$. 
For any sufficiently large choice of tensor train ranks $\vb{r} = (1, r_1, \dots, r_k, 1)$, there exists a Tucker tensor train $\tuple{T}$ with tensor train ranks $\vb{r}$ and Tucker ranks $\vb{n} = (r_1, \dots, r_1, r_k)$ such that
\begin{equation}
\label{eq:corollary_main_bound}
    \norm{T - D^k \ptoW(0)} \le c \norm{D^k \pto(\paramzero)} \sum_{j=1}^k r_j^{-\beta + \frac{1}{2}}\left(\log r_j\right)^{\beta(j-1)},
\end{equation}
where $T$ is the vector-valued multilinear function corresponding to $\tuple{T}$. Here,
$c=c(k, \beta)$ is a constant which depends on $k$ and $\beta$.
\end{corollary}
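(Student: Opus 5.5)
This corollary follows directly from \thmref{thm:ttt_main_thm} once we bound the tail of the eigenvalue sequence of $\kron^j \sqrtcov$. The eigenvalues of $\kron^j \sqrtcov$ are the products $\lambda_{i_1}\cdots\lambda_{i_j} = (i_1 \cdots i_j)^{-\beta}$ over multi-indices $(i_1,\dots,i_j)\in\mathbb{N}^j$. The total mass is finite and depends only on $j$ and $\beta$:
\begin{equation*}
\sum_\alpha \gamma_{j,\alpha}^2 = \zeta(2\beta)^j, \qquad \norm{\sqrtcov}_{\mathrm{HS}}^2 = \zeta(2\beta),
\end{equation*}
using $\beta>1/2$. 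So the plan is as follows. For each given large $r_j$, define $\epsilon_j$ as the relative tail
\begin{equation*}
\epsilon_j := \Bigl(\sum_{\alpha>r_j}\gamma_{j,\alpha}^2\Bigr)^{1/2}\Big/\zeta(2\beta)^{j/2},
\end{equation*}
so that \eqref{eq:gamma_eigs_assumption2} holds with equality. Apply \thmref{thm:ttt_main_thm}. Absorb $2(2k-1)\norm{\sqrtcov}_{\mathrm{HS}}^k$ and the normalizing factors $\zeta(2\beta)^{-j/2}$ into $c(k,\beta)$. What remains is to show the absolute tail bound
\begin{equation*}
\sum_{\alpha>r}\gamma_{j,\alpha}^2 \lesssim r^{1-2\beta}(\log r)^{2\beta(j-1)}
\end{equation*}
for $r$ large.

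This is the standard hyperbolic cross estimate, which I would derive by counting. Let $d_j(n)=\#\{(i_1,\dots,i_j): i_1\cdots i_j\le n\}$. It is classical, by induction on $j$ comparing the sum with an integral, that $d_j(n)\le c_j\, n(\log n)^{j-1}$ for $n\ge 2$. Since the eigenvalues are ordered decreasingly, the $r$th largest value $\gamma_{j,r}=P^{-\beta}$ satisfies $d_j(P)\ge r$, where $P$ is the corresponding product. Inverting the counting bound gives $P\gtrsim r/(\log r)^{j-1}$, and hence the pointwise decay
\begin{equation*}
\gamma_{j,\alpha}\lesssim \alpha^{-\beta}(\log \alpha)^{\beta(j-1)}.
\end{equation*}
Then
\begin{equation*}
\sum_{\alpha>r}\gamma_{j,\alpha}^2\lesssim \sum_{\alpha>r}\alpha^{-2\beta}(\log\alpha)^{2\beta(j-1)}\lesssim r^{1-2\beta}(\log r)^{2\beta(j-1)},
\end{equation*}
where the last step compares the sum with an integral, using $2\beta>1$ and that the log factor is slowly varying (valid for $r$ beyond a threshold depending on $j,\beta$). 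Taking square roots gives
\begin{equation*}
\epsilon_j\lesssim r_j^{-\beta+1/2}(\log r_j)^{\beta(j-1)},
\end{equation*}
which yields \eqref{eq:corollary_main_bound}.

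The main obstacle is making the inversion of the counting bound rigorous with explicit dependence only on $(k,\beta)$, together with the requirement that $r$ be large enough for the log terms to be monotone. This is the reason for the ``sufficiently large'' hypothesis. I would handle it by working with the level sets $\{P\le n\}$ directly: the tail beyond rank $r$ is bounded by $\sum_{P>n}P^{-2\beta}$ with $n$ chosen so that $d_j(n)\le r$. Summation by parts against $d_j$ then gives $\lesssim n^{1-2\beta}(\log n)^{j-1}$. Substituting $n\approx r/(\log r)^{j-1}$ produces exactly the exponent $\beta(j-1)$ on the logarithm after the square root, since the powers combine as $r^{1-2\beta}(\log r)^{(j-1)(2\beta-1)+(j-1)}$.
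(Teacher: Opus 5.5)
Your argument is correct and has the same architecture as the paper's proof. Both apply \thmref{thm:ttt_main_thm} and absorb $2(2k-1)\norm{\sqrtcov}_{\mathrm{HS}}^k=2(2k-1)\zeta(2\beta)^{k/2}$, together with the normalization of the relative tolerance, into $c(k,\beta)$. Both then reduce everything to the tail estimate $\sum_{\alpha>r}\gamma_{j,\alpha}^2\lesssim r^{1-2\beta}(\log r)^{2\beta(j-1)}$. Defining $\epsilon_j$ as the exact relative tail is slightly cleaner than the paper, which plugs the absolute rate in as $\epsilon_j$ and leaves the normalization to ``absorbing constants.''

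The difference is how the tail estimate is proved. The paper's route:
\begin{itemize}
\item it takes the largest hyperbolic cross $\Upsilon(\rho)$ with at most $r_j$ points;
\item it quotes the two-sided asymptotics $|\Upsilon(\rho)|\asymp\rho(\log\rho)^{j-1}$ and $S(\infty)-S(\rho)\asymp\rho^{1-2\beta}(\log\rho)^{j-1}$ from the hyperbolic-cross literature;
\item it converts from $\rho$ to $r=|\Upsilon(\rho)|$ by the change of variables in \lemref{lem:hcross_standard_sum};
\item it then needs a separate argument that $r_j^-=|\Upsilon(\rho)|$ is comparable to $r_j$. This holds because each hyperbola $\prod_i k_i=n$ carries only sub-polynomially many lattice points, so consecutive cross cardinalities have ratio tending to one.
\end{itemize}
You use only the elementary one-sided bound $d_j(n)\le c_j\,n(\log n)^{j-1}$. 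You either invert it into the pointwise bound $\gamma_{j,\alpha}\lesssim\alpha^{-\beta}(\log\alpha)^{\beta(j-1)}$ on the sorted eigenvalues and sum a one-dimensional tail, or you pick the level $n\approx r/(\log r)^{j-1}$ so that $d_j(n)\le r$ holds automatically. Either way the comparison between $r_j$ and the nearest cross size disappears, and the argument is self-contained. The exponents combine as you say, and the integral comparison is valid since $2\beta>1$. What the paper's two-sided route buys is the matching lower bound, showing the tail rate is sharp, though the corollary only uses the upper half.
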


The proof of \corref{cor:error_vs_rank_poly_decay} is provided in \appref{app:finite_to_infinite}. The idea of the proof is that 
the leading eigenvectors and eigenvalues of $\kron^j C$ correspond to points on a hyperbolic cross, with the eigenvalues of $\kron^j C$ being products of those of $C$. Since the eigenvalues of $C$ decay according to a power law, the result is proven by combining \thmref{thm:ttt_main_thm} with standard results for bounding power law sums on the hyperbolic cross.

The Tucker ranks $\vb{n}=(r_1,\dots,r_1,r_k)$ in \corref{cor:error_vs_rank_poly_decay}, with $r_1$ repeated over all input modes, are a conservative artifact of the proof, which builds the shared input basis from the first tensor train core together with the symmetry argument of \lemref{lem:sym_tt_to_ttt}; the TT ranks $r_1,\dots,r_k$ may differ across edges. In practice, T3 approximations may use nonuniform Tucker ranks selected by validation or rank continuation (\secref{sec:rank_adaptivity}).

Strictly speaking, the condition that the ranks be sufficiently large in \corref{cor:error_vs_rank_poly_decay} could be removed: since only finitely many ranks are excluded, any violation of \eqref{eq:corollary_main_bound} on this finite set can be accounted for by increasing the constant $c$. However, the resulting value of $c$ would then be dictated by the small rank regime and would no longer reflect the large rank asymptotic behavior.

In summary, the theory supports the use of Tucker tensor train representations for covariance-whitened derivative tensors when the input covariance has a rapidly decaying spectrum. The bounds are intentionally conservative: they use only covariance decay and boundedness of the derivative tensors, and ignore any additional smoothness, locality, or low-rank structure of the underlying solution map.


\ifdefined\kdefulltw\else\newlength{\kdefulltw}\fi

\section{Numerical experiments}
\label{sec:numerical_results}

We present three sets of numerical results. In \secref{sec:numerical_random}, we fit symmetric random preconditioned tensors with Tucker tensor trains using TR-RMGN and MC-SGD, comparing against the quasi-optimal T3-SVD benchmark. In \secref{sec:numerical_darcy}, we use the complete method (with MC-SGD for the fitting step) 
to form T4S approximations of two mappings defined implicitly through a Poisson PDE. The first (\secref{sec:transmission}), which contains the bulk of our results, is a standard test case: a linear equation, a Gaussian parameter, and a smooth state-trace observable. The second (\secref{sec:outflow}) keeps the same Poisson PDE but hardens the problem on every other axis: a nonlinear equation, a non-Gaussian (logistic) parameter, and a less regular boundary-flux observable. It also illustrates how to construct T4S models when $\theta$ is not normally distributed.

\subsection{Random tensor}
\label{sec:numerical_random}

We fit random preconditioned tensors with Tucker tensor trains to evaluate our optimization methods. Random tensors are used here so that we can compare our methods against the T3-SVD baseline, which is quasi-optimal in the Hilbert-Schmidt norm, but expensive and applicable only to dense tensors. 

Let $\vb{A} \in \mathbb{R}^{N \times \dots \times N \times M}$,
\begin{equation*}
\vb{A}[i_1, \dots, i_k, i_{k+1}] \sim N(0, 1),
\end{equation*}
be a $(k+1)$th-order random tensor with i.i.d.\ standard-normal entries, and let $\vb{A}_\text{sym}$ be its symmetrization, averaging over all permutations of the first $k$ indices. 
Let $\vb{C}$ be the diagonal $N \times N$ matrix with diagonal entries obeying the power law 
$$
\vb{C}[i,i]= i^{-2}
$$ 
We approximate the tensor $\vb{T} \in \mathbb{R}^{N \times \dots \times N \times M}$ formed by contracting $\vb{A}_\text{sym}$ with copies of $\vb{C}$ in the first $k$ indices, i.e., the array of the vector-valued multilinear function
\begin{equation*}
T(x_1, \dots, x_k) := A_\text{sym}(\vb{C}x_1, \dots, \vb{C}x_k).
\end{equation*}
We fit $\vb{T}$ using $n_s$ random symmetric probes (\secref{sec:symmetric_action_data}) as training data.

\begin{figure}[tb]
\centering
\begin{tikzpicture}

\newcommand{\addcmsgd}[4]{
    \addplot [
        thick,
        color of colormap={#2 of tabgreens},
        mark=*,
        mark size=1pt,
        mark options={solid}, 
        dash pattern=on #3pt off #4pt
    ] table [x index=0, y index=1] {\main/figures/random_rank_adaptivity_cm-sgd/cm-sgd-#1.txt};
    
    \addlegendentry{MC-SGD(#1)}
}

\begin{groupplot}[
    group style={
        group size=2 by 1,
        y descriptions at=edge left, 
        horizontal sep=6pt,
    },
    xmin=1e2,
    xmax=8e4,
    ymax=1,
    xmode=log,
    ymode=log,
    width=0.52\textwidth,        
    height=0.52\textwidth, 
    title style={yshift=-6pt}
]

    \nextgroupplot[
        title={TR-RMGN($n_\text{chunk}$)}, 
        ylabel={Relative Forward Error},
        legend pos=south west,
        legend cell align=left,
        legend style={
            font=\footnotesize, 
            draw=none, 
            fill=none
        },
        reverse legend]
        
    \addplot [
        black, 
        thick, 
        mark=square*, 
        only marks,
        mark size=1pt,
        mark options={solid}
    ] table [x index=0, y index=1] {\main/figures/random_rank_adaptivity_tr-rmgn/t3svd.txt};
    \addlegendentry{T3-SVD}

    \addplot [
        thick,
        color of colormap={200 of tabblues},
        mark=triangle*,
        only marks,
        mark size=1pt,
        mark options={solid}, 
    ] table [x index=0, y index=1] {\main/figures/random_rank_adaptivity_tr-rmgn/rmgn.txt};
    \addlegendentry{TR-RMGN}
    
    \addplot [
        thick,
        color of colormap={800 of tabblues},
        mark=triangle*,
        mark size=1pt,
        mark options={solid}, 
        dash pattern=on 1pt off 1pt
    ] table [x index=0, y index=1] {\main/figures/random_rank_adaptivity_tr-rmgn/rmgn-1.txt};
    
    \addlegendentry{TR-RMGN(1)}

    \nextgroupplot[
        title={MC-SGD($n_\text{chunk}$)}, 
        legend pos=south west,
        legend cell align=left,
        legend style={
            font=\footnotesize, 
            draw=none, 
            fill=none
        },
        reverse legend]
        
    \addplot [
        black, 
        thick, 
        mark=square*, 
        only marks,
        mark size=1pt,
        mark options={solid}
    ] table [x index=0, y index=1] {\main/figures/random_rank_adaptivity_cm-sgd/t3svd.txt};
    \addlegendentry{T3-SVD}

    \addplot [
        thick,
        color of colormap={200 of tabgreens},
        mark=*,
        only marks,
        mark size=1pt,
        mark options={solid},
    ] table [x index=0, y index=1] {\main/figures/random_rank_adaptivity_cm-sgd/cm-sgd.txt};
    \addlegendentry{MC-SGD}
    
    \addcmsgd{10}{400}{5.66}{2.38}
    \addcmsgd{5}{600}{4}{2}
    \addcmsgd{1}{800}{2.83}{1.68}

\end{groupplot}

\node at ($(group c1r1.south)!0.5!(group c2r1.south) - (0, 24pt)$) {Manifold Dimension};

\end{tikzpicture}
\caption{(\textbf{Random tensor: rank continuation}) Error vs. manifold dimension with and without rank continuation. 
Left: TR-RMGN. T3-SVD and non-adaptive TR-RMGN use the same ranks as TR-RMGN(1). Right: MC-SGD. T3-SVD and non-adaptive MC-SGD use the same ranks as MC-SGD(1).
}
\label{fig:random_tensor_1}
\end{figure}

\paragraph{Rank continuation} We compare TR-RMGN (\secref{sec:trrmgn}) and MC-SGD (\secref{sec:cauchy_sgd}), with and without rank continuation (\secref{sec:rank_adaptivity}), for several values of the continuation parameter $n_\text{chunk}$. We write MC-SGD($n_\text{chunk}$) and TR-RMGN($n_\text{chunk}$) for the continuation variants and MC-SGD, TR-RMGN for the non-continuation variants.

The tensor has shape parameters $k=4$, $N=30$, $M=25$; training uses $n_s=400$ random probes and testing uses $n_t = 1000$ independent probes. We plot relative forward error against the manifold dimension---the number of independent degrees of freedom in the model, analogous to the number of weights in a neural network. The initial model is a Tucker tensor train with all ranks one; ranks then increase per our continuation scheme until the manifold dimension exceeds the training-data size (more ``unknowns'' than ``equations'').

The relative forward error for an approximation $\widetilde{\vb{T}}$ of $\vb{T}$ is
\begin{equation*}
\text{Relative Forward Error} ~~ := ~~ \sqrt{\frac{\sum_{i=1}^{n_t}\norm{\vb{y}^{(\sampleind)}(\vb{T}) - \vb{y}^{(\sampleind)}(\widetilde{\vb{T}})}^2}{\sum_{i=1}^{n_t}\norm{\vb{y}^{(i)}(\vb{T})}^2}},
\end{equation*}
where 
$$\vb{y}^{(\sampleind)}(\vb{T}) := T(\vb{x}^{(i)}, \dots, \vb{x}^{(i)})$$
and the test vectors $\vb{x}^{(i)}$ are drawn uniformly at random from the unit sphere in $\mathbb{R}^N$ (normalized white noise vectors).

The TR-RMGN error curve is V-shaped: it tracks the T3-SVD baseline as rank increases, then rises as overfitting sets in. TR-RMGN(1) follows the same descent but levels off at the minimum instead of rising. Rank continuation does not lower the minimal error, but it avoids overfitting and shifts computation toward smaller, cheaper models.

The MC-SGD curve depends strongly on rank continuation: small $n_\text{chunk}$ yields better models than large $n_\text{chunk}$, and MC-SGD without continuation overfits badly, so MC-SGD should be used with rank continuation. With it, MC-SGD(1), TR-RMGN(1), and well-tuned TR-RMGN achieve comparable accuracy.

\begin{figure}[tb]
\centering
\begin{tikzpicture}

\newcommand{\addtrrmgn}[4]{
    \addplot [
        thick,
        color of colormap={#2 of tabblues},
        mark=triangle*,
        mark size=1pt,
        mark options={solid}, 
        dash pattern=on #3pt off #4pt
    ] table [x index=0, y index=1] {\main/figures/random_data_scaling_tr-rmgn/rmgn-1_n-#1.txt};
    
    \addlegendentry{$\numactions=#1$} 
}

\newcommand{\addcmsgd}[4]{
    \addplot [
        thick,
        color of colormap={#2 of tabgreens},
        mark=*,
        mark size=1pt,
        mark options={solid}, 
        dash pattern=on #3pt off #4pt
    ] table [x index=0, y index=1] {\main/figures/random_data_scaling_cm-sgd/cm-sgd-1_n-#1.txt};
    
    \addlegendentry{$\numactions=#1$} 
}

\begin{groupplot}[
    group style={
        group size=2 by 1,
        y descriptions at=edge left, 
        horizontal sep=6pt,
    },
    xmin=1e2,
    xmax=4e5,
    ymax=1.1,
    xmode=log,
    ymode=log,
    width=0.52\textwidth,        
    height=0.52\textwidth, 
    title style={yshift=-6pt}
]

    \nextgroupplot[
        title={TR-RMGN(1)}, 
        ylabel={Relative Forward Error},
        legend pos=south west,
        legend cell align=left,
        legend style={
            font=\footnotesize, 
            draw=none, 
            fill=none
        },
        reverse legend]
        
    \addplot [
        black, 
        thick, 
        mark=square*, 
        only marks,
        mark size=1pt,
        mark options={solid}
    ] table [x index=0, y index=1] {\main/figures/random_data_scaling_tr-rmgn/t3svd.txt};
    \addlegendentry{T3-SVD}

    \addtrrmgn{1600}{200}{5.66}{2.38}
    \addtrrmgn{800}{400}{4}{2}
    \addtrrmgn{400}{600}{2.83}{1.68}
    \addtrrmgn{200}{800}{2}{1.41}
    \addtrrmgn{100}{1000}{1.41}{1.19}

    \nextgroupplot[
        title={MC-SGD(1)}, 
        legend pos=south west,
        legend cell align=left,
        legend style={
            font=\footnotesize, 
            draw=none, 
            fill=none
        },
        reverse legend]
        
    \addplot [
        black, 
        thick, 
        mark=square*, 
        only marks,
        mark size=1pt,
        mark options={solid}
    ] table [x index=0, y index=1] {\main/figures/random_data_scaling_cm-sgd/t3svd.txt};
    \addlegendentry{T3-SVD}

    \addcmsgd{1600}{200}{5.66}{2.38}
    \addcmsgd{800}{400}{4}{2}
    \addcmsgd{400}{600}{2.83}{1.68}
    \addcmsgd{200}{800}{2}{1.41}
    \addcmsgd{100}{1000}{1.41}{1.19}

\end{groupplot}

\node at ($(group c1r1.south)!0.5!(group c2r1.south) - (0, 24pt)$) {Manifold Dimension};

\end{tikzpicture}
\caption{(\textbf{Random tensor: varying training data}) Error vs.\ manifold dimension with 100, 200, 400, 800, and 1600 training samples. Left: TR-RMGN(1). Right: MC-SGD(1). T3-SVD uses the same ranks as TR-RMGN(1) and MC-SGD(1) in the left and right plots, respectively.
}
\label{fig:random_tensor_2}
\end{figure}

\paragraph{Amount of training data} \figref{fig:random_tensor_2} varies the number of training probes $\numactions$ for TR-RMGN(1) and MC-SGD(1); all other parameters match \figref{fig:random_tensor_1}.
Both error curves follow the T3-SVD benchmark until they plateau, and more training data extends this agreement to larger manifold dimensions before flattening. The only exception is the data-limited $n_s=100$ MC-SGD(1) case, which plateaus early and is therefore unable to match T3-SVD at any ranks. The curves occasionally dip below T3-SVD despite its Hilbert-Schmidt quasi-optimality, because the forward error, which differs from the Hilbert-Schmidt norm, is a quantity that TR-RMGN(1) and MC-SGD(1) actually optimize. As expected, more training data reduces the final error for both methods.

\paragraph{Tensor size and number of indices} \figref{fig:random_tensor_3} and \figref{fig:random_tensor_4} compare TR-RMGN(1), MC-SGD(1), and T3-SVD for varying $N$ and $k$, respectively; all other parameters match \figref{fig:random_tensor_1}.

\begin{figure}
\centering
\ifdefined\panelside\else\newlength{\panelside}\fi
\setlength{\panelside}{\dimexpr (\textwidth - 120pt)/3 \relax}

\begin{tikzpicture}

\newcommand{\addNplot}[1]{
    \addplot [
        black, 
        mark=square*, 
        only marks, 
        mark size=1pt
    ] table [x index=0, y index=1] {\main/figures/random_size_scaling/t3svd_N-#1.txt};

    \addplot [
        color=tabblue, 
        thick, 
        mark=triangle*, 
        mark options={solid}, 
        dash pattern=on 1pt off 1pt, 
        mark size=1pt
    ] table [x index=0, y index=1] {\main/figures/random_size_scaling/rmgn-1_N-#1.txt};

    \addplot [
        color=tabgreen, 
        thick, 
        mark=*, 
        mark options={solid}, 
        dash pattern=on 1pt off 1pt, 
        mark size=1pt
    ] table [x index=0, y index=1] {\main/figures/random_size_scaling/cm-sgd-1_N-#1.txt};
}

\begin{groupplot}[
    group style={
        group size=3 by 2,
        vertical sep=30pt,
        horizontal sep=30pt,
    },
    ymax=1.1,
    xmode=log,
    ymode=log,
    scale only axis=true,        
    width=\panelside,            
    height=\panelside,           
    title style={yshift=-6pt},
    tick label style={font=\scriptsize}
]

    \nextgroupplot[
        title={$N=10$},
        legend pos=south west,
        legend cell align=left,
        legend style={
            font=\scriptsize, 
            draw=none, 
            fill=none
        },
        reverse legend
    ] 
    \addNplot{10}

    \addlegendentry{T3-SVD}
    \addlegendentry{TR-RMGN(1)}
    \addlegendentry{MC-SGD(1)}
    
    \nextgroupplot[title={$N=20$}]
    \addNplot{20}
    
    \nextgroupplot[title={$N=30$}]
    \addNplot{30}
    
    \nextgroupplot[title={$N=40$}] 
    \addNplot{40}
    
    \nextgroupplot[title={$N=50$}] 
    \addNplot{50}

    \nextgroupplot[title={$N=60$}]
    \addNplot{60}
\end{groupplot}

\node at ($(group c2r2.south) - (0, 24pt)$) {Manifold Dimension};
\node[rotate=90] at ($(group c1r1.west)!0.5!(group c1r2.west) - (36pt, 0)$) {Relative Forward Error};

\end{tikzpicture}
\caption{(\textbf{Random tensor: varying index size}) Comparison of optimization methods for fitting the preconditioned random $N \times N \times N \times N \times (N-5)$ tensor for different $N$.
}
\label{fig:random_tensor_3}
\end{figure}

In \figref{fig:random_tensor_3}, the TR-RMGN(1) and MC-SGD(1) curves follow T3-SVD until they plateau\footnote{For the smallest tensor $N=10$, TR-RMGN(1) departs from T3-SVD early; likely due to the Newton convergence criterion which is set at a relative tolerance of $10^{-4}$.}, flattening earlier for larger $N$. Larger tensors are thus harder to fit and require more data. 

\begin{figure}
\centering
\ifdefined\panelside\else\newlength{\panelside}\fi
\setlength{\panelside}{\dimexpr (\textwidth - 120pt)/3 \relax}

\begin{tikzpicture}

\newcommand{\addkplot}[1]{
    \addplot [
        black, 
        mark=square*, 
        only marks, 
        mark size=1pt
    ] table [x index=0, y index=1] {\main/figures/random_order_scaling/t3svd_k-#1.txt};

    \addplot [
        color=tabblue, 
        thick, 
        mark=triangle*, 
        mark options={solid}, 
        dash pattern=on 1pt off 1pt, 
        mark size=1pt
    ] table [x index=0, y index=1] {\main/figures/random_order_scaling/rmgn-1_k-#1.txt};

    \addplot [
        color=tabgreen, 
        thick, 
        mark=*, 
        mark options={solid}, 
        dash pattern=on 1pt off 1pt, 
        mark size=1pt
    ] table [x index=0, y index=1] {\main/figures/random_order_scaling/cm-sgd-1_k-#1.txt};
}

\begin{groupplot}[
    group style={
        group size=3 by 2,
        vertical sep=30pt,
        horizontal sep=30pt,
    },
    ymax=1.1,
    xmode=log,
    ymode=log,
    scale only axis=true,        
    width=\panelside,            
    height=\panelside,           
    title style={yshift=-6pt},
    tick label style={font=\scriptsize}
]

    \nextgroupplot[
        title={$k=1$},
        legend pos=south west,
        legend cell align=left,
        legend style={
            font=\scriptsize, 
            draw=none, 
            fill=none
        },
        reverse legend
    ] 
    \addkplot{1}

    \addlegendentry{T3-SVD}
    \addlegendentry{TR-RMGN(1)}
    \addlegendentry{MC-SGD(1)}
    
    \nextgroupplot[title={$k=2$}]
    \addkplot{2}
    
    \nextgroupplot[title={$k=3$}]
    \addkplot{3}
    
    \nextgroupplot[title={$k=4$}] 
    \addkplot{4}
    
    \nextgroupplot[title={$k=5$}]
    \addkplot{5}

\end{groupplot}

\node at ($(group c2r2.south) - (0, 24pt)$) {Manifold Dimension};
\node[rotate=90] at ($(group c1r1.west)!0.5!(group c1r2.west) - (36pt, 0)$) {Relative Forward Error};

\end{tikzpicture}
\caption{(\textbf{Random tensor: varying number of indices}) Comparison of optimization methods for fitting the preconditioned random $40^{\times k} \times 35$ tensor for different $k$.
}
\label{fig:random_tensor_4}
\end{figure}

In \figref{fig:random_tensor_4}, behavior depends on $k$. For $k=1$ and $k=2$, the training data exceeds the number of entries in $\vb{T}$, so $\vb{T}$ is fully determined; the TR-RMGN(1), MC-SGD(1), and T3-SVD curves coincide and drop to machine epsilon once the ranks grow large enough that the Tucker-tensor-train submanifold fills the whole space, $\mathcal{M}_{\vb{n}, \vb{r}}=\left(\mathbb{R}^N\right)^{\times k} \times \mathbb{R}^M$. For $k=3$, $4$, and $5$, TR-RMGN(1) follows T3-SVD until it plateaus, flattening earlier for larger $k$; MC-SGD(1) tracks it for $k=3$ and $4$ but flattens earlier at $k=5$. For large $k$, TR-RMGN(1) may therefore be preferable: more accurate than MC-SGD(1), though more expensive.

\begin{figure}[tb]
\centering
\begin{tikzpicture}
\begin{groupplot}[
    group style={group size=3 by 1, horizontal sep=46pt},
    width=0.30\textwidth,
    height=0.30\textwidth,
    title style={yshift=-4pt},
    legend cell align=left,
    legend style={font=\scriptsize, draw=none, fill=none},
    label style={font=\footnotesize},
    tick label style={font=\scriptsize},
]

    \nextgroupplot[
        title={Probe Normalization},
        xlabel={Order $k$}, ylabel={GN Condition Number},
        ymode=log, legend pos=north west,
    ]
    \addplot [thick, color=black!75, mark=*, mark size=1pt]
        table [x index=0, y index=2] {\main/figures/conditioning/panelA_normalization.txt};
    \addlegendentry{Raw}
    \addplot [thick, color=black!75, mark=*, mark size=1pt, dash pattern=on 2pt off 2pt]
        table [x index=0, y index=1] {\main/figures/conditioning/panelA_normalization.txt};
    \addlegendentry{Unit}

    \nextgroupplot[
        title={Conditioning vs.\ Rank},
        xlabel={Manifold Dimension}, ylabel={GN Condition Number},
        xmode=log, ymode=log,
    ]
    \addplot [thick, color=black!75, mark=square*, mark size=1pt]
        table [x index=0, y index=1] {\main/figures/conditioning/panelB_conditioning.txt};

    \nextgroupplot[
        title={Warm-start Deflation},
        xlabel={Rank}, ylabel={Captured Gradient Energy (\%)},
        ymin=-5, ymax=105, legend style={at={(0.97,0.5)}, anchor=east},
    ]
    \addplot [thick, color=black!75, mark=*, mark size=1pt]
        table [x index=0, y index=3] {\main/figures/conditioning/panelC_deflation.txt};
    \addlegendentry{Cold}
    \addplot [thick, color=black!75, mark=*, mark size=1pt, dash pattern=on 2pt off 2pt]
        table [x index=0, y index=2] {\main/figures/conditioning/panelC_deflation.txt};
    \addlegendentry{Warm}

\end{groupplot}
\end{tikzpicture}
\caption{(\textbf{Random tensor: conditioning, normalization, and continuation}) Left: Gauss-Newton Hessian condition number on the tangent space at the rank-$2$ base versus order $k$, for unnormalized and unit-norm probe directions. Middle: Condition number versus manifold dimension as the rank grows, with normalized probes. Right: percentage of the gradient's energy lying in the captured (rank-$(r-1)$) subspace, for cold and warm starts, versus rank.
}
\label{fig:conditioning}
\end{figure}

\paragraph{Conditioning, normalization, and rank continuation} \figref{fig:conditioning} investigates the Hessian spectral properties which underlie the algorithm design decisions in \secref{sec:symmetric_action_data} and \secref{sec:initial_guess_new_ranks}, on the same tensor ($N=12$, $M=8$). We isolate this investigation from the details of the fitting algorithm by choosing uniform ranks (all Tucker and TT ranks the same), constructing the quasi-optimal Tucker tensor train with those ranks using T3-SVD, and forming the fully dense Riemannian Gauss-Newton Hessian at that point. \emph{Left:} the Gauss-Newton condition number at the rank-$2$ base versus order $k$, unnormalized versus unit-norm directions; the $\norm{\vb{x}_0}^k$ weight inflates it with order ($1.7\times$ at $k=2$ to $5.4\times$ at $k=5$) and cuts the effective sample fraction from $0.58$ to $0.12$. This motivates normalization (\secref{sec:symmetric_action_data}). \emph{Middle:} even normalized, conditioning worsens monotonically with rank ($\kappa$ from $10$ to $70$ over ranks $1$--$6$), as each added rank resolves a smaller eigenvalue; continuation does not cure this. \emph{Right:} continuation succeeds anyway because the warm-start gradient carries almost no energy in the already-resolved (captured) subspace---the rank-$(r-1)$ tangent space inside the rank-$r$ one---whereas a cold start puts about $90\%$ there. The warm refit thus acts only on the new directions and is far smaller in norm (cold-to-warm ratio $9$ at rank $2$, $770$ at rank $10$). These new directions correspond to a much narrower band of eigenvalues, which helps mitigate the impact of the Hessian ill-conditioning.

\subsection{Poisson PDE}
\label{sec:numerical_darcy}

We apply MC-SGD(1) to two mappings whose state equation is the Poisson PDE
\begin{align*}
-\nabla \cdot \kappa(\theta,u) \nabla u = s& \quad \text{in } \Omega.
\end{align*}
on the unit square $\Omega=[0,1]^2$, with sides denoted $\Gamma_\text{left}$, $\Gamma_\text{right}$, $\Gamma_\text{top}$, and $\Gamma_\text{bot}$.
The source
\begin{equation*}
s(b,c) \propto \exp\left(-\frac{1}{2}\frac{\left(b-0.5\right)^2 + \left(c-0.5\right)^2}{0.025^2}\right),
\end{equation*}
is a Gaussian centered in the domain, normalized so that $\int_\Omega s(b,c)~db~dc = 1$; we write spatial coordinates as $b$ and $c$ since $x$ and $y$ denote other quantities elsewhere. The PDE models steady-state heat conduction, with temperature $u$, conductivity $\kappa$, and source $s$. 
The two mappings differ in $\kappa$, boundary conditions, quantity of interest $q$, and the distribution of $\theta$. 

\subsubsection{Example 1: Linear PDE, Gaussian parameter, Dirichlet Q.o.I.}
\label{sec:transmission}

\begin{figure}
\centering
\newcommand{\adddolfinplot}[5][]{
    \nextgroupplot[
        title={#3},
        point meta min=#4,
        point meta max=#5,
        colorbar style/.append style={
            xtick={#4, #5} 
        },
        #1
    ]
    \addplot graphics [xmin=0, xmax=1, ymin=0, ymax=1] {\main/figures/poisson_components/#2};
    \addplot [draw=none, forget plot, point meta=y] coordinates {(0,0)};
}

\begin{tikzpicture}

\begin{groupplot}[
    group style={
        group size=5 by 1,      
        horizontal sep=8pt     
    },
    scale only axis,
    enlargelimits=false,
    width=0.1625\textwidth,    
    height=0.1625\textwidth,
    colormap name=grey_r,           
    colorbar horizontal,         
    colorbar style={
        width=0.1625\textwidth,
        height=8pt,
        xticklabel style={font=\footnotesize}, 
        yshift=6pt,
        grid=major,   
        major grid style={black, solid}
    },
    title style={yshift=-6pt},      
    xtick=\empty, 
    yticklabel style={
        font=\footnotesize,
        /pgf/number format/fixed, 
        /pgf/number format/precision=0
    }
]

    \adddolfinplot[
        ytick=\empty
    ]{noise_nocbar_1.png}{$x$}{-3.4}{3.8}
    
    \adddolfinplot[ytick=\empty]{theta_nocbar_1.png}{$\theta$}{-1.76}{1.52}

    \nextgroupplot[
        title={$\kappa$},
        ytick=\empty,
        colormap name=grey_r,           
        colorbar horizontal,         
        colorbar style={
            width=0.1625\textwidth,
            height=8pt,
            yshift=6pt,
            grid=major,   
            major grid style={black, solid},
            xtick={0.15, 4.35}, 
            xticklabels={
                \makebox[0pt][l]{0.15}, 
                \makebox[0pt][r]{4.35}
            },
            xticklabel style={
                font=\footnotesize,
                inner xsep=0pt 
            }
        },
        point meta min=0.15,
        point meta max=4.35,
        colorbar style/.append style={
            xtick={0.15, 4.35} 
        },
    ]
    \addplot graphics [xmin=0, xmax=1, ymin=0, ymax=1] {\main/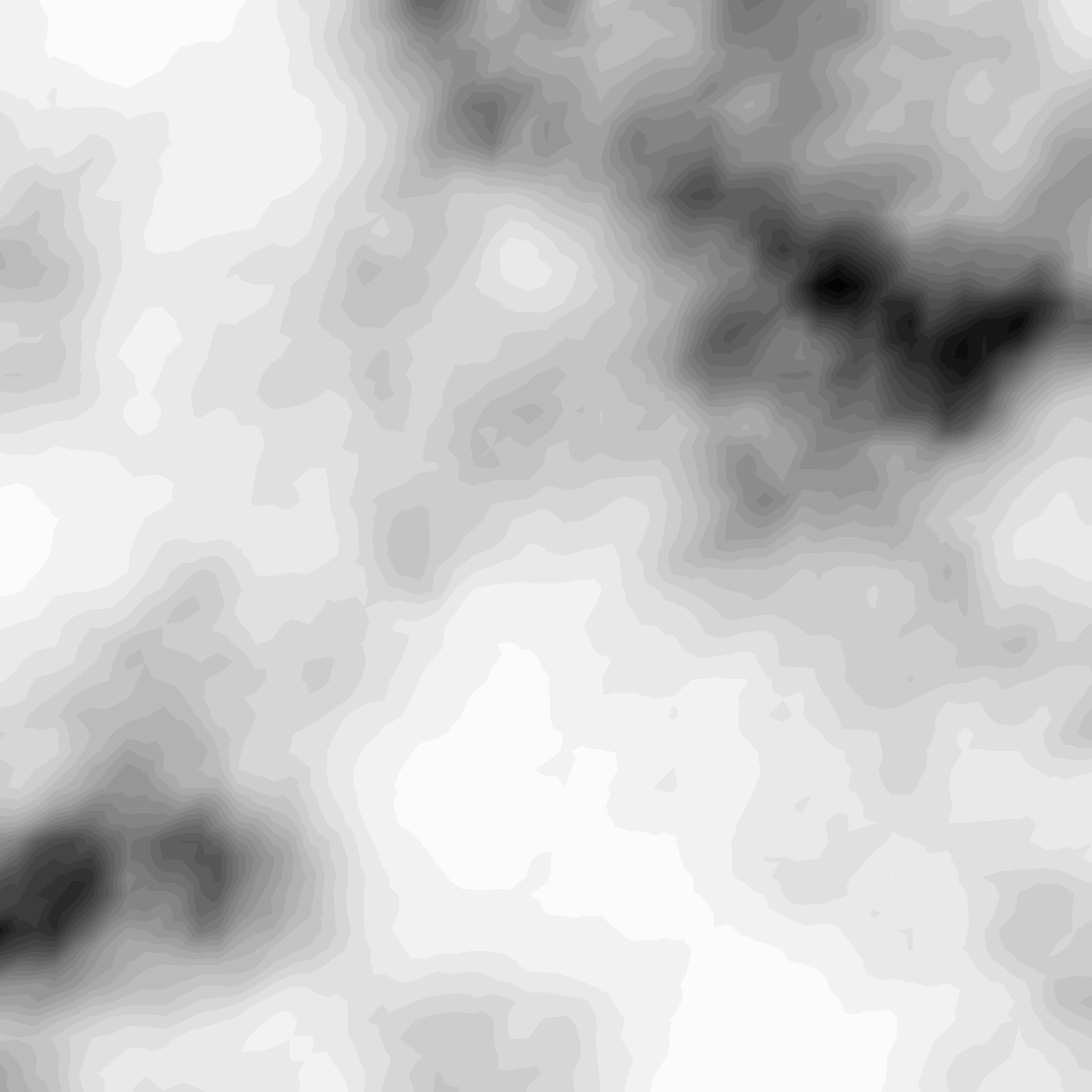};
    \addplot [draw=none, forget plot, point meta=y] coordinates {(0,0)};
    
    \adddolfinplot[
        ytick=\empty,
        clip=false,
        execute at end axis={
            \draw[black, thick] (-0.02, 0.98) rectangle (1.02, 1.02);
        }
    ]{state_nocbar_1.png}{$u$}{0}{0.78}

    \nextgroupplot[
        title={$q$},
        hide axis=false,
        colorbar=false,
        xmin=0,
        xmax=1,
        xtick={0, 1},
        minor xtick={0.25, 0.5, 0.75},
        xticklabel style={
            font=\footnotesize,
            /pgf/number format/fixed,
            /pgf/number format/precision=0
        },
        ymin=0, ymax=0.21,
        ytick={0, 0.05, 0.10, 0.15, 0.20},
        yticklabel pos=right,    
        yticklabel style={   
            /pgf/number format/fixed,
            /pgf/number format/fixed zerofill, 
            /pgf/number format/precision=2     
        },
        xminorgrids=true,
        ymajorgrids=true,
        grid style={gray!20, solid},       
        minor grid style={gray!20, solid}
    ]
    \addplot [
        thick,
        black,
        mark=none
    ] table [x index=0, y index=1] {\main/figures/poisson_components/fig2a_1.txt};
    
\end{groupplot}


\end{tikzpicture}
\caption{(\textbf{Example 1: illustration}) Illustration of the first Poisson example in \secref{sec:transmission}. From left to right: noise vector $x = C^{-1} \theta$, parameter $\theta$, conductivity $\kappa=\exp(\theta)$, temperature $u$, and quantity of interest $q$. We highlight the top boundary in the plot of $u$ to indicate that $q$ is the trace of $u$ along the top boundary. We approximate the overall mapping $f: x \mapsto q$, which is the composition of mappings $x \mapsto \theta \mapsto \kappa \mapsto u \mapsto q$.
}
\label{fig:transmission_overall_mapping}
\end{figure}

In the first example we impose homogeneous Dirichlet conditions on $\Gamma_\text{left}$, $\Gamma_\text{right}$, $\Gamma_\text{bot}$ and homogeneous Neumann on $\Gamma_\text{top}$, with quantity of interest
\begin{equation*}
q = u|_{\Gamma_\text{top}},
\end{equation*}
the trace of the temperature along the top boundary.
The parameter
$$\theta \sim N(\theta_0, C^2)$$
has mean $\theta_0=0$ and Mat\'ern covariance operator $C$. Following \cite{lindgren2011explicit}, we implement $C$ as the inverse of $-\gamma \Delta + \delta$ with homogeneous Neumann boundary conditions, choosing $\gamma$ and $\delta$ by the formulas of \cite{daon2016mitigating,villa2024note} to match a correlation length of $0.5$ and pointwise standard deviation $\sigma$. We take $\sigma \in \{0.5, 0.75, 1.0\}$ in \figref{fig:taylor_samples_pointwise_poisson} and \figref{fig:hist_poisson_taylor_tucker} and $\sigma=0.75$ in all other figures. We use
\begin{equation*}
\kappa = \exp(\theta),
\end{equation*}
yielding a log-normal conductivity field. We approximate the mapping
$
f(x) = q(Cx)
$
by a T4S Taylor series about $x=0$. \figref{fig:transmission_overall_mapping} illustrates the composite mapping
$$x \mapsto \theta \mapsto \kappa \mapsto u \mapsto q.$$
Note that while the PDE is linear as a mapping $s \mapsto u$, this is not the mapping being approximated here. The relevant mapping is the coefficient-to-solution operator $\kappa \mapsto u$, which is nonlinear. Intuitively, the nonlinearity comes from ``blocking'': changing $\kappa$ at one location modifies the shape and magnitude of downstream flow, which changes the sensitivity of the map to further changes in $\kappa$ in those downstream locations, i.e., a nontrivial second derivative. The impacts of multiple blocks combine, yielding nontrivial third and higher derivatives. The larger the pointwise variations are in $\kappa$, the more significant the blocking, and hence the more substantial the nonlinearity. The pointwise exponentiation $\theta \mapsto \kappa$ also adds nonlinearity.

We discretize $\theta$ and $\kappa$ with first-order continuous Galerkin (CG1) elements on a triangular mesh on a $60 \times 61$ grid, $u$ with CG2 (second-order) elements on the same mesh, and $q$ as the trace of the CG1 elements on $\Gamma_\text{top}$.

\begin{figure}
\centering
\begin{tikzpicture}

\newcommand{\addtaylorplot}[2]{
    \nextgroupplot[title={#2}]
    
    \pgfplotsinvokeforeach{6,5,4,3,2,1}{
        \pgfmathsetmacro{\dashlen}{2*sqrt(##1)}
        \pgfmathsetmacro{\skiplen}{1.5*sqrt(##1)}
        \pgfmathsetmacro{\colorpos}{(##1-1)*190}
        \pgfmathtruncatemacro{\kindex}{##1 - 1}
        
        \edef\temp{
            \noexpand\addplot [
                forget plot,
                very thick,
                line cap=round,
                color=tab##1,
                mark=none,
                opacity=0.7
            ] table [x index=0, y index=##1] {\main/figures/poisson_taylor_samples/fig2b_stdev#1_1.txt};
        }
        \temp
    }

    \addplot [
        forget plot,
        thick,
        line cap=round,
        black,
        dashed,
        mark=none
    ] table [x index=0, y index=1] {\main/figures/poisson_taylor_samples/fig2b_obs_1.txt};
}

\begin{groupplot}[
    group style={
        group size=3 by 1,
        horizontal sep=8pt,
        y descriptions at=edge left
    },
    width=0.225\textwidth,
    height=0.225\textwidth,
    scale only axis,
    xmin=0, xmax=1,
    ymin=0, ymax=0.21,
    xtick={0, 1},
    minor xtick={0.25, 0.5, 0.75},
    ytick={0, 0.05, 0.10, 0.15, 0.20},
    xticklabel style={
        font=\footnotesize,
        /pgf/number format/fixed,
        /pgf/number format/precision=0
    },
    yticklabel style={
        font=\footnotesize,
        /pgf/number format/fixed,
        /pgf/number format/fixed zerofill, 
        /pgf/number format/precision=2     
    },
    xminorgrids=true,
    ymajorgrids=true,
    grid style={gray!20, solid},       
    minor grid style={gray!20, solid},
    title style={yshift=-6pt},  
    legend columns=-1,
    legend style={
        at={(0.4125, -0.1)},
        anchor=north,
        font=\footnotesize,
        draw=none,                  
        fill=none,
        /tikz/every even column/.append style={column sep=6pt}
    },
]

    \addtaylorplot{0.5}{$\sigma = 0.5$}
    \addtaylorplot{0.75}{$\sigma = 0.75$}

    \pgfplotsinvokeforeach{1,2,3,4,5,6}{
        \pgfmathtruncatemacro{\kk}{#1 - 1}
        \addlegendimage{very thick, line cap=round, color=tab#1, mark=none, opacity=0.7}
        \edef\temp{\noexpand\addlegendentry{$k=\kk$}}\temp
    }
    \addlegendimage{thick, line cap=round, black, dashed}
    \addlegendentry{Truth}

    \addtaylorplot{1.0}{$\sigma = 1.0$}

\end{groupplot}


\end{tikzpicture}
\caption{(\textbf{Example 1: Taylor approximation of one sample}) Taylor series approximations (using the Taylor series of the map $f$, with no dimension reduction or T4S approximation) of $q$ corresponding to scaled versions of $\theta$ shown in \figref{fig:transmission_overall_mapping}. On the left, middle, and right, we show the samples of $q$ corresponding to $\theta$ drawn from distributions with pointwise standard deviations $\sigma=0.5$, $\sigma=0.75$, and $\sigma=1.0$, respectively.
}
\label{fig:taylor_samples_pointwise_poisson}
\end{figure}

\paragraph{Influence of pointwise standard deviation on the Taylor series} \figref{fig:taylor_samples_pointwise_poisson} plots Taylor series approximations of $q$ (from the Taylor series of $f$, with no dimension reduction or T4S approximation) for the parameter $\theta$ of \figref{fig:transmission_overall_mapping}, at $\sigma=0.5$, $0.75$, and $1.0$. For this sample, the ratio of the largest to smallest $\kappa$ in $\Omega$ is $8.6$, $25.0$, and $73.2$, respectively. As expected, the Taylor series converges to $q$ faster for smaller $\sigma$ and slower for larger $\sigma$.

\begin{figure}[tb]
\centering
\subfile{figures/poisson_taylor_tucker_combined_hist/poisson_taylor_tucker_combined_hist}
\caption{(\textbf{Example 1: Taylor error distributions}) KDEs of normalized error in the quantity of interest $q$, organized by Taylor series order $k$. \emph{Left:} exact Taylor series, for parameters $\theta$ with pointwise standard deviation $\sigma\in\{0.5, 0.75, 1.0\}$. \emph{Right:} dimension-reduced Taylor series, for reduced dimensions $N=31$, $82$, and $209$ (tolerance parameters $\epsilon=0.25$, $0.05$, and $0.01$ in the dimension reduction algorithm); the true full-space Taylor series is shown in black for reference.
}
\label{fig:hist_poisson_taylor_tucker}
\end{figure}

For an approximation $\widetilde{q} \approx q$, the normalized error is defined as:
\begin{equation*}
\text{Normalized Error} ~~ := ~~ \frac{\norm{q(\theta) - \widetilde{q}(\theta)}}{\norm{q(\theta_0)}}.
\end{equation*}
\figref{fig:hist_poisson_taylor_tucker} (left) shows KDEs of the normalized error in approximating $q$ by Taylor series, computed from $500$ test samples with the default \texttt{stats.gaussian\_kde}() function in SciPy \citep{2020SciPy-NMeth}. As the order increases, the error distribution shifts down and spreads out on the log scale. Smaller $\sigma$ decreases faster with order than larger $\sigma$, since smaller $\sigma$ yields samples closer to the expansion point.

\paragraph{Dimension reduction} \figref{fig:hist_poisson_taylor_tucker} (right) shows KDEs of the relative forward error when $q$ is approximated by the dimension-reduced Taylor series of \secref{sec:initial_dimension_reduction}, for tolerances $\epsilon= 0.25$, $0.05$, and $0.01$ (reduced input dimensions $N=31$, $82$, and $209$). Each reduced curve tracks the exact Taylor error as the order increases, then stagnates at a fixed level for all higher orders---levels that match the true series up to orders roughly $2$, $3$, and $5$, respectively. Dimension reduction thus caps the achievable error: higher orders require larger reduced dimensions.

\begin{figure}
\centering
\ifdefined\panelside\else\newlength{\panelside}\fi
\setlength{\panelside}{\dimexpr (\textwidth - 120pt)/3 \relax}

\begin{tikzpicture}

\newcommand{\addcmsgd}[5]{
    \addplot [
        thick,
        color of colormap={#3 of tabgreens},
        mark=*,
        mark size=1pt,
        mark options={solid}, 
        dash pattern=on #4pt off #5pt
    ] table [x index=0, y index=1] {\main/figures/poisson_fitting_convergence/fig2f_k#1_train#2.txt};
}

\newcommand{\addkplot}[1]{
    \addcmsgd{#1}{6400}{200}{5.66}{2.38}
    \addcmsgd{#1}{1600}{467}{5.66}{2.38}
    \addcmsgd{#1}{400}{733}{2.83}{1.68}
    \addcmsgd{#1}{100}{1000}{1.41}{1.19}
}

\begin{groupplot}[
    group style={
        group size=3 by 2,
        vertical sep=30pt,
        horizontal sep=30pt,
    },
    ymax=1.1,
    xmode=log,
    ymode=log,
    scale only axis=true,        
    width=\panelside,            
    height=\panelside,           
    title style={yshift=-6pt},
    tick label style={font=\scriptsize}
]

    \nextgroupplot[
        title={$k=1$},
        legend pos=south west,
        legend cell align=left,
        legend style={
            font=\scriptsize, 
            draw=none, 
            fill=none
        },
        reverse legend
    ] 
    \addkplot{1}

    \addlegendentry{$\numactions=6400$}
    \addlegendentry{$\numactions=1600$}
    \addlegendentry{$\numactions=400$}
    \addlegendentry{$\numactions=100$}
    
    \nextgroupplot[title={$k=2$}]
    \addkplot{2}
    
    \nextgroupplot[title={$k=3$}]
    \addkplot{3}
    
    \nextgroupplot[title={$k=4$}] 
    \addkplot{4}
    
    \nextgroupplot[title={$k=5$}] 
    \addkplot{5}

\end{groupplot}

\node at ($(group c2r2.south) - (0, 24pt)$) {Manifold Dimension};
\node[rotate=90] at ($(group c1r1.west)!0.5!(group c1r2.west) - (36pt, 0)$) {Relative Forward Error};

\end{tikzpicture}
\caption{(\textbf{Example 1: derivative fitting}) Convergence of MC-SGD(1) for each derivative tensor. Curves show the relative forward error for fitting the derivatives with different numbers of training samples, $n_s$.
}
\label{fig:poisson_t3_convergence}
\end{figure}

\paragraph{T4S approximation} \figref{fig:poisson_t3_convergence} plots relative forward error versus manifold dimension for fitting the derivative tensors with MC-SGD(1), at $n_s=100$, $400$, $1600$, and $6400$. The reduced bases use $\epsilon=0.01$ (reduced input dimension 209), and rank continuation stops when the manifold dimension exceeds $50\%$ of the training-data size (twice as many ``equations'' as ``unknowns'') or the ranks exceed 125, whichever comes first. Final error increases with derivative order $k$ and decreases with $n_s$: higher-order derivatives are harder to fit but remain accurate given sufficient training data. 

\begin{figure}[tb]
\centering
\setlength{\kdefulltw}{\textwidth}%
\begin{minipage}[c]{0.56\textwidth}
\centering
{\setlength{\textwidth}{\kdefulltw}%
 \resizebox{\linewidth}{!}{\begin{tikzpicture} 

\begin{groupplot}[
    group style={
        group size=1 by 6,             
        x descriptions at=edge bottom, 
        y descriptions at=edge left,   
        vertical sep=-1.8em,
        group name=hists
    },
    colormap name=tabgreens,
    xmin=5e-4,
    xmax=1e0,
    ymin=0,
    ymax=1.4,
    axis x line*=bottom,
    tick label style={font=\footnotesize},
    ytick=\empty,
    y axis line style={draw=none},
    every axis y label/.style={
        at={(axis description cs:-0.02, 0.33)},
        rotate=0, 
        anchor=west,
        font=\footnotesize
    },
    scale only axis=true,                       
    width=0.6\textwidth,                       
    height={(0.95 * 1.618 / 13) * \textwidth},   
    enlarge x limits=true,
    xmode=log,
]

\nextgroupplot[
    ylabel={$k=0$},
]

\addplot [
    thick, 
    black,
    opacity=0.8
] table [x index=0, y index=1] {\main/figures/poisson_t4s_hist/fig2g_taylor_k0.txt};

\addplot [
    thick, 
    line cap=round,
    color=black,
    dash pattern=on 2 pt off 1.5 pt,
    opacity=0.8
] table [x index=0, y index=1] {\main/figures/poisson_t4s_hist/fig2g_tucker_k0.txt};

\foreach \ns/\cname in {6400/tab4, 1600/tab3, 400/tab2, 100/tab1} {
    \edef\temp{
        \noexpand\addplot [
            draw=none, 
            fill=\cname, 
            fill opacity=0.2, 
            forget plot
        ] table [x index=0, y index=1] {\noexpand\main/figures/poisson_t4s_hist/fig2g_t4s_k0_train\ns.txt} \noexpand\closedcycle;

        \noexpand\addplot [
            thick,
            color=\cname,
            draw opacity=0.8
        ] table [x index=0, y index=1] {\noexpand\main/figures/poisson_t4s_hist/fig2g_t4s_k0_train\ns.txt};
    }
    \temp
}

\nextgroupplot[
    ylabel={$k=1$},
    legend style={
        at={(0.10, 1.25)}, 
        anchor=north west,
        font=\footnotesize,
        /tikz/every node/.style={anchor=west},
        rounded corners=2pt,
    },
    legend cell align=left,
    reverse legend
]

\addplot [
    thick, 
    black,
    opacity=0.8
] table [x index=0, y index=1] {\main/figures/poisson_t4s_hist/fig2g_taylor_k1.txt};
\addlegendentry{Taylor}

\addplot [
    thick, 
    line cap=round,
    color=black,
    dash pattern=on 2 pt off 1.5 pt,
    opacity=0.8
] table [x index=0, y index=1] {\main/figures/poisson_t4s_hist/fig2g_tucker_k1.txt};
\addlegendentry{Reduced Taylor}

\foreach \ns/\cname in {6400/tab4, 1600/tab3, 400/tab2, 100/tab1} {
    \edef\temp{
        \noexpand\addplot [
            draw=none, 
            fill=\cname, 
            fill opacity=0.2, 
            forget plot
        ] table [x index=0, y index=1] {\noexpand\main/figures/poisson_t4s_hist/fig2g_t4s_k1_train\ns.txt} \noexpand\closedcycle;
    
        \noexpand\addplot [
            thick,
            color=\cname,
            draw opacity=0.8,
        ] table [x index=0, y index=1] {\noexpand\main/figures/poisson_t4s_hist/fig2g_t4s_k1_train\ns.txt};

        \noexpand\addlegendentry{T4S ($\numactions\!=\!\noexpand\pgfmathprintnumber[fixed, precision=0, 1000 sep={}]{\ns}$)\!\!}
    }
    \temp
}

\nextgroupplot[ylabel={$k=2$}]

\addplot [
    thick, 
    black,
    opacity=0.8
] table [x index=0, y index=1] {\main/figures/poisson_t4s_hist/fig2g_taylor_k2.txt};

\addplot [
    thick, 
    line cap=round,
    color=black,
    dash pattern=on 2 pt off 1.5 pt,
    opacity=0.8
] table [x index=0, y index=1] {\main/figures/poisson_t4s_hist/fig2g_tucker_k2.txt};

\foreach \ns/\cname in {6400/tab4, 1600/tab3, 400/tab2, 100/tab1} {
    \edef\temp{
        \noexpand\addplot [
            draw=none, 
            fill=\cname, 
            fill opacity=0.2, 
            forget plot
        ] table [x index=0, y index=1] {\noexpand\main/figures/poisson_t4s_hist/fig2g_t4s_k2_train\ns.txt} \noexpand\closedcycle;
    
        \noexpand\addplot [
            thick,
            color=\cname,
            draw opacity=0.8
        ] table [x index=0, y index=1] {\noexpand\main/figures/poisson_t4s_hist/fig2g_t4s_k2_train\ns.txt};
    }
    \temp
}

\nextgroupplot[ylabel={$k=3$}]

\addplot [
    thick, 
    black,
    opacity=0.8
] table [x index=0, y index=1] {\main/figures/poisson_t4s_hist/fig2g_taylor_k3.txt};

\addplot [
    thick, 
    line cap=round,
    color=black,
    dash pattern=on 2 pt off 1.5 pt,
    opacity=0.8
] table [x index=0, y index=1] {\main/figures/poisson_t4s_hist/fig2g_tucker_k3.txt};

\foreach \ns/\cname in {6400/tab4, 1600/tab3, 400/tab2, 100/tab1} {
    \edef\temp{
        \noexpand\addplot [
            draw=none, 
            fill=\cname, 
            fill opacity=0.2, 
            forget plot
        ] table [x index=0, y index=1] {\noexpand\main/figures/poisson_t4s_hist/fig2g_t4s_k3_train\ns.txt} \noexpand\closedcycle;
    
        \noexpand\addplot [
            thick,
            color=\cname,
            draw opacity=0.8
        ] table [x index=0, y index=1] {\noexpand\main/figures/poisson_t4s_hist/fig2g_t4s_k3_train\ns.txt};
    }
    \temp
}

\nextgroupplot[ylabel={$k=4$}]

\addplot [
    thick, 
    black,
    opacity=0.8
] table [x index=0, y index=1] {\main/figures/poisson_t4s_hist/fig2g_taylor_k4.txt};

\addplot [
    thick, 
    line cap=round,
    color=black,
    dash pattern=on 2 pt off 1.5 pt,
    opacity=0.8
] table [x index=0, y index=1] {\main/figures/poisson_t4s_hist/fig2g_tucker_k4.txt};

\foreach \ns/\cname in {6400/tab4, 1600/tab3, 400/tab2, 100/tab1} {
    \edef\temp{
        \noexpand\addplot [
            draw=none, 
            fill=\cname, 
            fill opacity=0.2, 
            forget plot
        ] table [x index=0, y index=1] {\noexpand\main/figures/poisson_t4s_hist/fig2g_t4s_k4_train\ns.txt} \noexpand\closedcycle;
    
        \noexpand\addplot [
            thick,
            color=\cname,
            draw opacity=0.8
        ] table [x index=0, y index=1] {\noexpand\main/figures/poisson_t4s_hist/fig2g_t4s_k4_train\ns.txt};
    }
    \temp
}

\nextgroupplot[ylabel={$k=5$}]

\addplot [
    thick, 
    black,
    opacity=0.8
] table [x index=0, y index=1] {\main/figures/poisson_t4s_hist/fig2g_taylor_k5.txt};

\addplot [
    thick, 
    line cap=round,
    color=black,
    dash pattern=on 2 pt off 1.5 pt,
    opacity=0.8
] table [x index=0, y index=1] {\main/figures/poisson_t4s_hist/fig2g_tucker_k5.txt};

\foreach \ns/\cname in {6400/tab4, 1600/tab3, 400/tab2, 100/tab1} {
    \edef\temp{
        \noexpand\addplot [
            draw=none, 
            fill=\cname, 
            fill opacity=0.2, 
            forget plot
        ] table [x index=0, y index=1] {\noexpand\main/figures/poisson_t4s_hist/fig2g_t4s_k5_train\ns.txt} \noexpand\closedcycle;
    
        \noexpand\addplot [
            thick,
            color=\cname,
            draw opacity=0.8
        ] table [x index=0, y index=1] {\noexpand\main/figures/poisson_t4s_hist/fig2g_t4s_k5_train\ns.txt};
    }
    \temp
}

\end{groupplot}

\node[anchor=north, yshift=-14pt] at (hists c1r6.south) {Normalized Error};
\node[anchor=south, xshift=-4pt, rotate=90] at (hists c1r4.west) {Taylor Series Order, $k$};

\end{tikzpicture}}}%
\end{minipage}\hfill
\begin{minipage}[c]{0.42\textwidth}
\centering
\small
{%
\renewcommand{\arraystretch}{1.35}%
\begin{tabular}{@{}c @{} c @{} c @{\hspace{1.6em}} c @{} c @{} c @{\hspace{1.6em}} c @{}}
\toprule
 & & \multicolumn{2}{c}{Taylor} & & \multicolumn{2}{c}{T4S} \\
\cmidrule(lr){3-4}\cmidrule(lr){6-7}
$k$ & \rule{1.4em}{0pt} & P50 & P90 & \rule{2.2em}{0pt} & P50 & P90 \\
\midrule
0 & & $4.0\mathrm{e}{-1}$ & $7.9\mathrm{e}{-1}$ & & $4.0\mathrm{e}{-1}$ & $7.9\mathrm{e}{-1}$ \\
1 & & $1.4\mathrm{e}{-1}$ & $4.1\mathrm{e}{-1}$ & & $1.4\mathrm{e}{-1}$ & $4.1\mathrm{e}{-1}$ \\
2 & & $5.5\mathrm{e}{-2}$ & $1.9\mathrm{e}{-1}$ & & $5.6\mathrm{e}{-2}$ & $1.8\mathrm{e}{-1}$ \\
3 & & $2.3\mathrm{e}{-2}$ & $1.1\mathrm{e}{-1}$ & & $2.2\mathrm{e}{-2}$ & $1.1\mathrm{e}{-1}$ \\
4 & & $9.2\mathrm{e}{-3}$ & $6.6\mathrm{e}{-2}$ & & $1.0\mathrm{e}{-2}$ & $6.7\mathrm{e}{-2}$ \\
5 & & $4.0\mathrm{e}{-3}$ & $3.2\mathrm{e}{-2}$ & & $6.7\mathrm{e}{-3}$ & $3.3\mathrm{e}{-2}$ \\
\bottomrule
\end{tabular}%
}
\end{minipage}
\caption{(\textbf{Example 1: T4S errors}) \emph{Left:} KDEs of the normalized error for the
T4S approximations of \figref{fig:poisson_t3_convergence}, organized by Taylor order $k$, for
models fit with $n_s\in\{100,400,1600,6400\}$ training samples. \emph{Right:} median (P50) and
90th-percentile (P90) 
normalized error of the exact Taylor series and of the best-data T4S model ($n_s=6400$), each measured against the true $q$.
}
\label{fig:t4s_hist_poisson}
\end{figure}

The left side of \figref{fig:t4s_hist_poisson} shows normalized error KDEs for the T4S models from the fitting process of \figref{fig:poisson_t3_convergence}.
These errors are measured against the true output $q$. The exact-Taylor curves are the truncation error of the full-tensor Taylor series, and the T4S curves add the low-rank fitting error on top. T4S matching the exact-Taylor curves therefore means the low-rank Tucker tensor-train representation is nearly lossless; it reproduces the accuracy of the exact Taylor series relative to the true PDE.
More training data lets the T4S models match higher-order Taylor series before stagnating; at $n_s=6400$ the KDE for the T4S model matches that of the exact Taylor series for orders $k=0,1,\dots,4$ but not $k=5$.
The table on the right side of \figref{fig:t4s_hist_poisson} gives the corresponding median (P50) and 90th-percentile (P90) errors for the exact Taylor series and the T4S models with $n_s=6400$. The P90 results for T4S track the Taylor series for $k=0,\dots,5$. The P50 results for T4S track the Taylor series for $k=1,\dots,4$ but differ for $k=5$. The T4S P50 error still decreases from $k=4$ to $k=5$, just not as much as the Taylor series.
This deviation of T4S from Taylor at $k=5$ is caused by the accumulated low-rank fitting error across the retained terms, not a poorly fit fifth derivative (Figure~\ref{fig:poisson_t3_convergence} shows it is captured nearly as accurately as the fourth): the models match the exact series on the samples hardest to approximate, where truncation error dominates the floor, and fall short on easier samples, where the exact series already reaches errors below it. Because each derivative fit improves with more probes (Figure~\ref{fig:poisson_t3_convergence}), this floor is data-limited, not intrinsic.



T4S contains the standard low-order low-rank local surrogates as special cases, so the $k=1$ and $k=2$ curves are those models themselves, not merely lower-order T4S variants. At $k=1$, T4S is a low-rank affine model of $f$: the low-rank linearized (Gauss–Newton) surrogate that is the workhorse of large-scale inverse problems and uncertainty quantification. Its Jacobian is fit to machine precision within the reduced space (\figref{fig:poisson_t3_convergence}, left), and at our tolerance $\epsilon$ the dimension-reduction error is negligible (\figref{fig:hist_poisson_taylor_tucker}, right), so the $k=1$ curve is the strongest baseline a low-rank linearization could provide. At $k=2$, T4S adds a Tucker decomposition of the second derivative (a 3-tensor); such surrogates are used only rarely in PDE-constrained outer-loop problems and take no single established form, but once the $k=2$ tensor is fit to Taylor accuracy for $n_s \ge 400$ the $k=2$ curve likewise stands in for this class of Tucker-based quadratic surrogates. The $k=3,4,5$ terms have no established counterpart; the accuracy they add over the $k=1$ and $k=2$ curves is the gain from carrying the local expansion beyond these existing models.

\subsubsection{Example 2: Nonlinear PDE, Logistic parameter, Neumann Q.o.I.}
\label{sec:outflow}

\begin{figure}
\centering
\newcommand{\adddolfinplot}[3][]{
    \nextgroupplot[
        title={#3},
        #1
    ]
    \addplot graphics [xmin=0, xmax=1, ymin=0, ymax=1] {\main/figures/darcy_components/#2};
    \addplot [draw=none, forget plot, point meta=y] coordinates {(0,0)};
}

\begin{tikzpicture}

\begin{groupplot}[
    group style={
        group size=3 by 1,
        horizontal sep=8pt
    },
    scale only axis,
    enlargelimits=false,
    width=0.225\textwidth,
    height=0.225\textwidth,
    title style={yshift=-6pt},
    xtick=\empty, 
    yticklabel style={
        font=\footnotesize,
        /pgf/number format/fixed,
        /pgf/number format/precision=0
    }
]

    \adddolfinplot[ytick=\empty,
        colormap name=grey_r,
        colorbar horizontal,
        colorbar style={
            width=0.225\textwidth,
            height=8pt,
            xticklabel style={font=\footnotesize},
            yshift=6pt,
            grid=major,
            major grid style={black, solid},
            xtick={1.0, 10.0},
            xticklabels={
                \makebox[0pt][l]{1.0},
                \makebox[0pt][r]{10.0}
            },
            xticklabel style={
                font=\footnotesize,
                inner xsep=0pt
            }
        },
        point meta min=1.0,
        point meta max=10.0,
    ]{kappa_p25.png}{$\theta$}

    \adddolfinplot[
        ytick=\empty,
        clip=false,
        execute at end axis={
            \draw[black, thick] (-0.02, 0.98) rectangle (1.02, 1.02);
        }
    ]{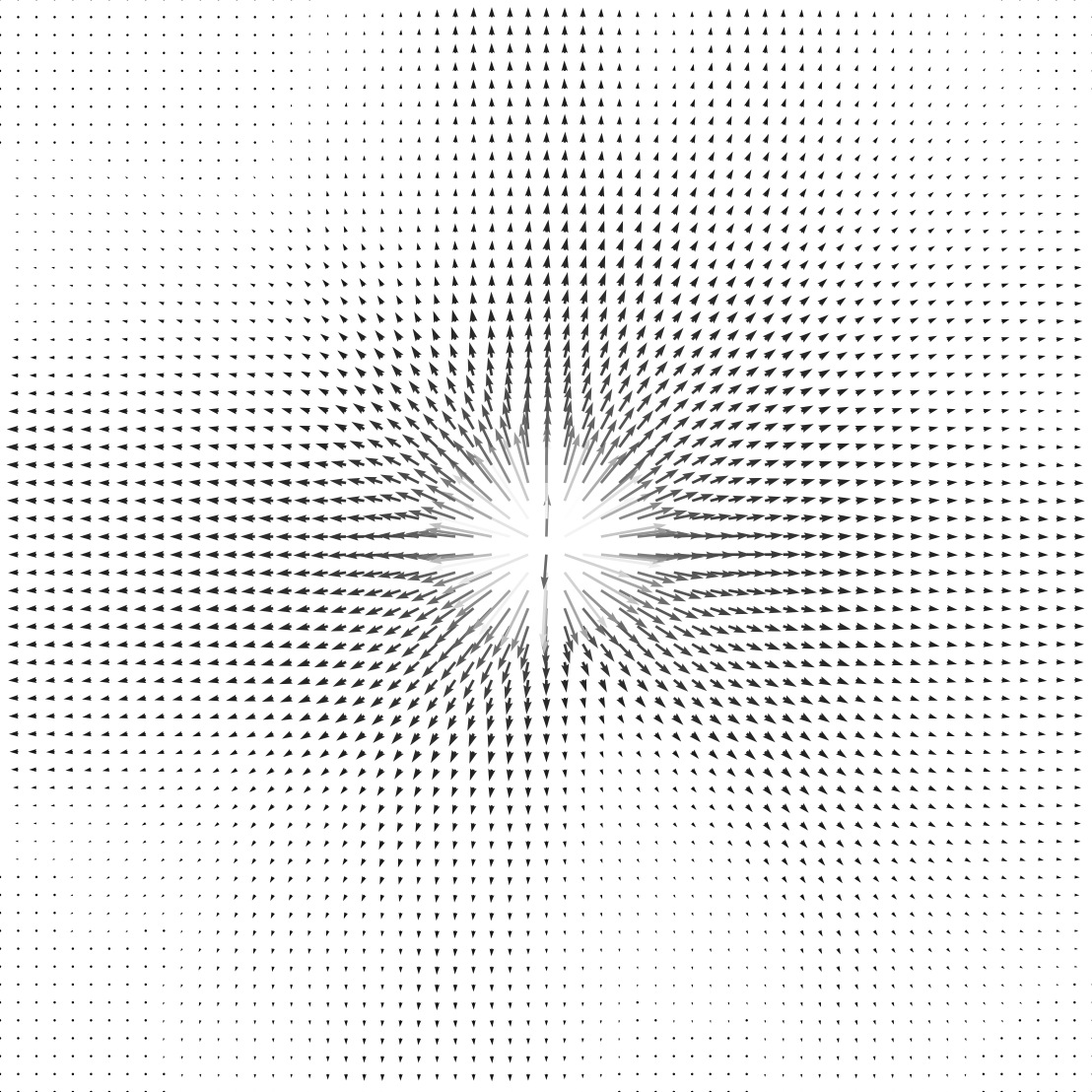}{$-\kappa \nabla u$}

    \nextgroupplot[
        title={$q$},
        hide axis=false,
        colorbar=false,
        xmin=0,
        xmax=1,
        xtick={0, 1},
        minor xtick={0.25, 0.5, 0.75},
        xticklabel style={
            font=\footnotesize,
            /pgf/number format/fixed,
            /pgf/number format/precision=0
        },
        ymin=0, ymax=0.55,
        ytick={0.0, 0.2, 0.4},
        minor ytick={0.1, 0.3, 0.5},
        yticklabel pos=right,
        yticklabel style={
            /pgf/number format/fixed,
            /pgf/number format/fixed zerofill,
            /pgf/number format/precision=2
        },
        xminorgrids=true,
        ymajorgrids=true,
        yminorgrids=true,
        grid style={gray!20, solid},
        minor grid style={gray!20, solid},
    ]
    \addplot [
        line width=1.0pt,
        black,
        mark=none
    ] table [x index=0, y index=1] {\main/figures/darcy_components/obs_p25.txt};

\end{groupplot}

\end{tikzpicture}
\caption{(\textbf{Example 2: illustration}) Illustration of the second Poisson example in \secref{sec:outflow}. Left: parameter, $\theta$. Middle: heat flux covector field, $-\kappa \nabla u$. Right: normal heat flux along the top boundary, $q$.}
\label{fig:darcy_overall_mapping}
\end{figure}

As previewed in \secref{sec:numerical_results}, this example keeps the Poisson PDE of Example 1 but the PDE becomes nonlinear, the parameter distribution becomes non-Gaussian, and the observable becomes a boundary flux.

Here, we use Dirichlet boundary conditions $u|_{\partial \Omega} = 0$.
The conductivity is
\begin{equation*}
\kappa(\theta, u) = (1+u)^2\theta.
\end{equation*}
The quantity of interest, 
\begin{equation*}
q = \left. -\nu \cdot \kappa \nabla u \right|_{\Gamma_\text{top}},
\end{equation*}
is the trace of the normal heat flux along the top surface, where $\nu$ is the unit outward normal. Because this boundary flux includes a differential operator applied to the state, the output of $f$ is less regular than Example 1's state trace, with sharp, localized features visible in the flux samples of \figref{fig:darcy_taylor_gallery}. 
The parameter $\theta$ follows the logistic distribution
\begin{equation}
\label{eq:logistic_param}
\theta = \theta_\text{min} + \frac{\theta_\text{max} - \theta_\text{min}}{1 + \exp(-2 \vartheta)}
\end{equation}
where $\theta_\text{min}=1$, $\theta_\text{max}=10$, and $\vartheta \sim N(0, C^2)$; for $C$ we reuse the Mat\'ern covariance square root of Example 1 (\secref{sec:transmission}) with $\sigma=1.0$. Draws vary pointwise between $\theta_\text{min}$ and $\theta_\text{max}$ and inherit their spatial correlation from $C$ (\figref{fig:darcy_overall_mapping}). We approximate a preconditioned version of $\theta \mapsto q$ rather than $\vartheta \mapsto q$, since the sigmoid $\vartheta\mapsto \theta$ is poorly approximated by Taylor series.

The PDE is discretized with mixed finite elements on the Example 1 mesh: zeroth-order Raviart--Thomas elements for the flux (one degree of freedom per triangle edge) and piecewise-constant discontinuous Galerkin elements for the temperature. We discretize $\theta$ with CG1 elements and $q$ with one-dimensional piecewise-constant elements along $\Gamma_\text{top}$.

To generate training data, locally $\vartheta \mapsto \theta$ is approximated by $\theta \approx \overline{\theta} + s\,\vartheta$, with $\overline{\theta}=(\theta_\text{min} + \theta_\text{max})/2$ and $s$ the slope of the sigmoid at zero. Since $\vartheta$ has covariance square root $C$, a local Gaussian approximation of $\theta$ then has mean $\overline{\theta}$ and covariance square root $s C$. Absorbing $s$ into the input coordinate, we approximate the mapping
$$
f(x) = q(\overline{\theta} + Cx)
$$
with T4S. This rescaling changes the Taylor coefficients' values but not their tensor ranks or the probe-fitting procedure. The resulting $f_k \approx f$ yields an approximation $q_k\approx q$ via
\begin{equation*}
q_k(\theta) := f_k(C^{-1}(\theta - \overline{\theta})).
\end{equation*}
We evaluate quality on 500 random test samples $\theta^{(i)}$ from the logistic distribution, comparing $q(\theta^{(i)})$ to $q_k(\theta^{(i)})$.

\begin{figure}[tb]
\centering
\begin{tikzpicture}

\newcommand{\addtaylorplot}[2]{
    \nextgroupplot[title={#2}]

    \pgfplotsinvokeforeach{6,5,4,3,2,1}{
        \pgfmathtruncatemacro{\qindex}{##1 + 1}
        \edef\temp{
            \noexpand\addplot [
                very thick,
                line cap=round,
                color=tab##1,
                mark=none,
                opacity=0.7
            ] table [x index=0, y index=\qindex] {\main/figures/darcy_components/#1};
        }
        \temp
    }

    \addplot [
        thick,
        line cap=round,
        black,
        dashed
    ] table [x index=0, y index=1] {\main/figures/darcy_components/#1};
}

\newcommand{\addthetaplot}[2][]{
    \nextgroupplot[
        xmin=0, xmax=1, ymin=0, ymax=1,
        xtick=\empty, minor xtick=\empty, ytick=\empty,
        xticklabels=\empty, yticklabels=\empty,
        xmajorgrids=false, ymajorgrids=false, xminorgrids=false, yminorgrids=false,
        enlargelimits=false, axis on top, title={},
        #1
    ]
    \addplot [forget plot] graphics [xmin=0, xmax=1, ymin=0, ymax=1]
        {\main/figures/darcy_components/#2};
}

\begin{groupplot}[
    group style={
        group size=4 by 2,
        horizontal sep=8pt,
        vertical sep=14pt,
        y descriptions at=edge left
    },
    width=0.2\textwidth,
    height=0.2\textwidth,
    scale only axis,
    xmin=0, xmax=1,
    ymin=0, ymax=0.85,
    xtick={0, 1},
    minor xtick={0.25, 0.5, 0.75},
    ytick={0, 0.2, 0.4, 0.6, 0.8},
    xticklabel style={
        font=\footnotesize,
        /pgf/number format/fixed,
        /pgf/number format/precision=0
    },
    yticklabel style={
        font=\footnotesize,
        /pgf/number format/fixed,
        /pgf/number format/fixed zerofill,
        /pgf/number format/precision=1
    },
    xminorgrids=true,
    ymajorgrids=true,
    grid style={gray!20, solid},
    minor grid style={gray!20, solid},
    title style={yshift=-6pt},
    legend columns=-1,
    legend style={
        at={(-1.13, -0.05)},    
        anchor=north,
        font=\footnotesize,
        draw=none,
        fill=none,
        /tikz/every even column/.append style={column sep=6pt}
    }
]

    \addtaylorplot{obs_p01.txt}{1st percentile}
    \addtaylorplot{obs_p25.txt}{25th percentile}
    \addtaylorplot{obs_p75.txt}{75th percentile}
    \addtaylorplot{obs_p99.txt}{99th percentile}

    \addthetaplot[
        colormap name=grey_r,
        colorbar,
        colorbar style={
            width=8pt,
            at={(-0.05, 0)}, anchor=south east,
            ytick={1.0, 10.0},
            yticklabel pos=left,
            yticklabel style={font=\footnotesize, /pgf/number format/fixed,
                              /pgf/number format/fixed zerofill, /pgf/number format/precision=1},
        },
        point meta min=1.0, point meta max=10.0,
    ]{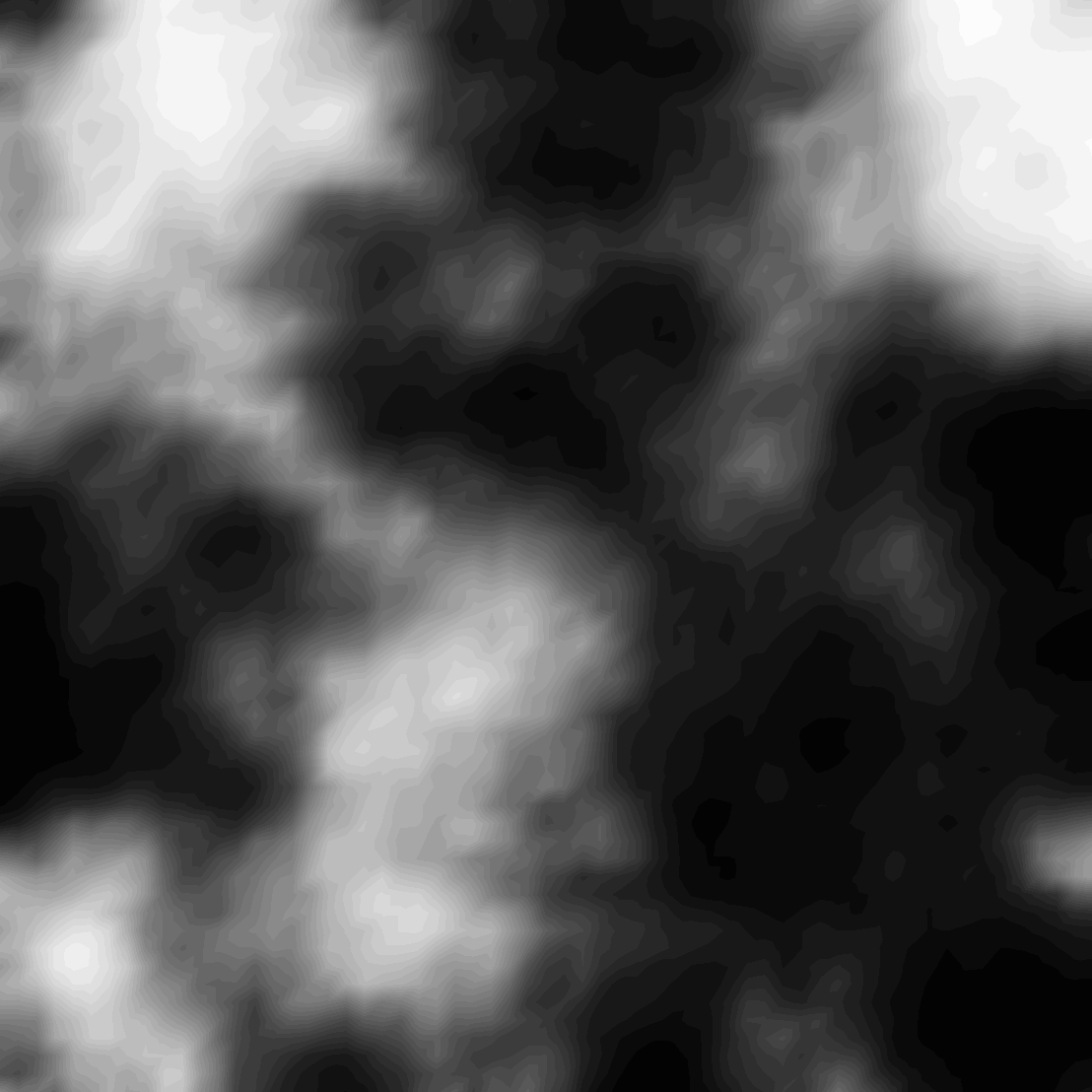}
    \addplot [draw=none, forget plot, point meta=y] coordinates {(0,0)};   
    \addthetaplot{kappa_p25.png}
    \addthetaplot{kappa_p75.png}
    \addthetaplot{kappa_p99.png}

    \pgfplotsinvokeforeach{1,2,3,4,5,6}{
        \pgfmathtruncatemacro{\kk}{#1 - 1}
        \addlegendimage{very thick, line cap=round, color=tab#1, mark=none, opacity=0.7}
        \edef\temp{\noexpand\addlegendentry{$k=\kk$}}\temp   
    }
    \addlegendimage{thick, line cap=round, black, dashed}
    \addlegendentry{Truth}

\end{groupplot}

\end{tikzpicture}
\caption{(\textbf{Example 2: Taylor series across samples}) Taylor approximations $q_k$ of the boundary flux $q$ (no dimension reduction or T4S) for four test parameters, at the $1$st, $25$th, $75$th, and $99$th percentiles of the order-$5$ normalized error, left to right (errors $0.0014$, $0.0040$, $0.014$, $0.048$). The $25$th-percentile sample is that of \figref{fig:darcy_overall_mapping}.}
\label{fig:darcy_taylor_gallery}
\end{figure}

\figref{fig:darcy_taylor_gallery} shows the Taylor series $q_k$ across the range of sample difficulty. Convergence is rapid where $q$ tracks the basic ``bump''-like shape of the mean and slow where $q$ deviates from its mean via sharp, localized features. The left side of \figref{fig:darcy_histogram_t4s} shows normalized-error KDEs for $n_s \in \{100,400,1600,6400\}$. Larger $n_s$ matches higher orders before stagnating: at $n_s=6400$ the model matches orders $k=0,1,2,3$ well, $k=4$ reasonably, and not $k=5$. As in Example 1, the $k=1$ curve is the linearized baseline and the higher-order curves show the gain over it. 
The percentile table on the right hand side of \figref{fig:darcy_histogram_t4s} shows the same pattern as Example 1. For P90 (90th percentile error level) T4S matches the exact Taylor series for orders $k=0,\dots,5$. For P50 (median) T4S matches the exact Taylor series through order $4$, and at order $5$ departs from the exact Taylor series but still improves over order $4$.
The T4S models thus work well on logistic samples despite being built from $\theta_0$ and $C$ of a local Gaussian approximation.  

\begin{figure}
\centering
\setlength{\kdefulltw}{\textwidth}%
\begin{minipage}[c]{0.56\textwidth}
\centering
{\setlength{\textwidth}{\kdefulltw}%
 \resizebox{\linewidth}{!}{\begin{tikzpicture} 

\begin{groupplot}[
    group style={
        group size=1 by 6,             
        x descriptions at=edge bottom, 
        y descriptions at=edge left,   
        vertical sep=-1.8em,
        group name=hists
    },
    xmin=1e-3,
    xmax=1,
    ymin=0,
    ymax=1.6,
    axis x line*=bottom,
    tick label style={font=\footnotesize},
    ytick=\empty,
    y axis line style={draw=none},
    every axis y label/.style={
        at={(axis description cs:-0.02, 0.33)},
        rotate=0, 
        anchor=west,
        font=\footnotesize
    },
    scale only axis=true,                       
    width=0.6\textwidth,                       
    height={(0.95 * 1.618 / 13) * \textwidth},   
    enlarge x limits=true,
    xmode=log,
]

\nextgroupplot[ylabel={$k=0$}]

\addplot [
    thick, 
    black,
    opacity=0.8
] table [x index=0, y index=1] {\main/figures/darcy_t4s_hist/t4s_hist_taylor_k0.txt};

\addplot [
    thick, 
    line cap=round,
    color=black,
    dash pattern=on 2 pt off 1.5 pt,
    opacity=0.8
] table [x index=0, y index=1] {\main/figures/darcy_t4s_hist/t4s_hist_tucker_k0.txt};

\foreach \ns/\cname in {6400/tab4, 1600/tab3, 400/tab2, 100/tab1} {
    \edef\temp{
        \noexpand\addplot [
            draw=none, 
            fill=\cname, 
            fill opacity=0.2, 
            forget plot
        ] table [x index=0, y index=1] {\noexpand\main/figures/darcy_t4s_hist/t4s_hist_t4s_k0_train\ns.txt} \noexpand\closedcycle;

        \noexpand\addplot [
            thick,
            color=\cname,
            draw opacity=0.8
        ] table [x index=0, y index=1] {\noexpand\main/figures/darcy_t4s_hist/t4s_hist_t4s_k0_train\ns.txt};
    }
    \temp
}

\nextgroupplot[
    ylabel={$k=1$},
    legend style={
        at={(0.0875, 1.25)}, 
        anchor=north west,
        font=\footnotesize,
        /tikz/every node/.style={anchor=west},
        rounded corners=2pt,
    },
    legend cell align=left,
    reverse legend
]

\addplot [
    thick, 
    black,
    opacity=0.8
] table [x index=0, y index=1] {\main/figures/darcy_t4s_hist/t4s_hist_taylor_k1.txt};
\addlegendentry{Taylor}

\addplot [
    thick, 
    line cap=round,
    color=black,
    dash pattern=on 2 pt off 1.5 pt,
    opacity=0.8
] table [x index=0, y index=1] {\main/figures/darcy_t4s_hist/t4s_hist_tucker_k1.txt};
\addlegendentry{Reduced Taylor\!\!}

\foreach \ns/\cname in {6400/tab4, 1600/tab3, 400/tab2, 100/tab1} {
    \edef\temp{
        \noexpand\addplot [
            draw=none, 
            fill=\cname, 
            fill opacity=0.2, 
            forget plot,
        ] table [x index=0, y index=1] {\noexpand\main/figures/darcy_t4s_hist/t4s_hist_t4s_k1_train\ns.txt} \noexpand\closedcycle;
    
        \noexpand\addplot [
            thick,
            color=\cname,
            draw opacity=0.8,
        ] table [x index=0, y index=1] {\noexpand\main/figures/darcy_t4s_hist/t4s_hist_t4s_k1_train\ns.txt};

        \noexpand\addlegendentry{T4S ($\numactions\!=\!\noexpand\pgfmathprintnumber[fixed, precision=0, 1000 sep={}]{\ns}$)\!\!}
    }
    \temp
}

\nextgroupplot[ylabel={$k=2$}]

\addplot [
    thick, 
    black,
    opacity=0.8
] table [x index=0, y index=1] {\main/figures/darcy_t4s_hist/t4s_hist_taylor_k2.txt};

\addplot [
    thick, 
    line cap=round,
    color=black,
    dash pattern=on 2 pt off 1.5 pt,
    opacity=0.8
] table [x index=0, y index=1] {\main/figures/darcy_t4s_hist/t4s_hist_tucker_k2.txt};

\foreach \ns/\cname in {6400/tab4, 1600/tab3, 400/tab2, 100/tab1} {
    \edef\temp{
        \noexpand\addplot [
            draw=none, 
            fill=\cname, 
            fill opacity=0.2, 
            forget plot
        ] table [x index=0, y index=1] {\noexpand\main/figures/darcy_t4s_hist/t4s_hist_t4s_k2_train\ns.txt} \noexpand\closedcycle;
    
        \noexpand\addplot [
            thick,
            color=\cname,
            draw opacity=0.8
        ] table [x index=0, y index=1] {\noexpand\main/figures/darcy_t4s_hist/t4s_hist_t4s_k2_train\ns.txt};
    }
    \temp
}

\nextgroupplot[ylabel={$k=3$}]

\addplot [
    thick, 
    black,
    opacity=0.8
] table [x index=0, y index=1] {\main/figures/darcy_t4s_hist/t4s_hist_taylor_k3.txt};

\addplot [
    thick, 
    line cap=round,
    color=black,
    dash pattern=on 2 pt off 1.5 pt,
    opacity=0.8
] table [x index=0, y index=1] {\main/figures/darcy_t4s_hist/t4s_hist_tucker_k3.txt};

\foreach \ns/\cname in {6400/tab4, 1600/tab3, 400/tab2, 100/tab1} {
    \edef\temp{
        \noexpand\addplot [
            draw=none, 
            fill=\cname, 
            fill opacity=0.2, 
            forget plot
        ] table [x index=0, y index=1] {\noexpand\main/figures/darcy_t4s_hist/t4s_hist_t4s_k3_train\ns.txt} \noexpand\closedcycle;
    
        \noexpand\addplot [
            thick,
            color=\cname,
            draw opacity=0.8
        ] table [x index=0, y index=1] {\noexpand\main/figures/darcy_t4s_hist/t4s_hist_t4s_k3_train\ns.txt};
    }
    \temp
}

\nextgroupplot[ylabel={$k=4$}]

\addplot [
    thick, 
    black,
    opacity=0.8
] table [x index=0, y index=1] {\main/figures/darcy_t4s_hist/t4s_hist_taylor_k4.txt};

\addplot [
    thick, 
    line cap=round,
    color=black,
    dash pattern=on 2 pt off 1.5 pt,
    opacity=0.8
] table [x index=0, y index=1] {\main/figures/darcy_t4s_hist/t4s_hist_tucker_k4.txt};

\foreach \ns/\cname in {6400/tab4, 1600/tab3, 400/tab2, 100/tab1} {
    \edef\temp{
        \noexpand\addplot [
            draw=none, 
            fill=\cname, 
            fill opacity=0.2, 
            forget plot
        ] table [x index=0, y index=1] {\noexpand\main/figures/darcy_t4s_hist/t4s_hist_t4s_k4_train\ns.txt} \noexpand\closedcycle;
    
        \noexpand\addplot [
            thick,
            color=\cname,
            draw opacity=0.8
        ] table [x index=0, y index=1] {\noexpand\main/figures/darcy_t4s_hist/t4s_hist_t4s_k4_train\ns.txt};
    }
    \temp
}

\nextgroupplot[ylabel={$k=5$}]

\addplot [
    thick, 
    black,
    opacity=0.8
] table [x index=0, y index=1] {\main/figures/darcy_t4s_hist/t4s_hist_taylor_k5.txt};

\addplot [
    thick, 
    line cap=round,
    color=black,
    dash pattern=on 2 pt off 1.5 pt,
    opacity=0.8
] table [x index=0, y index=1] {\main/figures/darcy_t4s_hist/t4s_hist_tucker_k5.txt};

\foreach \ns/\cname in {6400/tab4, 1600/tab3, 400/tab2, 100/tab1} {
    \edef\temp{
        \noexpand\addplot [
            draw=none, 
            fill=\cname, 
            fill opacity=0.2, 
            forget plot
        ] table [x index=0, y index=1] {\noexpand\main/figures/darcy_t4s_hist/t4s_hist_t4s_k5_train\ns.txt} \noexpand\closedcycle;
    
        \noexpand\addplot [
            thick,
            color=\cname,
            draw opacity=0.8
        ] table [x index=0, y index=1] {\noexpand\main/figures/darcy_t4s_hist/t4s_hist_t4s_k5_train\ns.txt};
    }
    \temp
}

\end{groupplot}

\node[anchor=north, yshift=-14pt] at (hists c1r6.south) {Normalized Error};
\node[anchor=south, xshift=-4pt, rotate=90] at (hists c1r4.west) {Taylor Series Order, $k$};

\end{tikzpicture}}}%
\end{minipage}\hfill
\begin{minipage}[c]{0.42\textwidth}
\centering
\small
{%
\renewcommand{\arraystretch}{1.35}%
\begin{tabular}{@{}c @{} c @{} c @{\hspace{1.6em}} c @{} c @{} c @{\hspace{1.6em}} c @{}}
\toprule
 & & \multicolumn{2}{c}{Taylor} & & \multicolumn{2}{c}{T4S} \\
\cmidrule(lr){3-4}\cmidrule(lr){6-7}
$k$ & \rule{1.4em}{0pt} & P50 & P90 & \rule{2.2em}{0pt} & P50 & P90 \\
\midrule
0 & & $4.2\mathrm{e}{-1}$ & $6.5\mathrm{e}{-1}$ & & $4.2\mathrm{e}{-1}$ & $6.5\mathrm{e}{-1}$ \\
1 & & $8.9\mathrm{e}{-2}$ & $2.1\mathrm{e}{-1}$ & & $8.9\mathrm{e}{-2}$ & $2.1\mathrm{e}{-1}$ \\
2 & & $5.4\mathrm{e}{-2}$ & $1.2\mathrm{e}{-1}$ & & $5.3\mathrm{e}{-2}$ & $1.1\mathrm{e}{-1}$ \\
3 & & $2.1\mathrm{e}{-2}$ & $6.2\mathrm{e}{-2}$ & & $2.2\mathrm{e}{-2}$ & $6.4\mathrm{e}{-2}$ \\
4 & & $1.4\mathrm{e}{-2}$ & $4.1\mathrm{e}{-2}$ & & $1.7\mathrm{e}{-2}$ & $4.0\mathrm{e}{-2}$ \\
5 & & $7.0\mathrm{e}{-3}$ & $2.6\mathrm{e}{-2}$ & & $1.3\mathrm{e}{-2}$ & $2.5\mathrm{e}{-2}$ \\
\bottomrule
\end{tabular}%
}
\end{minipage}
\caption{(\textbf{Example 2: T4S errors}) \emph{Left:} KDEs of the normalized error, organized
by Taylor order $k$, for models fit with $n_s\in\{100,400,1600,6400\}$ training samples.
\emph{Right:} median (P50) and 90th-percentile (P90) 
normalized error of the exact Taylor series and of the best-data T4S model ($n_s=6400$), each measured against the true output q.
}
\label{fig:darcy_histogram_t4s}
\end{figure}

\FloatBarrier 



\section{Conclusion}
\label{sec:conclusion}

We introduced the Tucker tensor train Taylor series (T4S), a local derivative-accurate surrogate for smooth high-dimensional maps defined implicitly by a simulator. T4S represents each derivative tensor of a truncated Taylor expansion as a Tucker tensor train, fit directly from directionally symmetric derivative probes at a single expansion point. The construction pairs derivative-informed dimension reduction with Riemannian manifold optimization and rank continuation, using fast sweeping routines to apply the Riemannian Jacobian and its transpose. We introduced two fitting methods, a trust-region Riemannian Gauss–Newton method (TR-RMGN) and a manifold stochastic gradient method with Cauchy step sizes (MC-SGD), and proved that, under spectral decay of the input covariance, the derivative tensors admit Tucker tensor train approximations with quantified ranks and errors. The use of directionally symmetric probes is central: for implicit models, these probes reuse the same linearized state and adjoint operators and reduce the solve count from exponential to linear growth in derivative order.

The numerical experiments show what the method achieves. On random-tensor benchmarks, the proposed fitting methods track quasi-optimal T3-SVD accuracy from probes alone, up to data-limited ranks (Section 8.1). On two Poisson PDE problems of increasing difficulty, the complete method recovers high-order Taylor structure for both Gaussian and non-Gaussian parameters (Section 8.2). 
More broadly, these results show that derivative probes are a practical training-data modality,
especially when function evaluations are expensive but reusable linearized solves are available.


The main limitations of the method are inherent to the approach. First, for problems with slowly decaying covariance spectra and high-rank derivative tensor structure (see, e.g., \cite{alger2024point,hu2025accelerating}), the required Tucker and tensor train ranks become large, which increases the model's construction and evaluation costs. 
Second, T4S is local by construction: it is accurate near the expansion point but does not provide global coverage, which would require multipoint or mixture extensions discussed below.
Third, T4S requires access to derivative-probing machinery, such as linearized and adjoint solvers. Fourth, the approximation results are representational: finite-probe sample complexity and optimization guarantees for the fitted model remain open.

We see several directions for future work. First, the representational guarantees use only the input covariance spectrum and are therefore conservative, prescribing larger ranks than are needed in practice; sharper bounds that exploit additional structure of the derivative tensors, together with finite-probe sample-complexity results for the fitted model, would more closely match observed performance. Second, model selection could be made more automatic, including adaptive choices of expansion point, Taylor order, and Tucker and tensor train ranks, as well as symmetry-aware parameterizations that share input factors across modes. Third, global coverage will require combining local expansions through multipoint or mixture constructions, or coupling T4S anchors with a global operator-learning surrogate. Finally, because T4S provides low-cost, accurate access to a map and its derivatives at a point, it is a natural candidate for derivative-driven outer-loop tasks such as Bayesian inverse problems, optimal experimental design, and optimization under uncertainty; integrating and evaluating it within these pipelines is future work. High-order Taylor surrogates have long been considered intractable in high dimensions; T4S shows that, with a suitable low-rank representation and probe-based training, they need not be.

\section{Author contributions}

Development of the numerical methods and their implementation was done jointly by Nick Alger and Blake Christierson. Theoretical results were proved by Nick Alger with help from Blake Christierson. The paper was drafted by Nick Alger with help from Blake Christierson. Peng Chen proposed the idea of using Tucker tensor trains instead of just tensor trains. Peng Chen and Omar Ghattas provided guidance and served in an advisory role throughout the project, and edited the paper. 

\section{Acknowledgments}

This research was partially supported by the U.S.\ Department of Energy under award DE-SC0023171, the U.S.\ Air Force Office of Scientific Research under award FA9550-24-1-0327, and the U.S.\ National Science Foundation under awards DMS-2324643, OAC-2313033, and DMS-2245111.

We thank Dingcheng Luo, Thomas O'Leary-Roseberry, Nicole Aretz, Rachel Ward, and Alex Gorodetsky for helpful discussions. We thank J.J. Alger for helpful discussions and extensive editing suggestions.


\appendix

\section{Tucker tensor train details}
\label{app:tensor_appendix}

This section collects details on Tucker tensor trains and the tools they require in \secref{sec:method} and \secref{sec:theory}. We define orthogonalization, relate Tucker, tensor train, and Tucker tensor train ranks to the ranks of matricizations and matrix unfoldings, and describe the fixed-rank Tucker tensor train manifold together with an efficient parameterization of its tangent space.

\subsection{Orthogonalization}

Let $\vb{G}$ be a 3-tensor of shape $a \times b \times c$.  
We define three matrix versions of $\vb{G}$:
$$
\leftunfolding{\vb{G}} \in \mathbb{R}^{(ab)\times c}, \qquad
\rightunfolding{\vb{G}} \in \mathbb{R}^{a\times (bc)}, \qquad
\vb{G}^O \in \mathbb{R}^{(ac)\times b}.
$$
Here $\vb{G}^L$ and $\vb{G}^R$ are reshapings of $\vb{G}$ into tall and wide matrices; $\vb{G}^O$ is formed by permuting and reshaping $\vb{G}$ so that its first and third indices form the rows.

A 3-tensor $\vb{G}$ is  
\emph{left orthogonal} if  
$
\left(\leftunfolding{\vb{G}}\right)^T \leftunfolding{\vb{G}} = \vb{I},
$
\emph{right orthogonal} if  
$
\rightunfolding{\vb{G}}\left(\rightunfolding{\vb{G}}\right)^T = \vb{I},
$
and \emph{outer orthogonal} if  
$
(\vb{G}^O)^T \vb{G}^O = \vb{I}.
$
A Tucker basis matrix $\vb{U}$ is orthogonal when  
$\vb{U}^T\vb{U} = \vb{I}$.  
A tensor train $(\vb{G}_i)_{i=1}^d$ is \emph{left orthogonal} if $\vb{G}_i$ is left orthogonal for $i=1,\dots,d-1$,  
and \emph{right orthogonal} if $\vb{G}_i$ is right orthogonal for $i=2,\dots,d$.  
A Tucker tensor train $((\vb{U}_i)_{i=1}^d,(\vb{G}_i)_{i=1}^d)$  
is said to have \emph{orthogonal Tucker bases} when each $\vb{U}_i$ is orthogonal.

Tensor trains can be converted to left or right orthogonal form without changing the represented tensor. To left orthogonalize, sweep left to right across the cores, replacing each $\vb{G}_i$ with the reshaped $\vb{Q}$ factor of a thin QR decomposition of $\leftunfolding{\vb{G}_i}$ and absorbing $\vb{R}$ into the first index of $\vb{G}_{i+1}$; right orthogonalization is analogous, sweeping right to left (see \cite[Section 4.2.1]{steinlechner2016riemannianphdthesis}). The Tucker bases can likewise be orthogonalized: replace each $\vb{U}_i$ with the $\vb{Q}$ factor of a thin QR decomposition of $\vb{U}_i$ and absorb $\vb{R}$ into the middle index of $\vb{G}_i$.

\subsection{Connection with matrix unfoldings and matricizations}
\label{app:t3_unfoldings}
The ranks of the Tucker, tensor train, and Tucker tensor train decompositions of a tensor are determined by the ranks of its matricizations and matrix unfoldings. 
For a tensor $\vb{T} \in \mathbb{R}^{N_1 \times \dots \times N_d}$, let $n_i$ denote the rank of its $i$th matricization and $r_i$ the rank of its $i$th matrix unfolding, and define the rank vectors
$
\vb{n} = (n_1, \dots, n_d)
$,
$
\vb{r} = (1, r_1, \dots, r_{d-1}, 1)
$.
$\vb{T}$ admits a low-rank Tucker decomposition if and only if its matricizations are low-rank, with minimal Tucker ranks $\vb{n}$. It admits a low-rank tensor train decomposition if and only if its matrix unfoldings are low-rank, with minimal TT-ranks $\vb{r}$. It admits a Tucker tensor train decomposition if and only if both are low-rank, with minimal Tucker and TT-ranks $\vb{n}$ and $\vb{r}$, respectively.
The dense T3-SVD algorithm (\algoref{alg:t3_svd_dense}) produces such a representation, generalizing the dense TT-SVD algorithm (see \cite[Algorithm 2]{oseledets.2011}). Any Tucker tensor train representation of $\vb{T}$ with Tucker ranks $\vb{n}'$ and TT-ranks $\vb{r}'$ satisfies $\vb{n}' \ge \vb{n}$ and $\vb{r}' \ge \vb{r}$; it is \emph{degenerate} when $\vb{n}' \neq \vb{n}$ or $\vb{r}' \neq \vb{r}$.

Let $\vb{\Sigma}_i^{\text{TT}}$ and $\vb{\Sigma}_i^{\text{Tucker}}$ denote the diagonal matrices of singular values from the thin SVDs of the $i$th matrix unfolding and $i$th matricization of $\vb{T}$. When $\vb{T}$ is represented by a Tucker tensor train, these can be computed by the implicit T3-SVD algorithm (\algoref{alg:t3_svd_implicit}), which mirrors the dense procedure but operates directly on the smaller cores.

Truncating the SVDs in \algoref{alg:t3_svd_implicit}, either by tolerance or by imposing maximum Tucker or TT-ranks, produces a lower-rank Tucker tensor train approximating the original. This is \emph{rounding}, a general mechanism for approximating one Tucker tensor train by another of lower rank (see \cite[Section 3]{oseledets.2011}).
 
\begin{algorithm}
\caption{Dense T3-SVD.}
\label{alg:t3_svd_dense}
\begin{algorithmic}[1]
\Require Dense tensor $\vb{T} \in \mathbb{R}^{N_1 \times \dots \times N_d}$
\Ensure Tucker tensor train $((\vb{U}_i)_{i=1}^d,(\vb{G}_i)_{i=1}^d)$
\State $\vb{X} \gets \vb{T}$, \quad $r \gets 1$
\For{$i=1,\dots,d$}
    \State Permute and reshape $\vb{X}$ into $N_i \times (r N_{i+1}\dots N_d)$ matrix
    \State Compute 
    $\vb{X} = \vb{U}_i \vb{\Sigma}_i^\text{Tucker} \vb{V}^T$
    \Comment{truncated SVD}
    \State $\vb{X} \gets \vb{\Sigma}_i^\text{Tucker} \vb{V}^T$, \quad $n \gets \#\text{rows}~\vb{\Sigma}_i^\text{Tucker}$
    \If{$i < d$}
        \State Permute and reshape $\vb{X}$ into $(rn) \times (N_{i+1} \times \dots \times N_d)$ matrix
        \State Compute 
        $\vb{X} = \vb{G}_i^L \vb{\Sigma}_i^\text{TT} \vb{V}^T$
        \Comment{truncated SVD}
        \State $r' \gets \#\text{rows}~\vb{\Sigma}_i^\text{TT}$
        \State Form $\vb{G}_i$ by reshaping $\vb{G}_i^L$ into $r \times n \times r'$ tensor
        \State $\vb{X} \gets \vb{\Sigma}_i^\text{TT} \vb{V}^T$
        \State Reshape $\vb{X}$ into $r' \times N_{i+1} \times \dots \times N_d$ tensor
        \State $r \gets r'$
    \Else
        \State Form $\vb{G}_d$ by reshaping $\vb{X}$ into $r' \times N_d \times 1$ tensor
    \EndIf
\EndFor
\end{algorithmic}
\end{algorithm}

\begin{algorithm}
\caption{Implicit T3-SVD 
}
\label{alg:t3_svd_implicit}
\begin{algorithmic}[1]
\Require {Tucker tensor train $\tuple{T}=((\vb{U}_i)_{i=1}^d,(\vb{G}_i)_{i=1}^d)$}
\Ensure Singular values $\{\vb{\Sigma}_i^\text{Tucker}\}_{i=1}^d$, $\{\vb{\Sigma}_i^\text{TT}\}_{i=1}^{d-1}$. 
\State Orthogonalize the Tucker bases of $\tuple{T}$
\State Right orthogonalize the tensor train $(\vb{G}_i)_{i=1}^d$
\For{$i=1,\dots,d$}
    \State Compute
    $\vb{G}_i^O = \vb{W} \vb{\Sigma}_i^\text{Tucker} \vb{V}^T$ 
    \Comment{truncated SVD}
    \State $\vb{G}_i^{O} \gets \vb{W} \vb{\Sigma}_i^\text{Tucker}$
    \State $\vb{U}_i \gets \vb{U}_i \vb{V}$
    \If{$i < d$}
        \State Compute 
        $\vb{G}_i^L = \vb{W} \vb{\Sigma}_i^\text{TT} \vb{V}^T$
        \Comment{truncated SVD}
        \State Replace $\vb{G}_i$ by appropriate 3-tensor reshaping of $\vb{W}$
        \State Replace $\vb{G}_{i+1}$ with the contraction $\vb{G}_{i+1}'$ defined by
        $$\vb{G}_{i+1}'[a,x,c] = \sum_b\left(\vb{\Sigma}_i^\text{TT} \vb{V}^T\right)[a,b]\vb{G}_{i+1}[b,x,c]$$
    \EndIf
\EndFor
\end{algorithmic}
\end{algorithm}

\subsection{T3 manifold and tangent space}
\label{sec:tensor_train_manifold}

We describe the manifold of non-degenerate fixed-rank Tucker tensor trains and a memory-efficient parameterization of its tangent space that supports fast linear algebra (addition, scaling, inner products) on tangent vectors. These results extend standard ones for fixed-rank tensor train manifolds (\citealp{haegeman2014geometry,holtz.rohwedder.ea.2012,khoromskij2012efficient,steinlechner2016riemannian,uschmajew2020geometric}; \citealp[Chapter 4]{steinlechner2016riemannianphdthesis}).

Let $\mathcal{M}_{\vb{n},\vb{r}} \subset \mathbb{R}^{N_1 \times \dots \times N_d}$ denote the set of all tensors $\vb{p}$ representable by non-degenerate Tucker tensor trains with Tucker ranks $\vb{n}=(n_1,\dots,n_d)$ and TT-ranks $\vb{r}=(1,r_1,\dots,r_{d-1},1)$. If non-degenerate Tucker tensor trains with these ranks exist, then $\mathcal{M}_{\vb{n},\vb{r}}$
forms an embedded submanifold of $\mathbb{R}^{N_1 \times \dots \times N_d}$.
To parameterize the tangent space $T_\vb{p} \mathcal{M}_{\vb{n},\vb{r}}$, we extend the tensor train formulation of \cite{khoromskij2012efficient,steinlechner2016riemannian}. We first construct $2d$ representations of $\vb{p}$ of the forms
\begin{equation}
\label{eq:p_rep_Btilde}
\begin{tikzpicture}[scale=1.0, every node/.style={scale=0.9},baseline=(current  bounding  box.center)]

    \node at (0,0.0) {$\vb{p}$};

    \node at (1,0.0) {$=$};

    \node[draw, rounded corners, minimum size=0.5cm, inner sep=0.0] (oneL) at (2,0.5) {$1$};
    \node[draw, rounded corners, minimum size=0.5cm, inner sep=0.0] (oneR) at (10,0.5) {$1$};
    
    \node[draw, rounded corners, minimum size=0.8cm, inner sep=0.5] (G1) at (3,0.5) {$\vb{P}_{1}$};
    \node (dots1) at (4,0.5) {$\dots$};
    \node[draw, rounded corners, minimum size=0.8cm, inner sep=0.5] (Gprev) at (5,0.5) {$\vb{P}_{i-1}$};
    \node[draw, rounded corners, minimum size=0.8cm, inner sep=0.5] (Gcur) at (6,0.5) {$\vb{O}_{i}$};
    \node[draw, rounded corners, minimum size=0.8cm, inner sep=0.5] (Gnext) at (7,0.5) {$\vb{Q}_{i+1}$};
    \node (dots2) at (8,0.5) {$\dots$};
    \node[draw, rounded corners, minimum size=0.8cm, inner sep=0.5] (Gd) at (9,0.5) {$\vb{Q}_{d}$};

    \draw (oneL) -- (G1) -- (dots1) -- (Gprev) -- (Gcur) -- (Gnext) -- (dots2) -- (Gd) -- (oneR);
    
    \node[draw, rounded corners, minimum size=0.8cm, inner sep=0.5] (B1) at (3,-0.5) {$\vb{U}_1$};
    \node[draw, rounded corners, minimum size=0.8cm, inner sep=0.5] (Bprev) at (5,-0.5) {$\vb{U}_{i-1}$};
    \node[fill=lightgray, draw, rounded corners, minimum size=0.8cm, inner sep=0.5] (Bcur) at (6,-0.5) {$\widetilde{\vb{U}}_{i}$};
    \node[draw, rounded corners, minimum size=0.8cm, inner sep=0.5] (Bnext) at (7,-0.5) {$\vb{U}_{i+1}$};
    \node[draw, rounded corners, minimum size=0.8cm, inner sep=0.5] (Bd) at (9,-0.5) {$\vb{U}_{d}$};
			
    \draw (G1) -- (B1);
    \draw (Gprev) -- (Bprev);
    \draw (Gcur) -- (Bcur);
    \draw (Gnext) -- (Bnext);
    \draw (Gd) -- (Bd);

    \node (B1x) at (3,-1.25) {};
    \node (Bprevx) at (5,-1.25) {};
    \node (Bcurx) at (6,-1.25) {};
    \node (Bnextx) at (7,-1.25) {};
    \node (Bdx) at (9,-1.25) {};

    \draw (B1) -- (B1x);
    \draw (Bprev) -- (Bprevx);
    \draw (Bcur) -- (Bcurx);
    \draw (Bnext) -- (Bnextx);
    \draw (Bd) -- (Bdx);
\end{tikzpicture}
\end{equation}
\noindent
and
\begin{equation}
\label{eq:p_rep_Gtilde}
\begin{tikzpicture}[scale=1.0, every node/.style={scale=0.9},baseline=(current  bounding  box.center)]

    \node at (0,0.0) {$\vb{p}$};

    \node at (1,0.0) {$=$};

    \node[draw, rounded corners, minimum size=0.5cm, inner sep=0.0] (oneL) at (2,0.5) {$1$};
    \node[draw, rounded corners, minimum size=0.5cm, inner sep=0.0] (oneR) at (10,0.5) {$1$};
    
    \node[draw, rounded corners, minimum size=0.8cm, inner sep=0.5] (G1) at (3,0.5) {$\vb{P}_{1}$};
    \node (dots1) at (4,0.5) {$\dots$};
    \node[draw, rounded corners, minimum size=0.8cm, inner sep=0.5] (Gprev) at (5,0.5) {$\vb{P}_{i-1}$};
    \node[fill=lightgray, draw, rounded corners, minimum size=0.8cm, inner sep=0.5] (Gcur) at (6,0.5) {$\widetilde{\vb{G}}_{i}$};
    \node[draw, rounded corners, minimum size=0.8cm, inner sep=0.5] (Gnext) at (7,0.5) {$\vb{Q}_{i+1}$};
    \node (dots2) at (8,0.5) {$\dots$};
    \node[draw, rounded corners, minimum size=0.8cm, inner sep=0.5] (Gd) at (9,0.5) {$\vb{Q}_{d}$};

    \draw (oneL) -- (G1) -- (dots1) -- (Gprev) -- (Gcur) -- (Gnext) -- (dots2) -- (Gd) -- (oneR);
    
    \node[draw, rounded corners, minimum size=0.8cm, inner sep=0.5] (B1) at (3,-0.5) {$\vb{U}_1$};
    \node[draw, rounded corners, minimum size=0.8cm, inner sep=0.5] (Bprev) at (5,-0.5) {$\vb{U}_{i-1}$};
    \node[draw, rounded corners, minimum size=0.8cm, inner sep=0.5] (Bcur) at (6,-0.5) {$\vb{U}_{i}$};
    \node[draw, rounded corners, minimum size=0.8cm, inner sep=0.5] (Bnext) at (7,-0.5) {$\vb{U}_{i+1}$};
    \node[draw, rounded corners, minimum size=0.8cm, inner sep=0.5] (Bd) at (9,-0.5) {$\vb{U}_{d}$};
			
    \draw (G1) -- (B1);
    \draw (Gprev) -- (Bprev);
    \draw (Gcur) -- (Bcur);
    \draw (Gnext) -- (Bnext);
    \draw (Gd) -- (Bd);

    \node (B1x) at (3,-1.25) {};
    \node (Bprevx) at (5,-1.25) {};
    \node (Bcurx) at (6,-1.25) {};
    \node (Bnextx) at (7,-1.25) {};
    \node (Bdx) at (9,-1.25) {};

    \draw (B1) -- (B1x);
    \draw (Bprev) -- (Bprevx);
    \draw (Bcur) -- (Bcurx);
    \draw (Bnext) -- (Bnextx);
    \draw (Bd) -- (Bdx);
\end{tikzpicture}
\end{equation}
\noindent
for $i=1,\dots,d$. These must satisfy:
\begin{itemize}
\item $\mathbf{U}_i$ has orthonormal columns for $i=1,\dots,d$.
\item $\vb{P}_i$ is left orthogonal for $i=1,\dots,d-1$.
\item $\vb{Q}_i$ is right orthogonal for $i=2,\dots,d$.
\item $\vb{O}_i$ is outer orthogonal for $i=1,\dots,d$.
\end{itemize}
Each representation designates one special core ($\widetilde{\vb{U}_i}$ or $\widetilde{\vb{G}_i}$, lightly shaded), relative to which the surrounding sub-networks are orthogonal.
They can be constructed from any non-degenerate representation $\tuple{T}=((\mathbf{U}_i)_{i=1}^d, (\mathbf{G}_i)_{i=1}^d)$ of $\vb{p}$ by the sweeping orthogonalization in \algoref{alg:t3_sweep_orth}, which is similar to the implicit T3-SVD (\algoref{alg:t3_svd_implicit}).

\begin{algorithm}
\caption{Sweeping orthogonalization.}
\label{alg:t3_sweep_orth}
\begin{algorithmic}[1]
\Require Non-degenerate Tucker tensor train $\tuple{T}=((\vb{U}_i)_{i=1}^d,(\vb{G}_i)_{i=1}^d)$ of $\vb{p}$
\Ensure Cores $(\vb{U}_i)_{i=1}^d$, $(\vb{P}_i)_{i=1}^d$, $(\vb{Q}_i)_{i=1}^d$, $(\vb{O}_i)_{i=1}^d$ and special cores $(\widetilde{\vb{U}}_i)_{i=1}^d$, $(\widetilde{\vb{G}}_i)_{i=1}^d$
\For{$i=1,\dots,d$} \Comment{orthogonalize Tucker bases}
    \State Factor $\vb{U}_i = \vb{V}_i \vb{X}$ with $\vb{V}_i^T \vb{V}_i = \vb{I}$
    \State $\vb{U}_i \gets \vb{V}_i$ and contract $\vb{X}$ into $\vb{G}_i$
\EndFor
\State Form $(\vb{Q}_i)_{i=1}^d$ by right orthogonalizing the tensor train, sweeping right-to-left
\State Form $(\vb{P}_i)_{i=1}^d$ by left orthogonalizing $(\vb{Q}_i)_{i=1}^d$, sweeping left-to-right; let $\widetilde{\vb{G}}_i$ be the $i$th core before its orthogonalization
\For{$i=1,\dots,d$} \Comment{outer orthogonalize cores}
    \State Factor $\widetilde{\vb{G}}_i[a,y,b]=\sum_{x} \vb{O}_i[a,x,b]\,\vb{X}[x,y]$ so that the $(r_{i-1}r_i) \times n_i$ matrix formed by permuting and reshaping $\vb{O}_i$ has orthonormal columns
    \State Contract $\vb{X}$ into $\vb{U}_i$ to form $\widetilde{\vb{U}}_i$
\EndFor
\end{algorithmic}
\end{algorithm}

Under these conditions, any $\vb{v} \in T_\vb{p} \mathcal{M}_{\vb{n}, \vb{r}}$ can be expressed as
\begin{equation}
\label{eq:t3_tangent_sum_lru}
\begin{tikzpicture}[scale=1.0, every node/.style={scale=0.9},baseline=(current  bounding  box.center)]

    \node at (-1,0.0) {$\vb{v}$};

    \node at (0,0.0) {$=$};

    \node at (1,0.0) {\Large $\displaystyle \sum_{i=1}^d$};

    \node[draw, rounded corners, minimum size=0.5cm, inner sep=0.0] (oneL) at (2,0.5) {$1$};
    \node[draw, rounded corners, minimum size=0.5cm, inner sep=0.0] (oneR) at (10,0.5) {$1$};
    
    \node[draw, rounded corners, minimum size=0.8cm, inner sep=0.5] (G1) at (3,0.5) {$\vb{P}_{1}$};
    \node (dots1) at (4,0.5) {$\dots$};
    \node[draw, rounded corners, minimum size=0.8cm, inner sep=0.5] (Gprev) at (5,0.5) {$\vb{P}_{i-1}$};
    \node[draw, rounded corners, minimum size=0.8cm, inner sep=0.5] (Gcur) at (6,0.5) {$\vb{O}_{i}$};
    \node[draw, rounded corners, minimum size=0.8cm, inner sep=0.5] (Gnext) at (7,0.5) {$\vb{Q}_{i+1}$};
    \node (dots2) at (8,0.5) {$\dots$};
    \node[draw, rounded corners, minimum size=0.8cm, inner sep=0.5] (Gd) at (9,0.5) {$\vb{Q}_{d}$};

    \draw (oneL) -- (G1) -- (dots1) -- (Gprev) -- (Gcur) -- (Gnext) -- (dots2) -- (Gd) -- (oneR);
    
    \node[draw, rounded corners, minimum size=0.8cm, inner sep=0.5] (B1) at (3,-0.5) {$\vb{U}_1$};
    \node[draw, rounded corners, minimum size=0.8cm, inner sep=0.5] (Bprev) at (5,-0.5) {$\vb{U}_{i-1}$};
    \node[fill=lightgray, draw, rounded corners, minimum size=0.8cm, inner sep=0.5] (Bcur) at (6,-0.5) {$\delta \vb{U}_{i}$};
    \node[draw, rounded corners, minimum size=0.8cm, inner sep=0.5] (Bnext) at (7,-0.5) {$\vb{U}_{i+1}$};
    \node[draw, rounded corners, minimum size=0.8cm, inner sep=0.5] (Bd) at (9,-0.5) {$\vb{U}_{d}$};
			
    \draw (G1) -- (B1);
    \draw (Gprev) -- (Bprev);
    \draw (Gcur) -- (Bcur);
    \draw (Gnext) -- (Bnext);
    \draw (Gd) -- (Bd);

    \node (B1x) at (3,-1.25) {};
    \node (Bprevx) at (5,-1.25) {};
    \node (Bcurx) at (6,-1.25) {};
    \node (Bnextx) at (7,-1.25) {};
    \node (Bdx) at (9,-1.25) {};

    \draw (B1) -- (B1x);
    \draw (Bprev) -- (Bprevx);
    \draw (Bcur) -- (Bcurx);
    \draw (Bnext) -- (Bnextx);
    \draw (Bd) -- (Bdx);

    %


    \node at (1,0.0-2.5) {\Large $+$};

    \node[draw, rounded corners, minimum size=0.5cm, inner sep=0.0] (oneL) at (2,0.5-2.5) {$1$};
    \node[draw, rounded corners, minimum size=0.5cm, inner sep=0.0] (oneR) at (10,0.5-2.5) {$1$};
    
    \node[draw, rounded corners, minimum size=0.8cm, inner sep=0.5] (G1) at (3,0.5-2.5) {$\vb{P}_{1}$};
    \node (dots1) at (4,0.5-2.5) {$\dots$};
    \node[draw, rounded corners, minimum size=0.8cm, inner sep=0.5] (Gprev) at (5,0.5-2.5) {$\vb{P}_{i-1}$};
    \node[fill=lightgray, draw, rounded corners, minimum size=0.8cm, inner sep=0.5] (Gcur) at (6,0.5-2.5) {$\delta \vb{G}_{i}$};
    \node[draw, rounded corners, minimum size=0.8cm, inner sep=0.5] (Gnext) at (7,0.5-2.5) {$\vb{Q}_{i+1}$};
    \node (dots2) at (8,0.5-2.5) {$\dots$};
    \node[draw, rounded corners, minimum size=0.8cm, inner sep=0.5] (Gd) at (9,0.5-2.5) {$\vb{Q}_{d}$};

    \draw (oneL) -- (G1) -- (dots1) -- (Gprev) -- (Gcur) -- (Gnext) -- (dots2) -- (Gd) -- (oneR);
    
    \node[draw, rounded corners, minimum size=0.8cm, inner sep=0.5] (B1) at (3,-0.5-2.5) {$\vb{U}_1$};
    \node[draw, rounded corners, minimum size=0.8cm, inner sep=0.5] (Bprev) at (5,-0.5-2.5) {$\vb{U}_{i-1}$};
    \node[draw, rounded corners, minimum size=0.8cm, inner sep=0.5] (Bcur) at (6,-0.5-2.5) {$\vb{U}_{i}$};
    \node[draw, rounded corners, minimum size=0.8cm, inner sep=0.5] (Bnext) at (7,-0.5-2.5) {$\vb{U}_{i+1}$};
    \node[draw, rounded corners, minimum size=0.8cm, inner sep=0.5] (Bd) at (9,-0.5-2.5) {$\vb{U}_{d}$};
			
    \draw (G1) -- (B1);
    \draw (Gprev) -- (Bprev);
    \draw (Gcur) -- (Bcur);
    \draw (Gnext) -- (Bnext);
    \draw (Gd) -- (Bd);

    \node (B1x) at (3,-1.25-2.5) {};
    \node (Bprevx) at (5,-1.25-2.5) {};
    \node (Bcurx) at (6,-1.25-2.5) {};
    \node (Bnextx) at (7,-1.25-2.5) {};
    \node (Bdx) at (9,-1.25-2.5) {};

    \draw (B1) -- (B1x);
    \draw (Bprev) -- (Bprevx);
    \draw (Bcur) -- (Bcurx);
    \draw (Bnext) -- (Bnextx);
    \draw (Bd) -- (Bdx);
    
\end{tikzpicture}
\end{equation}
\noindent
for some perturbation cores $\delta \tuple{V}=((\delta \vb{U}_i)_{i=1}^d, (\delta \vb{G}_i)_{i=1}^d)$ (lightly shaded), which we call \emph{variations}. This representation is not unique, but $\vb{v}$ has a unique representation by variations satisfying the \emph{gauge conditions}:
\begin{align}
\vb{U}_i^T \delta \vb{U}_i &= 0, \quad i=1,\dots,d, \label{eq:gauge_condition_tucker}\\
\left(\leftunfolding{\vb{\leftcore}_i}\right)^T \leftunfolding{\delta \vb{G}_i} &= 0, \quad i=1,\dots,d-1. \label{eq:gauge_condition_tt}
\end{align}

For variations $\delta \tuple{V}=((\delta \vb{U}_i)_{i=1}^d, (\delta \vb{G}_i)_{i=1}^d)$ and $\delta \tuple{V}'=((\delta \vb{U}_i')_{i=1}^d, (\delta \vb{G}_i')_{i=1}^d)$ representing $\vb{v}, \vb{v}'\in T_\vb{p} \mathcal{M}_{\vb{n},\vb{r}}$, and for scalars $a, b$, the linear combination $a \vb{v} + b \vb{v}'$ is represented by variations
$
((a\delta \vb{U}_i + b \delta \vb{U}_i')_{i=1}^d, (a\delta \vb{G}_i + b \delta \vb{G}_i')_{i=1}^d).
$
If $\delta \tuple{V}, \delta \tuple{V}'$ satisfy the gauge conditions, then
$$
\innerproduct{\vb{v}}{ \vb{v}'}_\text{HS} = \sum_{i=1}^d \innerproduct{\delta \vb{U}_i}{\delta \vb{U}_i'}_\text{HS} + \sum_{i=1}^d \innerproduct{\delta \vb{G}_i}{\delta \vb{G}_i'}_\text{HS}.
$$

\begin{figure}
\centering
\begin{subfigure}[b]{0.98\textwidth}
    \centering
    \begin{tikzpicture}[scale=0.5, every node/.style={scale=0.7}]

\node at (-1.5,0.0,0) {$\vb{W}_i ~~= $};

\coordinate (Az) at (0,2,-1);
\coordinate (Bz) at (0,2,1);
\coordinate (Cz) at (0,-2,1);
\coordinate (Dz) at (2,-2,-1);
\coordinate (Ez) at (2,2,-1);
\coordinate (Fz) at (2,2,1);
\coordinate (Gz) at (2,-2,1);
\coordinate (Hz) at (0,-2,-1);

\draw[black] (Az) -- (Ez) -- (Dz) -- (Hz) -- cycle; 
\draw[black] (Dz) -- (Ez) -- (Fz) -- (Gz) -- cycle; 
\draw[black] (Cz) -- (Bz) -- (Fz) -- (Gz) -- cycle; 
\draw[black] (Az) -- (Bz) -- (Fz) -- (Ez) -- cycle; 
\draw[black] (Cz) -- (Gz) -- (Dz) -- (Hz) -- cycle; 

\coordinate (A4) at (0,-2,0);
\coordinate (B4) at (0,-2,1);
\coordinate (C4) at (0,0,1);
\coordinate (D4) at (1,0,0);
\coordinate (E4) at (1,-2,0);
\coordinate (F4) at (1,-2,1);
\coordinate (G4) at (1,0,1);

\draw[black,fill=gray!80,opacity=1.0] (D4) -- (E4) -- (F4) -- (G4) -- cycle; 
\draw[black,fill=gray!40,opacity=1.0] (C4) -- (B4) -- (F4) -- (G4) -- cycle; 


%

\coordinate (A2) at (0,2,-1);
\coordinate (B2) at (0,2,0);
\coordinate (C2) at (0,0,0);
\coordinate (D2) at (1,0,-1);
\coordinate (E2) at (1,2,-1);
\coordinate (F2) at (1,2,0);
\coordinate (G2) at (1,0,0);

\draw[black,fill=gray!80,opacity=1.0] (D2) -- (E2) -- (F2) -- (G2) -- cycle; 
\draw[black,fill=gray!40,opacity=1.0] (C2) -- (B2) -- (F2) -- (G2) -- cycle; 
\draw[black,fill=gray!20,opacity=1.0] (A2) -- (B2) -- (F2) -- (E2) -- cycle; 


%

\coordinate (A1) at (0,2,0);
\coordinate (B1) at (0,2,1);
\coordinate (C1) at (0,0,1);
\coordinate (D1) at (1,0,0);
\coordinate (E1) at (1,2,0);
\coordinate (F1) at (1,2,1);
\coordinate (G1) at (1,0,1);

\draw[black,fill=gray!80,opacity=1.0] (D1) -- (E1) -- (F1) -- (G1) -- cycle; 
\draw[black,fill=gray!40,opacity=1.0] (C1) -- (B1) -- (F1) -- (G1) -- cycle; 
\draw[black,fill=gray!20,opacity=1.0] (A1) -- (B1) -- (F1) -- (E1) -- cycle; 


%

\coordinate (A3) at (1,2,0);
\coordinate (B3) at (1,2,1);
\coordinate (C3) at (1,0,1);
\coordinate (D3) at (2,0,0);
\coordinate (E3) at (2,2,0);
\coordinate (F3) at (2,2,1);
\coordinate (G3) at (2,0,1);

\draw[black,fill=gray!80,opacity=1.0] (D3) -- (E3) -- (F3) -- (G3) -- cycle; 
\draw[black,fill=gray!40,opacity=1.0] (C3) -- (B3) -- (F3) -- (G3) -- cycle; 
\draw[black,fill=gray!20,opacity=1.0] (A3) -- (B3) -- (F3) -- (E3) -- cycle; 


%

\node at (0.125,0.5,0) {$\delta \mathbf{G}_i$};
\node at (1.125,0.5,0) {$\mathbf{Q}_i$};
\node at (0.125,-1.5,0) {$\mathbf{O}_i$};
\node (P) at (-0.75,2.15,0) {$\mathbf{P}_i$};

\node (Px) at (0.85, 2.05, 0.0) {};
\draw[black,->,thick] (P) edge [bend left] (Px);


\coordinate (Aq) at (0+7,2+0.5,0);
\coordinate (Bq) at (2+7,2+0.5,0);
\coordinate (Cq) at (0+7,-3+0.5,0);
\coordinate (Dq) at (2+7,-3+0.5,0);

\draw[black,fill=gray!40,opacity=1.0] (Aq) -- (Cq) -- (Dq) -- (Bq) -- cycle;

\coordinate (Bw) at (2+7+2,2+0.5,0);
\coordinate (Dw) at (2+7+2,-3+0.5,0);

\draw[black,fill=gray!40,opacity=1.0] (Bq) -- (Bw) -- (Dw) -- (Dq) -- cycle;

\node at (-1.25+7,-0.5+0.5,0) {$\vb{B}_i ~~= $};

\node at (9-1,-0.5+0.5,0) {$\mathbf{U}_i$};
\node at (9+1,-0.5+0.5,0) {$\delta \mathbf{U}_i$};

\end{tikzpicture}
\end{subfigure}
\caption{Middle tensor train (left) and Tucker (right) cores for the representation of a Tucker tensor train tangent vector as a Tucker tensor train with doubled ranks. Empty portions are filled with zeros.
}
\label{fig:t3_tangent_2r}
\end{figure}

\subsubsection{Tangent vectors as doubled rank Tucker tensor trains} 
\label{app:tangent_vectors_doubled_ranks}
One can verify that $\vb{v}$ in \eqref{eq:t3_tangent_sum_lru} is represented by the Tucker tensor train $((\vb{B}_i)_{i=1}^d,(\vb{W}_i)_{i=1}^d)$ with Tucker bases
\begin{equation}
\label{eq:tucker_doubled_tangent}
\vb{B}_i = \begin{bmatrix}
\vb{U}_i & \delta \vb{U}_i
\end{bmatrix} \in \mathbb{R}^{N_i \times 2 n_i}, \quad \text{for} \quad i=1,\dots,d,
\end{equation}
and tensor train cores $\vb{W}_1 \in \mathbb{R}^{1 \times 2n_1 \times 2r_1}$, $\vb{W}_i\in \mathbb{R}^{2r_i \times 2n_i \times 2r_{i+1}}$, $i=2,\dots,d-1$, and $\vb{W}_d \in \mathbb{R}^{2r_{d-1} \times 2n_d \times 1}$ which are defined by
\begin{align}
\vb{W}_1\!\left(\begin{bmatrix}\vb{\xi}_1 \\ \delta\vb{\xi}_1\end{bmatrix}\right) &= \begin{bmatrix}
\delta \vb{G}_1(\vb{\xi}_1) & \leftcore_1(\vb{\xi}_1)
\end{bmatrix} + \begin{bmatrix}
\vb{O}_1(\delta\vb{\xi}_1) & 0
\end{bmatrix}, \label{eq:t3_tangent_2r_first} \\
\vb{W}_i\!\left(\begin{bmatrix}\vb{\xi}_i \\ \delta\vb{\xi}_i\end{bmatrix}\right) &= \begin{bmatrix}\rightcore_i(\vb{\xi}_i) & 0 \\ \delta \vb{G}_i(\vb{\xi}_i) & \leftcore_i(\vb{\xi}_i) \end{bmatrix} + \begin{bmatrix}0 & 0 \\ \vb{O}_i(\delta\vb{\xi}_i) & 0 \end{bmatrix}, \quad \quad i=2,\dots,d-1, \label{eq:t3_tangent_2r_mid}\\
\vb{W}_d\!\left(\begin{bmatrix}\vb{\xi}_d \\ \delta\vb{\xi}_d\end{bmatrix}\right) &= \begin{bmatrix}\rightcore_d(\vb{\xi}_d) \\ \delta \vb{G}_d(\vb{\xi}_d) \end{bmatrix} + \begin{bmatrix}0 \\ \vb{O}_d(\delta\vb{\xi}_d) \end{bmatrix}. \label{eq:t3_tangent_2r_last}
\end{align}
This is illustrated in \figref{fig:t3_tangent_2r}.
Thus, each tangent vector may be represented by a Tucker tensor train with Tucker ranks $(2n_1, \dots, 2n_d)$ and tensor train ranks $(1, 2r_1, \dots, 2r_{d-1}, 1)$. 
Vectors in $\vb{p} + T_\vb{p} \mathcal{M}_{\vb{n},\vb{r}}$ likewise have doubled-rank representations: $\vb{p}+\vb{u}$ is represented by the Tucker tensor train $((\vb{B}_1, \dots, \vb{B}_d), (\vb{W}_1, \dots, \vb{W}_{d-1}, \widetilde{\vb{W}}_d))$,
where $\widetilde{\vb{W}}_d$ is defined by
\begin{equation*}
\widetilde{\vb{W}}_d\!\left(\begin{bmatrix}\vb{\xi}_d \\ \delta\vb{\xi}_d\end{bmatrix}\right) = \vb{W}_d\!\left(\begin{bmatrix}\vb{\xi}_d \\ \delta\vb{\xi}_d\end{bmatrix}\right) +  \begin{bmatrix}0 \\ \leftcore_d(\vb{\xi}_d) \end{bmatrix}.
\end{equation*}
This is a generalization of the formula in \cite[Section 1.6.5]{voorhaartensor} for tensor trains.

\section{Additional proofs}
\label{app:finite_to_infinite}

\begin{proof}[Proof of \thmref{thm:probe_t3}]
Since $T$ is the multilinear function represented by $\tuple{T}$, contracting each input index against the corresponding probing vector gives the scalar
$$
T(\vb{w}_1,\dots,\vb{w}_d) = G_1(\vb{U}_1^T\vb{w}_1)\,G_2(\vb{U}_2^T\vb{w}_2)\cdots G_d(\vb{U}_d^T\vb{w}_d) = G_1(\vb{\xi}_1)\cdots G_d(\vb{\xi}_d).
$$
Replacing the $i$th probing vector $\vb{w}_i$ by a free argument $\vb{t}$ replaces the $i$th factor $G_i(\vb{\xi}_i)$ by $G_i(\vb{U}_i^T\vb{t})$, so
\begin{align*}
z_i(\vb{t})
&= G_1(\vb{\xi}_1) \cdots G_{i-1}(\vb{\xi}_{i-1})\, G_i(\vb{U}_i^T\vb{t})\, G_{i+1}(\vb{\xi}_{i+1}) \cdots G_d(\vb{\xi}_d) \\
&= \pushleft_{i-1}^T G_i(\vb{U}_i^T\vb{t}) \pushright_i
= \eta_i(\vb{U}_i^T \vb{t}).
\end{align*}
Because $G_i(\cdot)$ is linear in its contracted index, $\eta_i(\vb{U}_i^T\vb{t}) = (\pushleft_{i-1}^T \vb{G}_i \pushright_i)^T \vb{U}_i^T \vb{t} = (\vb{U}_i \vb{\eta}_i)^T \vb{t}$, where $\vb{\eta}_i := \pushleft_{i-1}^T \vb{G}_i \pushright_i$. Hence the $i$th probe is $\vb{z}_i = \vb{U}_i \vb{\eta}_i$.

The edge variables, and hence the probes, satisfy the sweeping recursions evaluated by \algoref{alg:probe_t3}. The contract-up step (\algoref{alg:probe_t3}, line~\ref{ln:t3-up}) is the definition of $\vb{\xi}_i$; the products defining $\pushleft_i$ and $\pushright_i$ telescope into the left and right sweeps (lines~\ref{ln:t3-left} and~\ref{ln:t3-right}), with boundary terms $\pushleft_0^T=\pushright_d=1$ (line~\ref{ln:t3-bdy}); and the central contraction $\vb{\eta}_i=\pushleft_{i-1}^T \vb{G}_i \pushright_i$ (line~\ref{ln:t3-central}) and contract-down step $\vb{z}_i=\vb{U}_i \vb{\eta}_i$ (line~\ref{ln:t3-down}) are the identities established above. Here $\pushleft_{i-1}^T \vb{G}_i \pushright_i$ denotes the vector obtained by contracting $\vb{\mu}_{i-1}$ and $\vb{\nu}_i$ with the left and right tensor train indices of $\vb{G}_i$, leaving its Tucker index uncontracted. \algoref{alg:probe_t3} evaluates these in an order consistent with their dependencies, which establishes correctness.

For the complexity, account for each loop under the stated shape and ranks. Reducing inputs (\algoref{alg:probe_t3}, line~\ref{ln:t3-up}) costs $O(Nn)$ for each of the $d-1$ input modes and $O(Mm)$ for the output mode, i.e.\ $O(dNn + Mm)$. In the sweeps (lines~\ref{ln:t3-left}--\ref{ln:t3-right}), each step forms a matrix $G_i(\vb{w}_i)\in\mathbb{R}^{r\times r}$ by contracting the Tucker index of an $r\times n\times r$ core at cost $O(nr^2)$, followed by a vector--matrix product at cost $O(r^2)$; over $O(d)$ steps this is $O(d n r^2)$. The central contractions (line~\ref{ln:t3-central}) contract each $r\times n\times r$ core against two length-$r$ vectors at cost $O(nr^2)$, giving $O(d n r^2)$. Reducing outputs (line~\ref{ln:t3-down}) costs $O(dNn + Mm)$, as in line~\ref{ln:t3-up}. Operations at the output mode involve $m$ and $r$ in place of $n$ and contribute $O(rm)\le O(Mm)$. Summing gives $O(dNn + d n r^2 + Mm)$.
\end{proof}

\begin{proof}[Proof of \thmref{thm:probe_tangent}]
\emph{Base point (\algoref{alg:probe_basepoint}).} Applying the sweeping scheme of \algoref{alg:probe_t3} to the orthogonal representations of $\vb{p}$, with the gauge-appropriate cores $\vb{P}_i$, $\vb{Q}_i$, and $\vb{O}_i$ in place of $\vb{G}_i$, yields the base-point edge variables and probes computed in \algoref{alg:probe_basepoint} (lines~\ref{ln:bp-up}--\ref{ln:bp-down}). Since each orthogonal representation represents $\vb{p}$, Theorem~\ref{thm:probe_t3} gives $\widehat{\vb{z}}_i = \vb{U}_i\widehat{\vb{\eta}}_i$ as the $i$th probe of $\vb{p}$.

\emph{Tangent vector (\algoref{alg:probe_tangent}).} By \appref{app:tangent_vectors_doubled_ranks}, $\vb{v}$ is represented by the doubled-rank Tucker tensor train $\tuple{V}$, so by Theorem~\ref{thm:probe_t3}, applying the sweeping scheme of \algoref{alg:probe_t3} to $\tuple{V}$ computes $\delta\vb{z}=\jac^{(s)}(\delta\tuple{V})$. The cores of $\tuple{V}$ have the block-triangular structure of \eqref{eq:tucker_doubled_tangent} and \eqref{eq:t3_tangent_2r_first}--\eqref{eq:t3_tangent_2r_last}, under which each edge variable splits into a base-point and a perturbation component. Reading off the perturbation component of each sweep gives the assignments of \algoref{alg:probe_tangent} (lines~\ref{ln:tan-up}, \ref{ln:tan-left}--\ref{ln:tan-down}), with boundary values $\ppsleft_0 = 0$ and $\ppsright_d = 0$ (line~\ref{ln:tan-bdy}). Evaluated in an order consistent with their dependencies (the base-point quantities being supplied by \algoref{alg:probe_basepoint}), \algoref{alg:probe_tangent} returns $\jac^{(s)}(\delta\tuple{V})$.

For the complexity, each line of either algorithm performs a constant number of operations of the same shapes counted in the proof of Theorem~\ref{thm:probe_t3}: a length-$N$ (or length-$M$) reduction by $\vb{U}_i$ or $\delta\vb{U}_i$, a contraction of an $r\times n\times r$ core against a vector at cost $O(nr^2)$, or a length-$r$ vector--matrix product at cost $O(r^2)$. Summing over modes, each of Algorithms~\ref{alg:probe_basepoint} and~\ref{alg:probe_tangent} requires $O(dNn + d n r^2 + Mm)$ operations.
\end{proof}

\begin{proof}[Proof of \thmref{thm:transpose_tangent}]
The transpose satisfies the identity
\begin{equation}
\left(\jac^{(s)}\right)^T(\widetilde{\vb{z}}) = D_{\delta \tuple{V}} \left\langle \widetilde{\vb{z}}, \jac^{(s)}(\delta \tuple{V})\right\rangle = D_{\delta \tuple{V}}
\left\langle \widetilde{\vb{z}}, \delta\vb{z}\right\rangle,
\label{eq:tangent_transpose_ATY_TYY}
\end{equation}
where $\delta\vb{z} = (\delta\vb{z}_1, \dots, \delta\vb{z}_d)$ are the probes of the tangent vector represented by $\delta \tuple{V}$ on $\vb{w}_1, \dots, \vb{w}_d$, and $D_{\delta \tuple{V}}$ is the total derivative with respect to $\delta \tuple{V}$.

To evaluate this derivative, we apply the adjoint state method.
Define the state and adjoint variables  
\begin{align*}
\Xi &= ((\delta\vb{\xi}_i)_{1}^d, (\ppsleft_i)_{1}^{d-1}, (\ppsright_i)_{1}^{d-1}, (\delta \vb{\eta}_i)_{1}^d), \\
\widetilde{\Xi} &= ((\widetilde{\delta\vb{\xi}}_i)_{1}^d, (\incrleft_i)_{1}^{d-1}, (\incrright_i)_{1}^{d-1}, (\widetilde{\delta \vb{\eta}}_i)_{1}^d),
\end{align*}
and the Lagrangian  
$$
\mathcal{L}(\delta \tuple{V}, \Xi, \widetilde{\Xi}) = \Psi(\delta \tuple{V}, \Xi) + \langle \widetilde{\Xi}, E(\Xi) \rangle.
$$
The state variables $\Xi$ include all edge variables that depend on $\delta \tuple{V}$. The edge variables $\widehat{\vb{\xi}}_i$, $\widehat{\pushleft}_i$, $\widehat{\pushright}_i$, and $\widehat{\vb{\eta}}_i$ are independent of $\delta \tuple{V}$ and are therefore
treated as constants.

The constraint $E(\Xi)$ consists of the residuals of the tangent sweep \algoref{alg:probe_tangent} (lines~\ref{ln:tan-up}, \ref{ln:tan-left}, \ref{ln:tan-right}, and~\ref{ln:tan-central}), which determine the edge variables in $\Xi$.
The objective function is defined by
$$
\Psi(\delta \tuple{V}, \Xi) = \sum_{i=1}^d
\widetilde{\vb{z}}_i^T (\vb{U}_i \delta \vb{\eta}_i + \delta \vb{U}_i \widehat{\vb{\eta}}_i),
\qquad i = 1, \dots, d.
$$
When \(\Xi\) satisfies \(E(\Xi) = 0\), we have  
$$
\Psi(\delta \tuple{V}, \Xi) = \sum_{i=1}^d \widetilde{\vb{z}}_i^T \delta\vb{z}_i
= \langle \widetilde{\vb{z}}, \delta\vb{z} \rangle,
$$
which is the quantity differentiated in the transpose identity.
Hence,
\begin{equation*}
\left(\jac^{(s)}\right)^T(\widetilde{\vb{z}}) = \partial_{\delta_V} \mathcal{L}(\delta \tuple{V}, \Xi, \widetilde{\Xi}),
\end{equation*}
where $\Xi$ and $\widetilde{\Xi}$ solve
\begin{align*}
0 &= \partial_\Xi \mathcal{L}(\delta \tuple{V}, \Xi, \widetilde{\Xi}), \\
0 &= \partial_{\widetilde{\Xi}} \mathcal{L}(\delta \tuple{V}, \Xi, \widetilde{\Xi}).
\end{align*}

Substituting the definitions of $E$ and $\Psi$ into $\mathcal{L}$ yields the large sum:
\begin{align*}
&\mathcal{L}(\delta \tuple{V}, \Xi, \widetilde{\Xi})
 = \\
&~~~\sum_{i=1}^d\widetilde{\vb{z}}_i^T\vb{U}_i \delta \vb{\eta}_i + \widetilde{\vb{z}}_i^T\delta \vb{U}_i \widehat{\vb{\eta}}_i \\
&+ \sum_{i=1}^d \widetilde{\delta \vb{\xi}_i}^T\delta \vb{U}_i^T \vb{w}_i - \widetilde{\delta \vb{\xi}_i}^T \delta \vb{\xi}_i \\
&+ \sum_{i=1}^{d-1}  \incrright_i^T\delta G_{i+1}(\widehat{\vb{\xi}}_{i+1}) \widehat{\pushright}_{i+1} + \incrright_i^T O_{i+1}(\delta\vb{\xi}_{i+1}) \widehat{\pushright}_{i+1} + \incrright_i^TP_{i+1}(\widehat{\vb{\xi}}_{i+1}) \ppsright_{i+1} - \incrright_i^T\ppsright_{i}\\
&+\sum_{i=1}^{d-1}  \ppsleft_{i-1}^T Q_i(\widehat{\vb{\xi}}_i) \incrleft_i + \widehat{\pushleft}_{i-1}^T \delta G_i(\widehat{\vb{\xi}}_i)\incrleft_i + \widehat{\pushleft}_{i-1}^T O_i(\delta\vb{\xi}_i)\incrleft_i - \ppsleft_{i}^T\incrleft_i \\
&+ \sum_{i=1}^d \ppsleft_{i-1}^T Q_i(\widetilde{\delta \vb{\eta}_i}) \widehat{\pushright}_i
+ \widehat{\pushleft}_{i-1}^T P_i(\widetilde{\delta \vb{\eta}_i}) \ppsright_i
+ \widehat{\pushleft}_{i-1}^T \delta G_i(\widetilde{\delta \vb{\eta}_i}) \widehat{\pushright}_{i} - \widetilde{\delta \vb{\eta}_i}^T\delta \vb{\eta}_i.
\end{align*}
The condition $0=\partial_{\widetilde{\Xi}} \mathcal{L}$ is the state equation, $0=E(\Xi)$. 
Differentiating $\mathcal{L}$ with respect to $\Xi$ yields the adjoint equations, which are given componentwise as follows:
\begin{itemize}
\item $0=\partial_{\delta \vb{\eta}_i} \mathcal{L}$ gives the contract-up step (\algoref{alg:transpose_tangent}, line~\ref{ln:tr-up}).
\item $0=\partial_{\ppsright_i} \mathcal{L}$ gives the left sweep (line~\ref{ln:tr-left}), with boundary $\incrright_0^T=0$ (line~\ref{ln:tr-bdy}).
\item $0=\partial_{\ppsleft_i} \mathcal{L}$ gives the right sweep (line~\ref{ln:tr-right}), with boundary $\incrleft_d=0$ (line~\ref{ln:tr-bdy}).
\item $0=\partial_{\delta \vb{\xi}_i} \mathcal{L}$ gives the central contraction (line~\ref{ln:tr-central}).
\end{itemize}
Differentiating $\mathcal{L}$ with respect to the variations yields the components of the gradient:
\begin{itemize}
\item $\widetilde{\delta \vb{U}}_i = \partial_{\delta \vb{U}_i} \mathcal{L}$ is the assembly of $\widetilde{\delta\vb{U}}_i$ (line~\ref{ln:tr-assembleU}).
\item $\widetilde{\delta \vb{G}}_i = \partial_{\delta \vb{G}_i} \mathcal{L}$ is the assembly of $\widetilde{\delta\vb{G}}_i$ (line~\ref{ln:tr-assembleG}).
\end{itemize}

The gradient is therefore
$$
((\widetilde{\delta \vb{U}}_i)_{i=1}^d, (\widetilde{\delta \vb{G}}_i)_{i=1}^d) = D_{\delta \tuple{V}}
\left\langle \widetilde{\vb{z}}, \delta\vb{z}\right\rangle= \left(\jac^{(s)}\right)^T(\widetilde{\vb{z}}).
$$
These stationarity conditions are precisely the assignments of \algoref{alg:transpose_tangent} (which first forms the base edge variables $\widehat{\vb{\xi}}_i$, $\widehat{\vb{\eta}}_i$, $\widehat{\pushleft}_i$, and $\widehat{\pushright}_i$ via \algoref{alg:probe_basepoint}), evaluated in an order consistent with their dependencies. Hence \algoref{alg:transpose_tangent} returns $\left(\jac^{(s)}\right)^T(\widetilde{\vb{z}})$.

For the complexity, the base edge variables are formed in $O(dNn + d n r^2 + Mm)$ operations by Theorem~\ref{thm:probe_tangent}. The reductions (line~\ref{ln:tr-up}) and the assembly of $\widetilde{\delta\vb{U}}_i$ (line~\ref{ln:tr-assembleU}) cost $O(dNn + Mm)$, the latter through outer products $\widetilde{\vb{z}}_i\widehat{\vb{\eta}}_i^T$ and $\vb{w}_i\widetilde{\delta\vb{\xi}}_i^T$. The adjoint sweeps (lines~\ref{ln:tr-left}--\ref{ln:tr-right}) and the central contraction (line~\ref{ln:tr-central}) each contract $r\times n\times r$ cores against vectors a constant number of times per mode, at cost $O(d n r^2)$; the assembly of $\widetilde{\delta\vb{G}}_i$ (line~\ref{ln:tr-assembleG}) forms three $r\times n\times r$ tensor products per mode, also $O(d n r^2)$. Summing gives $O(dNn + d n r^2 + Mm)$.
\end{proof}

\begin{proof}[Proof of \propref{thm:preconditioned_are_tt}]
For this proof, we assume without loss of generality that $\norm{\vb{C}_i}_{\mathrm{HS}}=1$ for all $\vb{C}_i$. Under this assumption, we have $\left(\sum_{\alpha=1}^{N_1 \dots N_{j}} \gamma_{j,\alpha}^2 \right)^{1/2}=1$ on the right-hand side of \eqref{eq:gamma_eigs_assumption} and $\left(\prod_{l=1}^k \norm{\vb{C}_l}_{\mathrm{HS}}\right)=1$ on the right-hand side of \eqref{eq:tt_overall_bound}. The stated result for $\norm{\vb{C}_i}_{\mathrm{HS}}\neq 1$ then follows from scaling and multilinearity.
The central components of this proof are \lemref{lem:helper_for_tt_thm} (given below, illustrated in \figref{fig:tt_peel_one_mode}) and its special case \lemref{lem:helper2_for_tt_thm} (given in \appref{app:finite_to_infinite}), which show that one may ``peel'' indices from $S$ in such a way that the tensor remaining after peeling has similar structure to $S$. The peeling is done repeatedly until there is only one input index left. The tensors that are peeled off during this process are the cores of the desired tensor train approximation. The first and last cores are special cases.

\lemref{lem:helper2_for_tt_thm}
provides a multilinear function $B_{k}$ (which is $B'$ in the lemma) with output dimension $r_k$ and a linear function $G_{k+1}:\mathbb{R}^M \rightarrow \mathbb{R}^{r_k \times 1}$ such that
\begin{align}
&\abs{B(\vb{C}_1\vb{x}_1, \dots, \vb{C}_k\vb{x}_k)^T \vb{\omega} - B_k(\vb{C}_1\vb{x}_1, \dots, \vb{C}_k\vb{x}_k)^T G_{k+1}(\vb{\omega})} \nonumber \\
&\le 2 \epsilon_k \norm{B} \norm{\vb{\omega}} \prod_{i=1}^{k} \norm{\vb{x}_i}. \label{eq:tt_first_factorization_special_case}
\end{align}
Furthermore, $\norm{B_k} \le \norm{B}$ and $\norm{G_{k+1}} \le 1$.
Iteratively applying \lemref{lem:helper_for_tt_thm} with $B_k$ as $B$, then with the ``remainder'' $B_{k-1}$ as $B$, and so on, yields a sequence of vector-valued multilinear functions $B_{k-1}, \dots, B_1$ and matrix-valued linear functions $G_k, \dots, G_2$ such that 
\begin{align}
&\abs{B_{j+1}(\vb{C}_1\vb{x}_1, \dots, \vb{C}_{j+1}\vb{x}_{j+1})^T \vb{\nu} - B_{j}(\vb{C}_1\vb{x}_1, \dots, \vb{C}_j\vb{x}_{j})^T G_{j+1}(\vb{x}_{j+1}) \vb{\nu}} \nonumber \\
&\le 2 \epsilon_j \norm{B} \norm{\vb{\nu}} \prod_{i=1}^{j+1} \norm{\vb{x}_i} \label{eq:recursive_R_bound}
\end{align}
Here, the output of $B_{j}$ has dimension $r_j \le r_j'$, $G_{j+1}:\mathbb{R}^{N_{j+1}}\rightarrow \mathbb{R}^{r_{j} \times r_{j+1}}$, and we have $\norm{B_j} \le \norm{B_{j+1}}$ and $\norm{G_{j}} \le 1$.
We define the linear function $G_1:\mathbb{R}^{N_1}\rightarrow \mathbb{R}^{1 \times r_1}$ by $G_1(\vb{x}_1) := B_1(\vb{C}_1 \vb{x}_1)^T$ and note that there is no bound on $\norm{G_1}$.

Define the functions $S_{j}: \mathbb{R}^{N_1} \times \dots \times \mathbb{R}^{N_k} \rightarrow \mathbb{R}^M$ by
\begin{equation*}
\vb{\omega}^T S_{j}(\vb{x}_1, \dots, \vb{x}_k) := B_j(\vb{C}_1 \vb{x}_1, \dots, \vb{C}_j \vb{x}_j)^T G_{j+1}(\vb{x}_{j+1}) \dots G_{k}(\vb{x}_{k}) G_{k+1}(\vb{\omega}).
\end{equation*}
The functions $S_j$ are successive approximations of $F$ which are formed by contracting longer and longer tensor train ``right tails'' with smaller and smaller remainders. 
By construction, $S_{k+1}=F$ and $S:=S_1$ is the multilinear function corresponding to the tensor train $\tuple{S}=(G_1, \dots, G_{k+1})$ which has ranks $\vb{r}$.
The rest of the proof therefore consists of bounding $\norm{S_{k+1} - S_1}$.

Suppose $1 \le j \le k$. By the definition of the induced norm, we have
\begin{equation}
\label{eq:T_delta_j_norm}
\norm{S_j - S_{j+1}} = \sup_{\substack{
\norm{\vb{\omega}}=1 \\
\norm{\vb{x}_i}=1
}} 
\left\lvert \vb{\omega}^TS_j(\vb{x}_1, \dots, \vb{x}_{k}) - \vb{\omega}^TS_{j+1}(\vb{x}_1, \dots, \vb{x}_{k}) \right\rvert.
\end{equation}
Let $\vb{x}_i$ and $\vb{\omega}$ be vectors of the appropriate sizes satisfying $\norm{\vb{x}_i}=1$, $\norm{\vb{\omega}}=1$. Using the definitions, we have
\begin{align*}
&\vb{\omega}^TS_j(\vb{x}_1, \dots, \vb{x}_{k}) - \vb{\omega}^TS_{j+1}(\vb{x}_1, \dots, \vb{x}_{k}) \\
&= (B_{j+1}(\vb{C}_1 \vb{x}_1, \dots, \vb{C}_{j+1} \vb{x}_{j+1})^T - B_j(\vb{C}_1 \vb{x}_1, \dots, \vb{C}_j \vb{x}_j)^T G_{j+1}(\vb{x}_{j+1})) \vb{\nu}_{j+2}
\end{align*}
where 
$$\vb{\nu}_{j+2} := G_{j+2}(\vb{x}_i) \dots G_{k}(\vb{x}_k) G_{k+1}(\vb{\omega})
$$
for $j < k$, or $\vb{\nu}_{k+1}:=1$ in the case $j=k$. Hence, recalling the bounds in \eqref{eq:tt_first_factorization_special_case} and \eqref{eq:recursive_R_bound}, we have
\begin{equation*}
\norm{S_j - S_{j+1}} \le 2 \epsilon_j \norm{B} \norm{\vb{\nu}_{j+2}} \\
 \le 2 \epsilon_j \norm{B}.
\end{equation*}
In the second inequality, we used the fact that $\norm{\vb{\nu}_{j+2}}=1$, which follows from the bounds $\norm{G_i} \le 1$ and the assumptions that $\norm{\vb{x}_i}=1$ and $\norm{\vb{\omega}}=1$.
Repeatedly using the triangle inequality 
yields 
\begin{equation}
    \norm{F - S} \le 2 \norm{B} \sum_{j=1}^{k} \epsilon_j,
\end{equation}
which is \eqref{eq:tt_overall_bound} under the assumption $\norm{\vb{C}_l}_{\mathrm{HS}}=1$, as required
\end{proof}

\begin{lemma}[Helper for \propref{thm:preconditioned_are_tt}: first step of the peeling process]
\label{lem:helper2_for_tt_thm}
    Under the same setup as \lemref{lem:helper_for_tt_thm}, there exists a multilinear function $B':\mathbb{R}^{N_1} \times \dots \times \mathbb{R}^{N_{k}} \rightarrow \mathbb{R}^{r_{k}}$ satisfying 
    $
    \norm{B'} \le \norm{B},
    $
    and a linear function $G:\mathbb{R}^M \rightarrow \mathbb{R}^{r_{k} \times 1}$ satisfying 
    $
    \norm{G} \le 1
    $
    such that the multilinear function $F'$ defined by
    \begin{equation}
    \label{eq:tt_one_step_factorization2}
        F'(\vb{x}_1, \dots, \vb{x}_{k})^T \vb{w} := B'(\vb{C}_1 \vb{x}_1, \dots, \vb{C}_{k} \vb{x}_{k})^T G(\vb{\omega})
    \end{equation}
    satisfies
    $
        \norm{F - F'} \le 2\epsilon_{k} \norm{B}.
    $
\end{lemma}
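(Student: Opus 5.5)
The plan is to mirror the proof of \lemref{lem:helper_for_tt_thm}, but with the peeled block consisting only of the output mode. All $k$ input modes stay in the row group, and there is no preconditioner on the column side.

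\textbf{Setup.} Work in the eigenvector bases of $\vb{C}_1,\dots,\vb{C}_k$ and use the $k$th matrix unfolding. Let $\vb{B}_\text{mat}$ be the $(N_1\cdots N_k)\times M$ reshaping of $\vb{B}$, and $\vb{F}_\text{mat}$ the same reshaping of $\vb{F}$. By \lemref{lem:unfoldings_and_kron} with $\vb{M}_{k+1}=\vb{I}$,
\[
\vb{F}_\text{mat}=\vb{\Gamma}\vb{B}_\text{mat},\qquad \vb{\Gamma}:=\vb{C}_1\kron\cdots\kron\vb{C}_k .
\]
Here $\vb{\Gamma}$ is diagonal with entries $\gamma_{k,\alpha}$. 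Define $\vb{\Gamma}'$ by keeping the largest $r_k'$ diagonal entries and zeroing the rest, and set $\widetilde{\vb{F}}_\text{mat}':=\vb{\Gamma}'\vb{B}_\text{mat}$. Its rank $r_k$ is at most $r_k'$. Let $\vb{G}^R\in\mathbb{R}^{r_k\times M}$ have orthonormal rows spanning the row space of $\widetilde{\vb{F}}'_\text{mat}$, and put $\vb{F}_\text{mat}':=\vb{F}_\text{mat}(\vb{G}^R)^T\vb{G}^R$.

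\textbf{Error bound.} Exactly as in Part 1 of the earlier lemma, \corref{cor:unfolding_norm_bound} and the projection argument give
\[
\norm{F-F'}\le 2\norm{\vb{F}_\text{mat}-\widetilde{\vb{F}}'_\text{mat}} .
\]
We then bound $\rho=\vb{a}^T(\vb{\Gamma}-\vb{\Gamma}')\vb{B}_\text{mat}\vb{b}$ for unit vectors $\vb{a}$ and $\vb{b}\in\mathbb{R}^M$. This case is simpler than Part 2, since no SVD of $\vb{b}$ is needed. Each entry satisfies $\abs{(\vb{B}_\text{mat}\vb{b})[\alpha]}=\abs{\vb{b}^TB(\vb{e}_{1,i_1},\dots,\vb{e}_{k,i_k})}\le\norm{B}$. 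Applying Cauchy--Schwarz with the diagonal $\vb{g}$ of $\vb{\Gamma}-\vb{\Gamma}'$, which satisfies $\norm{\vb{g}}\le\epsilon_k$ by \eqref{eq:gamma_eigs_assumption} after normalization, gives $\abs{\rho}\le\epsilon_k\norm{B}$. This yields $\norm{F-F'}\le2\epsilon_k\norm{B}$.

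\textbf{Factorization and norms.} Define $G(\vb{\omega}):=\vb{G}^R\vb{\omega}\in\mathbb{R}^{r_k\times1}$. Since $\vb{G}^R$ has orthonormal rows, $\norm{G}\le1$. Let $\vb{B}'$ be the reshaping of $\vb{B}_\text{mat}(\vb{G}^R)^T$ into an $N_1\times\dots\times N_k\times r_k$ array. The same telescoping computation as in Part 3 then shows
\[
B'(\vb{C}_1\vb{x}_1,\dots,\vb{C}_k\vb{x}_k)^TG(\vb{\omega})=(\otimes\vb{x}_i)^T\vb{\Gamma}\vb{B}_\text{mat}(\vb{G}^R)^T\vb{G}^R\vb{\omega}=F'(\vb{x}_1,\dots,\vb{x}_k)^T\vb{\omega}.
\]
Finally, $\norm{B'}\le\norm{B}$ follows directly: for unit $\vb{\theta}_i$ and $\vb{\mu}$,
\[
B'(\vb{\theta})^T\vb{\mu}=B(\vb{\theta})^T(\vb{G}^R)^T\vb{\mu},
\]
and $\norm{(\vb{G}^R)^T\vb{\mu}}\le1$. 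Here we do not even need the Hilbert--Schmidt normalization of a preconditioner.

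\textbf{Main obstacle.} The only delicate point is the entrywise bound in the error step. One might be tempted to bound $\norm{\vb{B}_\text{mat}\vb{b}}$ as a whole, but that loses the structure. The correct route is to bound each entry $\abs{(\vb{B}_\text{mat}\vb{b})[\alpha]}$ by $\norm{B}$ and let the $\ell^2$ smallness of $\vb{g}$ absorb the sum over $\alpha$ through Cauchy--Schwarz.
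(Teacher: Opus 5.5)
Your argument is correct, but it takes a different route from the paper. The paper does not redo the unfolding argument. Instead it reduces to \lemref{lem:helper_for_tt_thm} by adjoining a dummy scalar input: it sets $\widetilde{B}(\vb{\theta}_1,\dots,\vb{\theta}_k,s):=sB(\vb{\theta}_1,\dots,\vb{\theta}_k)$ with $\vb{C}_{k+1}:=1$. It then applies the earlier lemma to peel off the pair (scalar mode, output mode), which yields $\widetilde{G}:\mathbb{R}\to\mathbb{R}^{r_k\times M}$ with $\norm{\widetilde{G}}\le 1$. Finally it reinterprets this as $G$ via $\vb{\mu}^T G(\vb{\omega})s:=\vb{\mu}^T\widetilde{G}(s)\vb{\omega}$ and evaluates at $s=1$.

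The paper's route buys brevity: one peeling lemma is reused, at the cost of an artificial nuisance variable and a reinterpretation of the peeled core. Yours is self-contained and shows that the first step is genuinely simpler than the general one. With no preconditioner on the column side, the SVD of $\vb{b}$ and the normalization $\norm{\vb{C}_k}_{\mathrm{HS}}=1$ from Part~2 drop out; the entrywise bound plus Cauchy--Schwarz against $\vb{g}$ suffices. Likewise, $\norm{B'}\le\norm{B}$ follows at once from $(\vb{G}^R)^T$ being an isometry, rather than from the SVD argument of Part~3.

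In substance the two proofs coincide: your computation is exactly what the paper's reduction unwinds to when the peeled input mode has dimension one. The only normalization you still need is $\prod_{i\le k}\norm{\vb{C}_i}_{\mathrm{HS}}=1$, to get $\norm{\vb{g}}\le\epsilon_k$, and you correctly invoke it.
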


\begin{proof}[Proof of \lemref{lem:helper2_for_tt_thm}]
    Let
    $
        \widetilde{B}(\vb{\theta}_1, \dots, \vb{\theta}_{k}, s) := sB(\vb{\theta}_1, \dots, \vb{\theta}_{k})
    $
    be the multilinear function which is the same as $B$, except with an extra scalar nuisance parameter $s$, and define the multilinear function $\widetilde{F}$ by
    $$\widetilde{F}(\vb{x}_1, \dots, \vb{x}_k, s)=\widetilde{B}(\vb{C}_1 \vb{x}_1, \dots, \vb{C}_k \vb{x}_k, s).$$
    Applying \lemref{lem:helper_for_tt_thm} with $\widetilde{B}$ in place of $B$ and with $\vb{C}_{k+1}:=1$ for this application of the lemma yields an approximate factorization
    \begin{equation*}
        \widetilde{F}'(\vb{x}_1, \dots, \vb{x}_k, s) = \widetilde{B}'(\vb{C}_1\vb{x}_1, \dots, \vb{C}_k\vb{x}_k) \widetilde{G}(s)
    \end{equation*}
    where $\norm{\widetilde{F} - \widetilde{F}'} \le 2 \epsilon_k \norm{B}$ and $\widetilde{G}:\mathbb{R} \rightarrow \mathbb{R}^{r_k \times M}$ satisfies $\norm{\widetilde{G}} \le 1$.
    The desired result follows from setting $F'(\vb{x}_1, \dots, \vb{x}_k):=\widetilde{F}'(\vb{x}_1, \dots, \vb{x}_k, 1)$ and defining $G$ by $\vb{\mu}^TG(\vb{\omega})s:= \vb{\mu}^T\widetilde{G}(s)\vb{\omega}$.
\end{proof}

\begin{proof}[Proof of \propref{thm:finite_ttt}]
\propref{thm:preconditioned_are_tt} furnishes a tensor train $\tuple{S}$ with shape $(N,\dots,N,M)$ and ranks $\vb{r}$ satisfying
    \begin{equation*}
        \norm{F - S} \le 2 \norm{B} \norm{\vb{C}}_{\mathrm{HS}}^k \sum_{j=1}^k \epsilon_j,
    \end{equation*}
    where $S$ is the vector-valued multilinear function corresponding to $\tuple{S}$. 
    
    Let $\pi$ be any permutation function of $k$ variables. 
    Since $F \circ \pi = F$, 
    \begin{equation*}
    S - S \circ \pi = (S - F) + (F - S \circ \pi) 
    = (S - F) + (F \circ \pi - S \circ \pi).
    \end{equation*}
    Applying the triangle inequality and using the fact that induced norm is invariant under permutations yields
    \begin{equation*}
        \norm{S - S \circ \pi}
        \le \norm{F - S} + \norm{S \circ \pi - F \circ \pi} 
        = 2 \norm{F - S}.
    \end{equation*}
    Applying \lemref{lem:sym_tt_to_ttt} to $\tuple{S}$ with $\epsilon=2 \norm{F - S}$ furnishes matrices $\vb{\inputbasis}$ and $\vb{\outputbasis}$ and a tensor train $\tuple{S}'$ of the appropriate shape and ranks such that $T$ from \eqref{eq:widetildet_5456} satisfies
    \begin{equation*}
        \norm{S - T} \le (k-1) \epsilon = (2k-2) \norm{F - S}.
    \end{equation*}
    Hence,
    \begin{align*}
        \norm{F - T} &\le \norm{F - S} + \norm{S - T} 
        \le (2k-1)\norm{F - S} 
        \le 2(2k-1) \norm{B} \norm{\vb{C}}_{\mathrm{HS}}^k \sum_{j=1}^k \epsilon_j
    \end{align*}
    as required.
\end{proof}

\begin{lemma}[Infinite to finite reduction]
\label{lem:inf_to_finite}
Let $X$ and $Y$ be Hilbert spaces, let $B: X^k \rightarrow Y$ be a bounded $k$-multilinear function, let $C:X \rightarrow X$ be a compact symmetric positive-semidefinite operator, and define the multilinear function $F:X^k \rightarrow Y$ by
\begin{equation*}
F(x_1, \dots, x_k) := B(C x_1, \dots, C x_k).
\end{equation*}
Let $(\lambda_i, u_i)$ be the eigenvalue/eigenvector pairs of $C$, let $\inputbasis:\mathbb{R}^N \rightarrow X$
be the semi-infinite matrix with orthonormal columns given by the $N$ leading eigenvectors of $C$, and let
$
\vb{\Lambda} = \operatorname{diag}(\lambda_1, \dots, \lambda_N).
$
Let $\outputbasis: \mathbb{R}^{N^k} \rightarrow Y$ be a semi-infinite matrix with columns that form an orthonormal basis for the span of the vectors
\begin{equation*}
\{F(u_{i_1}, u_{i_2}, \dots, u_{i_k}): (i_1, \dots, i_k) \in \{1,\dots, N\}^k \}.
\end{equation*}
Finally, define the multilinear functions
\begin{alignat*}{5}
B_N:&~ \left(\mathbb{R}^N\right)^k \rightarrow \mathbb{R}^M, \qquad&& B_N(\vb{\theta}_1, \dots, \vb{\theta}_k) := \outputbasis^* B(\inputbasis \vb{\theta}_1, \dots, \inputbasis \vb{\theta}_k), \\
F_N:&~ \left(\mathbb{R}^N\right)^k \rightarrow \mathbb{R}^M, \qquad&& F_N(\vb{x}_1, \dots, \vb{x}_k) := B_N(\vb{\Lambda} \vb{x}_1, \dots, \vb{\Lambda} \vb{x}_k),\\
F':&~ X^k \rightarrow Y, \qquad&& F'(x_1, \dots, x_k) := \outputbasis F_N(\inputbasis^* x_1, \dots, \inputbasis^* x_k)
\end{alignat*}
(the quantities $B_N$, $F_N$, and $\vb{\Lambda}$ are finite-dimensional analogs of $B$, $F$, and $\vb{C}$, respectively, and $F'$ is an approximation of $F$).
Then, for any $\epsilon>0$, there exists $N$
such that
\begin{equation}
\label{eq:inf_to_finite_errorbound}
\norm{F - F'} \le \epsilon.
\end{equation}
We also have $\norm{B_N} \le \norm{B}$.
\end{lemma}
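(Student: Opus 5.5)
The plan is to show first that $F - F'$ is small on the span of the leading eigenvectors, and then that it is small on the complement. We begin with the easy claim $\norm{B_N} \le \norm{B}$. For unit vectors $\vb{\theta}_i \in \mathbb{R}^N$, the vectors $\inputbasis\vb{\theta}_i$ are unit vectors in $X$, because $\inputbasis$ has orthonormal columns. Also $\norm{\outputbasis^* y} \le \norm{y}$, because $\outputbasis$ has orthonormal columns, so $\outputbasis^*$ is a partial isometry. Taking the supremum in \eqref{eq:induced_norm_definition} then gives the bound.

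Next we identify $F'$. Let $P = \inputbasis\inputbasis^*$ be the orthogonal projector onto the span of $u_1,\dots,u_N$. Since $\inputbasis\vb{\Lambda}\inputbasis^* = CP$ and $CP = PC$, we can write
\begin{equation*}
F'(x_1,\dots,x_k) = \outputbasis\outputbasis^* B(CPx_1,\dots,CPx_k) = \outputbasis\outputbasis^* F(Px_1,\dots,Px_k).
\end{equation*}
By multilinearity, $F(Px_1,\dots,Px_k)$ is a linear combination of the vectors $F(u_{i_1},\dots,u_{i_k})$ with indices in $\{1,\dots,N\}$. These vectors lie in the range of $\outputbasis$ by construction, so $\outputbasis\outputbasis^*$ acts as the identity on them. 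Hence $F'(x_1,\dots,x_k) = F(Px_1,\dots,Px_k)$, and the output projection introduces no error.

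It remains to bound $\norm{F - F\circ(P,\dots,P)}$. We telescope, replacing one argument at a time:
\begin{equation*}
F(x_1,\dots,x_k) - F(Px_1,\dots,Px_k) = \sum_{j=1}^k F(Px_1,\dots,Px_{j-1},(I-P)x_j,x_{j+1},\dots,x_k).
\end{equation*}
Each term has the form $B(\cdot,\dots,C(I-P)x_j,\dots)$, with the other arguments of the form $CPx_i$ or $Cx_i$. The norms of those arguments are at most $\norm{C}\,\norm{x_i}$. Therefore each term is bounded by $\norm{B}\,\norm{C}^{k-1}\norm{C(I-P)}\prod_i\norm{x_i}$, where $\norm{\cdot}$ on operators is the operator norm. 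This gives
\begin{equation*}
\norm{F-F'} \le k\,\norm{B}\,\norm{C}^{k-1}\norm{C(I-P)}.
\end{equation*}
Since $C$ is compact, self-adjoint and positive semidefinite, $\norm{C(I-P)} = \lambda_{N+1}$, or more generally the largest eigenvalue not among the leading $N$. These eigenvalues tend to $0$ as $N\to\infty$, so choosing $N$ large enough gives \eqref{eq:inf_to_finite_errorbound}.

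The main obstacle is bookkeeping rather than analysis. The output dimension $M$ is at most $N^k$, and we should let $\outputbasis$ have $M$ orthonormal columns spanning the given vectors, which may be fewer than $N^k$. We should also handle the case where $C$ has only finitely many nonzero eigenvalues, where the error vanishes once $N$ exceeds the rank. Finally, the eigenvalue ordering must be the nonincreasing one, so that $\norm{C(I-P)}\to 0$.
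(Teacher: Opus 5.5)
Your proof is correct and follows the same structure as the paper's: identify $F'(x_1,\dots,x_k)=\outputbasis\outputbasis^*F(\inputbasis\inputbasis^*x_1,\dots,\inputbasis\inputbasis^*x_k)$, observe that the output projector acts as the identity on these values, and bound the input-projection error using $\norm{C(I-\inputbasis\inputbasis^*)}=\lambda_{N+1}$. The only difference is in that last step: you telescope one argument at a time ($k$ terms) rather than expanding $I=\inputbasis\inputbasis^*+(I-\inputbasis\inputbasis^*)$ in every slot ($2^k-1$ terms), which gives the explicit and slightly sharper bound $k\norm{B}\norm{C}^{k-1}\lambda_{N+1}$ in place of the paper's $\norm{B}\bigl((1+\lambda_{N+1})^k-1\bigr)$ (with $\norm{C}=1$) and makes the paper's separate continuity argument for choosing $N$ unnecessary.
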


\begin{proof}[Proof of \lemref{lem:inf_to_finite}]
By multilinearity, we may perform the rescaling $C \rightarrow C / \norm{C}$ and $\epsilon \rightarrow \norm{C}^k \epsilon$, so without loss of generality we may assume $\norm{C}=1$. The value of $\norm{C}$ will affect $N$, but for this proposition we are only interested in the existence of finite $N$, not its size. 

The fact that $\norm{B_N} \le \norm{B}$ follows directly from the definition of $B_N$ and orthonormality of $\inputbasis$ and $\outputbasis$. The rest of the proof therefore consists of 
proving bound \eqref{eq:inf_to_finite_errorbound}. 
We proceed in the following two part process. In Part 1, we quantify the error in the approximation
\begin{equation*}
F(x_1, \dots, x_k) \approx F(\inputbasis\inputbasis^* x_1, \dots, \inputbasis\inputbasis^* x_k).
\end{equation*}
In Part 2, we show that
\begin{equation*}
F(\inputbasis\inputbasis^* x_1, \dots, \inputbasis\inputbasis^* x_k) = \outputbasis \outputbasis^* F(\inputbasis\inputbasis^* x_1, \dots, \inputbasis\inputbasis^* x_k).
\end{equation*}
The lemma will then follow from the identity
\begin{align}
F'(x_1, \dots, x_k) &= \outputbasis\outputbasis^*B(\inputbasis \vb{\Lambda} \inputbasis^* x_1, \dots, \inputbasis \vb{\Lambda} \inputbasis^* x_k) \nonumber\\
&= \outputbasis\outputbasis^*B(C \inputbasis \inputbasis^* x_1, \dots, C \inputbasis \inputbasis^* x_k) \nonumber\\
&= \outputbasis\outputbasis^* F(\inputbasis\inputbasis^* x_1, \dots, \inputbasis\inputbasis^* x_k) \label{eq:projector_Ttilde_expression}
\end{align}
The equality in the first line of this identity follows from substituting the definition of $F_N$ into the expression for $F'$, then substituting the definition of $B_N$ into the result. Going from the first to the second line, we used the fact that $\inputbasis \vb{\Lambda} \inputbasis^* = C \inputbasis\inputbasis^*$, which follows from the fact that $\inputbasis \vb{\Lambda} \inputbasis^*$ is a truncated eigenvalue decomposition of $C$. Going from the second line to the third line we used the definition of $F$.


\paragraph{Part 1} For convenience of notation, for any subset $\sigma \subset \{1,2,\dots,k\}$, let $F^{\sigma}$ denote the version of $F$ with $\inputbasis\inputbasis^*$ inserted in the arguments corresponding to elements of $\sigma$, and $I - \inputbasis\inputbasis^*$ inserted into the remaining arguments. For example, the limiting cases are $F^{\{\}}(x_1, \dots, x_k) = F((I-\inputbasis\inputbasis^*) x_1, \dots, (I-\inputbasis\inputbasis^*) x_k)$ and $F^{\{1,\dots,k\}}(x_1, \dots, x_k) = F(\inputbasis\inputbasis^* x_1, \dots, \inputbasis\inputbasis^* x_k)$.

Inserting the identity in the form $I = \inputbasis\inputbasis^* + (I- \inputbasis\inputbasis^*)$ into all arguments of $F$ and expanding using multilinearity, we have
\begin{equation*}
F = \sum_{\sigma \subset \{1,\dots,k\}} F^{\sigma},
\end{equation*}
where the sum is taken over all subsets $\sigma \subset \{1,\dots,k\}$. 
Subtracting $F^{\{1,\dots,k\}}$ from both sides, taking the norm, and using the triangle inequality yields the bound
\begin{equation*}
\norm{F - F^{\{1,\dots,k\}}} \le \sum_{\substack{\sigma \subset \{1,\dots,k\} \\ \sigma \neq \{1,\dots,k\}}} \norm{F^\sigma},
\end{equation*}
where the sum is taken over all subsets of $\{1,\dots,k\}$ except for $\{1,\dots,k\}$ itself. For any particular term $\sigma$, let $i=\abs{\sigma}$. We have
\begin{equation*}
\norm{F^\sigma} \le \norm{B} \norm{C \inputbasis\inputbasis^*}^{i} \norm{C (I - \inputbasis\inputbasis^*)}^{k-i} = \norm{B} \lambda_{N+1}^{k-i}.
\end{equation*}
In the inequality, we used the definition of $F$ in terms of $B$ and $C$, and the submultiplicative property of the induced norm. In the equality, we used the facts that $\norm{C (I - \inputbasis\inputbasis^*)}=\lambda_{N+1}$ and $\norm{C \inputbasis\inputbasis^*} = \lambda_1$, and the assumption that $\lambda_1 = \norm{C} = 1$.
Applying this bound to all terms and using the fact that there are $\binom{k}{i}$ subsets $\sigma$ with $\abs{\sigma}=i$ yields
\begin{align*}
\norm{F - F^{\{1,\dots,k\}}} &\le \sum_{i=0}^{k-1} \binom{k}{i} \norm{B} \lambda_{N+1}^{k-i} \\
&= \norm{B} \left(\sum_{i=0}^{k} \binom{k}{i} \lambda_{N+1}^{k-i} - 1\right) 
= \norm{B}\left((\lambda_{N+1} + 1)^k - 1\right).
\end{align*}
The function $f(\lambda)=\left((\lambda + 1)^k - 1\right)$ is a continuous nonnegative function on $(0,\infty)$ with $f(0)=0$, hence there is some $\delta > 0$ such that $\norm{B}\left((\eta + 1)^k - 1\right) \le \epsilon$ for all $\eta < \delta$. The sequence $\lambda_i$, $i=1,2,\dots$ is a decreasing sequence with limit zero, so we may choose $N$ such that $\lambda_{N+1} < \delta$. With this choice of $N$, we therefore have
$$
\norm{F - F^{\{1,\dots,k\}}} \le \epsilon.
$$

\paragraph{Part 2} 
First, define 
$\widetilde{F}_N:\left(\mathbb{R}^N\right)^k \rightarrow Y$,
$
\widetilde{F}_N(\vb{x}_1, \dots, \vb{x}_k) = F(\inputbasis \vb{x}_1, \dots, \inputbasis \vb{x}_k),
$
so that
\begin{align}
F^{\{1,\dots,k\}}(x_1, \dots, x_k) &= F(\inputbasis\inputbasis^* x_1, \dots, \inputbasis\inputbasis^* x_k) 
= \widetilde{F}_N(\inputbasis^* x_1, \dots, \inputbasis^* x_k). \label{eq:C0_reduction_equivalence}
\end{align}
By construction,
\begin{align*}
\widetilde{F}_N(\vb{e}_{i_1}, \vb{e}_{i_2}, \dots, \vb{e}_{i_k}) &= F(u_{i_1}, u_{i_2}, \dots, u_{i_k}) \\
&= \outputbasis \outputbasis^*F(u_{i_1}, u_{i_2}, \dots, u_{i_k}) 
= \outputbasis \outputbasis^* \widetilde{F}_N(\vb{e}_{i_1}, \vb{e}_{i_2}, \dots, \vb{e}_{i_k}),
\end{align*}
and extending by multilinearity yields
\begin{equation}
\label{eq:S_zi_981}
\widetilde{F}_N(\vb{x}_1, \dots, \vb{x}_k) = \outputbasis \outputbasis^* \widetilde{F}_N(\vb{x}_1, \dots, \vb{x}_k)
\end{equation}
for arbitrary vectors $\vb{x}_i$.
Combining \eqref{eq:S_zi_981} and \eqref{eq:C0_reduction_equivalence}, we have
$$
F^{\{1,\dots,k\}}(x_1, \dots, x_k) = \outputbasis \outputbasis^*F(\inputbasis\inputbasis^* x_1, \dots, \inputbasis\inputbasis^* x_k).
$$
Setting $F_N=\outputbasis^* \widetilde{F}_N$ and
combining this with the bound at the end of Part 1 yields the required bound for this theorem.
\end{proof}

\begin{lemma}[Power law sum on hyperbolic cross]
\label{lem:hcross_standard_sum}
Let 
\begin{equation*}
\Upsilon(\rho) := \{\vb{k} \in \mathbb{Z}^j: 1 \le \vb{k}, ~ \text{and} ~ \prod_{i=1}^j \vb{k}[i] \le \rho\},
\end{equation*}
denote the hyperbolic cross of radius $\rho$,
let $r=|\Upsilon(\rho)|$ denote the number of points in $\Upsilon(\rho)$, and define
\begin{equation}
\label{eq:S_of_rho_defn}
S(\rho) = \sum_{\vb{k} \in \Upsilon(\rho)} \prod_{i=1}^j \vb{k}[i]^{-2\beta},
\end{equation}
for $\beta > 1/2$. We have
\begin{equation*}
S(\infty) - S(\rho) \asymp r^{1-2\beta}(\log r)^{2 \beta(j-1)}.
\end{equation*}
Here, the asymptotic relation $u \asymp v$ means that there exist positive constants $c_1, c_2$ such that $c_1 u(\rho) \le v(\rho) \le c_2 u(\rho)$ for sufficiently large $\rho$. 
\end{lemma}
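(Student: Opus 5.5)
I would prove \lemref{lem:hcross_standard_sum} by reducing both the point count and the tail sum to integrals over the continuous hyperbolic region $\{\vb{t}\in[1,\infty)^j : \prod_i t_i \le \rho\}$, and then eliminating $\rho$ in favor of $r$.

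\textbf{Step 1: count the lattice points.} The first step is the classical count
\begin{equation*}
r = |\Upsilon(\rho)| \asymp \rho(\log\rho)^{j-1}.
\end{equation*}
I would get this by induction on $j$. Write $|\Upsilon_j(\rho)| = \sum_{k=1}^{\lfloor\rho\rfloor} |\Upsilon_{j-1}(\rho/k)|$, insert the inductive hypothesis, and use
\begin{equation*}
\sum_{k\le\rho} (\rho/k)\,(\log(\rho/k))^{j-2} \asymp \rho\int_1^\rho t^{-1}(\log t)^{j-2}\,dt \asymp \rho(\log\rho)^{j-1}.
\end{equation*}
Both upper and lower bounds come out the same way, because only a bounded factor is lost at the lattice scale.

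\textbf{Step 2: estimate the tail.} Next I would group the tail by the value of the product $m=\prod_i \vb{k}[i]$:
\begin{equation*}
S(\infty)-S(\rho) = \sum_{m>\rho} d_j(m)\, m^{-2\beta},
\end{equation*}
where $d_j(m)$ is the number of ordered factorizations of $m$ into $j$ positive factors. Its summatory function is $D_j(x)=|\Upsilon(x)|\asymp x(\log x)^{j-1}$ by Step 1. Abel summation then gives
\begin{equation*}
S(\infty)-S(\rho) = -D_j(\rho)\rho^{-2\beta} + 2\beta\int_\rho^\infty D_j(x)\,x^{-2\beta-1}\,dx .
\end{equation*}
The convergence of $S(\infty)$ relies on $\beta>1/2$.

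\textbf{Main obstacle: the lower bound.} The upper bound $\lesssim \rho^{1-2\beta}(\log\rho)^{j-1}$ is immediate from the integral term. The lower bound is harder, because the boundary term is negative and of the same order, so a crude two-sided bound on $D_j$ does not control the difference. I would avoid Abel summation for the lower bound and argue directly. Restrict to $m\in(\rho,2\rho]$; then
\begin{equation*}
S(\infty)-S(\rho) \ge (2\rho)^{-2\beta}\bigl(D_j(2\rho)-D_j(\rho)\bigr).
\end{equation*}
It remains to show $D_j(2\rho)-D_j(\rho)\gtrsim \rho(\log\rho)^{j-1}$, which I would again obtain by induction. Fix $k_1\le\sqrt{\rho}$; then the increment of the count in the remaining $j-1$ variables, between $\rho/k_1$ and $2\rho/k_1$, is $\gtrsim (\rho/k_1)(\log\rho)^{j-2}$. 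Summing over $k_1\le\sqrt\rho$ supplies the last logarithm. This yields
\begin{equation*}
S(\infty)-S(\rho)\asymp \rho^{1-2\beta}(\log\rho)^{j-1}.
\end{equation*}

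\textbf{Step 3: eliminate $\rho$.} Finally I would invert Step 1: $\log r\asymp\log\rho$, so $\rho\asymp r(\log r)^{-(j-1)}$. Substituting,
\begin{equation*}
\rho^{1-2\beta}(\log\rho)^{j-1} \asymp r^{1-2\beta}(\log r)^{(2\beta-1)(j-1)+(j-1)} = r^{1-2\beta}(\log r)^{2\beta(j-1)},
\end{equation*}
which is exactly the claimed rate.
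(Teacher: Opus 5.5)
Your proof is correct. Its skeleton matches the paper's: establish $r\asymp\rho(\log\rho)^{j-1}$ and $S(\infty)-S(\rho)\asymp\rho^{1-2\beta}(\log\rho)^{j-1}$, then eliminate $\rho$ in favor of $r$. What differs is where the work goes.

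The paper takes both asymptotics as standard hyperbolic-cross facts, citing Temlyakov. It spends its effort on the change of variables: it writes the bounded factors $e_1,e_2$ explicitly, sets $h=\log\rho$, and shows $\log r/h\to 1$. You instead prove the two inputs yourself and do the elimination in one line. You correctly note that $\log r\asymp\log\rho$, which is immediate from Step 1, is all that is needed, so the paper's limit statement is stronger than necessary.

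Your remark about the lower bound is right. With only two-sided bounds on $D_j$, Abel summation cannot give it, because the boundary term $-D_j(\rho)\rho^{-2\beta}$ is of the same order as the integral. The dyadic shell $(\rho,2\rho]$, with the induction over $k_1\le\sqrt{\rho}$, is a clean workaround that avoids needing the precise asymptotics $D_j(x)\sim x(\log x)^{j-1}/(j-1)!$.

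The one thing to tighten is Step 1. The inductive hypothesis $|\Upsilon_{j-1}(x)|\asymp x(\log x)^{j-2}$ holds only for large $x$. For $j\ge 3$ and $k$ near $\rho$, the quantity $(\rho/k)(\log(\rho/k))^{j-2}$ is nearly zero while the count is at least $1$. So ``only a bounded factor is lost at the lattice scale'' is not literally true term by term. Two fixes handle this:
\begin{itemize}
\item For the upper bound, use the uniform estimate $|\Upsilon_{j-1}(x)|\le x(1+\log x)^{j-2}$, valid for all $x\ge 1$; it inducts cleanly via $\sum_{k\le x}1/k\le 1+\log x$.
\item For the lower bound, discard the nonnegative terms with $k>\sqrt{\rho}$, exactly as you do in Step 2.
\end{itemize}

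With that repair, your argument is a self-contained replacement for the citation, at the cost of some length. The paper's version is shorter but leans entirely on the external reference for the substantive estimates.
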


\begin{proof}[Proof of \lemref{lem:hcross_standard_sum}]
By standard hyperbolic cross arguments (see, e.g., \cite[Section 2.3]{temlyakov2016hyperbolic}), we have
$r \asymp \rho (\log \rho)^{j-1}$ and $\Delta S \asymp \rho^{1-2\beta} (\log \rho)^{j-1}$,
where $\Delta S := S(\infty) - S(\rho)$.
Let us write these asymptotic relationships in the form
\begin{align}
r &= \rho (\log \rho)^{j-1} e_1(\rho), \label{eq:r_e1_456} \\
\Delta S &= \rho^{1-2\beta} (\log \rho)^{j-1} e_2(\rho), \label{eq:s_e2_456}
\end{align}
where $e_1$ and $e_2$ are functions that are bounded between positive constants for all large $\rho$.

Defining $h=\log \rho$, \eqref{eq:s_e2_456} may be written as
\begin{equation}
\Delta S = e^{h(1-2\beta)}h^{j-1} e_2(\rho). \label{eq:S_of_h_2223}
\end{equation}
and \eqref{eq:r_e1_456} may be manipulated into the form
\begin{equation}
\label{eq:h_logV_relation}
h = \log r - (j-1) \log h - \log e_1(\rho)
\end{equation}
by substituting in $h$, taking the logarithm of both sides of the equation, and solving for $h$. 
Substituting the right-hand side of \eqref{eq:h_logV_relation} for the $h$ in the exponent of \eqref{eq:S_of_h_2223} yields
\begin{align}
\label{eq:ehrh987}
\Delta S &= e^{(1-2\beta) \log r} e^{(2\beta-1) (j-1) \log(h)} e^{(2\beta-1)\log e_1(\rho)} h^{j-1}e_2(\rho) \\
&= r^{1-2\beta} h^{2\beta(j-1)} e_2(\rho)\, e_1(\rho)^{2\beta-1}.
\end{align}
Writing the remaining $h$ in the form $h=\log r \frac{h}{\log r}$ yields
\begin{equation}
\label{eq:S_rlogr_with_es}
\Delta S = r^{1-2\beta} (\log r)^{2\beta(j-1)} e_2(\rho)\, e_1(\rho)^{2\beta-1} \left(\frac{h}{\log r}\right)^{2 \beta(j-1)}.
\end{equation}
Using \eqref{eq:h_logV_relation} again, we have
\begin{equation}
\label{eq:logr_over_h}
\frac{\log r}{h} = 1 + \frac{(j-1) \log h}{h} + \frac{\log e_1(\rho)}{h}.
\end{equation}
Since $h \rightarrow \infty$ as $\rho \rightarrow \infty$ and since $e_1$ is bounded between positive numbers, the denominators in the second and third terms dominate the numerators for large $h$, so we have
$
\frac{\log r}{h} \rightarrow 1
$
as $\rho \rightarrow \infty$, and consequently 
\begin{equation*}
\left(\frac{h}{\log r}\right)^{2 \beta(j-1)} \rightarrow 1 \quad \text{as} \quad \rho \rightarrow \infty.
\end{equation*}
Combining this with the fact that $e_1(\rho)$ and $e_2(\rho)$ are both bounded between positive constants for large $\rho$, it follows that
\begin{equation*}
e_2(\rho)\, e_1(\rho)^{2\beta-1} \left(\frac{h}{\log r}\right)^{2 \beta(j-1)}
\end{equation*}
is bounded between positive constants for large $\rho$, so from \eqref{eq:S_rlogr_with_es} we conclude
$
\Delta S \asymp r^{1-2\beta}(\log r)^{2 \beta(j-1)}.
$
\end{proof}

\begin{proof}[Proof of \corref{cor:error_vs_rank_poly_decay}]
The eigenvectors of $\kron^j C$ are Kronecker products of eigenvectors of $C$ and the eigenvalues are products of eigenvalues of $C$. The (eigenvalue, eigenvector) pairs may therefore be indexed by points in the lattice
\begin{equation*}
\mathbb{Z}_+^j=\{\vb{k} \in \mathbb{Z}^j: 1 \le \vb{k}\},
\end{equation*} 
and with this indexing the corresponding eigenvalues $\gamma_\vb{k}$ are given by
\begin{equation}
\label{eq:gamma_kvec}
    \gamma_\vb{k} = \prod_{i=1}^j \lambda_{\vb{k}[i]} = \prod_{i=1}^j \vb{k}[i]^{-\beta}.
\end{equation}
By monotonicity, the super-level sets of the function $\vb{k} \mapsto \gamma_\vb{k}$ are sub-level sets of the function $\vb{k} \mapsto \gamma_\vb{k}^{-1/\beta}=\prod_{i=1}^j \vb{k}[i]$. 
Hence, the leading (eigenvalue, eigenvector) pairs correspond to points in the hyperbolic cross 
$\Upsilon(\rho)$,
with all eigenvalues corresponding to points outside of $\Upsilon(\rho)$ being smaller than all eigenvalues corresponding to points in $\Upsilon(\rho)$ for any chosen $\rho$. 

Given a desired rank $r_j$, let $\Upsilon(\rho)$ be the largest hyperbolic cross containing at most $r_j$ points, and set $r_j^-:=|\Upsilon(\rho)| \le r_j$. Let $S(\rho)$ be the sum defined in \lemref{lem:hcross_standard_sum}. If $r_j$ is sufficiently large so that \lemref{lem:hcross_standard_sum} may be applied, the truncated eigenvalue sum on the left side of \eqref{eq:gamma_eigs_assumption2} in \thmref{thm:ttt_main_thm} may be bounded as follows:
\begin{align*}
\sum_{\alpha=r_j+1}^{\infty} \gamma_{j,\alpha}^2 &\le \sum_{\vb{k} \in \mathbb{Z}_+^j} \gamma_\vb{k}^2 - \sum_{\vb{k} \in \Upsilon(\rho)} \gamma_\vb{k}^2 \\
&= S(\infty) - S(\rho) \\
& \le c_1 (r_j^-)^{1-2 \beta}(\log r_j^-)^{2 \beta(j-1)} \\
&\le c_1 r_j^{1-2 \beta}(\log r_j)^{2 \beta(j-1)}
\end{align*}
for some positive constant $c_1$ that depends on $j$ and $\beta$.
The inequality in the first line holds because $\Upsilon(\rho)$ collects the $r_j^-$ largest eigenvalues with $r_j^- \le r_j$; the terms $\gamma_\vb{k}^2$ remaining on the right-hand side (those indexed outside $\Upsilon(\rho)$) therefore contain every term $\gamma_{j,\alpha}^2$ of the left-hand tail, which discards the $r_j \ge r_j^-$ largest. The equality in the second line follows from the definition of $S$ and the expression for the eigenvalues $\gamma_\vb{k}$ in \eqref{eq:gamma_kvec}. The inequality in the third line is the upper bound in the asymptotic relation of \lemref{lem:hcross_standard_sum}, applied with $r=r_j^-$. The inequality in the fourth line uses $r_j^- \asymp r_j$: the number of lattice points added to the cross when the radius crosses an integer is sub-polynomial in $\rho$, while $|\Upsilon(\rho)| \ge \lfloor \rho \rfloor$, so consecutive cross cardinalities have ratio tending to one (see, e.g., \cite[Section~2.3]{temlyakov2016hyperbolic}); hence $r_j \le (1+o(1))\, r_j^-$ and the resulting bounded factor is absorbed into $c_1$.
The result therefore follows from \thmref{thm:ttt_main_thm}, taking 
$$
\epsilon_j=\left(r_j^{1-2 \beta}(\log r_j)^{2 \beta(j-1)}\right)^{1/2}
$$
and absorbing all constants other than $\norm{D^k \pto(\paramzero)}$ into $c$.
\end{proof}




 \bibliographystyle{plain} 
 \bibliography{tttt}

\end{document}